\newtheorem{Thm}{Theorem}[section] %%%%%%%%%%%%%%%%%%%%%
\newtheorem{Lem}[Thm]{Lemma}
\newtheorem{Cor}[Thm]{Corollary}
\newtheorem{Cor.Conj}[Thm]{Corollary of Conjecture}
\newtheorem{Prop}[Thm]{Proposition}
\newtheorem{Conj}[Thm]{Conjecture}
\newtheorem{Ques}[Thm]{Question}
\newtheorem{Ass}[Thm]{Assumption}
\theoremstyle{remark}
\newtheorem{Rem}[Thm]{Remark}
\newtheorem{Ex}[Thm]{Example}
\theoremstyle{definition}
\newtheorem{Def}[Thm]{Definition}
\newtheorem{Step}{Step}
\newtheorem{Stp}{Step}
\newtheorem{Const}[Thm]{Construction}
\newcommand{\Spec}{\mathop{\mathrm{Spec}}\nolimits}
\newcommand{\Frac}{\mathop{\mathrm{Frac}}\nolimits}
\newcommand{\Supp}{\mathop{\mathrm{Supp}}\nolimits}
\newcommand{\PTrop}{\mathop{\mathrm{PTrop}}\nolimits}
\newcommand{\Val}{\mathop{\mathrm{Val}}\nolimits}
\newcommand{\R}{\ensuremath{\mathbb{R}}}
\newcommand{\C}{\ensuremath{\mathbb{C}}}
\newcommand{\Z}{\ensuremath{\mathbb{Z}}}
\newcommand{\Q}{\mathbb{Q}}
\newcommand{\A}{\mathbb{A}}
\newcommand{\X}{\mathcal{X}}
\newcommand{\DD}{\mathcal{D}}
\newcommand{\M}{\mathcal{M}}
\begin{document}

\title[On degenerations and moduli of Calabi-Yau varieties]{
Degenerated  
Calabi-Yau varieties with infinite components, moduli compactifications, \\ 
and limit toroidal structures }
\author{Yuji Odaka}
\date{\today}

\maketitle

\begin{abstract}
For any degenerating Calabi-Yau family, 
we introduce new limit space which we call {\it galaxy}, 
whose dense subspace is 
the disjoint union of {\it countably infinite} open Calabi-Yau varieties, 
parametrized by the rational points of the Kontsevich-Soibelman's 
essential skeleton, 
while dominated by the Huber adification over the Puiseux series field. 

Other topics include: projective limits of toroidal compactifications 
(\S \ref{lmmod.sec}), locally modelled on 
{\it limit toric varieties} 
(\S\ref{lim.toric.sec}), the way to attach tropicalized family to given Calabi-Yau 
family (\S\ref{fiber.mod.sec}), 
which are weakly related to each other. 
\end{abstract}

%\tableofcontents

\section{Introduction}

Degenerations and moduli spaces 
of polarized Calabi-Yau varieties\footnote{We use this term 
as equivalent of K-trivial varieties in the broad sense, i.e., 
normal 
projective varieties with numerically trivial canonical divisors.} 
have been attracting researchers over decades, especially since 
they lie among many areas of research, 
such as algebraic geometry, differential geometry, 
mirror symmetry, arithmetic geometry, 
non-archimedean geometry, tropical geometry, 
and others.  

\subsection{Partial summary}
This paper introduces and discusses various degeneration structure and compactifications, 
partially motivated by their own beauty and relations with non-archimedean geometry (and tropical geometry). 
Topics in each section (including appendices) 
are only weakly connected, so the interested readers could directly skip to their own concerned part. 

\vspace{2mm}

\subsubsection*{Contents of \S \ref{fiber.sec} (Galaxy models)}
Our first construction in \S \ref{fiber.sec} is as follows: 
for any degenerating one parameter family of (klt) Calabi-Yau varieties, 
we consider a new kind of degeneration as an intricate connected 
locally ringed space which we call {\it galaxy}. 
Here is the simplest example: 

\begin{Ex}[Elliptic curve case]\label{ell.ex} 
For a minimal degeneration of elliptic curves $\X\to \Delta\ni 0$ 
of $I_{m}$-type ($m\in \mathbb{Z}_{>0}$) in the sense of Kodaira i.e., 
with the central fiber $m$-gon consisting of transversally 
intersecting  $\mathbb{P}^{1}$s, take a finite base change 
$\Delta'\to \Delta$ ramifying at $0$ of degree $d$ and 
consider the fiber product $\X\times_{\Delta} \Delta'$. 
Then, after the minimal resolutions of $m$ $A_{d-1}$-singularities 
in $\X\times_{\Delta} \Delta'$, we obtain 
$I_{dm}$-type degeneration of (essentially same) elliptic curves. 
If we take a sequence of the degree 
$d$ diverging in the divisible order, we obtain a 
projective system of $I_{dm}$ fibers i.e., 
$md$-gon of $\mathbb{P}^{1}$s where $m$ varies. 
Its projective limit $X_{\infty}$ with respect to the degree $d$s, which is the easiest example of 
galaxy, includes 
infinite $(\mathbb{P}^{1}\setminus \{0,\infty\})$s parametrized by 
$\Q/\Z= \cup_{d} \frac{1}{dm}\Z /\Z$ and closed points bijectively 
corresponding to the fractional part of 
irrational numbers i.e., 
parametrized by $(\R\setminus \Q)/\Z$. 
\end{Ex}

In \S \ref{fiber.sec}, 
we extend the above space ``galaxy'' to arbitraliry dimensional 
Calabi-Yau varieties and reveal the basic structures roughly 
as follows. 
Here, $\Delta\ni 0$ means the germ of a 
smooth pointed $k$-curve, while we base change over 
the Puiseux series field 
$k((t^{\Q}))$ which is the fraction field of 
$k[[t^{\Q}]]:=\cup_{d\ge 1}k[[t^{1/d}]]$. 
See the details for later sections. 

\begin{Thm}[cf., \ref{subdiv2}, \ref{limit.defs}, \S \ref{rel.skel}]\label{galaxy.intro}
For any punctured meromorphic 
family of klt projective Calabi-Yau varieties 
$\X^{*}\to \Delta^{*}=\Delta\setminus \{0\}$, 
its base change to ${\rm Spec}k((t^{\Q}))$ has a 
model called 
{\it galaxy model} $\X_{\infty}$, with the central fiber 
$X_{\infty}$ (we call it galaxy), 
such that the following holds: 

\begin{enumerate}
\item \label{map.to.KS}
There is a canonical continuous surjective map 
\begin{align}
f_{tr}\colon 
X_{\infty} \twoheadrightarrow \Delta^{KS}(\X_{\eta})=B,
\end{align}
where $\Delta^{KS}(\X_{\eta})$ is 
{\bf the essential skeleton} (\cite{KS}) 
inside the Berkovich analytification $\X_{\eta}^{*, an}$ of the 
generic fiber $\X_{\eta}^{*}$. 

\item 
For any point $x\in \Delta(\X_{0})$, 
whose coordinates with respect to the natural integral affine 
structure are {\bf rational}, $f_{tr}^{-1}(x)$ includes as an open 
dense subset 
the  
complex analytification of {\bf open Calabi-Yau varieties} (each of which is unique up to log crepant birational maps). 

\item 
There is a continous surjective map 
\begin{align}
\X_{\eta, k((t^{\Q}))}^{*,ad}\twoheadrightarrow 
X_{\infty}
\end{align} from 
{\bf the Huber adification} $\X_{\eta, k((t^{\Q}))}^{*,ad}$ 
(naturally associated adic space in the sense of \cite{Hub96}) of the base change $\X_{\eta, k((t^{\Q}))}^{*}$ 
of the generic fiber $\X^{*}_{\eta}$ to the 
Puiseux series field $k((t^{\Q}))$. 

\end{enumerate}
\end{Thm}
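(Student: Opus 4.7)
The plan is to construct the galaxy model as a projective limit of relative dlt minimal models taken over successively deeper ramified base changes. First I would pick an initial relative dlt minimal model $\X^{(1)}\to \Delta$ of $\X^{*}\to\Delta^{*}$ (available via the MMP for klt Calabi-Yau families over curves), whose dual complex of the central fiber canonically realizes the essential skeleton $B=\Delta^{KS}(\X_{\eta})$ with its integral affine structure, following Koll\'ar--Xu and Nicaise--Xu. Then, for each integer $d\ge 1$, I would take the degree-$d$ base change $\Delta_{d}\to \Delta$, relatively minimalize to get a dlt model $\X^{(d)}$, and observe (as in the elliptic Example \ref{ell.ex}) that its central fiber $X^{(d)}_{0}$ has dual complex equal to the subdivision of $B$ at points with denominators dividing $d$. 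For a cofinal sequence of divisible degrees, the $X^{(d)}_{0}$ form a projective system under the natural birational contractions coming from MMP transitions, and I would \emph{define} $X_{\infty}:=\varprojlim_{d}X^{(d)}_{0}$ as a locally ringed space; the ambient $\X_{\infty}$ over $\Spec k[[t^{\Q}]]=\cup_{d}k[[t^{1/d}]]$ is produced in parallel.

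Next, to build $f_{tr}$ in (i), I would use that each $X^{(d)}_{0}$ admits a canonical retraction onto its dual complex, the subdivision $B^{(d)}\subset B$, and that these retractions are compatible with the transition maps (because the MMP contractions collapse strata along the refinement). Passing to the inverse limit yields a continuous surjection $f_{tr}\colon X_{\infty}\twoheadrightarrow \varinjlim_{d}B^{(d)}=B$, where rational points of $B$ are represented honestly by strata of some $X^{(d)}_{0}$, and irrational points arise as Cauchy-type limits of shrinking sequences of strata, matching the topology of the essential skeleton.

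For (ii), given a rational point $x\in B$ with denominator $d$, the fiber $f_{tr}^{-1}(x)$ is then the corresponding (open) stratum of $X^{(d)}_{0}$; by adjunction on dlt Calabi-Yau pairs this stratum is log Calabi-Yau, and removing its boundary produces the asserted open Calabi-Yau variety. Uniqueness up to log crepant birational equivalence follows because any two dlt minimal models of the same pair over the same base are connected by a sequence of crepant birational modifications (Koll\'ar--Kov\'acs, de~Fernex--Koll\'ar--Xu). Finally, for (iii), each $\X^{(d)}$ has a specialization map $\X^{(d),ad}_{\eta}\twoheadrightarrow X^{(d)}_{0}$ from the adic generic fiber of its $t$-adic formal completion (Huber), and these specialization maps are compatible under base change and MMP contractions; the inverse limit of these specializations, composed with the natural map from $\X_{\eta,k((t^{\Q}))}^{*,ad}$ to the common formal generic fiber, gives the desired continuous surjection onto $X_{\infty}$.

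The principal obstacle I anticipate is verifying compatibility in the projective system: dlt minimal models are typically not unique, so to get well-defined transition maps $X^{(de)}_{0}\to X^{(d)}_{0}$ and to prove that the fiber of $f_{tr}$ over a rational point is truly well-defined up to log crepant birational maps, one must show that any two choices of minimal models after different ramified base changes become crepantly comparable after a common further base change. This reduces to a careful analysis of how $\Q$-factorial dlt modifications interact with base change and to the termination/uniqueness results for log Calabi-Yau MMP, and is the technical heart of \S\ref{subdiv2}--\ref{limit.defs}. Continuity of $f_{tr}$ and of the specialization map at irrational points is a further point that requires identifying the induced topology on the inverse limit with the analytic topology on $B$.
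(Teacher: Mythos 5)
Your overall blueprint — build $\X_{\infty}$ as the projective limit of dlt minimal models taken over ramified base changes of divisible degrees, identify the fibers of $f_{tr}$ over rational points with the open klt strata via adjunction, and obtain the Huber adification's domination by sending a semi-valuation to its compatible system of centers (a Zariski-style argument) — matches the paper's construction in \S\ref{subdiv.sec}--\S\ref{rel.skel} at the coarse level. Points (ii) and (iii) of your sketch are essentially the paper's arguments (Corollary \ref{cor.subdiv.lc} and Proposition \ref{Huber.galaxy2}).

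However, your construction of $f_{tr}$ in (i) has a genuine gap, and it is precisely the point the paper warns about just after the statement of the theorem. You write that ``each $X^{(d)}_{0}$ admits a canonical retraction onto its dual complex $B^{(d)}$'' and that $f_{tr}$ is the inverse limit of these retractions. No such retraction exists: a finite-level central fiber, such as an $I_{md}$-gon of $\mathbb{P}^{1}$s, does \emph{not} admit any continuous surjection onto $S^{1}$ (nor even onto $\frac{1}{md}\Z/\Z$), so there is no family of compatible continuous maps to pass to the limit. What you may be conflating with this is the Berkovich retraction from $\X_{\eta}^{an}$ onto a skeleton, but that lives on the analytified generic fiber, not the central fiber, and it moreover depends on the choice of model. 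The map $f_{tr}$ must instead be defined \emph{directly on the inverse limit}: given $x=(x_{i})_{i}\in X_{\infty}$, take for each $i$ the lc stratum $Z_{i}\ni x_{i}$ and the corresponding rational polyhedron $\bar{\sigma}_{i}(x)\subset B$; these are nested and decreasing as $i$ increases, and $\bigcap_{i}\bar{\sigma}_{i}(x)$ is a single point, which is declared to be $f_{tr}(x)$. Continuity is then checked directly by exhibiting, for a neighborhood of $f_{tr}(x)$ coming from a simplex $\bar{\sigma}$ of $\Delta(X_{i_{0}})$, the open preimage $p_{i_{0}}^{-1}(S(Z_{i_{0}}))\subset X_{\infty}$ of the star of the corresponding lc center. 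The theorem's surjective continuous map is an \emph{emergent} phenomenon of the projective limit, not a limit of finite-level retractions; your proposal, as written, rests on a map that does not exist.
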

These galaxy models 
be seen as analogue of ``minimal'' models over non-finite type 
spectra $k[[t^{\Q}]]$. 
Recall that in the elliptic curve case (Example \ref{ell.ex}), 
the essential skeleta is $\R/\Z \simeq S^{1}$ (the {\it tropical elliptic curve}), and the open Calabi-Yau variety appearing in 
the above theorem is $\mathbb{P}^{1}\setminus\{0,\infty\}$. 
Note that any $I_{md}$-degenerate fiber does {\it not} admit 
a continous map onto $\R/\Z$ nor even to the subset 
$\frac{1}{md}\Z/\Z$, 
hence the above \eqref{map.to.KS} 
is an effect of taking the projective limit. 

In particular, we observe the following basic structure of the 
galaxies $X_{\infty}=\varprojlim_{i} X_{i}$. 

\begin{Cor}[=Corollary \ref{cor.subdiv.lc}: Decomposition of Galaxies]\label{cor.subdiv.lc.intro}
Any galaxy $X_{\infty}$ 
it has a natural decomposition into open part and the closed part: 
\begin{align}\label{decomp.lim}
X_{\infty}=(\sqcup_{a\in B(\Q)} \{\text{open klt log Calabi-Yau variety } U(a)\})
\bigsqcup X_{\infty}^{NKLT}.
\end{align}
Here, $B$ denotes the essential skeleton (as \ref{galaxy.intro} again)  
 and $B(\Q)$ means 
its rational points with respect to 
the $\Q$-affine structure, which does not depend on $i$ by 
Theorem \ref{dlt.lem} \eqref{dual.cpx.same} and 
$X_{\infty}^{NKLT}:=\varprojlim_{i}X_{i}^{NKLT}$, 
where NKLT stands for the non-klt closed loci. 
\end{Cor}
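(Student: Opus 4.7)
The plan is to combine Theorem \ref{galaxy.intro} with the standard finite-level stratification on each dlt model $X_i$ and then pass to the projective limit. Recall from the construction that $X_\infty = \varprojlim_i X_i$, where each $X_i$ is the central fiber of a dlt model; it decomposes as $X_i = X_i^{KLT} \sqcup X_i^{NKLT}$, with $X_i^{KLT}$ a finite disjoint union of open klt log Calabi-Yau varieties, one attached to each open top-dimensional stratum of the dual complex $\Delta(X_i)$. The vertices of $\Delta(X_i)$ correspond to irreducible components of $X_i$, forming a finite subset $B_i(\Q) \subset B(\Q)$, and $B(\Q) = \bigcup_i B_i(\Q)$ by the refinement procedure of \S \ref{subdiv2}.

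For each $a \in B(\Q)$, I would choose $i$ large enough that $a \in B_i(\Q)$ and use Theorem \ref{dlt.lem} \eqref{dual.cpx.same} to see that the log crepant birational equivalence class of the open klt stratum $U_i(a) \subset X_i^{KLT}$ is stable for $j \ge i$, matching the $U(a)$ of item (2) of Theorem \ref{galaxy.intro}. This embeds $U(a)$ in $f_{tr}^{-1}(a) \subset X_\infty$ as the dense open subset produced there. Distinct $a, a' \in B(\Q)$ have disjoint fibers under $f_{tr}$, so the family $\{U(a)\}_{a \in B(\Q)}$ is pairwise disjoint, and each $U(a)$ avoids $X_\infty^{NKLT}$ because $U_j(a) \cap X_j^{NKLT} = \emptyset$ for all $j \ge i$.

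Exhaustion is the remaining point. Let $x \in X_\infty$ with images $x_i \in X_i$. If $f_{tr}(x) \in B \setminus B(\Q)$, then at every level $i$ the point $f_{tr}(x)$ lies in the relative interior of a positive-dimensional cell of $\Delta(X_i)$, so $x_i$ lies in a log canonical center of codimension $\ge 1$, i.e., in $X_i^{NKLT}$, whence $x \in X_\infty^{NKLT}$. If $f_{tr}(x) = a \in B(\Q)$ but $x \notin U(a)$, then at sufficiently fine level $i$ the point $x_i$ sits outside the open klt stratum over $a$, so $x_i \in X_i^{NKLT}$, again placing $x$ in $X_\infty^{NKLT}$. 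Combined with the previous paragraph, this yields the claimed decomposition.

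The main obstacle I foresee is verifying the coherence of the open klt strata $U_i(a)$ under the base-change refinements $t \mapsto t^{1/d}$: one must ensure that the stratum labelled by a fixed rational point $a$ neither acquires nor loses generic points upon refinement (modulo log crepant modification), and that the NKLT loci assemble into a genuine inverse limit whose complement is exactly the disjoint union of the $U(a)$'s. Both points reduce to Theorem \ref{dlt.lem} \eqref{dual.cpx.same} together with the explicit limit toroidal description in \S \ref{limit.defs}, so no additional machinery beyond what is already set up in earlier sections should be required.
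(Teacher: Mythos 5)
Your argument is correct in substance, and its backbone — decompose each finite-level $X_i$ into its klt open strata (one per vertex of $\Delta(X_i)$, i.e., per irreducible component) and the complementary non-klt locus, then pass to the projective limit using the stability of these strata under refinement — is what the paper intends. However, the paper's own proof is a one-liner: it simply declares the corollary ``an immediate consequence of Theorem~\ref{subdiv.lc},'' because that theorem is precisely what guarantees the log canonical centers (hence the $\mathrm{NKLT}$ loci and their complementary klt strata) are matched up under the admissibly dominating morphisms $\psi \colon \X' \to \X \times_\Delta \Delta'$. Your writeup instead leans on the tropical map $f_{tr}$ from Theorem~\ref{galaxy.intro}\,(1)--(2) for both the disjointness and the exhaustion steps, which creates a forward reference to \S\ref{rel.skel} (where $f_{tr}$ is actually constructed via Theorem~\ref{tombo.tropical2}); the paper's route via Theorem~\ref{subdiv.lc} needs none of this. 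Similarly, for the coherence of $U_i(a)$ under the base-change refinements, the load-bearing result is Theorem~\ref{subdiv.lc}, not Theorem~\ref{dlt.lem}\,\eqref{dual.cpx.same}: the latter concerns dlt models over the \emph{same} base connected by flops, whereas what you actually need here is the compatibility of lc-centers across \emph{different} bases $\Delta' \to \Delta$. (The corollary's statement cites Theorem~\ref{dlt.lem}\,\eqref{dual.cpx.same} only for the independence of $B(\Q)$ from the model at a fixed level.) Finally, a small phrasing slip: the klt open log Calabi-Yau pieces are attached to the \emph{vertices} of $\Delta(X_i)$ — equivalently, the top-dimensional strata of $X_i$ itself — not to ``top-dimensional strata of the dual complex,'' which would correspond to the deepest, lowest-dimensional lc centers of $X_i$; your next sentence corrects the intent, but the first formulation reads backwards. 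None of this affects the correctness, but a direct argument citing Theorem~\ref{subdiv.lc} and avoiding $f_{tr}$ would be both shorter and free of the forward dependency.
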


\vspace{6mm}

\subsubsection*{Contents of \S \ref{lmmod.sec} (Limit toroidal 
compactifications)}

In \S \ref{lmmod.sec} over $k=\C$ in turn, we discuss on the projective limit of toroidal compactifications of \cite{AMRT} and 
its analogue for more general (moduli) varieties. 
It sounds somewhat independent from \S \ref{fiber.sec} 
but here we observe an 
analogous phenomenon of the above theorem \ref{galaxy.intro}, 
especially the natural continuous maps to their tropical versions.

More precisely, what we do there is as follows. 
Fix a locally Hermitian symmetric space $M$ of non-compact type, 
i.e., of the ubiquitous form $M=\Gamma\backslash G/K$ where $G$ 
is a real valued points of a simple algebraic group over $\Q$, 
$K$ its maximal compact subgroup with one dimensional center, 
$\Gamma$ an arithmetic discrete subgroup of $G$. 
Some renowned examples are the moduli space of 
$g$-dimensional principally polarized abelian varieties or 
that of primitively 
polarized K3 surfaces with possibly ADE singularities 
(of fixed genera). 

Recall that for certain combinatorial data i.e., 
admissible collection of fans $\Sigma=\{\Sigma(F)\}$, 
there is an associate toroidal compactification 
$\overline{M}^{\rm tor,\Sigma}$ 
of $M$ 
constructed in \cite{AMRT}, 
and its complex analytification 
$\overline{M}^{\rm tor,\Sigma,an}$. 

Now, we consider and introduce 
the projective limit of all of its toroidal 
compactifications as a locally ringed space and call 
the {\it limit toroidal compactification}: 
\begin{align}
\overline{M^{\rm tor, \infty}}^{an}&:=\varprojlim_{\Sigma} \overline{M}^{\rm tor,\Sigma,an}.
\end{align}

More precisely, the ingredient $\overline{M}^{\rm tor,\Sigma,an}$ 
of the right hand side is the complex analytification of the 
toroidal compactification with respect to the combinatorial data i.e., 
the admissible collection of fans 
$\Sigma$ (\cite{AMRT}). 
On the other hand, we also recall 
$\overline{M}^{\rm MSBJ}$, the minimal 
Morgan-Shalen-Boucksom-Jonsson compactification (\cite[Appendix]{TGC.II},\cite[\S 2]{OO}) i.e., 
the MSBJ compactification corresponding to  
the toroidal compactifications \cite{AMRT} 
(which do not depend on the combinatorial data i.e., 
the admissible collection of rational 
polyhedra as \cite[Theorem 2.1]{OO} shows). 

\begin{Thm}[=Theorem \ref{dom.TGC}]
Then, 
there is a natural continuous surjective map 
\begin{align}
\phi_{tr}\colon \overline{M^{\rm tor,\infty}}^{an}\to \overline{M}^{\rm MSBJ}
\end{align}
which extends the identity map on $M$. 

For a general point $x$ of the boundary 
$\partial \overline{M}^{\rm MSBJ}$, 
the fiber $\phi_{tr}^{-1}(x)$ coincides with the 
{\bf limit toric variety} 
$\overline{T}^{n-r, an}_{\infty}$ 
we 
introduce at section \S \ref{lim.toric.sec}. 
\end{Thm}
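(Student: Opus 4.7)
The plan is to obtain $\phi_{tr}$ from the universal property of the projective limit and then describe its fibers by a local analysis near boundary points. For each admissible collection of fans $\Sigma$, the result \cite[Theorem 2.1]{OO} provides a canonical continuous surjection $\phi_{\Sigma}\colon \overline{M}^{\rm tor,\Sigma,an}\twoheadrightarrow \overline{M}^{\rm MSBJ}$ which restricts to the identity on $M$ and whose target is independent of $\Sigma$. I would first verify that whenever $\Sigma'$ is an admissible refinement of $\Sigma$, the canonical modification $\overline{M}^{\rm tor,\Sigma',an}\to \overline{M}^{\rm tor,\Sigma,an}$ intertwines $\phi_{\Sigma'}$ with $\phi_{\Sigma}$. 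This compatibility is essentially built into the characterization of the MSBJ compactification as the quotient recording asymptotic valuative data, and it is precisely what is required for the family $\{\phi_{\Sigma}\}$ to induce a single continuous map $\phi_{tr}\colon \varprojlim_{\Sigma}\overline{M}^{\rm tor,\Sigma,an}\to\overline{M}^{\rm MSBJ}$.

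Surjectivity is then essentially automatic: each $\overline{M}^{\rm tor,\Sigma,an}$ is compact Hausdorff and each $\phi_{\Sigma}$ is surjective, so for every $x\in\overline{M}^{\rm MSBJ}$ the fibers $\phi_{\Sigma}^{-1}(x)$ form a cofiltered system of nonempty compact spaces, whose inverse limit $\phi_{tr}^{-1}(x)$ is therefore nonempty. Equivalently, the image is a compact subset of $\overline{M}^{\rm MSBJ}$ containing the dense locus $M$, hence everything.

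For the fiber identification, suppose $x$ is a general point of the boundary stratum of rank $r$ associated to a rational boundary component $F$. By the local structure theorem of \cite{AMRT}, an analytic neighborhood of $\phi_{\Sigma}^{-1}(x)$ in $\overline{M}^{\rm tor,\Sigma,an}$ is modeled on an open subset of the toric variety $X_{\Sigma(F)}$ attached to the local fan $\Sigma(F)$; genericity of $x$ ensures that the relevant arithmetic stabilizers act trivially on nearby data, so no additional identifications are imposed. Because every rational fan refinement of the cone governing $F$ arises as $\Sigma(F)$ for some admissible collection $\Sigma$, the projective limit of these local toric models is tautologically the limit toric variety $\overline{T}^{n-r,an}_{\infty}$ introduced in \S\ref{lim.toric.sec}, and the fiber of $\phi_{tr}$ over $x$ is identified with it.

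The principal obstacle is the commutation of the projective limit with the local model identifications: one must ensure that $\varprojlim_{\Sigma}$ of a system of analytic neighborhoods of $\phi_{\Sigma}^{-1}(x)$ is actually an analytic neighborhood in $\overline{M^{\rm tor,\infty}}^{an}$, and that no phantom fiber points appear from non-cofinal branches of the directed system of fans. The cleanest way around this is to restrict to a cofinal subsystem of fan collections that is $\Gamma$-equivariantly product-like near $x$, reducing the fiber computation to the pure toric case treated in \S\ref{lim.toric.sec}.
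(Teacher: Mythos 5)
The foundation of your argument is a family of maps $\phi_{\Sigma}\colon \overline{M}^{\rm tor,\Sigma,an}\to\overline{M}^{\rm MSBJ}$, asserted to come from \cite[Theorem 2.1]{OO}. No such maps exist. What \cite[Theorem 2.1]{OO} proves is that the MSBJ compactification built \emph{from} any toroidal compactification is the same topological space; it does not provide a continuous map out of a fixed $\overline{M}^{\rm tor,\Sigma,an}$ into $\overline{M}^{\rm MSBJ}$. Indeed such a map cannot be continuous: near a codimension-$m$ toroidal corner with local boundary coordinates $z_1,\dots,z_m$, the MSBJ construction attaches the whole projective simplex of directions $[-\log|z_1|:\cdots:-\log|z_m|]$, so sequences converging to a single corner point in $\overline{M}^{\rm tor,\Sigma,an}$ along different ``slopes'' have distinct limits in $\overline{M}^{\rm MSBJ}$. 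A corner therefore blows up, rather than mapping, to an entire simplex; the only thing that can go continuously to $\overline{M}^{\rm MSBJ}$ is a space in which those corners have already been resolved, which is exactly why the theorem is about $\overline{M^{\rm tor,\infty}}^{an}$ and not about any finite-level $\overline{M}^{\rm tor,\Sigma,an}$. (You may be conflating this with the genuine system of maps $\overline{M}^{\rm tor,\Sigma}\to\overline{M}^{\rm SBB}$, which do exist but have a different, much coarser target.) Since $\phi_{\Sigma}$ does not exist, the compatibility check, the universal-property construction of $\phi_{tr}$, and the cofiltered-compactness surjectivity argument all collapse.

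The correct route, and the one the paper takes, is to build $\phi_{tr}$ directly on the projective limit: given $x=(x_{\Sigma})_{\Sigma}\in\overline{M^{\rm tor,\infty}}^{an}$ lying over a rational boundary component $F$, lift each $x_{\Sigma}$ to the local toric model $(D/U(F)_{\Z})_{\Sigma(F)}$, record the cone $\sigma^{F}_{\alpha(\Sigma)}\in\Sigma(F)$ whose orbit contains it, observe that $\Sigma'\succ\Sigma$ forces $\sigma^{F}_{\alpha(\Sigma')}\subset\sigma^{F}_{\alpha(\Sigma)}$, and use cofinality of rational subdivisions to show $\bigcap_{\Sigma}\sigma^{F}_{\alpha(\Sigma)}$ is a single ray $\R_{\ge0}v_x$ in $U(F)$; then $\phi_{tr}(x):=[v_x]$. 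Continuity and the fiber identification with $\overline{T}^{n-r,an}_{\infty}$ are then reduced to the purely toric Theorem \ref{lim.toric.prop}, parts \eqref{lim.toric.conti} and \eqref{lim.toric.fiber}, exactly as you gesture at in your last two paragraphs. Those parts of your proposal are sound in spirit, but they have to be run \emph{after} the map has been built on the limit, not deduced from nonexistent finite-level maps.
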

Roughly speaking, the {\it limit toric variety} above 
is the projective limit of all proper toric varieties (of the 
fixed dimension) which we study in \S \ref{lim.toric.sec}. 

Also, in \S \ref{lmmod.sec} we discuss the Zariski-Riemann 
type compactification and clarify relations with the above 
compactifications. 

\vspace{2mm}

\subsubsection*{Contents of \S \ref{fiber.mod.sec} (Attaching tropical family to varieties family)}

Until the section \ref{lmmod.sec}, we have discussions of the 
degenerations and moduli compactifications independently although 
they are both connected with tropicalizing phenomena. 
\vspace{2mm}

The purpose of \S \ref{fiber.mod.sec} 
is to fill this gap to some extent by attaching a family of 
``tropical varieties'' to a family of varieties on the complex moduli. 
A little more precisely, 
we construct a family, with a connected total space, over the minimal Morgan-Shalen-Boucksom-Jonsson 
compactification of the moduli spaces of varieties which is a flat 
family of polarized varieties in the open locus, whereas at the boundary it is a family of 
``corresponding tropical varieties''. 
We give a rough vague statements as follows, which is essentially 
known by \cite[\S 8]{ACP}, 
\cite{CCUW}, \cite[\S 9 B]{Nam} etc. 
The precise meanings are still left until \S \ref{fiber.mod.sec}. 

\begin{Prop}[cf., \S \ref{fiber.mod.sec} for precise meanings]
Over the minimal MSBJ compactification $\overline{M}^{\rm MSBJ}$ 
of the moduli of 
$M=M_{g}$ (resp., $M=A_{g}$), we have a continous family 
which is the original family of hyperbolic curves on $M=M_{g}$ (resp., 
principally polarized $g$-dimensional abelian varieties on $M=A_{g}$) 
$M$ (at stacky level!) 
and the family of tropical curves (resp., tropical abelian varieties) 
on the boundary $\partial \overline{M}^{\rm MSBJ}$. 
\end{Prop}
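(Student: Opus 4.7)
The plan is to assemble the claimed continuous family by gluing two a priori distinct pieces: over the interior $M$, the tautological complex-analytic universal family of curves (resp.\ principally polarised abelian varieties); over the boundary $\partial \overline{M}^{\mathrm{MSBJ}}$, the universal tropical family of tropical curves (resp.\ tropical abelian varieties). The gluing is governed by the identification of boundary points of the MSBJ compactification with isomorphism classes of such tropical objects, combined with the fact that the MSBJ topology (cf.\ \cite[\S 2]{OO}) is designed to record precisely the non-archimedean/tropical limits of degenerations.

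First, one recalls the description of $\overline{M}^{\mathrm{MSBJ}}$ and identifies its boundary. For $M=M_{g}$, by \cite[\S 8]{ACP} together with \cite[Appendix]{TGC.II} and \cite[\S 2]{OO}, the boundary is canonically homeomorphic to the link of $M_{g}^{\mathrm{trop}}$, i.e.\ to isomorphism classes of tropical curves of genus $g$ with normalised total edge length. For $M=A_{g}$, the analogous identification comes from the Voronoi/Namikawa analysis \cite[\S 9 B]{Nam} combined with \cite{OO}, matching boundary points with isomorphism classes of normalised tropical principally polarised abelian varieties. At the stacky level, the moduli-of-tropical-objects interpretation is also available through \cite{CCUW}.

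Next I would define the total space $\mathcal{U}\to \overline{M}^{\mathrm{MSBJ}}$ set-theoretically by assigning, over an interior point, the curve or ppav it parametrises, and over a boundary point, the underlying metric graph (resp.\ real torus with integral affine structure). The topology is then imposed through a hybrid construction: a convergent sequence $[C_{n}]\to [\Gamma]$ in the base should come with compatible retractions $C_{n}\to \Gamma$ onto the dual graph after normalisation, and convergence in $\mathcal{U}$ is measured using these retractions; for $A_{g}$ one uses the real tori arising from the rescaled imaginary part of the period matrix. A cleaner uniform model factors through the Berkovich analytification of the universal family over the Puiseux series field as in \S\ref{fiber.sec}: the essential skeleton of such a family computes the tropical fiber, and continuity of the retraction onto the skeleton will yield the needed continuity of $\mathcal{U}\to \overline{M}^{\mathrm{MSBJ}}$.

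The main obstacle will be making this hybrid topology rigorous and uniform across boundary strata of different codimensions, ensuring that the fibres vary continuously not only from the interior to the boundary but also between boundary strata (as tropical edges collapse, or as a direct summand of a tropical ppav further degenerates). For $A_{g}$ this additionally requires tracking the continuous variation of real symmetric bilinear forms arising as limits of $\mathrm{Im}(\tau)$. I expect that a uniform treatment via a Berkovich-hybrid model of the universal family, combined with the ACP- and Namikawa-type identifications above, will resolve this, and that this is the technical content to be made precise in \S\ref{fiber.mod.sec}.
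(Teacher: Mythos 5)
Your proposal correctly identifies the boundary of $\overline{M}^{\mathrm{MSBJ}}$ with moduli of tropical curves (resp.\ tropical ppavs) via ACP/CCUW and Namikawa, and you rightly sense that a Berkovich/skeleton perspective should organize the picture. But the construction mechanism you propose — assigning fibres set-theoretically and then inventing a ``hybrid topology'' via hand-chosen retractions $C_n\to\Gamma$ — is not what the paper does, and the obstacle you flag at the end (making that topology rigorous and uniform across strata) is precisely the difficulty the paper's approach is designed to avoid. The paper instead takes a \emph{compactified} universal family over a toroidal/snc compactification of the moduli stack ($\overline{\mathcal{M}}_{g,1}\to\overline{\mathcal{M}}_g$ at the stacky level in the curve case, and the Namikawa--Alexeev--Nakamura family over $\overline{\mathcal{A}_g}^{\mathrm{AN}}$ in the abelian case), views its boundary as a snc (or toroidal) divisor over the moduli boundary, and applies the MSBJ functoriality result (Proposition \ref{functoriality.alg}, elaborating \cite[A.15]{Od.Ag}) to the projection. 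This immediately produces a continuous map $\overline{\mathcal{U}}^{\mathrm{MSBJ}}\to\overline{\mathcal{M}}^{\mathrm{MSBJ}}$, so continuity comes for free; no ad hoc topology is needed.

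What then remains — and what you do not isolate — is the \emph{fiber-compatibility} statement (Question \ref{Ques}): that the fiber of $\bar f^{\mathrm{MSBJ}}$ over a boundary point $\overline{\varphi_Y}^{\mathrm{MSBJ}}(p)$ equals the dual intersection complex of the algebraic fiber $\bar f^{-1}(\overline{\varphi_Y}(p))$, i.e.\ the tropical curve (resp.\ tropical ppav). This is genuinely nontrivial (the paper gives explicit counterexamples, including a Type II/III K3 degeneration), and it is resolved affirmatively for $M_g$ by invoking \cite[Theorem 2]{CCUW} (cf.\ \cite[8.2.1]{ACP}) and for $A_g$ by Namikawa's ``mixed decomposition'' of \cite[\S 9B]{Nam}, which shows the fibers over a ray are exactly unions of the corresponding Delaunay cells. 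So the missing ingredients in your argument are (a) replacing the hand-built topology by the functoriality of MSBJ for morphisms with Cartier boundary divisors, and (b) identifying fiber-compatibility as the key lemma and proving it via the cited structural results rather than hoping it follows from a Berkovich retraction, which the paper's counterexamples show cannot be taken for granted.
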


\vspace{2mm}

We hope that our newly introduced structures and problems 
can be of its own interests in non-archimedean geometry and 
tropical geometry among others. 

\vspace{2mm}

\subsubsection*{On the appendices}
Also, as additional notes, 
Appendix \S \ref{appendixA} discusses Morgan-Shalen type compactifications for  
general Berkovich analytic spaces, their basic properties such as 
funtoriality with respect to morphisms. 
These are partially used in a few places in 
our main contents. 

Appendix \S \ref{SBB.sec} discusses possible analogues of 
classical Satake-Baily-Borel compactifications 
(section~\ref{SBB.sec}) in the more moduli theoritic contexts, in the spirit of Griffiths 
\cite{Griffiths} for period spaces, followed by \cite{KU, GGLR} etc, 
but with more focus on the relations with degenerations and moduli. 

\vspace{5mm}

\subsection{Some background and history}

Before going into the main contents, 
we also review some historical background 
especially from birational geometry and the asymptotic behaviour of 
Ricci-flat K\"ahler metrics, although unfortunately 
differential geometric 
perspectives have not yet been substantially developped. 

\subsubsection*{Differential geometric background}

The well-known work of Yau \cite{Yau} showed the existence of 
unique Ricci-flat K\"ahler metric on polarized Calabi-Yau varieties. 
Since then, a natural question has been to consider 
asymptotic behaviour of the metric with respect to the 
variation of the polarized Calabi-Yau varieties in concern. 

Sometimes the metrics collapse i.e., 
roughly the dimension of the limit is less, whereas sometimes not. 
Non-collapsing situation is well-understood by now, 
due to \cite{Wan97, Wan03, DS, Tos.WP, Tak}. 

In the collapsing cases, the study are started 
in \cite{KS, GW} etc and developped in many literatures such as 
\cite{GTZ1, GTZ2, BJ} among others. 
The metrics  behaviour in those two cases are observed to be 
very different. 

However, above works are for one parameter family, either 
along a holomorphic family or adiabatic limits. 
Such framework is recently extended to general sequences or equivalently 
to the whole moduli in the (ongoing) series of works 
\cite{Od.Ag, OO, PL, Osh}. 
Above works mainly concern collapsing of
the {\it rescaled} Ricci-flat K\"ahler metrics with fixed 
{\it diameters}. 

\vspace{2mm}

We whereas expect our consideration of analogue of 
Satake-Baily-Borel compactifications 
in \S \ref{SBB.sec} of this paper could be 
related to the Gromov-Hausdorff 
compactifications with respect to different rescale i.e., 
{\it with fixed volumes 
of the Ricci-flat metrics}, in a certain sense. 
See Question \ref{SBB.GH} in Appendix 
\S \ref{SBB.sec}. 

On the other hand, differential geometric meanings of 
other compactifications we discuss 
in \S \ref{lmmod.sec} is more unclear and nontrivial yet. 
We leave such directions of research for future. 

\subsubsection*{Algebro-geometric background}
Motivated by the projective moduli construction of ample 
(log-)canonical class varieties with semi-log-canonical 
singularities (``KSBA moduli'' cf., 
\cite{SB, KSB, Ale94, Ale1, Ale2, Kol}), 
as well as many other 
explicit moduli spaces 
(e.g., \cite{Nam1, Shah, AN, Ale02, Nak1, Nak2}) 
and a lot of great differential geometric 
works such as \cite{BKN, Fuj92, FS, Don04}, 
\cite[\S5, Conjecture~5.2]{Od10} 
discussed the possibility of constructing theory of 
at least partially compact 
moduli spaces via K-stability 
for more general polarised varieties 
as ``K-moduli'': moduli of K-(poly)stable varieties. 
Indeed, there are many ways from different fields to lead to this 
expectation, and the above expression of review should be still 
with personal biase of my own perspectives. Here, we 
make some precise but certainly weaker versions of the K-moduli conjecture 
in the case of Calabi-Yau varieties, and confirm some progresses 
which have been done. 

The currently most recognized approach for more than a decade, for 
compactifying the moduli of polarized Calabi-Yau varities has been 
essentially 
``log KSBA'' i.e., 
to attach {\it (extra) ample divisors} 
as boundary to the Calabi-Yau varieties in concern, to 
make natural {\it logarithmic} generalization (i.e., with boundary divisors) of the KSBA approach 
work. Lately, such approach gave various interesting compactifications and 
their explicit description e.g., \cite{Ale02,Laza2,AET,ABE} among others. 
However, in general, such approach certainly gives either {\it non-}uniqueness 
of the compactification due to the additional data of choosing the divisors and 
change the framework of discussion somewhat. 
In this paper, together with \cite{ops}, we still seek for {\it canonical} ``algebro-''
geometric compactifications without adding divisors or {\it canonical} 
``algebro-''geometric limits of degenerations. 

\vspace{3mm}
Before going into details, for comparison, let us review another story 
from the K-stability perspectives. 
Especially since 2012, 
the construction of K-moduli 
construction in the case of anticanonically polarized 
$\mathbb{Q}$-Fano varieties has developed: 
projective moduli spaces of 
K-polystable (K\"ahler-Einstein) $\mathbb{Q}$-Fano varieties 
constructed under $\Q$-Gorenstein smoothability condition 
cf., \cite{OSS, SSY, LWX, Od15}, which are 
{\it \'etale locally} 
GIT quotients (called ``good moduli spaces'' by Alper). 
More recently, 
there has been also nice developments to algebraize the arguments in 
the construction, so that it naturally extends to more general sigular 
$\mathbb{Q}$-Fano varieties, we do not try to make 
complete list of such references here, with apology. 
In any case, the notable feature in the anti-canonically polarized 
case is that for any punctured family of K\"ahler-Einstein K-polystable $\Q$-Fano varieties, we can conjecturally fill in a 
K-polystable $\Q$-Fano variety to ensure the compactness of the moduli. 

On the other hand, the case of our focus in this paper, 
polarized varieties with numerically trivial 
log-canonical class (``Calabi-Yau''), 
K-polystable ones do {\it not} form compact moduli unlike 
$\Q$-Fano case unfortunately: 
even for elliptic curve case, the nodal minimal degeneration, i.e., 
$I_{v}$-type in the sense of Kodaira is {\it not} K-polystable. 
Indeed, recall that 

\begin{Thm}[{\cite[6.3]{OS}}]\label{OSrev}
Assume $(X, D)$ is a log smooth 
Calabi-Yau pair, i.e., $K_X+D=0$ with a polarization $L$. 
Then,  $((X,D),L)$ 
is log K-semistable 
if and only if coefficients of $D$ are at most $1$ 
but it can{\it -not} be log K-polystable unless $\lfloor 
D\rfloor =0$. % i.e., unless the coefficients are strictly less than $1$. 
More generally, semi-log-canonical Calabi-Yau polarized pair 
$((X,D),L)$ is log K-semistable but it is log K-polystable 
if and only if the underlying log pair $(X,D)$ has only klt singularities. 
\end{Thm}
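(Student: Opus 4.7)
First I would recall that for a normal ample test configuration $(\mathcal{X},\mathcal{D},\mathcal{L})$ of a polarized log pair $((X,D),L)$, the log Donaldson--Futaki invariant has a Wang--Odaka intersection-theoretic expression with two terms, the leading one being proportional to $(K_X+D)\cdot L^{n-1}$. Under the Calabi--Yau hypothesis $K_X+D\equiv_{\mathbb{Q}}0$ that leading term drops out and the formula collapses to
\begin{equation*}
\mathrm{DF}(\mathcal{X},\mathcal{D},\mathcal{L}) \;=\; \frac{c}{L^n}\big(K_{\overline{\mathcal{X}}/\mathbb{P}^1}+\overline{\mathcal{D}}\big)\cdot\overline{\mathcal{L}}^n
\end{equation*}
with $c>0$, where $(\overline{\mathcal{X}},\overline{\mathcal{D}},\overline{\mathcal{L}})$ is the natural $\mathbb{P}^1$-compactification. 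The right-hand side has the shape of a discrepancy integrated against an ample class on the total space, and it is the single quantity whose sign governs K-(semi/poly)stability throughout.

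\textbf{Semistability $\Leftrightarrow$ coefficients $\leq 1$.} The ``if'' direction is Odaka-type: log smoothness together with coefficients $\leq 1$ means $(X,D)$ is log canonical, and the standard argument via inversion of adjunction and crepant pull-back on $(\overline{\mathcal{X}},\overline{\mathcal{D}}+\overline{\mathcal{X}}_0)$ (cf.\ \cite{Od10}) shows the above intersection is $\geq 0$. Conversely, if some coefficient exceeds $1$ then a weighted blow-up centred on the corresponding stratum produces a test configuration whose DF is manifestly negative by direct computation against the discrepancy of the exceptional divisor.

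\textbf{Failure of polystability when $\lfloor D\rfloor\neq 0$.} Pick a reduced component $E\subset\lfloor D\rfloor$ and form the deformation to the normal cone $\mathcal{X}:=\mathrm{Bl}_{E\times\{0\}}(X\times\mathbb{A}^1)$, taking $\mathcal{D}$ the strict transform of $D\times\mathbb{A}^1$ and $\mathcal{L}$ a suitable twist of $\mathrm{pr}_1^*L$ by the exceptional divisor $\mathbf{E}$. Because $E$ has coefficient exactly $1$ in $D$, the log discrepancy of $\mathbf{E}$ in $(X\times\mathbb{A}^1,\, D\times\mathbb{A}^1+X\times\{0\})$ equals $2-1-1=0$, so $\mathbf{E}$ is an lc place. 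A direct intersection computation then gives $\mathrm{DF}=0$. On the other hand, the central fiber is the union of $\mathrm{Bl}_E X$ with $\mathbb{P}(\mathcal{N}_{E/X}\oplus\mathcal{O}_E)$ glued along $E$, which is not equivariantly isomorphic to $(X,D)$, so the test configuration is non-trivial modulo $\mathbb{G}_m$. This contradicts polystability.

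\textbf{The slc generalization and the main obstacle.} For the semi-log-canonical case one passes to the normalization $(X^\nu, D^\nu+C)$, where $C$ is the conductor; both steps above extend because slc implies lc component-wise and the DF formula is additive under normalization. Hence slc Calabi--Yau polarized pairs are automatically K-semistable, and if any non-klt place exists (including one along $C$) the deformation-to-normal-cone through that place again yields a non-trivial test configuration with $\mathrm{DF}=0$. The main obstacle, and the bulk of the real work, is the slc bookkeeping: keeping the total space of the test configuration demi-normal with controlled slc singularities, ensuring that the central fiber inherits an slc structure compatible with the normalization and gluing data, and justifying the intersection-theoretic DF formula in that non-normal context. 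Handling this rigorously, rather than the log smooth case, is the technical core of \cite[6.3]{OS}.
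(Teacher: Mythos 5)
Note that the paper does not prove this statement: Theorem~\ref{OSrev} is recalled from \cite[Theorem~6.3]{OS} (``Indeed, recall that\dots'') with no internal argument, so there is no in-paper proof against which to compare your sketch.

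Taken on its own, your proposal is in the right spirit of the cited source: under $K_X+D\equiv_{\Q}0$ the Wang--Odaka Donaldson--Futaki expansion collapses to a single discrepancy-type intersection number, log K-semistability is then equivalent to log canonicity of $(X,D)$ via the sign of that term, and a deformation to the normal cone of a reduced component $E\subset\lfloor D\rfloor$ produces a non-product test configuration whose exceptional divisor is an lc place of $(X\times\A^1,\,D\times\A^1+X\times\{0\})$, giving $\mathrm{DF}=0$. Two genuine gaps in the write-up remain. First, you assert ``a direct intersection computation then gives $\mathrm{DF}=0$'' without performing it, and you do not verify that the twist $\mathcal{L}=\mathrm{pr}_1^*L-c\mathbf{E}$ is relatively ample and gives a normal ample test configuration for $0<c\ll1$, which is needed before the intersection formula applies at all. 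Second, the final paragraph identifies the slc bookkeeping --- conductor and normalization, demi-normality of the total space, validity of the DF intersection formula off the normal locus --- as ``the bulk of the real work'' and then supplies none of it; for the general statement (K-polystability iff $(X,D)$ klt in the slc setting) this is exactly the part that needs to be proved, not merely acknowledged.
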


Furthermore, as is well-known, for a fixed 
punctured family of 
Calabi-Yau varieties e.g., K3 surfaces, 
$\mathcal{X}^{*}\to \Delta^{*}$, the way of 
compactifying as relative dlt minimal model $\X\to \Delta$ has 
a complicated indeterminancy caused by flops on $\X$. 
For the convenience of readers, 
we enhance the indeterminancy at polarized level as follows, 
thus clarifying the problem. Or see \cite{Kawamataflop}. 
Further, as we confirmed in \cite[beginning of \S4]{ops}, 
even in polarized setting, 
flops can easily cause {\it un-}separatedness of moduli, without polystability conditions. 

Nevertheless, let us first define the following notion of {\it weak K-moduli} 
compactifications, just to coin the well-studied notion 
in our context. 
We set the scene by fixing a connected Deligne-Mumford (at least generically smooth) 
moduli stack $\mathcal{M}$ 
of polarized log terminal Calabi-Yau varieties. 
We make a caution that in general even connected 
$\mathcal{M}$ could be a priori only of {\it locally} finite type 
(i.e., non-quasi-compact), 
due to unsolved problem on the boundedness of singularities 
(cf., \cite[1.1 (iii), 1.2]{YZha}, \cite[\S 9]{OO}) 
hence we simply take 
$\mathcal{M}$ which satisfies the quasi-compactness. 
Then, from the work of \cite{Vie}, 
$\mathcal{M}$ has 
a quasi-projective coarse moduli variety 
$M$. 

Note that from the definition, there is a universal famly $\pi\colon (\mathcal{U},\mathcal{L})\to \mathcal{M}$ 
of the polarized log-terminal Calabi-Yau varieties. 
Suppose $\mathcal{M}^{o}\subset \mathcal{M}$ denotes the open substack 
which parametrizes {\it smooth} polarizd Calabi-Yau varieties. 

\begin{Def}[Weak K-moduli and very weak K-moduli]\label{weak.Kmoduli}
For the above setting, we call the following object a {\it weak K-moduli stack} (resp., very weak K-moduli); 

A proper Deligne-Mumford moduli stack $\overline{\mathcal{M}}$ compactifying $\mathcal{M}$ (resp., $\overline{\mathcal{M}}$ 
compactifying $\mathcal{M}^{o}$), with a $\Q$-Gorenstein family of 
polarized semi-log-canonical, or equivalently, K-semistable\footnote{the equivalence follows  
from \cite{Od, Od0} (also cf., \cite[\S 4]{Od15})}) 
Calabi-Yau varieties 
$\bar{\pi}\colon (\bar{\mathcal{U}},\bar{\mathcal{L}})\to \overline{\mathcal{M}}$ which 
extends 
$\pi\colon (\mathcal{U},\mathcal{L})\to \mathcal{M}$ (resp., the 
restriction of 
$\pi\colon (\mathcal{U},\mathcal{L})\to \mathcal{M}$ to 
$\mathcal{M}^{o}$). 
For $\mathcal{M}$ to be weak K-moduli (but not for very weak K-moduli), we also require that 
underlying family of varieties 
$\bar{\mathcal{U}}\to \overline{\mathcal{M}}$ 
 is effective (i.e., no isomorphic varieties occur as fibers at 
 different $k$-rational points).  
\end{Def}

The notion is clearly motivated by the examples as \cite{AN, Ale02, Nak} (cf., also \cite{Nam, HKW} preceding that) 
for when $M=A_g$ the moduli of $g$-dimensional  
principally polarized abelian varieties, and that of (special) polarized K3 surfaces by \cite{Shah, Looi.semitoric, GHKS, Laza2, Inc, LO, AET, AB, ABE} among others. Both are 
obtained by explicitly determining logarithmic generalization of the KSBA compactification by fixing some ``natural" ample divisors 
in the Calabi-Yau varieties. Also, it often turns out that the compactification or its normalization is 
obtained as toroidal compactification \cite{AMRT} 
or semi-toric compactification \cite{Looi.semitoric}. 
As {\it loc.cit} works mentioned above show, the following has been 
a classical problem, certainly well-recognized 
folklore expectations among experts over decades. 

\begin{Conj}[Weak K-moduli (folklore)]\label{weak.Kmoduli.conj} 
For any moduli algebraic stack\footnote{in the sense of Deligne-Mumford \cite{DM}}
 $\mathcal{M}$ of  polarized log terminal Calabi-Yau varieties with its quasi-projective coarse moduli variety $M$, 
there is at least one weak K-moduli proper stack. 
Moreover, if $\mathcal{M}$ has uniformization 
by a Hermitian symmetric domain, then at least one such 
compactification's normalization is dominated by one of toroidal compactifications \cite{AMRT} 
or semi-toric compactifications \cite{Looi.semitoric}. 
\end{Conj}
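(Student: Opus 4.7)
The plan is to attack the two halves of the conjecture separately: the existence half is essentially a valuative-criterion problem to be solved by the relative Minimal Model Program plus boundedness, while the Hermitian-symmetric addendum is a period-theoretic comparison. For the existence, I would fix a punctured family $(\X^{*},\mathcal{L}^{*})\to \Delta^{*}$ of polarized klt Calabi-Yau varieties representing a morphism $\Delta^{*}\to \mathcal{M}$. After a finite base change $\Delta'\to \Delta$ achieving semistable reduction, I run a relative MMP on a log resolution to obtain a dlt relative minimal model $\X\to \Delta'$ with $K_{\X}\equiv_{\Delta'}0$, then pass to the semi-log-canonical modification of the dlt central fiber via Koll\'ar's gluing theory and descend a large multiple of the limit of $\mathcal{L}$ by relative semi-ampleness to obtain a polarization on the slc central fiber. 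By Theorem \ref{OSrev} this filling is automatically K-semistable, matching Definition \ref{weak.Kmoduli}. Packaging these fillings over a quasi-compact base via Artin's criterion would then produce the proper Deligne-Mumford stack $\overline{\mathcal{M}}$, provided the underlying family of polarized slc Calabi-Yau varieties is bounded.

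For the Hermitian-symmetric half, the period map gives an open immersion $M\hookrightarrow \Gamma\backslash D$, and by Borel's extension theorem together with \cite{AMRT} it extends to morphisms from $\overline{(\Gamma\backslash D)}^{\mathrm{tor},\Sigma}$ for every admissible collection of fans $\Sigma$ (or semi-toric collection in the sense of \cite{Looi.semitoric}). The task is to find a choice of $\Sigma$ for which the extended map dominates the normalization of the weak K-moduli stack constructed above. The mechanism I would propose is that, in the cases of interest, Torelli-type theorems (classical Torelli for ppav, Piatetski-Shapiro--Shafarevich for polarized K3s, Verbitsky for hyperk\"ahler) identify the degeneration type of the slc limit produced by the MMP step with the monodromy weight filtration at the corresponding cusp, which is in turn the combinatorial data indexing the cones of $\Sigma$. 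Aligning these two sets of combinatorial data identifies the stratification of the toroidal compactification by cones with the stratification of the normalized weak K-moduli compactification by deformation type of the slc central fiber, yielding the required domination.

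The main obstacle is twofold. First, boundedness of the polarized slc Calabi-Yau varieties arising as MMP limits is genuinely open in full generality, despite recent progress on boundedness of K-semistable log Calabi-Yau pairs; this cannot be circumvented purely within the valuative-criterion framework. Second, as emphasized in the excerpt, the relative minimal model is non-unique because of flops, so the valuative criterion will at best produce a non-separated moduli; either one must quotient by flop equivalence, or impose an extra polarization-type condition as in the companion work \cite{ops}. The Hermitian-symmetric half is comparatively cleaner once these two obstacles are bypassed, because the symmetric-space data (root lattices, rational polyhedral cones at the cusps) provides a pre-existing combinatorial target against which to check the comparison, and because the explicit degeneration theories in the ppav and K3/hyperk\"ahler cases already supply candidate compactifications. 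Consequently I would expect a full proof to proceed first in the Hermitian-symmetric situations and only subsequently attempt the general case, with boundedness as the ultimate bottleneck.
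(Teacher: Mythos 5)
This statement is Conjecture~\ref{weak.Kmoduli.conj}, which the paper deliberately leaves \emph{unproven}: the author explicitly calls it a ``folklore'' conjecture, attributes it to collective expert expectation, and then proves only the strictly weaker Theorem~\ref{veryweak.Kmoduli} (existence of a \emph{very weak} K-moduli, i.e.~a compactification of the smooth locus $\mathcal{M}^{o}$ only, with no effectiveness requirement). So a complete proof of the conjecture is not what you should be comparing against; the relevant comparison is with the outlined proof of Theorem~\ref{veryweak.Kmoduli} and with the discussion in \S1.2 of why the full conjecture remains open.

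Against that yardstick, your existence strategy tracks the paper's sketch closely in outline (semistable/toroidal reduction, then relative log MMP, then a stability check via Theorem~\ref{OSrev}), but deviates at the crucial step of producing an \emph{ample} extended polarization. You propose to pass to an slc modification of the dlt central fiber via Koll\'ar gluing and then ``descend a large multiple of $\mathcal{L}$ by relative semi-ampleness''; on a dlt relative minimal model the limit of $\mathcal{L}$ is a priori only nef (and semi-ample after MMP), not ample, and nefness is not preserved by the slc gluing step in the form you need. The paper's route is different and is precisely designed to force ampleness: after the relative MMP, one chooses a general relative section $D\in|m\overline{L}|$ for $m\gg0$ and takes the relative \emph{log canonical} model of the pair with a small multiple of $D$ (cf.~Proposition~\ref{pol.dlt.model} and the outlined proof of Theorem~\ref{veryweak.Kmoduli}). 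That log canonical model simultaneously contracts the semi-ample-but-not-ample locus of $\mathcal{L}$ and produces slc Calabi-Yau fibers with an honest ample $\Q$-polarization. Without some version of that contraction step your construction stalls at a nef, non-ample limit and does not verify Definition~\ref{weak.Kmoduli}.

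Two further gaps. First, the very weak K-moduli theorem compactifies $\mathcal{M}^{o}$, whereas the conjecture asks to compactify $\mathcal{M}$ itself (which already contains singular klt fibers) and also imposes the effectiveness condition; your valuative-criterion argument does not address either of these, and the paper gives no mechanism for them either --- this is exactly why it remains a conjecture. Second, on the Hermitian-symmetric addendum, your plan is to pick a particular $\Sigma$ and use Torelli to align combinatorics; but the conjecture only asserts \emph{domination by some} toroidal or semi-toric compactification, and the paper's evidence for it consists of verified examples (Namikawa--Alexeev--Nakamura for $A_g$, Shah, Looijenga semi-toric, Alexeev--Engel--Thompson for degree-$2$ K3s) rather than a general argument. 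Your appeal to Torelli is a sensible heuristic but is not a proof: Torelli statements identify Hodge-theoretic limits, not the MMP-produced slc limits directly, and the bridge between them (already flagged around Conjecture~\ref{SBB.conj}) is itself open in general. You do correctly name the two fundamental obstructions --- boundedness of the slc CY fillings and flop-induced non-separatedness --- and those are precisely the reasons the paper stops short of a proof; but identifying obstacles is not the same as closing them.
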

The reason the latter statements only predict domination is that, at least in a log version (with nonzero boundaries) or subdomain version i.e., 
when $M$ does not locally cover the Kuranishi space, there do exist examples when the Satake-Baily-Borel compactifications 
parametrize polarized semi-log-canonical log Calabi-Yau pairs (cf., e.g., \cite[\S 4.1]{dBS}, \cite[\S 7.2.1]{OO}). 
We believe the following 
certain weak form of the 
above conjecture \ref{weak.Kmoduli.conj} 
has been essentially known to experts. 
\begin{Thm}[cf., \cite{AK, BCHM, Fjn.ss, HX, KX2, ATW, Bir20}]
\label{veryweak.Kmoduli}
For given $\mathcal{M}^{o}$, at least one very weak K-moduli 
compactification exists. 
\end{Thm}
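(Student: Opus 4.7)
The plan is to construct $\overline{\M}$ as the moduli stack of $\Q$-Gorenstein polarized semi-log-canonical Calabi-Yau varieties whose polarized smoothings belong to $\M^{o}$, and to verify properness through the standard semistable-reduction-plus-MMP pipeline now available. Throughout, we work with fixed numerical invariants (dimension, volume of the polarization, Cartier index of $K_{X}$), which are bounded since $\M^{o}$ is quasi-compact by assumption. All cited tools (\cite{AK, BCHM, Fjn.ss, HX, KX2, ATW, Bir20}) will be combined in the order described below.

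\textbf{Step 1 (Boundedness of slc candidates).} First I would invoke Birkar's boundedness results \cite{Bir20} (together with the Kollár--Xu theory of moduli of Calabi-Yau pairs \cite{KX2}) to conclude that the family of $\Q$-Gorenstein polarized slc Calabi-Yau varieties with the fixed invariants above is bounded. This pins down a finite-type candidate Deligne-Mumford moduli stack $\overline{\M}$; the DM property itself follows because polarized slc Calabi-Yau varieties have finite automorphism groups (the automorphisms of a polarized projective scheme preserving $\mathcal{L}$ form an algebraic group whose Lie algebra is zero as $K_{X}\equiv 0$ and $\mathcal{L}$ is ample).

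\textbf{Step 2 (Existence of limits via semistable reduction and MMP).} To verify the valuative criterion for properness, I would take a DVR $R$ with fraction field $K$ and a map $\Spec K \to \M^{o}$. Applying the functorial semistable reduction theorem of \cite{ATW} (or \cite{AK} in its weak form) yields, after a finite base change $R'/R$, a semistable total space $\X \to \Spec R'$ extending the smooth polarized family $(\X_{\eta},\mathcal{L}_{\eta})$. Next, I would extend $\mathcal{L}_{\eta}$ to a relatively big line bundle $\mathcal{L}$ by closure and, since $K_{\X_{\eta}/K}\sim_{\Q} 0$, run a $(K_{\X}+\epsilon\mathcal{L})$-MMP with scaling over $\Spec R'$ using \cite{BCHM}, combined with the semistable MMP of \cite{Fjn.ss} and the extension results of \cite{HX}. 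The outcome is a relative ample model $\bar{\X} \to \Spec R'$ whose central fiber is semi-log-canonical; by Theorem \ref{OSrev} this is equivalent to the K-semistability required by Definition \ref{weak.Kmoduli}.

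\textbf{Step 3 (Separatedness and conclusion).} Since any two choices of polarization extensions differ by a divisor supported on the central fiber, and since the ample model of a $\Q$-divisor class is canonical when it exists (the Proj of its section ring), the resulting slc polarized limit is unique up to isomorphism after the base change $R'/R$. This gives the existence half of the valuative criterion; the uniqueness half (separatedness) follows because two slc ample models produced in this way would, after simultaneous base change, have isomorphic polarized generic fibers and identical ample model constructions. Assembling these local statements using the standard bootstrapping for moduli stacks produces the required proper DM stack $\overline{\M}$ together with the extended universal family $\bar{\pi}\colon (\bar{\mathcal{U}},\bar{\mathcal{L}}) \to \overline{\M}$.

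\textbf{Main obstacle.} The technically delicate part is coordinating the outputs of semistable reduction, relative MMP, and slc extension so that the resulting slc limit is \emph{insensitive} to the chosen polarization extension and to the order of MMP operations; this is exactly where the combination of \cite{HX} (slc extension) and \cite{KX2} (moduli of Calabi-Yau pairs) is needed, and where the argument departs from the anti-canonical Fano case. Effectivity of the family is deliberately not required in Definition \ref{weak.Kmoduli} precisely because, as emphasized in the remarks after Theorem \ref{OSrev} and in \cite[\S 4]{ops}, flops between K-semistable (non-polystable) slc models can cause isomorphic varieties to appear at different $k$-points, and enforcing effectivity is a genuinely stronger requirement reserved for the weak (non-``very weak'') K-moduli setting.
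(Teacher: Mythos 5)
Your proposal takes a genuinely different route from the paper's, and it contains two gaps that are not merely cosmetic but go to the heart of what makes the statement subtle.

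The paper does \emph{not} construct $\overline{\mathcal{M}}$ as a moduli functor of slc polarized Calabi-Yau varieties. It instead builds a single $\Gamma$-equivariant compactification $\overline{M'}$ of the uniformizing space $M'$, together with an explicit family over it: apply functorial weak semistable reduction over a projective compactification of $M'$, run relative MMP of the total space, pick a \emph{general ample section} $D\in |m\overline{L}|$ to upgrade the family to a log Calabi-Yau \emph{pair}, and take the relative lc model of $(\overline{U},\epsilon D)$. Properness is then automatic because $\overline{M'}$ is projective and $\Gamma$ is finite; there is no valuative criterion to verify for a moduli functor, and boundedness is invoked only for log CY pairs with ample boundary, which is exactly the setting of \cite{Bir20}. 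This is precisely the point where the paper departs from your proposal.

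\textbf{Gap 1 (Step 1, boundedness).} You invoke \cite{Bir20} and \cite{KX2} to bound the class of slc polarized Calabi-Yau \emph{varieties} with fixed invariants. But those references establish boundedness for log Calabi-Yau \emph{pairs} $(X,B)$ with big or ample boundary $B$, not for polarized CY varieties without a boundary divisor. For the latter, boundedness is unknown in general; indeed the paper flags this as an open problem and imposes quasi-compactness of $\mathcal{M}$ as an extra hypothesis precisely because it cannot be derived. The paper escapes this issue by attaching the general divisor $D$ before applying Birkar's bound.

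\textbf{Gap 2 (Step 3, separatedness).} The separatedness claim is false, and this is one of the central points of the surrounding discussion in the paper. Citing \cite{Kawamataflop} and \cite[\S 4]{ops}, the paper emphasizes that flops cause \emph{un}-separatedness of the moduli of slc polarized Calabi-Yau varieties even in the polarized setting: two extensions $\mathcal{L},\mathcal{L}'$ of the generic polarization differ by a divisor supported on the central fiber, but $K_{\X}+\epsilon\mathcal{L}$ and $K_{\X}+\epsilon\mathcal{L}'$ are genuinely different $\Q$-divisor classes with potentially different relative ample models, so the ``identical ample model constructions'' premise in your argument does not hold. You even mention this phenomenon in your ``Main obstacle'' paragraph, but do not notice that it directly contradicts Step 3. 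Since properness would require separatedness, the moduli functor you propose cannot be proper, which is exactly why the definition of \emph{very weak} K-moduli drops effectivity and why the paper compactifies the base variety rather than a moduli functor of slc objects.
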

The proof 
follows from the two kinds of deep techniques, by combining them 
in a relatively simple manner; applying the 
weak semistable reduction \cite{AK, ATW} (or the classical 
semistable reduction \cite{KKMS} for one parameter setting) 
and then run the relative log minimal 
model program (established in \cite{BCHM, Fjn.ss, HX} etc) 
after that. Before these procedures, boundedness of the pairs in concern is needed, 
which is achieved in \cite{Bir20} at fairly broad setting including ours. 
We include the sketch proof of Theorem \ref{veryweak.Kmoduli} for convenience. 

\begin{proof}[Outlined proof of Theorem \ref{veryweak.Kmoduli}]
For the given $\mathcal{M}^{o}$ 
parametrising smooth varieties can be written as 
$\Gamma\backslash M'$ with $\Gamma$-equivariant 
flat projective family of polarized slc Calabi-Yau varieties 
which we denote as 
$(\mathcal{U},\mathcal{L}) \twoheadrightarrow \mathcal{M}
=[\Gamma\backslash M']$ or $(U,L)\twoheadrightarrow M'$ 
with $\Gamma$-action on it. 

Then we apply the functorial weak semistable reduction 
\cite{AK, ATW} to (an arbitrary compactification for base direction 
of) $U \to M'$ so that after replacing $M'$ by its (further) 
finite Galois cover, we can compactify as $M'\subset \overline{M'}$ 
with toroidal flat morphism $\overline{U}\to \overline{M'}$. 
Then run relative minimal model model 
over $\overline{M'}$ 
of $\overline{U}$ and consider a (absolutely) ample line bundle  
$\overline{L}$ which is relatively linear equivalent to $L$ 
over $M'$. Then, take a general relative section $D$ of $|m\overline{L}|$ for $m\gg 0$. Then take its relative 
lc model by \cite{BCHM, Fjn.ss, HX}. It becomes a desired flat family of slc Calabi-Yau varieties 
with ample divisor (class), extending $(U,mL)\twoheadrightarrow 
M'$. The semi-log-canonicity follows from applying the adjunction 
inductively. 
\end{proof}

Now, turning back to weak K-moduli, 
the mentioned explicit examples of 
(e.g., \cite{AN, Ale02, Nak, Shah, Looi.semitoric, Zhu, AET, ABE}) 
are all weak K-moduli, 
giving the affirmative confirmation of the full  conjecture~\ref{weak.Kmoduli.conj}, hence has been its supporting 
evidences. Most of the above are obtained as special case of 
{\it logarithmic version (i.e., a version 
with boundary divisors) of KSBA construction}, 
which is also discussed in recent \cite{Bir20, KX2}. 
However, we should keep in mind and emphasize 
that a priori these could be 
moduli of the pairs of varieties and their {\it divisors}, 
hence some locus could possibly preserve the ambient varieties 
while the divisors change. 

It turned out that these weak K-moduli compactifications of the moduli of (log-terminal) polarized Calabi-Yau varieties turned out to be 
at least {\it non-}unique. 
Indeed, 
there are at least two non-isomorphic examples of weak K-moduli compactification in the case of the moduli $\mathcal{F}_{2}$ of degree $2$ polarized K3 surfaces; 
one by \cite{Shah} (cf., also \cite{Looi.semitoric}), 
obtained as a simple Kirwan blow up 
of the GIT moduli of sextic curves, 
and the other more recent one by \cite{AET} 
with much higher Picard number. 
The moduli construction of \cite{AET} 
is in terms of the polyhedral decompositions of 
integral affine spheres (``tropical K3 surfaces") with singularities. The latter family of tropical K3 surfaces on the $19$-dimensional cone 
is rather close to the one constructed in 
\cite[\S 4]{OO} on $\mathcal{F}_{2}(l)$ in the sense they have same radiance obstructions, 
hence expected to be obtained from each other by moving worms generically. However, the affine structure itself is different as 
observed by V.Alexeev. We thank him for sharing the observation.

\vspace{5mm}
Now we go back to discuss our main contents, 
and set the notation and conventions before that. 

\subsection{Notation and Conventions}
In this paper, we work over an arbitrary algebraically closed 
field $k$ of characteristic $0$ unless otherwise stated, 
since often we use the minimal model program and 
resolution of singularities. 
Some parts which involve analytifications 
are discussed only over $\C$ (e.g., \S \ref{lmmod.sec}) 
or non-archimedean fields (e.g., \S \ref{appendixA}) 
as indicated, 
which means complete valuation fields with non-archimedean valuations. 
Many purely algebro-geometric arguments extend over more general 
fields but we omit specifications. 

In this paper, a Calabi-Yau variety or a K-trivial variety both interchangably means a 
{\it klt} projective variety with numerically 
trivial canonical class, and weak Ricci-flat K\"ahler metric for it (when $k=\C$) 
means the 
singular K\"ahler metric whose existence is established by 
\cite{Yau, EGZ.sing}. 
Polarization means ample $(\Q-)$line bundle and variety with polarization is 
said to be polarized, and whenever it is not genuine line bundle but 
only $\Q$-line bundle we clarify so. 
We often discuss over a pointed smooth $k$-curve germ $\Delta\ni 0$ or 
$\Delta^{*}:=\Delta\setminus 0$, 
but many of discussions go verbatim (or with slight refinements) 
to the complex disk $\{t\in \C\mid |t|<1\}$ (resp., 
$\{t\in \C\mid 0<|t|<1\}$) 
or ${\rm Spec}k[[t]]$ (resp., 
${\rm Spec}k((t))$). 

%%%%%%%%%%%%%%%%%%%%%%%%%%%%%%%%%%%%%%%%%%%%%%%%%%%%%%%%%%%%%%%%%%%%%

\subsection*{Acknowledgements}
The author thanks Sebastien Boucksom, 
Tetsushi Ito, Yang Li, Yoshiki Oshima, Song Sun for helpful comments on the earlier versions of the draft, which in particular 
improved the presentation. 
During this work, the author is partially supported by 
KAKENHI 18K13389 (Grant-in-Aid for Early-Career Scientists), 
KAKENHI 16H06335 (Grant-in-Aid for Scientific Research (S)), and 
KAKENHI 20H00112 (Grant-in-Aid for Scientific Research (A)) 
during this research. 

%%%%%%%%%%%%%%%%%%%%%%%%%%%%%%%%%%%%%%%%%%%%%%%%%%%%%%%%%%%%%%%%%%%%%
%%%%%%%%%%%%%%%%%%%%%%%%%%%%%%%%%%%%%%%%%%%%%%%%%%%%%%%%%%%%%%%

\section{Degenerations and their limit - Galaxy}\label{fiber.sec}

\subsection{Basic of dlt minimal models}\label{basic.dlt.sec}

This section discusses dlt minimal models which do not necessarily satisfy 
open K-polystability (of \cite{ops}), and their certain limits. In some part, 
we give technical refinements of \cite{MN, NX, KX, NXY}. 

\subsubsection*{Notation}\label{Notation}
In the following {\it dlt minimal model} (resp., 
{\it lc minimal model}) 
$\X\to \Delta$ 
means $(\X,\X_{0})$ is a dlt pair (resp., a lc pair), 
so that $\X_{0}$ is reduced in particular, 
and $K_{\X}+\X_{0}$ is relatively nef over $\Delta$, 
where $\Delta$ denotes either germ of pointed smooth curve or 
${\rm Spec}k[[t]]$. 
A slight difference with other references (e.g., \cite{NX}) is 
that we assume $\X_0$ is reduced, just for simplicity, which is not essential assumption because 
the semistable reduction theorem \cite{KKMS} combined with the semistable MMP \cite{Fjn} always allow to 
pass to this case without essential change (our following arguments also include explanation). 
Note that the relatively nefness implies relatively triviality 
for the case when the generic fiber has trivial canonical class. 
We note following points for the convenience, 
which are easy to prove from the definitions. 

Since our paper focuses on the Calabi-Yau varieties and 
their degenerations, unless otherwise stated, 
dlt minimal model also requires the general fiber to be 
klt Calabi-Yau varieties which are $\Q$-factorial. 
Note that in some references, dlt minimal model requires the 
total models to be $\Q$-factorial. However, unfortunately the conditions 
exclude various important examples 
e.g., well-studied Dwork-Fermat family of K3 surfaces, 
as they are {\it non-$\Q$-factorial dlt minimal models}, 
we discuss in our generality. 

\begin{Thm}[Basic of dlt models]\label{dlt.lem}
\begin{enumerate}
\item \label{dlt.dom}
Consider any small birational map 
$\varphi\colon X\to X'$ and a $\Q$-divisor $D=\sum_{i} a_{i}D_{i}$ 
on $X$ 
with $D':=\varphi_{*}D$, such that 
$(X,D)$ and hence $(X',D')$ are both log pairs 
i.e., $K_{X}+D$ and $K_{X'}+D'$ are $\Q$-Cartier. 
Suppose $(X,D)$ is dlt and satisfies the following assumption. 
\begin{quote}
For any strata $Z$ of $\lfloor D \rfloor$ i.e., 
connected component of $\cap_{i\in I}D_{i}$ 
with the index set $I$ satisfying $a_{i}=1$ for $i\in I$, 
$\varphi$ restricts to $Z$ still gives a birational map. 
\end{quote}
Then, $(X',D')$ is also dlt. 
(For converse direction, see below \eqref{lc.min}). 
\item \label{dlt.term}
If $(\X,\X_{0})\to \Delta$ is a dlt minimal model, 
then $\X$ is terminal. 

\item (cf., \cite{Kawamataflop},\cite[3.2.5]{NX}) \label{dual.cpx.same}
Take two dlt minimal models $(\X_{i},\X_{i,0}) (i=1,2)$ over $\Delta$ 
which coincide over $\Delta\setminus \{0\}$. Then they are 
connected by flops and the dual intersection complex 
of their central fibers $\X_{i,0}$ are canonically homeomorphic. 

\item \label{lc.min}
For any lc minimal model $(\X,\X_{0})\to \Delta$ and 
its log crepant dlt minimal model $(\X^{\rm dlt},\X^{\rm dlt}_{0})$ 
which dominates $\X$, the exceptional locus of $\X^{\rm dlt}\to \X$ 
is a union of some irreducible components of the central fiber 
$\X^{\rm dlt}_{0}$ and hence cannot be small unless isomorphism. 

Moreover, if further $(\X,\X_{0})$ is dlt, 
then $\X^{\rm dlt}\to \X$ is small and satisfies 
the assumption of \eqref{dlt.dom}. 
\end{enumerate}
\end{Thm}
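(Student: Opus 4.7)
The plan is to treat the four items as a cascade, each reducing to a discrepancy identity combined with an appropriate open-locus characterization of dlt. For $(i)$ I would invoke Szabó's description: $(X, D)$ is dlt iff there is a dense open $U \subset X$ containing the generic point of every stratum of $\lfloor D\rfloor$ on which $(X, D)$ is toroidal, together with $a(E; X, D) > -1$ for every divisor $E$ with center in $X \setminus U$. Since $\varphi\colon X \dashrightarrow X'$ is small, it is an isomorphism outside a codimension $\geq 2$ locus; the components $D_i$ and their strict transforms $D_i'$ match bijectively with the same coefficients, and $a(E; X, D) = a(E; X', D')$ for every geometric valuation via any common resolution. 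The hypothesis that $\varphi$ restricts to a birational map on every stratum of $\lfloor D\rfloor$ is precisely what is needed to transport the SNC open $U$ to an open $U' \subset X'$ that still dominates every stratum of $\lfloor D'\rfloor$ generically, after which the discrepancy condition transfers automatically and $(X', D')$ is dlt.

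For $(ii)$, the key identity is
\[
a(E; \X) \;=\; a(E; \X, \X_0) + \mult_E(\mu^{*}\X_0)
\]
for any birational extraction $\mu\colon Y \to \X$ of a divisor $E$ whose center lies in $\X_0$. The dlt assumption gives $a(E; \X, \X_0) > -1$, and because the center is contained in the support of the Cartier divisor $\X_0$ we have $\mult_E(\mu^{*}\X_0) \geq 1$, whence $a(E; \X) > 0$. In the germ setting around $\X_0$, which is the scope of the statement, every exceptional divisor has center meeting $\X_0$, so this is exactly terminality of $\X$.

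For $(iii)$, both $(\X_i, \X_{i,0})$ are relative log minimal models over $\Delta$ of a common generic fibre, so the negativity lemma forces the induced map $\X_1 \dashrightarrow \X_2$ to be small, and the Kawamata $K$-equivalence theorem realizes it as a finite sequence of flops. Each elementary flop preserves the divisorial lc valuations and their incidence, so by \cite[3.2.5]{NX} induces a canonical homeomorphism of dual complexes of the central fibres, which iterates to give the claim. For $(iv)$, a log crepant dlt modification $\X^{\rm dlt} \to \X$ of a lc pair extracts precisely the divisors $E$ with $a(E; \X, \X_0) = -1$. Each such $E$ is an lc place, and since $K_{\X} + \X_0 \equiv_{\Delta} 0$ with generic fibre klt, every lc centre lies in $\X_0$; therefore the extracted $E$ is vertical, i.e., becomes an irreducible component of $\X^{\rm dlt}_0$, so the exceptional locus cannot be small unless the modification is an isomorphism. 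When $(\X, \X_0)$ is itself dlt, no new divisors of log discrepancy zero remain to be extracted, so $\X^{\rm dlt} \to \X$ is small; the crepant bijection of lc centres then supplies the strata hypothesis of $(i)$.

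The main obstacle I expect is the technical bookkeeping in $(i)$ under the non-$\Q$-factorial generality adopted here, where one must avoid any reliance on log resolutions with exceptional divisorial data and instead lean on the open-locus criterion together with discrepancies of geometric valuations, which behave transparently under small maps. The remaining parts then follow by assembling this identity with standard flop and dlt-modification results.
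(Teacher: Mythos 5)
Items (i)--(iii) follow essentially the same route as the paper's proof. For (i), both arguments rest on the fact that the lc centres of a dlt pair are exactly the strata of $\lfloor D\rfloor$ (the paper cites \cite[3.9]{Fjn}; your Szab\'o-style open-locus criterion is the same idea packaged differently), then transport the simple-normal-crossing open across the small map using the stratum-birationality hypothesis. For (ii), both exploit the identity $a(E;\X)=a(E;\X,\X_0)+\mult_E(\mu^*\X_0)$ together with the Cartierness of $\X_0$. For (iii), both reduce to Kawamata's theorem on flops (which requires (ii)) plus the invariance of dual intersection complexes under flops.

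Two caveats in your (ii). The dlt condition only yields $a(E;\X,\X_0)>-1$ for $E$ centred in the \emph{non}-snc locus; for an lc place centred in the snc open one has $a(E;\X,\X_0)=-1$, and your inequality then gives only $a(E;\X)\ge 0$, which falls short of terminality. The implicit (and, in your write-up, missing) observation is that $\X$ is smooth on the snc open, so such $E$ pose no problem there. Also, ``centre meeting $\X_0$'' is weaker than ``centre contained in $\X_0$'': for a horizontal centre one has $\mult_E(\mu^*\X_0)=0$ and the inequality is vacuous. The paper's own proof makes the same tacit assumption (it asserts that the non-snc locus $W$ sits inside $\X_0$), so this is a shared subtlety rather than a flaw unique to your proposal, but it is worth being explicit about.

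The genuine gap is in (iv). Your observation that every exceptional \emph{prime divisor} of $\X^{\rm dlt}\to\X$ is vertical, hence an irreducible component of $\X^{\rm dlt}_0$, is correct and agrees with the paper. But it only controls the divisorial part of the exceptional locus. The claimed conclusion ``the exceptional locus is a union of components of $\X^{\rm dlt}_0$, hence cannot be small unless an isomorphism'' additionally requires ruling out a nontrivial small modification (exceptional locus entirely of codimension $\ge 2$), and that is precisely the step you assert without argument. The paper supplies the missing ingredient: the slc central fibre $\X_0$ is $S_2$, and this is used to show that codimension-$\ge 2$ lc centres of $(\X^{\rm dlt},\X^{\rm dlt}_0)$ cannot be contracted. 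Without some $S_2$-type argument the first assertion of (iv) is not established. Finally, ``extracts precisely the divisors with $a=-1$'' is an overstatement: a log-crepant dlt model dominating $\X$ extracts \emph{some} lc places, not necessarily all, and the paper's conventions deliberately do not assume $\Q$-factoriality here.
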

As an example of \eqref{dlt.dom}, the well-studied 
degeneration of quartic into four hyperplanes 
is a dlt (non-$\Q$-factorial) model. 
Note that the claim does not respect $\Q$-factoriality. 
\begin{proof}
The claim \eqref{dlt.dom} can be shown as follows. 
From the definition, $(X',D')$ is also obviously log canonical, 
at least. Then, 
since log canonical centers of 
$(X',D')$ are the strata of $\lfloor D\rfloor$ 
(\cite[\S 3.9]{Fjn}). Therefore, the assumption implies the 
assertion. 

For proving the latter \eqref{dlt.term}, 
we take a non-snc locus $W$ of $(\X,\X_{0})$. 
Then for any prime divisor over $\X$ with the center inside $W\subset 
\X_{0}$, the 
central fiber takes the multiplicity of $\X_{0}$ as at least $1$, 
since $\X_{0}$ is Cartier (indeed principal). Hence the claim. 

The statements of \eqref{dual.cpx.same} were known (cf., \cite[3.2.5]{NX}) 
at least in terms of skeleta in the Berkovich analytification but an easy 
(re)proof of \eqref{dual.cpx.same} easily follows now. 
By above \eqref{dlt.term}, \cite{Kawamataflop} implies 
that $\X_{i}$ are connected by flops (log flips, in reality). Because of 
\cite[3.9]{Fjn}, we can restrict to simple normal crossing part (in particular, toroidal) 
of $\X_{i}$ to discuss dual intersection complexes. 
Then, from the toric basics, the toric flops only change polyhedral decomposition 
(cf., \cite{GS}). \footnote{although not every flop preserves the dual complex such as 
the classical example of Atiyah flop over cone over quadric 
(cf., \cite[Ex 17]{dFKX}). Indeed, in {\it loc.cit}, 
the divisor in concern is only {\it a part} of toric divisor so that the setup is  
different.}

The last item can be proven as follows. 
As in the Step1 of the proof of Theorem ~\ref{lc.center.bir}, 
any of the log canonical centers $W$ of $(\X^{\rm dlt},\X^{\rm dlt}_{0})$ 
which are not irreducible components of $\X^{\rm dlt}_{0}$ 
(i.e., if ${\rm codim}_{\X^{\rm dlt}}(W)\ge 2$) 
cannot be contracted since semi-log-canonicity 
of $\X_{0}$ implies $S_{2}$ condition. On the other hand, 
the prime divisor of $\X^{\rm dlt}$ contracted in $\X$ is 
automatically log canonical centers, hence the claim. 
The rest of the proof again follows from the $S_{2}$ condition of 
$\X_{0}$. 
\end{proof}

Recall that since the work of Gross-Siebert and Kontsevich-Soibelman 
on geometric understanding of Strominger-Yau-Zaslow mirror symmetry conjectures (cf., 
\cite{KS, GS} etc), 
the dual intersection complexes of the central fibers of 
the relative minimal models for Calabi-Yau fibration over curves has been 
focused for a long time. Particular attention has been put on the case of maximal degenerations and in that case, the dual complex 
has been expected to be the sphere $S^{n}$ (cf., \cite{GW, GS} etc). 

Partial progress of understanding homeomorphic type are discussed in 
\cite{NX, KX}. 
\cite[4.1.4]{NX}, which is the consequence of some results in 
\cite{KK, Kol11}, proves that it is a closed 
manifold at least in (real) codimension $1$. 
\cite{KX} makes a progress, proving the expectation of dual complex to be $S^{n}$ in $n=3,4$ under certain strictness condition on the Calabi-Yau varieties. 
Thanks to the inductive structure of the stratification, 
it is also confirmed in \cite{KX} 
that the concerned dual intersection complex is 
topologically a manifold up to dimension $4$ (and $5$ if the degeneration is 
simple normal crossing). 

%%%%%%%%%%%%%%%%%%%%%%%%%

\subsection{Minimizing property of 
dlt model and polarized version}
\label{un.min.sec}

As an easy uniqueness statement of the Calabi-Yau filling, 
we recall the following in a global projective setting, 
which provides an algebro-geometric background to 
non-collapsing limits of Ricci-flat K\"ahler manifolds with bounded diameters. 

\begin{Thm}[{\cite[\S 4, discussion after 4.1, 4.2(i)]{Od12}}]
\label{CY.minimization}
 Suppose $(\X,\mathcal{L})\to C \ni 0$ 
 is a flat projective polarized family, 
 which are 
 $n$-dimensional slc Calabi-Yau projective varieties over 
 $C\setminus \{0\}$ (set $\X^{*}$ as preimage over $C\setminus \{0\}$)
 i.e., $K_{\X^{*}/(C\setminus \{0\})}\equiv_{/(C\setminus \{0\})} 0$. 
We consider all possible birational transforms 
$(\X',\mathcal{L}')$ 
 along the 
fibers over (the preimages of) $0$, allowing base changes of $C\ni 0$ as $b\colon C'\to C$. 

Then the normalized degree of CM line bundle (\cite{FS,PT,FR}) 
$$\dfrac{{\rm deg}(\lambda_{\rm CM}(\X',\mathcal{L}'))}
{{\rm deg}(b)}
=\dfrac{(\mathcal{L}'^{\cdot n}.K_{\X'/C'})}{{\rm deg}(b)}$$ 
attains the minimum 
(resp., the minimum as the only minimizer) 
among all 
$$\dfrac{{\rm deg}(\lambda_{\rm CM}(\X',\mathcal{L}'))}{{\rm deg}(b)}$$ 
if and only if $\X'_{0}$ is semi-log-canonical (resp., klt) and 
$K_{\X'_{0}}\equiv 0$. 
\end{Thm}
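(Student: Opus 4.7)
The plan is to compare the normalized CM degree of an arbitrary polarized model $(\X',\mathcal{L}')$ with that of a dlt/lc minimal model, which exists after a suitable base change by the weak semistable reduction of \cite{KKMS, AK, ATW} together with the relative log MMP (\cite{BCHM, Fjn.ss, HX}), as already invoked in the outlined proof of Theorem~\ref{veryweak.Kmoduli}. Since $K_{\X/C'} \equiv_{/C'} 0$ on any such dlt/lc minimal model $\X$ (by the Calabi--Yau condition on the generic fiber together with relative nefness), the CM degree of $(\X,\mathcal{L})$ vanishes, and the theorem reduces to the assertion that this value is the infimum, attained exactly on the stated class of fillings.

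First I would verify that the normalized quantity $\deg(\lambda_{\rm CM}(\X',\mathcal{L}'))/\deg(b)$ is invariant under further base change $C''\to C'$, which follows from the standard base change behavior of the CM line bundle (both numerator and denominator scale by the degree of the new base change); this reduces the comparison of two models to a common base. Given $(\X',\mathcal{L}')$ and a dlt minimal model $(\X,\mathcal{L})$, I would take a common log resolution $p\colon Y\to \X'$ and $q\colon Y\to \X$, and compute
\begin{align*}
(\mathcal{L}'^{\cdot n}\cdot K_{\X'/C'}) &= (p^{*}\mathcal{L}'^{\cdot n}\cdot p^{*}K_{\X'/C'})\\
&= (p^{*}\mathcal{L}'^{\cdot n}\cdot K_{Y/C'}) - (p^{*}\mathcal{L}'^{\cdot n}\cdot E_{p}),
\end{align*}
where $E_{p}$ is the discrepancy divisor of $p$, and similarly for $q$. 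Subtracting the two expressions for the same $K_{Y/C'}$ isolates the difference of CM degrees in terms of the two discrepancy divisors and the two pulled-back polarizations. Since $(\X,\mathcal{L})$ is a dlt minimal model, $q^{*}\mathcal{L}$ is relatively nef, and by the negativity lemma applied to $p$-exceptional divisors together with the lc/dlt bound on discrepancies (all coefficients $\ge -1$), one extracts the non-negativity of the difference.

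The equality case then forces the $p$-discrepancies to be exactly those of a crepant model, which by Theorem~\ref{dlt.lem}\eqref{dlt.dom}, \eqref{lc.min} means $\X'_{0}$ is semi-log-canonical with $K_{\X'_{0}}\equiv 0$; conversely any slc Calabi--Yau filling has $K_{\X'/C'}\equiv_{/C'}0$ and thus CM degree zero, showing that the minimum is realized precisely on this class. For the strict minimizer statement, I would invoke the birational uniqueness/connectedness via flops of dlt minimal models (Theorem~\ref{dlt.lem}\eqref{dual.cpx.same}) combined with the observation that among polarized slc Calabi--Yau fillings, any two are isomorphic in codimension one, and the polarized structure then rigidifies to an actual isomorphism exactly when the central fiber is klt (since flops act nontrivially as soon as a non-klt locus of codimension $\ge 2$ is present). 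The main obstacle I anticipate is the consistent transport of the polarization $\mathcal{L}'$ across birational modifications in the comparison step: since $\mathcal{L}'$ may only be a $\Q$-line bundle and the comparison uses pullbacks to the common resolution, one has to be careful that the intersection numbers above are invariant in the right sense and that the negativity lemma applies with the nef class $q^{*}\mathcal{L}$ rather than with an ample class.
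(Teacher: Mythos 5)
Your overall strategy — compare $(\X',\mathcal{L}')$ with a dlt minimal model on a common resolution and bound the difference by discrepancies — is indeed the right general direction and matches the discrepancy-based technique of \cite{Od,Od12} that the paper cites. However, several steps in your argument do not actually go through as written, and the key estimate is using the wrong bound.

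First, the claim that "the CM degree of $(\X,\mathcal{L})$ vanishes" is false as stated: $K_{\X/C'}\equiv_{/C'}0$ is only a \emph{relative} condition, so $K_{\X/C'}\equiv \pi^*N$ for some line bundle $N$ on $C'$, and $(\mathcal{L}^{\cdot n}\cdot K_{\X/C'})=\deg N\cdot(\mathcal{L}_t^n)$, which is the (generally nonzero) Hodge-bundle contribution. The theorem is about that Hodge contribution being the infimum, not zero.

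Second, the central inequality is not obtained by subtracting "the two expressions for the same $K_{Y/C'}$" and applying the negativity lemma with the bound "$\ge -1$". The relevant comparison is $(\mathcal{L}'^{\cdot n}\cdot K_{\X'/C'}) = (p^*\mathcal{L}'^{\cdot n}\cdot E)$ with $E:=p^*K_{\X'/C'}-q^*K_{\X/C'}$ (after a base change to a local germ so that $q^*K_{\X/C'}\sim_{\Q}0$, a step you never make explicit), and the two decisive observations are: (a) $(p^*\mathcal{L}'^{\cdot n}\cdot G)=0$ for any $p$-\emph{exceptional} component $G$ of $E$ by the projection formula, so only strict transforms of components of $\X'_0$ contribute; and (b) the coefficients of those components in $E$ are the discrepancies of the corresponding divisors over the \emph{total space} $\X$, which are $\ge 0$ because $\X$ is \emph{terminal} (Theorem~\ref{dlt.lem}\eqref{dlt.term}), not merely $\ge -1$ from lc/dlt-ness. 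With your weaker $\ge -1$ bound the sum could be negative, so your argument would not even give the claimed inequality. Finally, the equality case and the strict-minimizer claim need more than a one-line appeal to flops: identifying "all components of $\X'_0$ have discrepancy $0$ over $\X$" with "$\X'_0$ is slc with $K_{\X'_0}\equiv 0$" requires the crepant-birational comparison of lc minimal models, and the klt case of uniqueness should be deduced from the separatedness result (Corollary after Theorem~\ref{CY.minimization}) rather than from a heuristic about flops "acting nontrivially" on the non-klt locus.
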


Recall that the CM line bundle becomes a  positive multiple of 
the Hodge line bundle in case of Calabi-Yau family. 
As we discussed in {\it loc.cit}, 
\footnote{However, the author mistakenly wrote the content of 
(ii) also in the place of 
(i) in {\it op.cit}, by which the author wanted to mean above. 
We recently noticed the mistake but the arithmetic variant's proof 
had also been repeated in \cite[Theorem 3.4(i)]{Od.ari}. 
We apologize for the possible confusion it caused. 
} 
basically the proof of above Theorem~\ref{CY.minimization} 
is same as \cite{Od} 
in the isotrivial case. 

\begin{Cor}[{\cite[Cor 4.3]{Od12}, \cite{Bou}}]
If a punctured family of polarized log terminal Calabi-Yau varieties 
$(\X^{*},\mathcal{L}^{*})\to \Delta^{*}$ 
can be completed to $(\mathcal{X},\mathcal{L})\to \Delta$ 
with a log terminal Calabi-Yau filling $(\mathcal{X}_{0},
\mathcal{L}_{0})$, 
there is no other slc Calabi-Yau filling.
\end{Cor}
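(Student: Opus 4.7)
The plan is to read off the corollary directly from the minimization criterion of Theorem~\ref{CY.minimization}, using the ``only minimizer'' clause as the source of uniqueness. Let $(\mathcal{X},\mathcal{L})\to\Delta$ denote the given log terminal Calabi--Yau filling of $(\mathcal{X}^{*},\mathcal{L}^{*})\to\Delta^{*}$, and suppose for contradiction that $(\mathcal{X}',\mathcal{L}')\to\Delta$ is another, non-isomorphic, slc Calabi--Yau filling of the same punctured family.

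First, I would place both fillings in the common framework of Theorem~\ref{CY.minimization}. Since $(\mathcal{X}^{*},\mathcal{L}^{*})=(\mathcal{X}'^{*},\mathcal{L}'^{*})$ as polarized families over $\Delta^{*}$, the pair $(\mathcal{X}',\mathcal{L}')$ is a birational transform of $(\mathcal{X},\mathcal{L})$ along the central fiber, with no base change needed ($b=\mathrm{id}$, so $\deg(b)=1$); in particular both fillings sit inside the class of competitors over which the normalized CM degree is being minimized in Theorem~\ref{CY.minimization}.

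Next, I would apply the theorem twice. Because $\mathcal{X}_{0}$ is klt Calabi--Yau, the theorem asserts that $\deg(\lambda_{\mathrm{CM}}(\mathcal{X},\mathcal{L}))$ achieves the minimum value and, crucially, that $(\mathcal{X},\mathcal{L})$ is the \emph{only} minimizer among all such birational competitors (this is the parenthetical ``only minimizer'' clause). On the other hand, since $\mathcal{X}'_{0}$ is slc Calabi--Yau, the same theorem tells us that $\deg(\lambda_{\mathrm{CM}}(\mathcal{X}',\mathcal{L}'))$ also attains the minimum. Combining these two applications, $(\mathcal{X}',\mathcal{L}')$ must coincide with $(\mathcal{X},\mathcal{L})$, contradicting the assumed distinctness and proving the corollary.

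I expect the only point requiring care — and the place where one should check the setup of Theorem~\ref{CY.minimization} carefully — is the precise meaning of ``only minimizer,'' i.e.\ in which category of birational/pullback models uniqueness is being asserted, and in particular to confirm that two fillings agreeing over $\Delta^{*}$ are automatically considered equivalent competitors so that uniqueness forces $(\mathcal{X},\mathcal{L})\cong(\mathcal{X}',\mathcal{L}')$ over $\Delta$. Once that identification is made (which is immediate from the description of the competitor class in Theorem~\ref{CY.minimization}), the proof is a one-line consequence of the minimization theorem and no further birational geometry or MMP input is needed.
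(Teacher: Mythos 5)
Your argument is correct and is essentially the intended one: the paper presents this as a direct consequence of Theorem~\ref{CY.minimization}, exactly as you do — the klt CY filling is the unique normalized CM-degree minimizer (the ``only minimizer'' clause), any slc CY filling is a minimizer (the ``slc $\Rightarrow$ minimizer'' direction with $b=\mathrm{id}$), so they must coincide. The caveat you flag about checking that the two fillings lie in the same competitor class is the right sensitivity point, and it is indeed settled by the way Theorem~\ref{CY.minimization} phrases its class of birational transforms over $\Delta$.
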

In particular, moduli space of 
log terminal polarized Calabi-Yau varieties will be automatically 
separated (Hausdorff).

Contrast to the above uniqueness claim, 
as an existence-direction claim, we have the following. 
The result slightly refines the above written case, 
while ensuring ampleness of the extended polarization, 
and generalizes the classical work of Shepherd-Barron 
\cite{SB.K3} for degeneration of surfaces. 

\begin{Prop}[Polarized dlt model]\label{pol.dlt.model}
Given a punctured family of polarized log terminal Calabi-Yau varieties 
$(\X^{*},\mathcal{L}^{*})\to \Delta^{*}$, 
there is a filled-in proper flat family 
$(\X,\mathcal{L})\to \Delta$ such that 
$(\X,\X_{0, red})$ is dlt and $\mathcal{L}$ is 
{\bf relatively ample}. Here, 
$\X_{0, red}$ means the reduced scheme structure on 
$\X_{0}$. 

Furthermore, after finite base change, 
one can assume $\X_{0}$ is reduced i.e., 
$\X$ is dlt minimal model with ample $\mathcal{L}$. 
\end{Prop}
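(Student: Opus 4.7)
The plan is to combine semistable reduction \cite{KKMS} with relative finite generation of log canonical rings from \cite{BCHM, Fjn.ss, HX}, in the spirit of the sketch of Theorem~\ref{veryweak.Kmoduli}. After a finite base change $\Delta'\to\Delta$ ramified at $0$, I apply \cite{KKMS} to a projective compactification of $\X^*$ to obtain a semistable filling $\tilde{\pi}\colon\tilde{\X}\to\Delta'$ with reduced snc central fiber, so $(\tilde{\X},\tilde{\X}_0)$ is log smooth, in particular dlt. Extend (the pullback of) $\mathcal{L}^*$ to a line bundle $\tilde{\mathcal{L}}$ on $\tilde{\X}$; after tensoring with $\OO_{\tilde{\X}}(\sum a_iF_i)$ for appropriate $a_i\in\Q$ (where $F_i$ enumerate the components of $\tilde{\X}_0$), we may assume $\tilde{\mathcal{L}}$ is relatively big and nef on the total space, and further that $\tilde{\mathcal{L}}|_Z$ has positive top self-intersection on every stratum $Z$ of $\lfloor\tilde{\X}_0\rfloor$.

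Next, pick a general $D\in|m\tilde{\mathcal{L}}|$ for $m\gg 0$ and consider the dlt pair $(\tilde{\X},\tilde{\X}_0+\tfrac{\epsilon}{m}D)$ for $0<\epsilon\ll 1$. Using $K_{\tilde{\X}}+\tilde{\X}_0\equiv_{\Delta'}0$ (generic-fiber Calabi--Yau plus the principal-divisor relation), one has $K_{\tilde{\X}}+\tilde{\X}_0+\tfrac{\epsilon}{m}D\equiv_{\Delta'}\epsilon\tilde{\mathcal{L}}$, which is relatively big, so by \cite{BCHM, Fjn.ss, HX} the relative log canonical ring is finitely generated. Set
\[
\X:=\operatorname{Proj}_{\Delta'}\bigoplus_{k\ge 0}\tilde{\pi}_*\OO_{\tilde{\X}}\bigl(k(K_{\tilde{\X}}+\tilde{\X}_0+\tfrac{\epsilon}{m}D)\bigr),
\]
with the induced relatively ample line bundle $\mathcal{L}$ extending $\mathcal{L}^*$. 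The birational contraction $q\colon\tilde{\X}\dashrightarrow\X$ is crepant for the pair $(\cdot,\text{central fiber})$; each of its steps is $\tilde{\mathcal{L}}$-trivial or $\tilde{\mathcal{L}}$-negative, and by the bigness of $\tilde{\mathcal{L}}|_Z$ on each stratum, $q$ restricts to a birational map on every stratum of $\lfloor\tilde{\X}_0\rfloor$. Theorem~\ref{dlt.lem}\eqref{dlt.dom} then yields that $(\X,\X_{0,\mathrm{red}})$ is dlt.

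For the ``furthermore'' assertion, perform a further base change $s\mapsto s^N$ with $N$ the l.c.m.\ of the multiplicities of components of $\X_0$ and take the normalization; by the log-smoothness of $\X$ on a neighborhood of every stratum (dlt) this is a toroidal base change, so it preserves dlt while making $\X_0$ reduced. After this, $K_\X+\X_0=K_\X+\X_{0,\mathrm{red}}$ is relatively numerically trivial and $\mathcal{L}$ remains relatively ample, exhibiting $\X$ as a dlt minimal model with ample $\mathcal{L}$. The hardest part is the stratum-wise bigness arrangement of $\tilde{\mathcal{L}}$ needed to invoke Theorem~\ref{dlt.lem}\eqref{dlt.dom}: in genuinely complicated degenerations (e.g.\ certain Type~III Kulikov limits of K3 surfaces) this may not be attainable by a single vertical twist, and one must first run a preliminary sequence of flops and divisorial contractions inside $\tilde{\X}_0$ to redistribute the polarization, ultimately producing an ample model that is dlt without being $\Q$-factorial, as in the Dwork--Fermat example recalled in the excerpt.
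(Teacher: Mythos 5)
You take a genuinely different route at the crucial step of showing that the relative lc model remains dlt, and that route has a gap. The paper's argument is short and structural: the total space $\X_{lc}$ of the lc model of $(\X',\epsilon\mathcal{D}')$ is still terminal (or at least has klt, hence rational, singularities, since $\X'$ is terminal by Theorem~\ref{dlt.lem}\eqref{dlt.term} and the boundary coefficient $\epsilon$ is tiny), hence Cohen--Macaulay. The central fiber $\X_{lc,0}$ is therefore Cohen--Macaulay, thus reduced (given generic reducedness) and $S_2$. The $S_2$ condition then rules out contraction of log canonical centers of the intermediate dlt model $\X_{\min}$, exactly by the mechanism in the proof of Theorem~\ref{dlt.lem}\eqref{lc.min}. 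No hypothesis on the restriction of the polarization to individual strata is ever needed.

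Your substitute --- twisting $\tilde{\mathcal{L}}$ by a vertical $\Q$-divisor $\sum a_iF_i$ so that the result is relatively nef and big on the total space \emph{and} has positive top self-intersection on every stratum of $\lfloor\tilde{\X}_0\rfloor$, then invoking Theorem~\ref{dlt.lem}\eqref{dlt.dom} --- is not established. A single global choice of the $a_i$ must satisfy a large family of open inequalities indexed by all strata, and there is no argument that these are simultaneously solvable; your own closing caveat concedes as much, and the proposed remedy (a preliminary run of flops and divisorial contractions "to redistribute the polarization") is a programmatic remark, not a proof --- it is precisely the part that would need to be supplied, and it is also what would make the map $q$ small so that Theorem~\ref{dlt.lem}\eqref{dlt.dom} is even applicable. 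The Cohen--Macaulay route shows all of this is unnecessary. A secondary, harmless redundancy: you perform semistable reduction at the outset, so $\tilde{\X}_0$ is already reduced and (by the same CM argument) so is $\X_0$ after passing to the lc model; the final paragraph's additional ramified base change therefore does nothing. The paper instead proves the base-change-free first assertion directly and only invokes \cite{KKMS} in the separate "furthermore" step.
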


\begin{proof}
We take a dlt $\Q$-factorial minimal model $\X'$ and 
extend the polarization $\mathcal{L}^{*}$ to 
$\Q$-line bundle $\mathcal{L}'$ on $\X'$. Then, take a 
general section of $|m\mathcal{L}'|$ and denote as $\mathcal{D}'$. 
We take a relative log canonical model of 
$(\X',\epsilon \mathcal{D}')$ for $0<\epsilon\ll 1$ 
which gives the desired property. 
We denote its total space by $\X_{lc}$ and the intermediate relative 
log 
(dlt) minimal model's total space as $\X_{min}$. 

Note $\X_{lc}$ 
is still terminal hence Cohen-Macaulay from 
Theorem \ref{dlt.lem} \ref{dlt.term}. Thus its central fiber is 
again Cohen-Macaulay so that the generic reducedness implies 
reducedness. The only remained thing to confirm, for the 
former statement of this theorem \ref{pol.dlt.model} is that 
none of lc centers of the $\X_{min}$ is not contracted 
by the morphism to $\X_{lc}$. This also follows from the 
Cohen-Macaulay property of $\X_{lc,0}$. 

After finite base change, the semistable reduction theorem 
implies one can ensure reducedness of the central fiber of $\X'$. 
After passing to the relative lc model, the central fiber is still 
reduced because of the dlt property. 
\end{proof}

%%%%%%%%%%%%%%
We introduce  
a simpler local version of the CM degree minimization 
(cf., \cite[\S 4]{Od12}). 
Although our main interest lies in the Calabi-Yau case for 
a moment, we discuss in more general setting. 

\begin{Def}[(Log) canonical height] 
For a proper flat ($\Q$-Gorenstein) family of 
polarized projective varieties over a smooth 
proper curve 
$(\X,\mathcal{L})\to C$ 
and a holomorpihc section $s\colon C\to \X$, we define  
the {\it log canonical height} 
(resp., {\it canonical divisor height}) 
of $s$ as 
$$h_{\rm lc}(s):={\rm deg}_{C} \hspace{1mm} s^{*}(\mathcal{O}_{\X}(K_{\X/C}+\X_{0,{\rm red}})).$$
(resp., $h_{\rm c}(s):=
{\rm deg}_{C} \hspace{1mm} s^{*}(\mathcal{O}_{\X}(K_{\X/C})).$)
Here, $\X_{0,{\rm red}}$ denotes the 
reduced divisor fully supported on $\X_{0}$. 
\end{Def}

This is a variant of weight function by \cite{KS, MN, Tem.diff}, 
and the result below characterizes its minimization easily, 
in a similar manner to {\it loc.cit}. 

\begin{Prop}[Dlt minimal models and heights minimization]
\label{minimization}
For a generically log terminal 
proper family of projective varieties 
$\X^{*}\to (C\setminus \{0\})$, and fixing 
the meromorphic section $s|_{C\setminus 0}$, 
consider fillings i.e., 
proper flat $\X\to C$ 
possibly after a finite base change of $C$ of degree $d$. 

Then, among such fillings with $s(0)$ lying in only 
one irreducible component of $\X_{0}$, the normalized lc divisor height 
$$\frac{h_{lc}(s)}{d}$$ is minimized 
if and only if $s(0)$ lies in an irreducible component 
which appears in the dlt minimal model of a finite base change 
(we only see local germ around $s(0)\in \X$). 
\end{Prop}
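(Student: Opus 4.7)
The plan is to use the relative MMP and the negativity lemma to directly compare $h_{lc}(s)/d$ for any filling with its value on a dlt minimal model (after a sufficiently divisible base change), and to identify the equality case with the condition that $s(0)$ lies on a component surviving in the dlt model.

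\textbf{Step 1 (Dlt minimal model after base change).} By semistable reduction \cite{KKMS} together with the relative (log) MMP \cite{BCHM,Fjn.ss,HX}, after a sufficiently divisible degree-$d$ base change $C'\to C$ totally ramified at $0$, the punctured family $\X^{*}$ admits a dlt minimal model $\X^{min}\to C'$: a filling with reduced snc central fiber and $K_{\X^{min}}+\X^{min}_0$ relatively nef over $C'$ (Theorem~\ref{dlt.lem}). By Theorem~\ref{dlt.lem}\eqref{dual.cpx.same} the set of central-fiber components of $\X^{min}_0$ is canonical up to flops, so the question of whether $s(0)$ ``lies on a dlt component'' is well-posed intrinsically.

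\textbf{Step 2 (Common resolution and negativity).} Given any competing filling $\X\to C$, base-change it to $\X_{C'}\to C'$ and take a common log resolution $\tilde\X$ dominating both $\X_{C'}$ and $\X^{min}$, via morphisms $\pi_{\X}$ and $\pi_{\min}$. Writing
\[
\pi_{\X}^{\,*}\bigl(K_{\X_{C'}/C'}+(\X_{C'})_{0,\mathrm{red}}\bigr) \;=\; \pi_{\min}^{\,*}\bigl(K_{\X^{min}/C'}+\X^{min}_{0,\mathrm{red}}\bigr)\;+\;F,
\]
the negativity lemma — applied to the relative nefness of $K_{\X^{min}}+\X^{min}_0$ over $C'$ and to the $\pi_{\min}$-exceptional part of $F$ — forces $F$ to be an effective $\Q$-divisor. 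Its support consists of (i) $\pi_{\min}$-exceptional divisors, and (ii) strict transforms of components of $(\X_{C'})_0$ that do not correspond, under any flop, to a component of $\X^{min}_0$ (compare Theorem~\ref{dlt.lem}\eqref{lc.min}).

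\textbf{Step 3 (Height comparison and equality).} Lift $s$ to $\tilde s\colon C'\to\tilde\X$; taking $\deg_{C'}\tilde s^{\,*}$ of both sides and using the projection formula, one obtains
\[
\frac{h_{lc}(s)}{d}\;=\;\frac{h_{lc}(s^{min})}{d}\;+\;(\tilde s(C')\cdot F),
\]
where the normalisation by $d$ absorbs the base-change scaling. The intersection $(\tilde s(C')\cdot F)$ is $\ge 0$ since $F$ is effective, with equality precisely when $\tilde s(0')\notin\mathrm{supp}(F)$. By Step 2's description of $\mathrm{supp}(F)$, this is equivalent to $s(0)$ lying on an irreducible component of $(\X_{C'})_0$ whose strict transform is a component of $\X^{min}_0$ — i.e., $s(0)$ lies on a component of a dlt minimal model after a finite base change.

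\textbf{Main obstacle.} The decisive step is Step 2: the effectivity of $F$ together with the precise description of $\mathrm{supp}(F)$. The negativity-lemma argument in this relative dlt setting, with the reduced central fiber appearing as boundary instead of the usual plain $K_{\X}$, is a slight refinement of the standard MMP negativity and requires decomposing $F$ into its $\pi_{\min}$-exceptional and $\pi_{\X}$-exceptional parts and applying negativity to each; the characterisation of which divisors survive in $\X^{min}$ uses Theorem~\ref{dlt.lem}\eqref{lc.min}. Once Step 2 is in place, the comparison in Step 3 and the ``if and only if'' conclusion are formal.
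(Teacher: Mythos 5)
Your overall strategy---comparing the competing filling with a dlt minimal model via a common resolution, pulling back along the section, and reading off equality from log discrepancies---is similar in spirit to the paper's argument, but Step 2 contains a genuine gap: the divisor $F$ is \emph{not} effective in general, and the negativity lemma does not yield effectivity as you invoke it. Write $F = A_{min} - A_{\X}$, where $A_{min}:=K_{\tilde\X}+\tilde\X_{0,\mathrm{red}}-\pi_{min}^*(K_{\X^{min}}+\X^{min}_{0,\mathrm{red}})$ is effective precisely because $(\X^{min},\X^{min}_{0,\mathrm{red}})$ is lc, while $A_{\X}:=K_{\tilde\X}+\tilde\X_{0,\mathrm{red}}-\pi_{\X}^*(K_{\X_{C'}}+(\X_{C'})_{0,\mathrm{red}})$ has arbitrary sign since $\X_{C'}$ is an arbitrary filling. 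The coefficient of $F$ at a component $E$ of $\X^{min}_0$ that is $\pi_{\X}$-exceptional is then $-a(E;\X_{C'})$, which is strictly negative whenever $\X_{C'}$ is (say) terminal along the image of $E$: concretely, if $\X_{C'}$ is the MMP divisorial contraction of a fiber component $E$ of $\X^{min}$ and $\tilde\X=\X^{min}$, then $F = -a(E;\X_{C'})\,E < 0$. The negativity lemma does not save this, because $F$ is not $\pi_{\X}$-exceptional (it is also supported on strict transforms of components of $(\X_{C'})_0$), and $-F$ is not $\pi_{min}$-nef (that would require $-\pi_{\X}^*(K_{\X_{C'}/C'}+(\X_{C'})_{0,\mathrm{red}})$ to be $\pi_{min}$-nef, which is not available).

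What actually rescues the argument is exactly the hypothesis that $s(0)$ lies on only one component: then $s(0)$ is a codimension-one point of $\X_{C'}$, hence outside the centers of every $\pi_{\X}$-exceptional divisor (all of codimension $\ge 2$), so $\tilde s(0')$ never meets $\mathrm{Exc}(\pi_{\X})$ and $\tilde s^*A_{\X}=0$. The quantity you need is therefore $\tilde s^*F = \tilde s^*A_{min}\ge 0$, for which lc-ness of the dlt pair alone suffices. This is effectively the paper's route: resolve $\X'\dashrightarrow\X^{\mathrm{dlt}}$ to a morphism from some $\X''$ dominating both, observe that $\X''\to\X'$ leaves $h_{lc}(s)$ unchanged near the generic point $s(0)$, and read effectivity of $K_{\X''}+\X''_{0,\mathrm{red}}-\varphi^*(K_{\X^{\mathrm{dlt}}}+\X^{\mathrm{dlt}}_{0,\mathrm{red}})$ directly from lc-ness of $(\X^{\mathrm{dlt}},\X^{\mathrm{dlt}}_{0,\mathrm{red}})$---no negativity lemma is needed; terminality of $\X^{\mathrm{dlt}}$ (Theorem~\ref{dlt.lem}) is then used only to realize the remaining lc places as fiber components after a further ramified base change. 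With Step 2 repaired in this way, your Step 3 goes through.
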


\begin{proof}
By a finite normalized base change (semistable reduction), 
which we denote as $\X'$, we can and do assume $s(0)$ lies in 
the reduced locus of $\X'_{0}$. 
Take a dlt minimal model $(\X,\X_{0,{\rm red}})$. 
We can take a resolution of indeterminancy of 
$\X'\dashrightarrow \X$ to reduce to the case when 
$\X'$ dominates $\X$ by a morphism $\varphi$. Then since 
$(\X,\X_{0,{\rm red}})$ is dlt by the definition, 
$K_{\X'}+\X'_{0,{\rm red}}-\varphi^{*}(K_{\X}+\X_{0,{\rm red}})$ 
is effective. Furthermore, since $\X$ is terminal, 
all log canonical valuations $v$ of $(\X,\X_{0,{\rm red}})$ 
have $v(t)>1$ but after a base change with 
the ramification degree $v(t)$ at $0\in C$, 
the log canonical valuations are realized at the dlt model 
of the base change over $C'$. 
Hence the both assertions easily follow. 
\end{proof}

We call the above irreducible component, 
{\it lc height minimizing} for $s$.

%%%%%%%%%%%%%%%%%%%%%%%%%  

\subsection{Subdivision by base change and Galaxies}\label{subdiv.sec}
From now, we consider finite ramified 
base change of degenerating Calabi-Yau varieties 
and its ``subdividing'' effects, following \cite[\S4]{ops}. 
More precisely, we obtain a projective system of reductions as 
follows. Here, one main point will be to work over the (formal) 
Puiseux power series ring which means 
$$k[[t^{\Q}]]:=\cup_{m\ge 1}k[[t^{\frac{1}{m}}]],$$
in this paper. We also denote its fraction field 
as $k((t^{\Q}))$. 

\begin{Def}
\begin{enumerate}
\item \label{adm.dom}
Consider a (polarized) 
dlt minimal model $(\X,\mathcal{L})\to \Delta\ni 0$, 
and another dlt minimal model $(\X',\mathcal{L}')\to \Delta'\ni 0$ 
over a finite ramified cover of $\Delta$ whose 
complements of the central fiber is identified after 
the base change. 
Note that $\varphi$ is automatically birational and log crepant. 
We call $(\X',\mathcal{L}')$ (resp., $\X'$) is 
{\it admissibly dominating} $(\X,\mathcal{L})$ 
(resp., $\X$) 
if 
$\varphi\colon \X'\dashrightarrow 
\X\times_{\Delta}\Delta'$ is a morphism (i.e., everywhere defined) 
such that the following conditions hold (resp., the second condition holds): 
\begin{itemize}
\item 
$\mathcal{L}'$ and $\varphi^{*}\mathcal{L}$ 
coincide at open strata $\X_{0}^{\rm klt}$, 
\item $\varphi$ is toroidal at the snc (open) locus of $(\X,\X_{0})$. 
\end{itemize}

\item Take a local uniformizer $t$ of $\mathcal{O}_{\Delta,0}$ 
and consider $\mathcal{O}_{\Delta,0}\hookrightarrow 
k[[t]]\hookrightarrow k[[t^{\Q}]],$ where the last target denotes  
the local ring of formal Puiseux power series. 
If $(\X',\mathcal{L}')\to \Delta'$ admissibly dominates 
$(\X,\mathcal{L})\to \Delta$ in the above sense \eqref{adm.dom}, 
we call $$(\X',\mathcal{L}')\times_{\Delta'} k[[t^{\Q}]]
\to (\X,\mathcal{L})\times_{\Delta} k[[t^{\Q}]]$$ 
or its restriction to the central fiber 
$(\X'_{0},\mathcal{L}'_{0})\to(\X,\mathcal{L})$ 
also admissibly dominating. 
\end{enumerate}
\end{Def}
The term {\it admissibly} comes with reminiscence to 
that of Raynaud \cite{Ray}, although our main concern is 
more detailed analysis of very particular case of admissible blow ups in 
the sense of {\it loc.cit}. 

The base change trick in \cite[\S 4.1]{ops}, whose origin 
at least 
goes back to \cite{FriS} for Kulikov degenerations of 
K3 surfaces and Kodaira for $I_{\nu}$-type degenerations of 
elliptic curve case 
(cf., also \cite{KKMS}, 
\cite[\S 5.9]{BJ16} etc)  shows the following 
proposition. 

\begin{Prop}\label{subdiv}
For any dlt model $(\mathcal{X},\mathcal{L})\to \Delta$ 
as polarized Calabi-Yau family, and its ramified finite 
base covering $\Delta'\to \Delta$, we have a 
dlt model 
$(\mathcal{X}',\mathcal{X}'_{0})$ as a vertical blow up of 
$\mathcal{X}\times_{\Delta} \Delta'$ 
admissibly dominating 
$\mathcal{X}$. 
\end{Prop}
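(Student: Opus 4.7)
The plan is to adapt the classical semistable-reduction-plus-MMP paradigm of \cite{KKMS, FriS}, now in the dlt minimal model setting of \cite[\S 4.1]{ops}.

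First I would work on the snc open locus $\X^{\rm snc} \subseteq \X$, where $(\X, \X_{0,\rm red})$ is toroidal. Along a codimension-$k$ stratum of $\X_0$, the local \'etale model of $\X \times_{\Delta} \Delta'$ has the form $x_1 \cdots x_k = s^d$, whose canonical toric resolution is obtained by subdividing the cone $\sigma = \R_{\geq 0}^k$ using the lattice hyperplanes $x_1 + \cdots + x_k = j$ for $j = 1, \ldots, d$. These local refinements are compatible along intersections of strata and by the toric machinery of \cite{KKMS} assemble into a globally defined toric subdivision, realized as the blow-up of a coherent ideal sheaf $\mathcal{I}^{\rm snc}$ on $(\X \times_{\Delta} \Delta')^{\rm snc}$ supported on the central fiber. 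Since the non-snc locus $Z = \X \setminus \X^{\rm snc}$ has codimension $\geq 2$ in $\X$, the sheaf $\mathcal{I}^{\rm snc}$ extends by scheme-theoretic closure to a coherent ideal sheaf $\mathcal{I}$ on $\X \times_{\Delta} \Delta'$; its blow-up $\X''$ is a projective vertical modification whose central fiber is reduced on the snc open, and $\X''$ is terminal by Theorem \ref{dlt.lem} \ref{dlt.term} applied to $(\X, \X_0)$.

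Second, to upgrade $\X''$ to a polarized dlt \emph{minimal} model, I would follow the recipe of Proposition \ref{pol.dlt.model}: pulling back $\mathcal{L}$ to a $\Q$-line bundle $\mathcal{L}''$ on $\X''$, choosing a general section $D'' \in |m\mathcal{L}''|$ for $m \gg 0$, and running the relative log MMP for $(\X'', \X''_{0,\rm red} + \epsilon D'')$ over $\Delta'$ via \cite{BCHM, Fjn.ss, HX}. The output $(\X', \X'_{0,\rm red})$ is dlt with relatively ample $\mathcal{L}'$. Each flip/divisorial contraction of this MMP is supported in the non-klt locus, so on $\X_0^{\rm klt}$ the comparison $\mathcal{L}'|_{\X_0^{\rm klt}} = \varphi^* \mathcal{L}|_{\X_0^{\rm klt}}$ persists, and the morphism $\varphi \colon \X' \to \X \times_{\Delta} \Delta'$ remains toroidal on the snc open, giving the two admissibility conditions.

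The main obstacle is the coherence/projectivity globalization in the first step: amalgamating the strata-by-strata toric subdivisions into a single coherent ideal sheaf $\mathcal{I}$ on $\X \times_{\Delta} \Delta'$ whose blow-up is projective. In the purely semistable case this is exactly the content of \cite{KKMS}, but here the dlt hypothesis (rather than snc on the total space) means the amalgamation must be carried out inductively along strata, using that the non-snc complement has codimension $\geq 2$ together with the relative polarization $\mathcal{L}$ to control projectivity — which is essentially the argument of \cite[\S 4.1]{ops} transferred to the present Puiseux-series setup.
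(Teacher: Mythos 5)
You have identified the right overall route --- a toroidal subdivision over the snc locus followed by an MMP cleanup --- and this is indeed what the paper's citation to \cite[Theorem 4.8, Steps 2--3]{ops} boils down to. However several steps of your elaboration do not work as written. The invocation of Theorem~\ref{dlt.lem}~\eqref{dlt.term} to claim that $\X''$ is terminal is incorrect: that statement applies to a dlt \emph{minimal model} $(\X,\X_0)\to\Delta$, and $\X''$, being a blow-up of the base change, is not one a priori (one cannot conclude terminality of $\X''$ from terminality of $\X$). More seriously, extending $\mathcal{I}^{\rm snc}$ by scheme-theoretic closure across $Z\times_\Delta\Delta'$ gives an ideal whose blow-up is completely uncontrolled there; you have no reason to believe $(\X'',\X''_{0,\rm red})$ is even log canonical, which you need before the relative MMP of \cite{BCHM,Fjn.ss,HX} can be started (without first taking a log resolution).

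The third, most important gap is that you run the MMP \emph{over $\Delta'$}. This does not preserve the crucial property that $\varphi\colon\X'\to\X\times_\Delta\Delta'$ is a \emph{morphism}: flips over $\Delta'$ can introduce indeterminacy of the rational map to $\X\times_\Delta\Delta'$, and admissible domination in Definition~\ref{adm.dom} requires $\varphi$ to be everywhere defined. You would either have to run the MMP relatively over $\X\times_\Delta\Delta'$, or --- as the paper does, following \cite{ops} --- arrange the toroidal subdivision so carefully that no MMP step is required: the paper's remark about taking a \emph{regular} subdivision so that relative projectivity follows from Tai's criterion \cite[Ch.~IV, \S 2]{AMRT} is doing precisely the work you flag as ``the main obstacle'' and wave at via the polarization. (A final, minor point: the Puiseux-series setup you invoke at the end is not part of this Proposition, which lives entirely over $\Delta'\to\Delta$; the passage to $k[[t^{\Q}]]$ only happens in Corollary~\ref{subdiv2}.)
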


\begin{proof}
This is essentially proven in \cite[the proof of Theorem 4.8]{ops}, 
especially Step 2 and Step 3 in {\it loc.cit}. So we refer to {\it loc.cit} 
and omit the repetition of the arguments, except for pointing out that 
we should take the regular subdivision to ensure the 
relative projectivity criterion by Tai (\cite[Chapter IV, \S2, 
esp., Theorem 2.1]{AMRT}). 
Note that from the construction, the obtained 
dlt model is $\Q$-factorial even if the original $\X$ is 
not $\Q$-factorial. 
\end{proof}

The above construction of admissibly dominating models does {\it not} 
give their uniqueness, especially for the ambiguity 
caused by the simple use of the relative minimal 
model program. 
Here is basic properties of the admissible dominations. 

\begin{Thm}\label{subdiv.lc}
For an admissibly dominating morphisms between dlt minimal models as above 
$\psi\colon \X'\to \mathcal{X}\times_{\Delta} \Delta'$, 
the following set of closed subsets of $\X_{0}$ are the same: 
\begin{enumerate}
\item 
the set of log canonical centers of $(\X,\X_{0})$, 
\item the set of components of intersections of some components of $\X_{0}$, 
\item  the set of the images of log canonical centers of $(\X',\X'_{0})$ by $\psi$, and 
\item the set of components of intersections of some components of $\X'_{0}$.
\end{enumerate}
\end{Thm}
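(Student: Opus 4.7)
The plan is to show that all four sets coincide with the set of strata of $\X_0$, i.e.\ the irreducible components of intersections of irreducible components of $\X_0$. By Fujino's characterization of lc centers for dlt pairs \cite[\S 3.9]{Fjn}, the lc centers of the dlt pair $(\X,\X_0)$ are exactly these strata, which gives (1) $=$ (2) immediately; applying the same result to $(\X',\X'_0)$ shows that (3) equals the $\psi$-images of the sets in (4). The remaining substantive content is the identification (1) $=$ (3).

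For the inclusion (3) $\subseteq$ (1), I would use that the admissibly dominating condition, combined with both $\X$ and $\X'$ being dlt minimal models of the same generic fiber, forces $\psi$ to be log crepant (both $K + \text{central fiber}$ are relatively numerically trivial, and $\psi$ is proper birational). A standard fact then asserts that any proper birational log crepant morphism between dlt pairs sends lc centers to lc centers: a divisorial lc valuation realizing a stratum $W'\subset\X'_0$ pushes forward to an lc valuation on $(\X,\X_0)$ centered at $\psi(W')$, and on a dlt pair the center of an lc valuation is an lc center.

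For the reverse inclusion (1) $\subseteq$ (3), let $W$ be an lc center of $(\X,\X_0)$. By the Szabó form of the dlt condition used here, the generic point $\eta_W$ lies in the snc locus of $(\X,\X_0)$, where $\psi$ is toroidal by the second bullet of the admissibly dominating hypothesis (the base change $\X\times_{\Delta}\Delta'$ remains toroidal near $\eta_W$, merely rescaling the smoothing parameter). Standard toroidal geometry then says $\psi$ there corresponds to a subdivision of fans, and in any subdivision every cone of the target fan is covered by cones of the source fan, so every stratum of the target is the image of at least one stratum of the source. Hence some stratum $W'\subset\X'_0$ satisfies $\psi(W')$ dense in $W$, and properness gives $\psi(W')=W$. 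The main potential obstacle is precisely this last assertion — ruling out that a stratum of the toroidal base change $\X\times_{\Delta}\Delta'$ gets collapsed by $\psi$ — but this is built into the construction of admissibly dominating models in Proposition \ref{subdiv}: the regular subdivision invoked there only refines, and never coarsens, the fan associated with the snc part, so every stratum of the target has a preimage stratum.
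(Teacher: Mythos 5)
Your proof is correct and captures the same key ingredients—log crepance, Fujino's characterization of lc centers for dlt pairs, and the toroidal structure near the snc locus—but it takes a more direct organizational route than the paper. The paper does not work directly with $\psi$: instead it introduces a log resolution $\widetilde{\X}\to\X$ with snc vertical divisor $\DD$ (log crepant to $(\X,\X_0)$), base changes to get a toroidal pair $(\widetilde{\X}^{(N)},\widetilde{\DD}^{(N)})$, takes a toroidal log resolution $\widetilde{\widetilde{\X}}^{(N)}$ which can be arranged to dominate $\X'$, and then pushes a chain of identifications down this tower to $\X_0$. Your version uses $\psi$ directly, invoking log crepance for $(3)\subseteq(1)$ and the toroidal bullet in the definition of admissible domination plus the subdivision property of toroidal modifications for $(1)\subseteq(3)$. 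Both buy essentially the same conclusion; the paper's auxiliary tower makes the toroidal combinatorics more explicit, while yours is shorter.

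Two small corrections are in order. First, $(\X\times_\Delta\Delta',(\X\times_\Delta\Delta')_{0})$ is in general only lc, not dlt—the base change creates $A_{N-1}$-type singularities along the codimension-two strata of the snc locus—so the phrase ``log crepant morphism between dlt pairs'' is not accurate as stated; the argument still goes through because the pushforward of an lc place under a log-crepant proper birational morphism (and then under the finite crepant cover $\X^{(N)}\to\X$) lands at an lc center of the lc/dlt target, which is all you use. Second, for coverage of target strata you appeal to the construction in Proposition~\ref{subdiv}, but the theorem is stated for an \emph{arbitrary} admissibly dominating $\psi$, not only for models produced by that construction. The correct justification is the toroidal hypothesis in the definition itself: any proper birational toroidal morphism is locally a subdivision of fans, and in a subdivision every cone of the target is partitioned by cones of the source, so every target stratum is dominated by some source stratum. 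With that substitution your $(1)\subseteq(3)$ argument is complete.
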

\begin{proof}
Before describing the proof, we set further notations: 
we denote the ramification degree of $\Delta'\to \Delta$ at the origin as $N$, 
$\mathcal{X}\times_{\Delta} \Delta'$ as $\X^{(N)}$. We take a log 
resolution $\varphi\colon \widetilde{\X}\to \X$ with a simple normal crossing vertical 
divisor $\DD$ of $\tilde{\X}$ 
so that $(\tilde{\X},\DD)$ is log crepant to $(\X,\X_{0})$, 
which exists by the definition of log resolution. 
Here, in this paper, 
the notion of simple normal crossingness for a divisor means locally normal crossing and 
 any finite intersection of 
the components are connected and smooth. 

Note that the divisor $\DD$ can be written as $\varphi^{-1}_{*}\X_{0}+E$ where 
$E$ is supported on $\varphi$-exceptional locus. 
Now, we set  $\widetilde{\X}^{(N)}:=\widetilde{\X}\times_{\Delta}\Delta'$ with its 
vertical divisor $\widetilde{\DD}^{(N)}$ such that the log smooth pair 
$(\widetilde{\X}^{(N)},\widetilde{\DD}^{(N)})$ is log crepant to $(\widetilde{\X},\mathcal{D})$ 
and $(\X,\X_{0})$. 

From the simple normal crossing property of $\DD$, 
$(\widetilde{\X}^{(N)},\widetilde{\DD}^{(N)})$ is not only log canonical, but even toroidal 
due to the local equations of two terms. Furthermore, 
since $\DD$ is supposed to have only connected smooth finite intersections, 
we can take a toroidal log resolution of the pair 
$(\widetilde{\X}^{(N)},\widetilde{\DD}^{(N)})$ as 
$(\widetilde{\widetilde{\X}}^{(N)},\widetilde{\widetilde{\DD}}^{(N)})$. 

In this situation, we confirm the following desired 
natural identifications of the sets. 
\begin{align}
\label{1} &{\text{log canonical centers of }}(\X^{(N)},\X^{(N)}_{0})\\
\label{2} =&{\text{log canonical centers of }}(\X^{(N)},\DD^{(N)})\\ 
\label{3}=&{\text{strata (components of finite intersections) of }}\lfloor \widetilde{\DD^{(N)}} \rfloor\\
\label{4}=&{\text{strata (components of finite intersections) of }}\lfloor \widetilde{\DD} \rfloor \\ 
\label{5}=&{\text{log canonical centers of }}(\X,\DD). 
\end{align}
The reasons of the above to hold are: 
\eqref{2} follows from the log crepantness, 
the \eqref{3} follows from toroidal structures, 
\eqref{4} obviously holds, and \eqref{5} follows from the 
dlt-ness of $(\widetilde{\X},\DD)$ thanks to 
\cite[3.9]{Fjn} for instance. 

Now, for the setting of the statements, 
we can take $\widetilde{\widetilde{\X}}^{(N)}$ 
as which dominates $\X'$. Then, sending the elements (loci of various varieties) 
to $\X_{0}$, we obtain the desired identifications. 
\end{proof}

It readily follows from 
above Theorems \ref{dlt.lem}, \ref{subdiv.lc} that: 

\begin{Cor}
In the setup of \ref{subdiv}, \ref{subdiv.lc}, 
\begin{enumerate}
\item \label{flop.cor}
For fixed $\X\to \Delta$ and $N$, $\# \{{\text{components of }\X'_{0}}\}$ 
does not depend on the choice of dlt minimal model $\X'$. 

\item \label{compo.asym}
$$\dfrac{\# \{{\text{components of }\X'_{0}\}}}{\# \{{\text{components of }\X_{0}\}}}\sim N^{m},$$
for $N\to \infty$, 
where $m$ denotes the dimension of the dual intersection complex. 
\end{enumerate}
\end{Cor}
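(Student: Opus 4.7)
The plan is to deduce both parts from Theorem~\ref{dlt.lem}(iii) and Theorem~\ref{subdiv.lc}, via a local toric lattice-point count on the dual intersection complex $\Delta(\X_0)$.

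For part~(i), the irreducible components of $\X'_0$ are in bijection with the vertices of the dual intersection complex $\Delta(\X'_0)$. Given two admissibly dominating dlt minimal models $\X'_1, \X'_2$ of $\X^{(N)} = \X \times_\Delta \Delta'$, both coincide over $\Delta' \setminus \{0\}$ by construction (Proposition~\ref{subdiv}) and both are dlt minimal models with klt Calabi--Yau generic fiber. Theorem~\ref{dlt.lem}(iii) then produces a sequence of flops connecting them together with a canonical homeomorphism of their dual complexes, so the number of vertices, hence of components, agrees.

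For part~(ii), I would work locally near each top-dimensional cell of $\Delta(\X_0)$. By Theorem~\ref{subdiv.lc}, the poset of intersection strata of any admissibly dominating dlt minimal model $\X'$ is identified with the poset of strata of a toroidal log resolution of $\widetilde{\X} \times_\Delta \Delta'$, where $\widetilde{\X} \to \X$ is a log-crepant toroidal log resolution. Near a stratum of $\X_0$ corresponding to an $m$-cell of $\Delta(\X_0)$, the étale-local picture is the standard one: $m+1$ components meet with local equation $t = x_1 \cdots x_{m+1}$, and after the substitution $t = s^N$ the admissible regular toric subdivision has new vertices corresponding to lattice points of the simplex $\{(a_1,\dots,a_{m+1}) \in \Z_{\geq 0}^{m+1} : \sum a_i = N\}$.

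There are $\binom{N+m}{m} = \Theta(N^m)$ such lattice points. Summing over the finitely many top-dimensional cells of $\Delta(\X_0)$ and noting that cells of lower dimension contribute only $O(N^{m-1})$ new components, we conclude $\#\{\text{components of }\X'_0\} = \Theta(N^m)$; dividing by the constant $\#\{\text{components of }\X_0\}$ yields the asymptotic in~(ii). The main point requiring care is that boundary vertices are shared between adjacent maximal cells, so an honest leading-order count needs the observation that such shared contributions live on cells of strictly lower dimension and hence are of subleading order; this, together with the flop-invariance from part~(i), ensures the count is intrinsic and depends only on $\Delta(\X_0)$ and $N$.
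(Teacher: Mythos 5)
Your proof is correct and follows essentially the same route as the paper: part~(i) via flop-connectedness and the canonical homeomorphism of dual complexes from Theorem~\ref{dlt.lem}, and part~(ii) by identifying $\Delta(\X'_0)$ with the $1/N$-refined affine structure on $\Delta(\X_0)$ and counting vertices. Your explicit $\binom{N+m}{m}$ lattice-point count and the remark on shared boundary vertices spell out what the paper leaves implicit (the paper instead first cites Koll\'ar's equidimensionality of minimal lc centers to view $\Delta(\X_0)$ as a union of $m$-simplices, a step you sidestep by noting the lower-dimensional cells contribute only $O(N^{m-1})$).
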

\begin{proof}
The proof of \eqref{flop.cor}: 
it immediately follows from Theorem\ref{dlt.lem} \eqref{dlt.term} 
and \cite{Kawamataflop}. 
More precisely, we have canonical identification of the sets 
for different choices of $\X'$ (for fixed $\X$ and $N$). 

The proof of \eqref{compo.asym}: 
Recall from \cite{Kol11} that the minimal log canonical centers have 
all the same dimensions, hence the dual 
intersection complex is union of $m$-dimensional simplices 
(\cite[4.1.4]{NX}). 

Therefore, since $\Delta(\X'_{0})$ is 
canonically homeomorphic to $\Delta(\X_{0})$ with $N$ multiplied 
affine structures (Lemma~\ref{dual.cpx.same}), we conclude the proof. 
\end{proof}
\noindent
Note that before taking the limit, above can not be an equality 
unless abelian varieties case. 

Recall that for dlt model $\X\to \Delta$, with 
the generic fiber $\X_{\eta}$ and the central fiber 
$\X_{0}$, 
we have the essential skeleton 
$\Delta^{alg}(\X_{0})\subset \X_{\eta}^{an}$ as 
introduced in \cite{KS}, with more refined understanding 
in \cite{MN, BFJ, BFJ.sol, NX}. 
As \cite[\S 4.1]{KS} inferred indirectly and written explicitly 
in \cite[\S 3.2]{MN} (cf., also \cite[\S 2.1]{BFJ.sol}), 
it also admits a natural $\Z$-affine structure. 
Recall from the basic of Berkovich geometry \cite{Ber90}, 
for the degree $N$ ramifying finite map $\Delta'\to \Delta$ 
and $\X^{(N)}:=\X\times_{\Delta}\Delta'$ as before, 
there is a natural map 
$(\X'_{\eta})^{an}/{\rm Gal}(\Delta'/\Delta)\simeq 
\X_{\eta}^{an}$. 

\begin{Prop}
In the setup of Theorem~\ref{subdiv.lc}, 
the restriction of homeomorphism: 
$$(\X'_{\eta})^{an}/{\rm Gal}(\Delta'/\Delta)\simeq 
\X_{\eta}^{an}$$
gives a natural homeomorphism between 
the dual intersection complexes of $\X'_{0}$ and 
$\X_{0}$ as 
$$
\Delta^{alg}(\X'_{0}) \simeq  
\Delta^{alg}(\X_{0}), 
$$
with the $\Z$-affine structure 
simply $N$ multiplied (in the sense of 
local affine coordinates). 
\end{Prop}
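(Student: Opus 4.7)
The plan is to reduce the claim to local toroidal computations on the snc locus of $(\X,\X_0)$, which by Theorem~\ref{dlt.lem} is dense in every log canonical stratum, and then to track how the normalization by the base uniformizer changes under a degree $N$ ramified cover.

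First, for any dlt minimal model $\Y\to\Delta$ with reduced central fiber, the Mustata--Nicaise and Nicaise--Xu constructions (cited in this paper as \cite{MN, NX}) embed the dual intersection complex $\Delta^{alg}(\Y_0)$ as a strong deformation retract of $\Y_\eta^{an}$, canonically identified with the essential skeleton of \cite{KS}. The embedding is explicit: near a closed point $p$ of a stratum $Z$ cut out by regular parameters $(z_0,\ldots,z_d)$ with $t=\prod z_i\cdot(\text{unit})$ (this is the local form precisely because the central fiber is reduced and the pair is snc there), the barycentric point $(w_0,\ldots,w_d)$ with $\sum w_i=1$ is sent to the monomial valuation $v_w$ defined by $v_w(z_i)=w_i$, so $v_w(t)=1$. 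Applying this to both $\X$ and $\X'$, and combining with Theorem~\ref{subdiv.lc}, which supplies a canonical bijection of strata, and Theorem~\ref{dlt.lem}(iii), which gives a canonical homeomorphism of the underlying complexes, we obtain embedded models of both dual complexes inside their respective Berkovich analytifications.

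It then suffices to verify two compatibilities. The first is that the standard Galois identification $(\X'_\eta)^{an}/\mathrm{Gal}(\Delta'/\Delta)\simeq \X_\eta^{an}$ of Berkovich generic fibers carries the skeleton of $(\X'_\eta)^{an}$ onto the skeleton of $\X_\eta^{an}$ in accordance with the stratum bijection. The second is the rescaling of the $\Z$-affine structure. Both reduce to the following local computation on the snc locus of $\X$: at a stratum with local equations $(z_0,\ldots,z_d)$ and $t=\prod z_i$, the base change $t=(t')^N$ turns the equation $\sum w_i=1$ defining the embedded simplex into the equation $\sum w_i=N$ when we use the uniformizer $t'$ instead, and a regular subdivision of this dilated simplex (whose integer lattice points now include many new vertices) governs the components of $\X'_0$ produced by Proposition~\ref{subdiv}. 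Renormalizing to $v(t')=1$ on $(\X'_\eta)^{an}$ then divides all $w_i$ by $N$, so a valuation with $\X$-affine coordinates $(w_i)$ acquires $\X'$-affine coordinates $(Nw_i)$, as claimed.

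The main obstacle is the global compatibility check for the affine structure: one must verify that after the toroidal blow-up and possible further flops used to construct the dlt minimal model $\X'$ (neither of which is unique, though Theorem~\ref{dlt.lem}(iii) shows the underlying dual complex is canonical), the resulting $\Z$-affine structure on $\Delta^{alg}(\X'_0)$ is intrinsically the $N$-fold dilation of the one on $\Delta^{alg}(\X_0)$ on every overlapping chart, not just per-stratum. This is where the skeleton's independence from the chosen dlt model is essential, combined with the Galois equivariance of monomial valuations under the cyclic cover $k((t))\subset k((t^{1/N}))$: once consistent local toroidal coordinates are fixed, the transition functions between charts are integral-affine in the $w_i$, and they transform in exactly the same way after rescaling, so the global affine structure on $\Delta^{alg}(\X'_0)$ is the claimed $N$-multiple of that on $\Delta^{alg}(\X_0)$.
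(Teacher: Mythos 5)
Your proof is correct and follows essentially the same route as the paper: both reduce to the Musta\c{t}\u{a}--Nicaise/Nicaise--Xu description of the essential skeleton with its $\Z$-affine structure, invoke its independence from the choice of dlt model, and then compute locally over the snc locus of $(\X,\X_0)$ where the birational morphism is toroidal. The only cosmetic difference is that the paper pins down the specific admissibly dominating model produced in \cite{ops} and reads off the rescaling from the exceptional divisors landing at $1/N$-integral points, whereas you phrase the same fact as a renormalization of the monomial valuation from $v(t)=1$ to $v(t')=1$; these are equivalent statements of the same local computation.
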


\begin{proof}
If we replace $\X'$ by another dlt minimal model 
(without changing $\Delta'\to \Delta$), the essential skeleta 
do not change including the affine structures, as proven in 
\cite[\S 3.2]{NX} after \cite{MN}. 

Therefore, we can assume that $\Delta'$ is the one constructed 
in Step 2 of the proof of Theorem 4.4 in \cite{ops}, 
as used in \eqref{subdiv}. From the construction, 
over simple normal crossing locus of $(\X,\X_{0})$ 
which contains all lc centers, the birational morphism 
$\X'\to \X^{(N)}=\X\times_{\Delta}\Delta'$ is toroidal 
and the exceptional divisors corresponds to 
$1/N$-integral points of the maximal simplices of 
$\Delta^{alg}(\X_{0})$, from the definition of 
the affine structures in 
\cite[\S 3.2]{MN} (cf., also \cite[\S 2.1]{BFJ.sol}). 
Hence, from  Theorem \ref{subdiv.lc}, 
we conclude with the desired assertion. 
\end{proof}

\vspace{3mm}
Now we consider to pass the construction of Proposition~\ref{subdiv} over 
{\it Puiseux formal power series ring} over a field $k$ 
as $k[[t^{\Q}]]$ by base change. 
Then, 
the above admissible dominating morphism over such Puiseux formal 
power series ring is also a finite type blow up morphism. 
For simplicity of our noation, 
we denote the Puiseux formal power series ring over a field $k$ 
as $k[[t^{\Q}]]$, although it is {\it strictly smaller than} 
the whole set of 
formal power series with rational exponents as it may look. 
Note that although our base ${\rm Spec}(k[[t^{\Q}]])$ is 
non-Noetherian, the blow up still makes sense for any 
quasi-coherent ideal of finite type. 
General such blow up may not be ``fppf'' i.e., finitely presented a priori, 
but nevertheless in our case fppf holds because of the toroidal  description 
above and \cite[proof of Theorem 4.4]{ops} (compare with \cite[37 
0EV4]{Stpro}). 
Now, we are ready to introduce the notion of (quasi-)galaxies. 
\footnote{Intuitively (non-mathematically) speaking, galaxy in this sense (resp., galaxy in normal sense) looks like 
a totality which, while connected, consists of 
so many tiny fine pieces, each of which is still 
rich and beautiful world 
(log Calabi-Yau variety resp., planetary system). 
This might also remind some readers of dragonfly's compound eyes, or either 
Kumiko or Kiriko, the traditional crafts in Japan.}

\begin{Def}[(quasi-) Limit dlt model and (quasi-) Galaxies]\label{limit.defs}
\begin{enumerate}

\item \label{limit.model}
We call a locally ringed space $\X_{\infty}$ over $k[[t^{\Q}]]$ is 
{\it quasi-limit dlt model} of polarized Calabi-Yau family if there is 
a projective system of the base changes to $k[[t^{\Q}]]$ 
of dlt minimal models with admissbly dominating morphisms between them, 
whose projective limit is $\X_{\infty}\to k[[t^{\Q}]]$, 
which we call {\it quasi-galaxy model} (over $k[[t^{\Q}]]$). 
We call such projective system a presentation of $\X_{\infty}$ and its 
each dlt minimal model as {\it dlt approximation model}. 

We call the reduction (at $t=0$) of limit dlt model as 
{\it quasi-limit sdlt reduction} or {\it quasi-galaxy}. 
For simplicity, we often 
denote the reduction at $t=0$ of $\X_{i}$ (resp., $\X_{\infty}$) 
as $X_{i}$ (resp., $X_{\infty}$) from now on. 
Also, if $k$ is a valuation field, 
we put $X_{i}^{an}$ the complex analytification of 
$X_{i}$ and $X_{\infty}^{an}:=\varprojlim_{i}X_{i}^{an}$. 

\item \label{limit.ops}
A quasi-limit dlt model or its reduction i.e., quasi-limit sdlt reduction 
is said to be open K-polystable if one can take dlt approximation models sequence as 
all open K-polystable ones (cf., \cite{ops}). 

\item \label{complete}
We call a quasi-limit dlt model over Puiseux series in the above sense 
is {\it complete} or {\it limit dlt model} if it is 
represented by a sequence of dlt approximation models $\X_{i}$ over 
$k[[t^{1/N_{i}}]]$ satisfying: for any positive integer $N$, 
there is $i_{0}$ with $\forall i>i_{0}$, $N \mid N_{i}$. 
Intuitively, viewing the dual intersection complexes, 
this means ``sufficiently divided'' 
at its combinatorial looking. 
We call the reduction $X_{\infty}$ or $X_{\infty}^{an}$ (at $t=0$) of 
{\it limit dlt model} as {\it limit sdlt reduction} or simply {\it galaxy}, while the (quasi-galaxy) model $\X_{\infty}$ as 
{\it galaxy model} over $k[[t^{\Q}]]$. 

\item \label{limit.stable.model} 
We call a limit dlt model over Puiseux series in the above sense 
\eqref{limit.model} 
{\it limit (dlt K-)polystable model} if it is complete in 
the sense above \eqref{complete} and one can take 
the dlt approxmation models as all open K-polystable in the sense of 
\cite{ops}. 
\end{enumerate}
\end{Def}

\begin{Cor}\label{subdiv2}
For any dlt model $(\mathcal{X},\mathcal{L})\to \Delta$ 
as polarized Calabi-Yau family, there is a (complete) limit dlt model 
over $k[[t^{\Q}]]$ 
which dominates 
$(\mathcal{X},\mathcal{L})\times_{\Delta}k[[t^{\Q}]]$. 
\end{Cor}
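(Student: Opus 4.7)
The plan is to construct the desired galaxy model as an explicit inverse limit indexed by a divisibly cofinal sequence of ramification degrees, feeding the output of Proposition~\ref{subdiv} back into itself at each stage.

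Concretely, I would fix a sequence of positive integers $N_{1}\mid N_{2}\mid N_{3}\mid \cdots$ with $\bigcup_{i}N_{i}\Z=\Z$ (for instance $N_{i}=i!$), and let $\Delta_{i}\to \Delta$ denote the degree-$N_{i}$ ramified cover at $0$. Starting from $\X_{0}:=\X$ and inductively assuming that a $\Q$-factorial polarized dlt model $(\X_{i},\mathcal{L}_{i})\to \Delta_{i}$ admissibly dominating $(\X,\mathcal{L})\times_{\Delta}\Delta_{i}$ has already been produced, I would apply Proposition~\ref{subdiv} to $(\X_{i},\mathcal{L}_{i})$ together with the further base change $\Delta_{i+1}\to \Delta_{i}$ of degree $N_{i+1}/N_{i}$. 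This supplies a vertical blow-up $(\X_{i+1},\mathcal{L}_{i+1})\to \Delta_{i+1}$ of $\X_{i}\times_{\Delta_{i}}\Delta_{i+1}$ which is again a $\Q$-factorial polarized dlt minimal model and admissibly dominates $\X_{i}$ in the sense recorded in the definition preceding Proposition~\ref{subdiv} (the two conditions there are preserved: agreement of polarizations on the klt open locus, and toroidality over the snc locus, because both are built into the conclusion of Proposition~\ref{subdiv} via the regular-subdivision argument from \cite[Thm.~4.8]{ops}). By composition, $\X_{i+1}$ also admissibly dominates $(\X,\mathcal{L})\times_{\Delta}\Delta_{i+1}$, so the induction continues.

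Next I would base change the whole tower to $k[[t^{\Q}]]$, obtaining a projective system $\{(\X_{i},\mathcal{L}_{i})\times_{\Delta_{i}}k[[t^{\Q}]]\}_{i}$ with admissibly dominating transition morphisms, and form its inverse limit in the category of locally ringed spaces over $k[[t^{\Q}]]$. By Definition~\ref{limit.defs}\eqref{limit.model}, this limit is a quasi-limit dlt model $\X_{\infty}$; by construction it comes with a canonical map to $(\X,\mathcal{L})\times_{\Delta}k[[t^{\Q}]]$ through the level-$0$ structure map. The completeness hypothesis of Definition~\ref{limit.defs}\eqref{complete} is satisfied automatically: given any $N$, choose $i_{0}$ with $N\mid N_{i_{0}}$, and then $N\mid N_{i}$ for every $i>i_{0}$ by divisibility of the $N_{i}$. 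Thus $\X_{\infty}$ is a (complete) limit dlt model dominating $\X\times_{\Delta}k[[t^{\Q}]]$.

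The main point requiring care is the inductive step, namely checking that the output of Proposition~\ref{subdiv} applied to $\X_{i}\to \Delta_{i}$, and not just to $\X\to \Delta$, still produces an \emph{admissibly} dominating morphism over the intermediate stages. This is where the toroidal description of the vertical blow-up in \cite[\S4]{ops} together with Theorem~\ref{subdiv.lc} is essential: the latter guarantees that log canonical centres are preserved under admissible domination, so admissible domination is transitive and one really obtains a projective system rather than a mere sequence of unrelated dlt fillings. Everything else -- the existence of inverse limits of locally ringed spaces over the non-Noetherian base $\Spec k[[t^{\Q}]]$, the fact that the admissible blow-ups remain fppf as noted just before Definition~\ref{limit.defs}, and the divisibility verification -- is formal once this compatibility is in place.
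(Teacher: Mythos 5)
Your proof is correct and follows essentially the same route as the paper: the paper's own proof simply observes that $k[[t^{\Q}]]=\cup_{m}k[[t^{1/m}]]$ and invokes (the proof of) Proposition~\ref{subdiv} iteratively before passing to the projective limit of the base changes. You have spelled out the tower construction and the transitivity of admissible domination in more detail than the paper does, but this is elaboration rather than a different argument.
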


\begin{proof}
This is a corollary to (the proof of) Proposition~\ref{subdiv} 
simply because $k[[t^{\Q}]]=\cup_{m}k[[t^{\frac{1}{m}}]]$ so that 
we can take the projective limit of their base changes. 
\end{proof}

\vspace{3mm}
Next statements give basic structure of galaxies, decomposing into pieces. 

\begin{Cor}[of Theorem~\ref{subdiv.lc}: Decomposition of Galaxies]\label{cor.subdiv.lc}
For any complete limit sdlt reduction $\X_{\infty,0}=\varprojlim_{i} \X_{i,0}$, 
it has a natural decomposition into open part and the closed part: 
\begin{align}\label{decomp.lim}
\X_{\infty,0}=(\sqcup_{a\in B(\Q)} \{\text{open klt log Calabi-Yau variety } U(a)\})
\bigsqcup \X_{\infty,0}^{NKLT}.
\end{align}
Here, $B$ denotes the dual intersection 
complex of $\X_{i,0}$ and $B(\Q)$ means 
its rational points with respect to 
the $\Q$-affine structure, which does not depend on $i$ by 
Theorem \ref{dlt.lem} \eqref{dual.cpx.same} and 
$\X_{\infty,0}^{NKLT}:=\varprojlim_{i}\X_{i,0}^{NKLT}$, 
where NKLT stands for the non-klt loci. 
\end{Cor}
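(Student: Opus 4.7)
The plan is to leverage Theorem~\ref{subdiv.lc}, which identifies the lc centers of each dlt approximation with the intersections of its components, to produce a compatible decomposition at each finite level, and then pass to the projective limit.

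First, at each level $i$, I would decompose $\X_{i,0}=\bigl(\sqcup_j U_{i,j}\bigr)\sqcup \X_{i,0}^{NKLT}$, where the $D_{i,j}$'s are the irreducible components of $\X_{i,0}$, $U_{i,j}:=D_{i,j}\setminus\bigcup_{j'\neq j}D_{i,j'}$, and $\X_{i,0}^{NKLT}=\bigcup_{j\neq j'}(D_{i,j}\cap D_{i,j'})$. The identification of the NKLT locus with this union of pairwise intersections follows from Theorem~\ref{subdiv.lc}, since the non-klt centers of the dlt pair $(\X_i,\X_{i,0})$ are precisely the positive-codimension lc centers inside $\X_{i,0}$. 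Adjunction on the dlt pair gives each $(D_{i,j},B_{i,j})$ a log canonical Calabi--Yau structure which is klt on $U_{i,j}$, so each $U_{i,j}$ is an open klt log Calabi--Yau variety.

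Next I would identify the indexing set of components with rational points of $B$. The components of $\X_{i,0}$ are in bijection with the vertex set $V_i\subset B$ of the dual complex at level $i$, and by the proposition above on the $\Z$-affine structure under base change, $V_i$ is exactly the set of $\tfrac{1}{N_i}$-integral points of $B$ in its canonical $\Z$-affine structure. The completeness hypothesis on the presentation of $\X_\infty$ ensures that $B(\Q)=\bigcup_i V_i$. Hence for each $a\in B(\Q)$ there is $i_0(a)$ with $a\in V_i$ for all $i\geq i_0(a)$, yielding a component $D_i(a)\subset\X_{i,0}$ and an open piece $U_i(a)\subset D_i(a)$. I then set $U(a):=\varprojlim_{i\geq i_0(a)}U_i(a)$ and $\X_{\infty,0}^{NKLT}:=\varprojlim_i \X_{i,0}^{NKLT}$.

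The main obstacle is the exhaustion argument, showing that every compatible sequence $(x_i)\in\varprojlim\X_{i,0}$ lies in exactly one $U(a)$ or in $\X_{\infty,0}^{NKLT}$. The decisive observation, which I would prove from Theorem~\ref{subdiv.lc} together with the toroidal description of admissibly dominating morphisms over the snc locus, is that if $x_{i+1}\in U_{i+1,j_{i+1}}$ and its image $x_i$ lies in some $U_{i,j_i}$, then the vertices $v_{i+1}\in V_{i+1}$ and $v_i\in V_i$ must coincide as points of $B$: otherwise $v_{i+1}$ would be a newly introduced vertex in the interior of a positive-dimensional face of the level-$i$ dual complex, so $D_{i+1}(v_{i+1})$ would be an exceptional divisor mapping into a positive-codimension lc center of $\X_{i,0}$, hence into $\X_{i,0}^{NKLT}$, contradicting $x_i\in U_{i,j_i}$. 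Consequently, any sequence $(x_i)$ whose entries are all in klt loci stabilizes to a single $a\in B(\Q)$ and lies in $U(a)$; and any sequence with some entry in NKLT has all entries in NKLT loci by the same preservation, placing it in $\X_{\infty,0}^{NKLT}$. Disjointness of the pieces in the limit is automatic from disjointness at each finite level.
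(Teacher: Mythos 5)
The paper itself gives essentially no proof: it declares the corollary "an immediate consequence of Theorem~\ref{subdiv.lc}". Your fleshed-out argument is therefore welcome, and most of it is sound — the level-by-level decomposition of $\X_{i,0}$ into klt strata $U_{i,j}$ and $\X_{i,0}^{NKLT}$, the identification of the component sets $V_i$ with $\tfrac{1}{N_i}$-integral points of $B$ via the affine-structure scaling, the role of completeness in giving $\bigcup_i V_i = B(\Q)$, and the vertex-coincidence claim when both $x_i$ and $x_{i+1}$ lie in klt loci are all correct.

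However, the final exhaustion step contains a genuine error. You assert that "any sequence with some entry in NKLT has all entries in NKLT loci by the same preservation." This is false, and the preservation you proved does not support it. The toroidal picture gives a one-directional statement: since the image of each positive-dimensional stratum of $\X_{i+1,0}$ under the admissibly dominating morphism lands in a positive-dimensional lc center of $\X_{i,0}$, we get $\psi\bigl(\X_{i+1,0}^{NKLT}\bigr)\subset\X_{i,0}^{NKLT}$, i.e. non-kltness propagates \emph{downward} (toward smaller $i$), equivalently klt-ness propagates \emph{upward}. It does not propagate the other way: a ray of $\Sigma_{i+1}$ may sit in the interior of a cone of dimension $\geq 2$ of $\Sigma_i$, so the open orbit $U_{i+1}(v)$ of a newly introduced vertex $v$ maps \emph{into} $\X_{i,0}^{NKLT}$. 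Concretely, in the $I_m$ elliptic example, after a degree-$2$ base change the $m$ exceptional $\mathbb{P}^1$'s of the $I_{2m}$ fiber contract to nodes of the $I_m$ fiber; so one can take $x_1$ a node (NKLT) and $x_2$ a point of the new open $\mathbb{C}^*$ lying over it (klt). Such a compatible sequence has its first entry in NKLT but is \emph{not} in $\X_{\infty,0}^{NKLT}$; it does, correctly, lie in $U(a)$ for the newly introduced half-integral $a$.

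The fix is straightforward and uses exactly the ingredients you already assembled: if some $x_{i_1}$ is klt then all $x_i$ for $i\geq i_1$ are klt (upward propagation), the vertex stabilizes from $i_1$ on by your coincidence argument, and $i_1$ is forced to equal $i_0(a)$; hence $(x_i)\in U(a)$, with $x_i\in\X_{i,0}^{NKLT}$ for $i<i_0(a)$ coming for free. Only sequences with $x_i$ NKLT for \emph{every} $i$ land in $\X_{\infty,0}^{NKLT}$. Disjointness is fine as you say. So the conclusion holds, but the stated "preservation" for the NKLT case must be replaced by the correct (opposite-direction) argument.
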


\vspace{2mm}
Note that from Theorem \ref{dlt.lem} (iii), 
it follows that for each fixed $a$, 
birational type of $U(a)$ is unique. 
This Corollary \ref{cor.subdiv.lc} above 
is an immediate consequence of Theorem \ref{subdiv.lc}. 
We call the former part {\it klt locus} while the latter part 
{\it non-klt locus}, and denote them as $\X_{\infty,0}^{klt}$ 
(resp., $\X_{\infty,0}^{nklt}$) or $X_{\infty}^{klt}$ 
(resp., $X_{\infty}^{nklt}$) accordingly. 

As in \cite[\S 4]{ops}, from differential geometric perspective, 
an interesting case is maximal degenerations with open K-polystable components of the reduction. 
At least if the general fibers are abelian varieties, 
such open K-polystable reduction exists. 
Indeed, there is the N\'eron model after finite base change, 
whose abelian parts of the reduction 
are parametrized as the limits inside the Satake(-Baily-Borel) 
compactification 
of $A_{g}$ (cf., \cite[4.4.1]{Chai}). 
This is also confirmed in \cite[Cor 2.14]{TGC.II}. 
On the other hand, from the construction of 
\cite{FC90, Gub, Gub10} combined with the Delaunay decomposition 
used in \cite{Nam, AN}, we can relatively compactify to 
projective family. Hence, 
the existence of limit sdlt open K-polystable 
reduction does hold. 

\vspace{2mm}

Here comes the relation with non-archimedean geometry of Zariski-Riemann, Fujiwara, Huber type. 
We leave the details and the proof to later section \S \ref{ZR.review}. 

\begin{Prop}[Galaxies dominated by Huber analytification]\label{Huber.galaxy}
In the above setup, consider the generic fiber of $\X_{\infty}$ which we denote as $\X_{\infty, \eta}$. 
Then, any of its quasi-galaxy is dominated by the Huber analytification of the $k((t^{\Q}))$-variety 
$\X_{\infty, \eta}$ by a natural continuos surjective map. 
\end{Prop}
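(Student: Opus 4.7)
The plan is to realise the asserted map as the inverse limit of Huber-type specialization maps, one for each dlt approximation model in a chosen presentation $\{\X_i \to \Spec k[[t^{1/N_i}]]\}_i$ of the quasi-galaxy model $\X_\infty$. The key structural observation is that $k[[t^{\Q}]] = \varinjlim_{N} k[[t^{1/N}]]$ is a valuation ring, obtained as a filtered union of discrete valuation rings, so Huber's adic formalism applies uniformly at each finite level and survives passage to the limit.

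First, for each $\X_i$ (proper over $\Spec k[[t^{1/N_i}]]$), I would take the $t$-adic completion along the special fibre, obtaining an admissible formal scheme $\widehat{\X}_i$ proper over $\mathrm{Spf}\, k[[t^{1/N_i}]]$. Huber's theory \cite{Hub96} then provides an adic generic fibre $\X_{i,\eta}^{\rm ad}$ over $k((t^{1/N_i}))$ and a continuous specialization map $\mathrm{sp}_i\colon \X_{i,\eta}^{\rm ad}\twoheadrightarrow X_i$ which is surjective by properness. Base changing along $k((t^{1/N_i}))\hookrightarrow k((t^{\Q}))$ yields canonical continuous maps $\pi_i\colon \X_{\infty,\eta}^{\rm ad} \to \X_{i,\eta}^{\rm ad}$ (surjective by the standard valuative extension of points along valuation-ring inclusions). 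The composition $\mathrm{sp}_i\circ\pi_i$ is thus a continuous surjection $\X_{\infty,\eta}^{\rm ad}\twoheadrightarrow X_i$.

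Next, I would verify compatibility with the transition morphisms $\psi_{ji}\colon X_j\to X_i$ coming from the admissibly dominating morphisms between the dlt approximation models. This is a direct consequence of the functoriality of Huber's specialization maps under morphisms of admissible formal schemes, combined with the obvious compatibility of the base-change maps $\pi_i$ in the tower. The universal property of the projective limit then produces the desired continuous map $\X_{\infty,\eta}^{\rm ad}\to X_\infty=\varprojlim_i X_i$, which is independent of the chosen presentation by standard diagram chasing.

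For surjectivity, I would argue that each $X_i$ is spectral and $\X_{\infty,\eta}^{\rm ad}$ is locally spectral, so that a standard Bourbaki-type compactness argument for inverse limits of surjective continuous maps between spectral spaces (equipped with the constructible topology, under which they are compact Hausdorff) forces the limit map to remain surjective. The main obstacle I expect is the careful justification of Huber's formalism, and in particular the functoriality of specialization, over the non-Noetherian base $\mathrm{Spf}\, k[[t^{\Q}]]$; this reduces to checking that the admissibly dominating morphisms remain admissible after $t$-adic completion, which is ensured by the fppf property of the relevant toroidal blow-ups recorded in the discussion preceding Definition~\ref{limit.defs} (following \cite[proof of Theorem 4.4]{ops}).
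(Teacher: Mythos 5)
Your construction of the map is essentially the paper's: both send a point of $\X_{\infty,\eta}^{\rm ad}$ to the compatible family of its centers in the reductions $X_i$, and the paper simply performs the Zariski-style center-taking directly on the schemes $\X_i$ rather than passing through $t$-adic formal completion and Huber's specialization apparatus for formal schemes (a mostly cosmetic difference, though your framing needs the proper-case formal/algebraic comparison over the non-Noetherian base, which the paper sidesteps by never leaving the algebraic category). Where you genuinely diverge is surjectivity. The paper lifts a given point of $X_\infty$ to a compatible system $\{y_j\}$ over \emph{all} normal projective models over $k[[t^{1/m}]]$ (a Zariski--Riemann tower refining the dlt approximation tower), forms the filtered colimit $\varinjlim_j\mathcal{O}_{Y_j,y_j}$, and shows it is a valuation ring by the classical blow-up trick applied to the ideal $(f,g,t)$; this hand-produces a semi-valuation in $\X_{\infty,\eta}^{\rm ad}$ specializing to the given point. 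You instead invoke surjectivity of each composite $\mathrm{sp}_i\circ\pi_i$ (from properness and valuation extension) and then deduce surjectivity of the limit map by a spectral-space compactness argument in the constructible topology. Both work: yours is shorter and more abstract, the paper's is constructive and deliberately mirrors the proof of the classical Theorem~\ref{Zar.isom} it invokes. If you go your route, you should state explicitly that $\X_{\infty,\eta}^{\rm ad}$ is quasi-compact (properness of $\X_{\infty,\eta}$ over $k((t^{\Q}))$) and that the composite specialization maps are spectral, so that the decreasing family of nonempty fibers has the finite-intersection property in the constructible topology.
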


%%%%%%%%%%%%%%%%%%%%%%%%%

\subsection{Limit toric variety - a local toy model}\label{lim.toric.sec}

For (quasi-)limit sdlt reduction, 
often the irreducible components get blown up for infinitely 
many times. However, the blow up is certainly log crepant and 
not arbitrary. 
In this subsection, we introduce a prototypical example of 
or a toy model for such structure. Toys give us joy. 
This is prototypical not only 
to some constructions in the previous subsection but also \S 
\ref{Kmoduli.sec} for the compactified moduli. 

To make longer story of this subsection short, we consider the projective limit of 
toric variety. This may be of own interest in much more general 
context. To set the scene, 
recall for a lattice $N\simeq \Z^{n}$, 
proper toric variety $T_{N}{\rm emb}\Sigma$ of dimension $n$ 
corresponds to a rational polyhedral decomposition of $N_{\R}=N\otimes \R$. 

Here is the definition of limit toric spaces. 
Note that projective limit exists in the category of locally ringed spaces 
(cf., e.g., \cite[\S 4.1]{Fuj95}, \cite{Gillam}, \cite[Remark 2.1.1]{Tem.ZR}).  

\begin{Def}
For natural number $n$, 
we define locally ringed spaces 
$$\overline{T}^{n}_{\infty}:=\varprojlim_{\Sigma}T_{N}{\rm emb}\Sigma,$$ 
and if $k$ is a valuation field, we also set the 
Berkovich analytification version (\cite{Ber90}): 
$$\overline{T}^{n, an}_{\infty}:=
\varprojlim_{\Sigma}(T_{N}{\rm emb}\Sigma)^{an}.$$ 
Here, the directed set $\{\Sigma\}$ is defined as the set of all 
complete rational polyhedral decomposition of $N_{\R}$ 
and $\Sigma'\succ \Sigma$ if and only if $\Sigma'$ is subdivision of $\Sigma$. 
We call above locally ringed spaces $n$-dimensional {\it limit toric space}. 
\end{Def}

Note that this story is totally different from more ubiquitous 
toric construction for infinite, but locally finite, fan 
such as the one used in \cite{AMRT, Mum72.AV} among others. 
Here are some basic properties. 

\begin{Thm}\label{lim.toric.prop}
Suppose our base field $k$ is a valuation field. 
\begin{enumerate}
\item $\overline{T}^{n, an}_{\infty}\supset 
T_{N}^{an} \sqcup (\sqcup_{l} D_{l}^{o})$. 
Here, $T_{N}^{an}\simeq (\C^{*})\otimes_{\Z} N$ and 
$l$ runs over rational half lines inside $N_{\R}$. 
By $D_{l}$, we mean the torus invariant prime divisor of $T_{N}emb\Sigma$ 
with $\Sigma \ni l$, corresponding to $l$, and $D_{l}^{o}(\subset D_{l})$ denotes its 
torus invariant open subset. 
\item \label{lim.toric.conti}
If we denote the minimal Morgan-Shalen-Boucksom-Jonsson  compactification of $T_{N}^{an}$ (cf., \cite[Appendix A.1, esp., 
A.12]{TGC.II} and our Appendix \ref{appendixA}) as 
$\overline{T_{N}^{an}}^{MSBJ}$, the boundary is naturally isomorphic to 
$(N_{\R}\setminus \{0\})/\R_{>0}$. Further we have a natural surjective 
continuous map: 
$$\varphi_{tr}\colon 
\overline{T}^{n, an}_{\infty}\twoheadrightarrow \overline{T_{N}^{an}}^{MSBJ}.$$
Here, tr of $\varphi_{tr}$ stands for tropical. 
Moreover, at the boundary level, we have an algebraic version of the 
morphism $\varphi_{tr,alg}\colon 
\partial \overline{T}_{\infty}^{n}\to 
\partial \overline{T_{N}}$ which is also continous. 
\item \label{lim.toric.fiber}
Take a point $x\in (N_{\R}\setminus \{0\})/\R_{>0}=\partial 
\overline{T_{N}^{an}}^{MSBJ}$ (cf., \cite[Appendix A.1, esp., 
A.12]{TGC.II}, and our Appendix \S \ref{lim.toric.sec}). 
If $x=(x_{1},\cdots,x_{n})$ with respect to a basis of $N$ over $\Z$, 
and $r={\rm rank}_{\Q}\sum_{i} \Q x_{i}$, 
$$\varphi_{tr}^{-1}(\varphi_{tr}(x))\simeq 
\overline{T}^{n-r, an}_{\infty}.$$
\end{enumerate}
\end{Thm}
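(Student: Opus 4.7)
The plan is to prove the three parts in turn, leveraging the projective-limit description of $\overline{T}^{n, an}_{\infty}$ together with the identification $\partial \overline{T_{N}^{an}}^{MSBJ} \cong (N_{\R} \setminus \{0\})/\R_{>0}$ from \cite[Appendix A.12]{TGC.II} and Appendix~\ref{appendixA}.

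For (i), I would verify the structural fact that whenever $\Sigma' \succ \Sigma$ are complete fans of $N_{\R}$ both containing a given rational ray $l$, the induced toric morphism $T_{N}\mathrm{emb}\,\Sigma' \to T_{N}\mathrm{emb}\,\Sigma$ restricts to the identity on the open torus $T_{N}$ and to an isomorphism on the codimension-one orbit $D_{l}^{o}$ (both canonically $T_{N/\Z l}$). Consequently $T_{N}^{an}$ and each $D_{l}^{o, an}$ determine mutually disjoint compatible systems in the projective limit, giving the claimed embedding.

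For (ii), for each complete fan $\Sigma$, since $(T_{N}\mathrm{emb}\,\Sigma)^{an}$ is a Hausdorff compactification of $T_{N}^{an}$, the minimality of MSBJ (Appendix \ref{appendixA}) yields a canonical continuous surjection
\begin{equation*}
\varphi_{tr, \Sigma} \colon (T_{N}\mathrm{emb}\,\Sigma)^{an} \twoheadrightarrow \overline{T_{N}^{an}}^{MSBJ}
\end{equation*}
extending the identity on $T_{N}^{an}$ and sending each orbit $O_{\tau}^{an}$ into the projectivization of the relative interior of $\tau$; these are functorial in $\Sigma$, so the universal property of projective limits produces a continuous $\varphi_{tr}$. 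Surjectivity then holds because rational rays $[x]$ are hit by the $D_{l}^{o}$ from (i) (with $l = \R_{\geq 0}x$), and the image of $\varphi_{tr}$ is closed ($\overline{T}^{n, an}_{\infty}$ is compact Hausdorff as a projective limit of such) and dense, hence all of the target. The algebraic boundary version $\varphi_{tr, alg}$ is obtained by restricting the whole construction to the complement of the open torus at each finite level.

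For (iii), let $W \subset N_{\R}$ be the minimal rational subspace containing $x$; a short linear-algebra check shows $\dim W = r$, well-defined on the ray $[x]$ since $r$ is scale-invariant. I would fix a cofinal directed system of fans $\Sigma$ in which the unique minimal cone $\sigma(\Sigma) \in \Sigma$ containing $\R_{>0}x$ in its relative interior satisfies $\sigma(\Sigma) \subset W$ and $\dim \sigma(\Sigma) = r$; such fans form a cofinal family because inside $W$ one may freely subdivide to obtain any $r$-dimensional rational cone through $x$, and then extend to a complete fan of $N_{\R}$. For such $\Sigma$, the only orbit whose MSBJ image contains $[x]$ is $O_{\sigma(\Sigma)}$, so $\varphi_{tr, \Sigma}^{-1}([x]) \subseteq \overline{O_{\sigma(\Sigma)}}^{an}$, and this orbit closure is the analytification of a toric variety for the quotient lattice $N(W) := N/(N \cap W)$ of rank $n - r$, with fan equal to the star of $\sigma(\Sigma)$ in $\Sigma$. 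The key cofinality claim --- that the star fans realize cofinally every complete fan of $N(W)_{\R}$ --- is verified by a lifting argument: any complete fan of $N(W)_{\R}$ lifts to a fan of $N_{\R}$ containing $\sigma(\Sigma)$ as a prescribed face. Passing to the projective limit identifies the fiber with $\overline{T}^{n - r, an}_{\infty}$ for $N(W)$. The main obstacle I expect is this cofinality-and-lifting step: although purely combinatorial, it demands simultaneous control over the small cone $\sigma(\Sigma) \subset W$ through $x$ and arbitrary refinement behaviour on the quotient $N_{\R}/W$; a secondary subtlety, relevant to (ii) and (iii), is pinning down that $\varphi_{tr, \Sigma}^{-1}([x])$ concentrates on the single orbit closure $\overline{O_{\sigma(\Sigma)}}^{an}$, which I would handle by an orbit-by-orbit analysis of the MSBJ-retraction using that each orbit's image on the sphere is the projectivization of its cone's relative interior.
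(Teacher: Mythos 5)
The approach to parts (i) and (iii) is broadly in line with the paper (the paper's proof of (iii) is in fact terser, simply reducing to a convenient $\mathrm{GL}(n,\Z)$-normalization and leaving the verification to the reader, whereas your cofinality/lifting sketch fleshes that out). The problem is your step (ii).

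You construct $\varphi_{tr}$ by first producing, for each fixed complete fan $\Sigma$, a continuous surjection
$$
\varphi_{tr,\Sigma}\colon (T_N\mathrm{emb}\,\Sigma)^{an}\twoheadrightarrow \overline{T_N^{an}}^{\rm MSBJ}
$$
and then passing to the limit. No such map exists. Already for $N=\Z^2$ and $\Sigma$ the fan of $\PP^1\times\PP^1$, the boundary point $(0,0)\in\PP^1\times\PP^1$ is the limit of sequences $z_i\in T_N^{an}$ whose tropicalizations $\mathrm{Trop}(z_i)$ escape to infinity in arbitrarily prescribed directions throughout the first quadrant; so any extension of the identity on $T_N^{an}$ to $(\PP^1\times\PP^1)^{an}$ would have to assign to $(0,0)$ simultaneously every point of the projectivized quadrant $[\R_{\ge 0}^2]\subset (N_\R\setminus\{0\})/\R_{>0}$, which is absurd. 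More structurally: the MSBJ boundary of $T_N^{an}$ is the entire projectivized vector space $(N_\R\setminus\{0\})/\R_{>0}$, which is strictly larger (has more points, a continuum over each deep cone) than the finite stratification of the toric boundary of a single $T_N\mathrm{emb}\,\Sigma$; there is no surjection, let alone a continuous one, from the smaller space to the larger one. Also, the appeal to ``minimality'' points the wrong way: a minimal compactification would receive maps from bigger ones, not surject onto them, and the MSBJ boundary is not smaller in this sense.

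The paper's construction of $\varphi_{tr}$ is therefore necessarily done directly at the level of the projective limit: given $x=(x_\Sigma)_\Sigma\in\overline{T}^{n,an}_\infty$, one records the cone $\sigma_{\alpha(\Sigma)}\in\Sigma$ whose orbit contains $x_\Sigma$, observes that $\Sigma'\succ\Sigma$ forces $\sigma_{\alpha(\Sigma')}\subset\sigma_{\alpha(\Sigma)}$, and proves that $\bigcap_\Sigma\sigma_{\alpha(\Sigma)}$ is a single ray $\R_{>0}y$ (since $\Sigma$ ranges over all rational subdivisions), which defines $\varphi_{tr}(x)$. Continuity is then checked by hand via a basic open set $U=T_N^{an}\cup U_\sigma^{an}$. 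The point is that only a coherent system $(x_\Sigma)_\Sigma$ across all $\Sigma$ singles out a unique tropical direction, whereas a single $x_\Sigma$ does not; your construction discards precisely this essential information. Part (iii) as you wrote it inherits the same defect through its use of $\varphi_{tr,\Sigma}^{-1}([x])$ for a fixed $\Sigma$, though the combinatorial heart of your argument (cofinal fans with the minimal cone through $x$ of dimension $r$ inside $W$, and identifying the star fans as cofinal among fans of $N/(N\cap W)$) is a reasonable way to make the paper's ``direct explicit confirmation'' precise once $\varphi_{tr}$ is defined correctly.
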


\begin{Ex}
Even for the case $n=1$, the structure of $\overline{T}^{1}_{\infty}$ is already somewhat 
interesting. For each nonzero $x\in N\otimes \Q$, there are two points 
$[x+0]$ (resp., $[x-0]$) in $\partial \overline{T}^{1}_{\infty}$ 
which values at $D_{l}\setminus D_{l}^{o}$ for any $\Sigma$ which contains $l$. 
This is analogous to Type $5$ point of \cite[2.20]{Scholze}. 
\end{Ex}

As we see, this limit toric space, as well as our analogues, 
has some mixed properties of varieties or usual analytic spaces, 
and adic spaces such as Zariski-Riemann spaces 
\cite{Zar, Hub94, Fuj95}. 

\begin{proof}
(i): We only need to 
consider $\Sigma$ whose ray set $\Sigma(1)$ includes $l$. 
Then, $T_{N}emb\Sigma$ contains $T_{N}\sqcup D_{l}^{o}$ as an 
open subset. This locus remains the same for any $\Sigma$ while further subdivisions, 
and for different $l$, $D_{l}^{o}$ does not intersect. Hence the proof. 

(ii): The map $\varphi_{tr}$ is constructed as follows. 
Take an arbitrary $$x=(x_{\Sigma})\in \overline{T}^{n, an}_{\infty}=
\varprojlim_{\Sigma}(T_{N}{\rm emb}\Sigma)^{an}.$$ 
For each $\Sigma$, we take $\sigma_{\alpha(\Sigma)}\in \Sigma$ 
such that its corresponding torus orbit $orb(\sigma_{\alpha(\Sigma)})
\subset T_{N}emb\Sigma$ contains $x_{\Sigma}$. 
Then, it follows that $\Sigma'\succ \Sigma$ implies 
$\sigma_{\alpha(\Sigma)}\supset 
\sigma_{\alpha(\Sigma')}.$ Since $\Sigma$ runs over all rational 
subdivisions, there is some $y\in N_{\R}\setminus \vec{0}$ such that 
$\R_{>0} y=\cap_{\Sigma} \sigma_{\alpha(\Sigma)}$. 
We put $[\R_{>0}y]\in \partial \overline{T_{N}^{an}}^{MSBJ}$ as $\varphi_{tr}(x)$. 

Then the continuity of $\varphi_{tr}$ can be confirmed as follows. 
We denote the natural projection map 
$\overline{T}^{n, an}_{\infty}\to T_{N}emb\Sigma$ as $p_{\Sigma}$. 
Now we take a point $y=\varphi_{tr}(x)\in \partial \overline{T_{N}^{an}}^{MSBJ}$. 
For an arbitrary open neighborhood $V$ of $y$, we want to show that there is an 
open neighborhood $U$ of $x$ so that $\varphi_{tr}(U)\subset V$. 
We can take such $U$ as follows. First we take a small enough 
regular rational polyhedral cone $\sigma$ whose image contains $y$. 
Then take a regular rational polyhedral decomposition of $N_{\Q}$ as 
$\Sigma$. Then one can consider the complex analytification $U_{\sigma}^{an}$ 
of affine toric variety $U_{\sigma}$ and set 
$$U:=(T_{N}^{an} \cup U_{\sigma}^{an}).$$ 
Then, it is immediate from the definition of $\varphi_{tr}$ 
that $\varphi_{tr}(U)\subset V$. Thus the continuity of $\varphi_{tr}$ 
is shown, as well as the corresponding map from 
the (algebraic) boundary since $U$ is the 
analytification of a 
Zariski open subset. 

(iii): After an appropriate element of $GL(N)\simeq GL(n,\Z)$, 
we can assume those vectors 
$(x_{1},\cdots,x_{r},x_{r+1},\cdots,x_{n})$ which satisfy that 
$x_{1},\cdots,x_{r}\in \R$ are rationally independent and 
$x_{i}$ for $i>r$ are all $\Q$-linear combination of $x_{1},\cdots,x_{r}$. 
Then, the assertion follows from direct explicit confirmation, 
which we leave to the readers. \end{proof}

The former (i) is analogous to Corollary \ref{cor.subdiv.lc} and 
the latter (ii) is analogous to Theorem \ref{tombo.tropical} to come. 
There is another further analogue in the context of moduli compactification, 
in \S \ref{Kmoduli.sec}. 

%%%%%%%%%%%%%%%%%%%%%%%%%

\vspace{3mm}
Now we go back to the original context of limit sdlt reduction 
at the end of \S \ref{subdiv.sec}. 

\subsection{Relation with the Kontsevich-Soibelman essential skeleta}\label{rel.skel}
Generally, we can show the following connetion of the 
limit sdlt reduction with 
essential skeleta proposed by Kontsevich-Soibelman 
\cite{KS} and more clarified in \cite{MN, NX} 
in the terminology of recent minimal model program. 
We first note the following, which e.g., 
follows from our construction above Proposition 
\ref{subdiv} 
(and \cite[proof of Theorem 4.4]{ops}). 

Note that for toric degeneration in the sense of 
\cite{GS}, we can also make sense of the essential skeleton or 
the dual intersection complex. In particular, 
such construction can be applied to 
any of the historical degenerations for abelian varieties, 
constructed by 
\cite{Mum72.AV, FC90, AN, Gub}: which we call 
semi-toric degeneration and their complete limit as 
{\it limit semi-toric degeneration/reduction}. 

\begin{Lem}\label{dual.cpx.same2}
Consider a dlt minimal model $\X$ 
over $k[[t]]$ and another dlt minimal model 
$\X'$ 
over $k[[t^{1/N}]]$ which is isomorphic to 
the base change of $\X$ away from $t=0$. 
Then the essential skeleta $\Delta(\X_{0})$ of $\X$ and 
$\Delta(\X'_{0})$ of $\X'$ are 
canonically homeomorphic, 
preserving the rational affine structures. 
\end{Lem}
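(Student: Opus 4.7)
The plan is to reduce this statement to the previous Proposition, which already compared $\Delta^{alg}(\X_0)$ and $\Delta^{alg}(\X'_0)$ and showed a canonical homeomorphism under which the $\Z$-affine structure gets multiplied by $N$. Since multiplication by the positive integer $N$ is an automorphism of the $\Q$-affine structure (even though not of the $\Z$-affine one), the statement about preservation of the rational affine structure will follow almost immediately, once one matches $\X'$ with a convenient auxiliary model.

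First, I would apply Proposition \ref{subdiv} to produce an auxiliary dlt minimal model $\X''$ over $k[[t^{1/N}]]$ which admissibly dominates $\X \times_{k[[t]]} k[[t^{1/N}]]$, with the domination morphism being toroidal on the snc locus of $(\X,\X_0)$ (a locus that contains all lc centers and so determines $\Delta(\X_0)$). This reduces the problem to two parallel comparisons: (a) $\Delta(\X''_0) \simeq \Delta(\X_0)$, and (b) $\Delta(\X'_0) \simeq \Delta(\X''_0)$.

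For comparison (b), since $\X'$ and $\X''$ are both dlt minimal models over the same base $k[[t^{1/N}]]$ that agree away from $t=0$, Theorem \ref{dlt.lem}\eqref{dlt.term} together with \cite{Kawamataflop} connects them by flops, and Theorem \ref{dlt.lem}\eqref{dual.cpx.same} gives a canonical homeomorphism of the essential skeleta preserving the full $\Z$-affine structure (and hence a fortiori the $\Q$-affine structure), as established in \cite{MN, NX}. For comparison (a), Theorem \ref{subdiv.lc} identifies the sets of lc centers / intersection strata of $(\X'',\X''_0)$ and $(\X,\X_0)$, producing a canonical homeomorphism $\Delta(\X''_0) \simeq \Delta(\X_0)$. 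Using the toroidal description of the domination on the snc locus together with the definition of the affine structure from \cite[\S3.2]{MN} and \cite[\S2.1]{BFJ.sol}, the new exceptional divisors correspond precisely to the $\tfrac{1}{N}$-integral points of the maximal simplices of $\Delta(\X_0)$, so the $\Z$-affine structure on $\Delta(\X''_0)$ is the $N$-rescale of the one on $\Delta(\X_0)$.

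Combining (a) and (b), I obtain a canonical homeomorphism $\Delta(\X'_0) \simeq \Delta(\X_0)$ that scales the $\Z$-affine structure by $N$. Since $x \mapsto N\cdot x$ is a $\Q$-affine isomorphism, the $\Q$-affine structures match, which is exactly the claim. The only step that requires genuine care is the local-toric verification that the admissible domination $\X'' \to \X \times k[[t^{1/N}]]$ scales the affine structure by exactly $N$ on each maximal simplex; this is the main (but mild) obstacle and is essentially the toric computation already carried out in the proof of the preceding Proposition, so no new technical input is needed.
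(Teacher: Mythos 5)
Your proof is correct and follows essentially the same route the paper gestures at: reduce, via Proposition~\ref{subdiv}, to an admissibly dominating model, handle the independence of the choice of dlt minimal model over a fixed base by \cite{MN,NX} (and \cite{Kawamataflop}), and read off the $N$-rescaling of the $\Z$-affine structure from the toroidal description of the domination on the snc locus, concluding since $x\mapsto Nx$ is a $\Q$-affine isomorphism. One small caution: Theorem~\ref{dlt.lem}\eqref{dual.cpx.same} as stated only asserts a canonical homeomorphism of dual complexes, not preservation of the $\Z$-affine structure — the latter really comes from \cite[\S3.2]{MN} and \cite{NX}, which you do also cite, so the argument is sound but the attribution should lean on those references rather than on \ref{dlt.lem}\eqref{dual.cpx.same}.
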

%{\blue{Berkovichのsubとして比較してより精密に書いた方がいいな}}

Now we observe the limit sdlt reduction 
hides the structure of essential skeleton, 
by two versions of the statements: 
the case of abelian varieties with slightly general 
degenerations, and for general Calabi-Yau varieties case. 
Below, for abelian varietiese case, 
we allow {\it limit semi-toric degenerations} of abelian varieties 
which 
means the projective limit of the central fibers of 
approximation models constructed by \cite{Mum72.AV, FC90} 
with respect to subdivisions. 
More precisely, their approximation models are the 
relatively complete model of Raynaud extension divided by the 
period group $Y\simeq \Z^{r}$. Note that a priori such models are 
only formal, not necessarily algebraic nor projective, but for 
given punctured family, there do exists such a projective model 
thaks to \cite{Nam, AN, Zhu}. 

\begin{Thm}\label{tombo.tropical}
For any limit semi-toric degeneration $\X$ of abelian varieties 
over $k[[t^{\Q}]]$, 
$X_{\infty}^{\rm an}$ has a natural continuous surjective 
map to $\Delta(\X_{0})$: 
$$f_{tr}\colon 
X_{\infty} \to \Delta(\X_{0}).$$
\end{Thm}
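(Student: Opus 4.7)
The plan is to reduce to the explicit local toric description afforded by the Mumford--Faltings--Chai construction and invoke the tropicalization map of \S \ref{lim.toric.sec}. Recall that each approximation model $\X_i$ of a semi-toric degeneration is, \'etale-locally over the base, the quotient of a toric model over $k[[t^{1/N_i}]]$ by the action of the period group $Y \simeq \Z^r$; finer $Y$-equivariant rational polyhedral subdivisions of (a fundamental domain of $Y$ in) $N_\R$ supply the admissibly dominating morphisms between the $\X_i$'s.

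First, I would construct $f_{tr}$ locally. Pass to an \'etale chart where $X_i^{\rm an}$ is the analytification of a toric variety attached to a $Y$-equivariant rational polyhedral decomposition of $N_\R$. Taking the inverse limit in this chart, $X_\infty^{\rm an}$ becomes locally modelled on $\overline{T}^{r,an}_\infty / Y$ from \S \ref{lim.toric.sec}. Theorem \ref{lim.toric.prop}(\ref{lim.toric.conti}) then furnishes a natural continuous surjection $\varphi_{tr}$ from $\overline{T}^{r,an}_\infty$ onto the MSBJ compactification $\overline{T_N^{an}}^{MSBJ}$, whose boundary $(N_\R\setminus\{0\})/\R_{>0}$, after quotienting by $Y$, is precisely the relevant chart of $\Delta(\X_0)$ via the canonical identification $\Delta(\X_{i,0})\simeq \Delta(\X_0)$ of Lemma \ref{dual.cpx.same2}.

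To globalize, the $\Q$-affine compatibility of the dual complexes across all approximations (Lemma \ref{dual.cpx.same2}; Theorem \ref{dlt.lem}(\ref{dual.cpx.same})) implies that the local tropicalizations agree on overlaps, giving a well-defined $f_{tr}\colon X_\infty^{\rm an}\to\Delta(\X_0)$; continuity with respect to the inverse-limit topology reduces to the local continuity of $\varphi_{tr}$. For surjectivity, rational points $a\in\Delta(\X_0)(\Q)$ lie in the image because, by the decomposition \eqref{decomp.lim} of Corollary \ref{cor.subdiv.lc}, the open klt piece $U(a)$ maps onto $\{a\}$ by construction. For an irrational $b\in\Delta(\X_0)$, completeness of the limit dlt model (every $N\in\Z_{>0}$ eventually divides $N_i$) guarantees a nested sequence of closed simplices $\bar\sigma_1\supset \bar\sigma_2\supset\cdots$ of the successive subdivisions with shrinking mesh and $\bigcap_i\bar\sigma_i=\{b\}$; a compatible family of points in the strata of $X_i$ corresponding to $\bar\sigma_i$ (whose compatibility is provided by Theorem \ref{subdiv.lc}) lifts $b$ to a point of $X_\infty^{\rm an}$.

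The main obstacle I expect is making the local-to-global gluing fully rigorous: one must verify that local tropicalizations coming from different \'etale charts assign the same point of $\Delta(\X_0)$ to each point of $X_\infty^{\rm an}$, particularly over the non-klt stratum $X_\infty^{\rm nklt}$ where no single toric chart suffices. This verification should reduce to the $Y$-equivariance of the Mumford--Faltings--Chai construction together with the compatibility of $\Z$-affine structures on overlaps, but the bookkeeping of strata at the boundaries between charts, and the subtlety that the semi-toric models are a priori only formal (algebraized by \cite{Nam, AN, Zhu} in the polarized case), is where most of the technical effort lies.
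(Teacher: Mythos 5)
Your core idea---defining $f_{tr}$ via nested intersections of cones in the successive polyhedral decompositions, and inferring continuity from the toric structure---is the same mechanism the paper uses. But the reduction to $\overline{T}^{r,an}_{\infty}/Y$ misidentifies the local model and hence the target. The boundary of $\overline{T}^{n,an}_{\infty}$ is the \emph{projectivization} $(N_\R\setminus\{\vec 0\})/\R_{>0}$, yet the essential skeleton $\Delta(\X_0)$ of a degenerating abelian variety is the real torus $Y_\R/Y$, not a further quotient of $(Y_\R\setminus\{\vec 0\})$ by $\R_{>0}$ (the two actions do not commute, and the quotients are genuinely different spaces). The Mumford--Faltings--Chai construction is governed by a $Y$-periodic $Y$-admissible rational polyhedral cone decomposition of $Y_\R\oplus\R_{>0}$, where the extra $\R_{>0}$-coordinate tracks the uniformizer $t$; it is not a fan in $Y_\R$ alone. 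The paper's proof exploits this: the nested intersection $\cap_i\sigma_i(x)$ is a half-line $\R_{\geq 0}(1,\tilde f_{tr}(x))$ in $Y_\R\oplus\R_{\geq 0}$, and slicing at height $z=1$ picks out a unique point $\tilde f_{tr}(x)\in Y_\R$, which is then reduced mod $Y$. There is no projectivization and hence no mismatch. If you insist on a limit-toric model, it should be the projective limit over $Y$-admissible fans of $Y_\R\oplus\R_{>0}$ (an \emph{affine} degeneration picture), not $\overline{T}^{r,an}_\infty$.

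Relatedly, the \'etale-localize-and-glue scheme, which you correctly single out as the main obstacle, is avoidable and the paper avoids it: the $Y$-periodic cone decomposition is global from the start, so $f_{tr}$ is defined directly on $\varprojlim_i(\X_i)^{an}_0$ by associating to each $x=(x_i)_i$ the semitorus stratum $Z_i\ni x_i$ and its cone $\sigma_i(x)$, with $Y$-equivariance built in by the $Y$-admissibility of the fan; continuity is then checked by exhibiting preimages of small rational cones in $Y_\R$ as Zariski-open star neighborhoods of strata in some $X_{i_0}$. Finally, your surjectivity argument over $B(\Q)$ via Corollary~\ref{cor.subdiv.lc} is stated for limit sdlt reductions; for general limit semi-toric degenerations it applies only after restricting to regular fans (cf.\ the remark following the theorem in the paper), a restriction the theorem statement does not impose, so that step needs rewording.
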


\begin{proof}
The core arguments of the proof closely follows that of Theorem \ref{lim.toric.prop} \eqref{lim.toric.conti}, 
thanks to the toric nature of the construction of \cite{Mum72.AV, FC90}. 
Take a projective system of {\it all} semi-toric degenerations constructed 
by \cite{Mum72.AV, FC90, Gub} and denote it as 
$$\{\X_{i}^{[N_{i}]}\to \Delta\xrightarrow{t\mapsto t^{N_i}} \Delta\}_{i}.$$ 
Here, the index directed set of $i$ is the set of pairs of a positive integer $N_{i}$ and 
a $N_{i}Y$-periodic regular fan (cone decomposition) of $Y_{\R}\oplus \R_{>0}$. 
The order $i<i'$ is defined to hold if and only if 
$N_{i}|N_{i'}$ and the corresponding fan to $i'$ is a subdivision 
of that for $i$. 

From the construction in {\it loc.cit}, 
the obtained approximating semi-toric degenerations are quotients by $N_{i}Y\simeq \Z^{r}$ of 
a toric variety fiber bundle determined by the $N_{i}Y$-periodic fan, 
so that the dual intersection complex of the reductions are canonically 
homeomorphic to the $r$-dimensional real $(Y\otimes_{\Z} \R)/Y$ (cf., also 
\cite{TGC.II}). 

Take any $x=(x_{i})_{i}\in \varprojlim_{i}(\X_{i}^{[N_{i}]})^{an}_{0}$. 
For each $i$, take a semitorus strata (locally closed subset) 
$Z_{i}$ which contains $x_{i}$, and consider a cone $\sigma_{N_{i}}(x)$ 
which corresponds to $Z_{i}$. This is determined only modulo $N_{i}Y$. 
Nevertheless, considering the semi-toroidal structure of the admissibly 
dominating morphisms, we can take $\{\sigma_{i}(x)\}_{i}$ so that 
$i<i'$ implies $\sigma_{i}(x)\supset \sigma_{i'}(x)$. 
Then, we consider $\cap_{i}\sigma_{i}(x)$. Because $i$ runs over 
the index set of all admissible regular $Y_{\Q}$-rational polyhedral cone decomposition, 
it easily follows that is a half line i.e., there is $\tilde{f}_{tr}(x)\in Y_{\R}$ so that 
$\R_{\ge 0}(1,\tilde{f}_{tr}(x))=\cap_{i}\sigma_{i}(x)$. 
Then we set $f_{tr}(x):=\tilde{f}_{tr}(x) {\rm mod. } Y$. 

Now we show the continuity of $f_{tr}$. 
We take an open neighborhood $V$ of $\tilde{f}_{tr}(x)$ inside $Y_{\R}$, 
which is a section of a rational regular cone $\sigma$ of $N\oplus \R_{z}$ at the 
hyperplane $z=1$. We take a 
partial compactification of the Raynaud extension correponding to $\sigma$, 
i.e., affine toric variety $U_{\sigma}$-fiber bundle over the abelian part of 
$\X_{0}$. Then consider its image after the quotient by $Y$, and denote it as $\mathcal{U}$. We consider a regular rational $Y$-admissible 
polyhedral fan $\Sigma$ which contains $\sigma$. Then we consider the 
corresponding model $\X_{\Sigma}$. 
We define $U:=\mathcal{U}\cap X_{\Sigma}|_{t=0}$. 
Then, from the definition of $f_{tr}$ and $U$, 
it obviously holds that $f_{tr}(U)\subset V$. We complete the proof. 
\end{proof}
Note that, if we only allow regular fans for the semi-toric degenerations, 
obviously the obtained models are dlt (snc) models so that the inverse limit is a 
limit sdlt reduction. 

\vspace{2mm}

Here is another version of the statements for more general 
Calabi-Yau varieties. 

\begin{Thm}\label{tombo.tropical2}
For any punctured family of Calabi-Yau varieties 
$\X^{*}\to \Delta^{*}$, and its 
any galaxy model $\X$ over $k[[t^{\Q}]]$, 
the central fiber (galaxy) 
$X_{\infty}^{\rm an}$ 
has a natural continuous surjective map to $\Delta(\X_{0})$: 
$$f_{tr}\colon 
X_{\infty} \to \Delta(\X_{0}).$$
\end{Thm}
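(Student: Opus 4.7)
The plan is to mirror the proof of Theorem~\ref{tombo.tropical}, replacing the global semi-toroidal structure from the Mumford--Faltings--Chai construction with the local toroidal structure available at the snc locus of each dlt minimal model $\X_i$ (which, by \cite[3.9]{Fjn}, contains every log canonical center). Write $X_{\infty}=\varprojlim_{i}X_{i}$ with each $X_{i}$ the central fiber of a dlt approximation model $\X_{i}\to \Spec k[[t^{1/N_{i}}]]$, connected by admissibly dominating morphisms.

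First I would construct $f_{tr}$ pointwise. Given $x=(x_{i})\in X_{\infty}$ and each $i$, let $Z_{i}(x)\subset X_{i}$ be the minimal stratum, i.e., the connected component of a non-empty intersection of some irreducible components of $X_{i}$, containing $x_{i}$. By Theorem~\ref{subdiv.lc}, strata of $X_{i}$ correspond bijectively to simplices of the dual intersection complex $\Delta(X_{i})$, and Lemma~\ref{dual.cpx.same2} identifies the latter canonically with $\Delta(\X_{0})$ (up to the $N_{i}$-rescaled $\Q$-affine structure). Thus $Z_{i}(x)$ determines a simplex $\sigma_{i}(x)\subset \Delta(\X_{0})$. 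Toroidality of admissibly dominating morphisms at the snc locus yields $\sigma_{i}(x)\supset \sigma_{j}(x)$ whenever $i<j$, and completeness of the galaxy model (Definition~\ref{limit.defs}\eqref{complete}) guarantees that arbitrarily fine rational subdivisions are eventually encountered; hence $\bigcap_{i}\sigma_{i}(x)$ collapses to a single point of $\Delta(\X_{0})$, which we declare to be $f_{tr}(x)$.

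Next I would verify continuity and surjectivity. For continuity at $x$, given an open neighborhood $V\ni f_{tr}(x)$ in $\Delta(\X_{0})$, choose $i$ large enough that a rational simplex of $\Delta(X_{i})$ containing $f_{tr}(x)$ is contained in $V$; the preimage in $X_{i}$ of the union of the corresponding open strata, pulled back along the projection $X_{\infty}\to X_{i}$, is an open neighborhood of $x$ mapping into $V$. For surjectivity onto a rational point $a\in \Delta(\X_{0})(\Q)$, Corollary~\ref{cor.subdiv.lc} supplies an entire open klt Calabi-Yau variety $U(a)\subset X_{\infty}$ with $f_{tr}(U(a))=\{a\}$. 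For irrational $a$, in each $X_{i}$ pick a closed point $y_{i}$ lying in the stratum whose simplex contains $a$, and use properness of the admissibly dominating morphisms together with a Tychonoff compactness argument to extract a compatible system $(y_{i})\in X_{\infty}$ with $f_{tr}((y_{i}))=a$.

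The hardest part I expect is handling points $x_{i}$ sitting outside the snc locus of $X_{i}$ (equivalently, points of the non-klt locus of the galaxy whose combinatorial position is not directly controlled by the dual complex). At such points the direct toroidal description fails, so one must pass through a toroidal log resolution $\widetilde{\X_{i}}\to \X_{i}$ in the spirit of the proof of Theorem~\ref{subdiv.lc}, argue via the identifications in that theorem and Lemma~\ref{dual.cpx.same2} that the essential skeleton is unchanged, and then push the constructed map forward. A related technical point, again handled by compactness analogous to the closing argument of Theorem~\ref{tombo.tropical}, is ensuring that the inverse limit of the compact Hausdorff spaces $X_{i}^{an}$ carries enough limit points to surject onto the irrational locus of $\Delta(\X_{0})$.
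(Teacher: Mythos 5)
Your proposal follows essentially the same route as the paper's proof: for $x=(x_i)$ you take the lc stratum $Z_i(x)\ni x_i$, pass to the corresponding rational polyhedron $\bar\sigma_i(x)$ in the essential skeleton, intersect over $i$ (using completeness of the galaxy model to shrink to a single point) to define $f_{tr}(x)$, and prove continuity by pulling back the star of an lc center along the projection $X_\infty\to X_{i_0}$. Note that the worry you raise about points outside the snc locus is already resolved by working with the lc stratification (via \cite[3.9]{Fjn}, the lc centers agree with what the snc/toroidal locus detects), which is exactly why the paper phrases the construction in terms of lc strata rather than snc strata; you also add a surjectivity argument which the paper's proof leaves implicit.
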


\begin{proof}
The proof closely follows that of above Theorem \ref{tombo.tropical}. 
Thus we briefly describe the proof basically as repition of 
the similar ideas, while showing some subtle 
differences. 
 
We again take a projective system of the approximating dlt models 
constructed 
and denote it as 
$$\{\X_{i}^{[N_{i}]}\to \Delta\xrightarrow{t\mapsto t^{N_i}} \Delta\}_{i}.$$ 
Here, the index directed set of $i$ is the set of pairs of a positive integer $N_{i}$ and the dlt model after the base change of 
degree $N_{i}$. 
The order $i<i'$ is defined to hold if and only if 
$N_{i}|N_{i'}$ and the corresponding polyhedral decomposition 
of the essential skeleton for $i'$ is a subdivision 
of that for $i$. 

Take any $x=(x_{i})_{i}\in \varprojlim_{i}(\X_{i}^{[N_{i}]})^{an}_{0}$. 
For each $i$, take a  strata (locally closed) 
$Z_{i}$ of the lc stratification of $X_{i}$ 
which contains $x_{i}$, and consider a 
rational polyhedron $\bar{\sigma}_{i}(x)$ in the 
essential skeleton $\Delta(X_{i})$, 
which corresponds to $Z_{i}$.

Note that our index set of $i$ is ordered so that 
$i<i'$ implies $\bar{\sigma}_{i}(x)\supset \bar{\sigma}_{i'}(x)$. 
Then, we consider $\cap_{i}\bar{\sigma}_{i}(x)$, 
which we can easily show to be a single point and denote it as 
$f_{tr}(x)$.

Now we show the continuity of $f_{tr}$ 
similarly to the case of abelian varieties. 
We take a closed neighborhood $\bar{\sigma}$ 
of $\tilde{f}_{tr}(x)$, 
which is a rational simplex inside the essential skeleton, 
which is one regular piece of the simplicial complex 
$\Delta(X_{i_{0}})$, corresponding to 
the log canonical center $Z_{i_{0}}$. 
We define the star $S(Z_{i_{0}})$ of the lc center $Z_{i_{0}}$ 
as the union of the strata of the lc stratification of $X_{i_{0}}$ 
whose closure contain the generic point of $Z_{i_{0}}$. 

From the definition, it is a Zariski open subset of 
$X_{i_{0}}$. We denote the projection $X_{\infty}\to X_{i_{0}}$ as 
$p_{i_{0}}$. 
Now 
we define $p_{i_{0}}^{-1}(S(Z_{i_{0}}))$ as $U$. Then, 
from the definition of $f_{tr}$ and $U$, 
it obviously holds that $f_{tr}(U)\subset V=\bar{\sigma}$. 
We complete the proof. 
\end{proof}

\begin{Rem}
The above may somewhat look resembling the 
Berkovich retraction \cite{Ber99,Thuillier} (also called ``non-archimedean 
SYZ fibration'' as in \cite{NXY}) from the Berkovich analytification of 
Calabi-Yau varieties to its essential skeleton (see also \cite{KS}).  
However, note that version was {\it not} canonical and changes by flops of the models,  
while our map $f_{tr}$ above is canonically defined. 
\end{Rem}

%%%%%%%%%%%%%%%%%%%%%%%%%%%%%%%%%%%%%%%%%%%%%%%%%%%%%%%%%%%%%%%%%%%%%%%%
%%%%%%%%%%%%%%%%%%%%%%%%%%%%%%%%%%%%%%%%%%%%%%%%%%%%%%%%%%%%%%%%%%%%%%%%

\subsection{Visualizing non-archimedean Calabi-Yau metric 
by galaxy models}

A natural basic strategy towards understanding the structure of limit 
sdlt reductions 
is to compare $\X_{0}$ and $\X_{i,0}$ which are 
the central fibers at $t=0$ of 
dlt minimal models $\X\to \Delta$ and $\X_{i}\to \Delta'$, 
where the bases are connected by a finite morphism 
$\Delta'\to \Delta$ with ramifying degrees $N_{i}$. 
From the definition \ref{limit.defs}, 
we have an admissibly dominating morphism $\X_{i}\to \X\times_{\Delta}\Delta'$. 
What about the ``converse-direction''? 
We partially show that for given any dlt minimal model $\X$, 
$\X_{i}$ can be taken which look locally very similar, 
refining Proposition \ref{subdiv}.

\begin{Prop}[Local identification]\label{subdiv.local.same}
Take an arbitrary 
dlt minimal model $\X$ which is a toric degeneration (\cite{GS.logI, GS11}), 
so that in particular the central fiber $\X_{0}$ is a Gorenstein stable toric variety in the sense of \cite{Ale02}. 
 We denote its open subset where $(\X,\X_0)$ with its projection to $\Delta$ is toroidal 
 as $\X^{tor}\subset \X$ (cf., \cite{GS.logI} \cite[3.9]{Fjn}). 
 For any finite branched morphism $\Delta'\to \Delta$ with any ramifying degree $N$, 
 there is a dlt minimal model $\X'\to \Delta'$ with an admissibly dominating 
 morphism $\psi\colon \X'\to \X\times_{\Delta}\Delta'$ such that the following holds: 
 
 For any point $x'\in \psi^{-1}\X^{tor} (\subset \X')$, 
 there is a point $x\in\X_{0}\cap \psi^{-1}(x')$ such that 
 $(x\in \X)$ and $(x'\in \X')$ have isomorphic germs. 
\end{Prop}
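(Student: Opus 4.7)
The plan is to combine Proposition \ref{subdiv} with an explicit toric local analysis on the toroidal chart. First, I would invoke Proposition \ref{subdiv} to obtain a $\Q$-factorial dlt minimal model $\X'\to\Delta'$ with an admissibly dominating morphism $\psi\colon \X'\to \X\times_{\Delta}\Delta'$, and I would ensure (as in the proof of \cite[Theorem 4.8]{ops} via Tai's regularity criterion) that the vertical blow-up used over $\X^{tor}$ is given by a regular (unimodular) subdivision of the relevant fan in the refined integral affine structure induced by the base change.

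Next, I would analyze the local structure at any point $x'\in\psi^{-1}(\X^{tor})$. Working in a formal or \'etale neighborhood, the toroidal hypothesis lets me describe $(\X^{tor},\X_0^{tor})\to \Delta$ as $\Spec(k[\sigma^{\vee}\cap M])\to \Spec(k[\N])$, where $\sigma$ is a regular cone of dimension $k$ equal to the number of components of $\X_0$ through the image point, and the projection is given by the primitive element $\rho_0=e_1^{\vee}+\cdots+e_k^{\vee}$; the assumption that $\X_0$ is reduced forces exactly this form. The base change then corresponds to keeping $\sigma$ but refining the lattice to $M':=M+\frac{1}{N}\rho_0\cdot\Z$, with the new projection to $\Delta'$ given by $\frac{1}{N}\rho_0\in(M')^{\vee}$.

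The key step is to identify each top-dimensional cone $\tau$ of the regular subdivision with its local germ. Since $\X'$ is a dlt minimal model whose central fiber is reduced (by our convention, after further base change if needed), the projection to $\Delta'$ restricted to any top-dimensional regular cone $\tau\subset\sigma$ in the new lattice $M'$ must be given by $f_1^{\vee}+\cdots+f_{k'}^{\vee}$ for some basis $f_1,\ldots,f_{k'}$ of $\tau\cap M'$ with $k'\le k$; otherwise the central fiber of the corresponding affine toric chart would be non-reduced, contradicting reducedness of $\X'_0$. Consequently, the formal germ of $\X'$ at any point lying in the closure of the orbit associated to $\tau$ is isomorphic, as an abstract scheme, to the formal germ of $\X$ at a point where $k'$ components of $\X_0$ meet transversally, and such a point $x\in\X_0$ exists in $\X^{tor}$ because $\X$ was assumed to be a toric degeneration with stable toric central fiber.

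The main obstacle I anticipate is the toroidal bookkeeping through the subdivision: one must verify that the regular subdivision produced in the proof of Proposition \ref{subdiv} is unimodular with respect to the refined lattice $M'$ (not merely with respect to an auxiliary combinatorial lattice used for Tai's projectivity criterion), and that the element $\frac{1}{N}\rho_0$ occupies the required ``reduced'' position in every resulting cone. It is precisely this compatibility that allows the reducedness of $\X'_0$ to force the local germ to match that of the original $(\X,\X_0)$; tracking it through the semistable reduction and minimal model program steps is where the technical care has to be concentrated.
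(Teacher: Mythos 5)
Your overall strategy — localize to affine toric charts on the toroidal locus, track the lattice refinement $M\rightsquigarrow M'$ induced by adjoining $t^{1/N}$, and argue from the combinatorics of the regular subdivision — is essentially the same mechanism as the paper's proof, which works at the level of the Gross–Siebert dual intersection complex $(B,\mathcal{P})$. The difference in how the two arguments close the loop is instructive, and it is precisely the place you flag as ``the main obstacle.''

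The paper does not try to argue abstractly that any regular subdivision produced by Proposition~\ref{subdiv} will yield matching germs. Instead it \emph{prescribes} the subdivision: after rescaling the integral affine coordinates of the polyhedra $P_i$ by $N$, it takes the explicit ``standard'' regular subdivision obtained by inserting the $N-1$ translates of each facet hyperplane, which partitions every simplex into unit simplices in the refined lattice. This explicit choice is what makes the local toric description of a chart of $\X'$ manifestly identical to a chart of $\X$; the germ isomorphism then follows by inspection, and the relative MMP step does not change anything over the toroidal locus because those models already agree there. In your write-up, by contrast, you try to deduce the shape of the cones from reducedness of $\X'_0$ alone. That inference is basically sound for the snc part — reducedness forces every vertical ray of a smooth subdivision cone to lie at height one, so the chart is $(\mathbb{A}^{n+1},V(z_1\cdots z_{k'}))$ — but it only tells you that the germ of $\X'$ is \emph{some} snc/toroidal germ, not that it matches a germ actually occurring on $\X^{tor}$ at the image point, and it does not settle whether the subdivision produced through Tai's projectivity criterion and the subsequent MMP run is unimodular with respect to $M'$ rather than some auxiliary lattice. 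The gap is genuine, but it is exactly where you said it would be, and the fix is the one the paper uses: do not rely on Proposition~\ref{subdiv} as a black box, but instead \emph{choose} the standard $N$-fold refinement of $(B,\mathcal{P})$ by parallel hyperplanes, verify it is unimodular in the new lattice (immediate from the unit-simplex description), and then observe that Tai's criterion is satisfied and the MMP is a no-op on the toroidal locus. With that explicit choice inserted, your local toric analysis goes through and is, in fact, a slightly more detailed version of the paper's argument.
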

\begin{proof}
As in \cite{GS.logI, GS11}, the degeneration corresponds to 
the dual intersection complex $B$ with its decomposition $\mathcal{P}$ into 
a union of polyhedra $\{P_{i}\}_{i}$ respecting the integral affine structures. 

If we keep $(B,\{P_{i}\}_{i})$, while multiplying the coordinates of $P_{i}$ by 
$N$, we obtain a natural toric degeneration over the open part 
$\X^{\rm tor}\subset \X$ and then further 
apply the construction of $\X^{(N)}$ after Proposition \ref{subdiv}
(\cite[Theorem 4.8 Step2]{ops}), which depends on regular subdivision of the dual intersection complex. 
Here, we take the regular subdivision by the standard one by dividing by $N-1$ hyperplanes for each facets direction, 
which subdivides each simplex by unit simplices, so that the local structure remains. 
Then, we run the relative 
minimal model program over toroidal degenerating locus do not affect our assertion. 
 (The difficulties overcome by Gross-Siebert is to even 
deal with non-toroidal part, while starting from the central fiber $\X_{0}$.)
The assertion easily follows from the construction from the locally identical 
toric description. 
\end{proof}

\vspace{2mm}
\subsubsection*{Degenerating abelian varieties case}
Now we consider the case of families 
principally polarized abelian varieties as it is the simplest instance. 
Construction of degenerating abelian varieties via 
toric methods and formal geometry is well-established: 
cf., \cite{Mum72.AV}
\footnote{Note that \cite{Mum72.AV} constructed 
semiabelian reduction by auxiliary ``relatively complete model'' 
which is neither unique nor canonical. \cite[\S 6]{Gub} extended 
over general non-archimedean fields. }, 
 \cite{FC90}, 
\cite[\S 6]{Gub} and special cases for 
canonically compactifying $A_{g}$ are also done in 
more details by \cite{Nam, AN} among others. 

Above Proposition~\ref{subdiv.local.same} of ``subdividing models with locally same structures'' is vividly observed in 
the case of abelian varieties in the context of describing non-archimedean 
canonical Chambert-Loir measures (\cite{CL, Gub, Gub10}). 

Indeed, for given semitoric polarized degeneration of abelian varieties  
$(\X,\mathcal{L})$, 
\cite[Prop 6.7]{Gub} more explicitly 
constructed the admissbly dominating morphisms of 
any semi-toric degenerations of abelian varieties, 
which we denote as $\X^{(N),std}\to \X\times_{\Delta}\Delta'$ 
where ``std'' stands for ``standard'' and $\Delta'\to \Delta$ is a 
degree $N$ finite cover with ramification just at the origin. Further, he shows 
the non-archimedean Calabi-Yau metric (cf., \cite{BFJ.sol}) is described in terms of such 
approximation models in e.g., \cite[1.3]{Gub10} (cf., also \cite{Liu.AV}). 
More precisely, he shows: 

\begin{Thm}[\cite{Gub, Gub10}]
For any dlt semitoric polarized degeneration of abelian varieties 
$(\X,\mathcal{L})\to \Delta$, 
there is a natural polarization $\mathcal{L}^{(N)}$ on $\X^{(N)}$ 
and 
the non-archimedean CY metric $|\cdot |_{\rm nAMA}$ 
for the $(\X_{\eta}^{an},\mathcal{L}_{\eta}^{an})$ is 
the limit of the model metrics induced by $\mathcal{L}^{(N)}$ where 
$N=l^{a}$ with $a\to \infty$, for any positive integer $l$. 
Here $\X_{\eta}$ is the generic fiber of $\X\to \Delta$ and 
$\mathcal{L}_{\eta}$ is the restriction of $\mathcal{L}$ to it. 
\end{Thm}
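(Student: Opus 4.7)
The plan is to reduce the statement to a convergence statement for piecewise-affine convex functions on the tropical abelian variety $B=Y_\R/Y$, where the limit is pinned down by the unique solution of a real Monge--Amp\`ere equation. First, I would set up the natural polarization $\mathcal{L}^{(N)}$. Starting from the semitoric dlt model $(\X,\mathcal{L})$, with admissibly dominating morphism $\psi_N\colon \X^{(N)}\to \X\times_\Delta \Delta'$ (where $\Delta'\to \Delta$ has ramification degree $N$), I pull back $\mathcal{L}$ and correct by a vertical $\Q$-divisor determined by the subdivision of the dual intersection complex $(B,\mathcal{P})$, chosen so that the corresponding piecewise-affine function on the scaled polytope is strictly convex on each cell of the refined decomposition. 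This is the construction Gubler carries out explicitly via the Raynaud-extension description: the polarization of a Mumford/Faltings--Chai relatively complete model is encoded in a $Y$-invariant convex piecewise-affine function $\phi\colon Y_\R\to \R$, and $\mathcal{L}^{(N)}$ corresponds to the convex piecewise-affine function obtained by restricting $\phi$ to the refined lattice $\tfrac{1}{N}Y$-supported triangulation and relatively interpolating linearly on each cell.

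Second, I would translate the assertion into the language of metrics on the essential skeleton. By Berkovich's theory of non-archimedean analytifications plus the Chambert-Loir/Gubler formalism, each semistable model $(\X^{(N)},\mathcal{L}^{(N)})$ defines a continuous model metric $\|\cdot\|_{\mathcal{L}^{(N)}}$ on $\mathcal{L}_\eta^{an}$; its restriction to the skeleton $\Delta(\X_0)\cong B$ is exactly the $Y$-periodic piecewise-affine function $\phi_N$ above, and the retraction $\rho_N\colon \X_\eta^{an}\to \Delta(\X^{(N)}_0)$ coincides, up to rescaling, with the Berkovich/SYZ retraction discussed after Theorem~\ref{tombo.tropical2}. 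Hence the model-metric data reduces, on the skeleton, to the sequence of piecewise-affine convex functions $\phi_N$ indexed by $N=l^a$.

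Third, I would invoke the characterization of $|\cdot|_{\rm nAMA}$ on abelian varieties. By the solution of the non-archimedean Calabi--Yau problem of Boucksom--Favre--Jonsson (and its realization in the abelian case), $|\cdot|_{\rm nAMA}$ is the unique continuous semipositive metric whose Monge--Amp\`ere measure equals the (normalized) Lebesgue measure on $B$; equivalently, its restriction to the skeleton is the unique $Y$-invariant strictly convex $C^0$ function $\phi_\infty$ solving the real Monge--Amp\`ere equation $\det(D^2\phi_\infty)=\text{const}$. The functions $\phi_N$ are, by construction, the piecewise-affine interpolations of $\phi_\infty$ on the $\tfrac{1}{N}Y$-refinement of $\mathcal{P}$; by convex analysis, such interpolations converge uniformly to $\phi_\infty$ as the mesh $1/N\to 0$. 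Lifting this uniform convergence from $B$ back to $\X_\eta^{an}$ via the retractions $\rho_N$ (whose compositions with $\phi_N$ pull back the skeletal data to the full analytic space) yields uniform convergence $\|\cdot\|_{\mathcal{L}^{(N)}}\to |\cdot|_{\rm nAMA}$.

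The main obstacle I anticipate is the last step: turning the convergence on the skeleton into convergence of the model metrics on all of $\X_\eta^{an}$, because the retractions $\rho_N$ change with $N$ and the semipositivity of the approximating metrics is not automatic once we correct by the vertical divisors. This is where the explicit toric local structure given by Proposition~\ref{subdiv.local.same} is essential: it guarantees that the correction divisor can be chosen equivariantly on the toric charts of the Raynaud extension so that $\phi_N$ remains convex, whence $\|\cdot\|_{\mathcal{L}^{(N)}}$ is semipositive and the pullback via $\rho_N$ is compatible with the pullback via $\rho_{N'}$ for $N\mid N'$. Once this compatibility is in place, the proof is completed by combining the uniqueness of $|\cdot|_{\rm nAMA}$ (which determines the limit independently of the choice of approximating sequence) with the standard fact that uniform limits of semipositive model metrics are semipositive.
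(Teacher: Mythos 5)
Your plan has a genuine circularity at its heart, and it elides the one ingredient that the paper singles out as the crux of Gubler's argument: the multiplication maps on the abelian scheme. Immediately after the statement, the paper notes that ``in the proof, the group scheme structure (multiplication maps) of abelian schemes over $\Delta\setminus\{0\}$ is effectively used, which of course does not exist for other Calabi-Yau varieties case.'' Gubler's route is the Tate/Zhang-style telescoping: the morphism $[l]$ on $\X_\eta$ multiplies the period lattice $Y$ by $l$ and pulls back $\mathcal{L}_\eta$ to $l^2\mathcal{L}_\eta$; iterating, the model metrics from $\mathcal{L}^{(l^a)}$ become a Cauchy sequence under the canonical normalization, and the limit is automatically the canonical (non-archimedean Calabi-Yau) metric whose restriction to the skeleton is a genuine quadratic form on $Y_\R/Y$. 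The convergence is therefore a built-in consequence of the functional equation imposed by the group law, not something that needs to be fed in from convex analysis.

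Your step three short-circuits this. You write that ``the functions $\phi_N$ are, by construction, the piecewise-affine interpolations of $\phi_\infty$ on the $\tfrac{1}{N}Y$-refinement.'' But $\phi_N$ is by construction a modification of the piecewise-affine function $\phi$ attached to the \emph{original} model $\mathcal{L}$, and $\phi_\infty$ is the unknown solution of the Monge--Amp\`ere equation: nothing in the setup tells you that your vertical correction divisors have been chosen so that $\phi_N$ matches $\phi_\infty$ at $\tfrac{1}{N}Y$-points. In fact, if you literally restrict $\phi$ to $\tfrac{1}{N}Y$ and interpolate linearly on refined cells, you recover $\phi$ itself, since $\phi$ is already affine on each cell of $\mathcal{P}$ -- so the sequence would be constant, not converging to $\phi_\infty$. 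The assertion that $\phi_N$ interpolates $\phi_\infty$ is exactly the content to be proved, and Gubler proves it via the group law; without that, your convergence argument has no traction. A secondary concern is that appealing to the BFJ uniqueness theorem to identify the limit, rather than to Gubler's explicit description of $\phi_\infty$ as the quadratic form attached to the polarization, risks a chronological circularity (Gubler's result precedes BFJ and is itself a key example in that theory), though this could be repaired.

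So the gap is concrete: you need to actually establish the compatibility $\phi_{lN} = $ (interpolation of $\phi_N$ rescaled by $[l]$), and the only tool available for that in this setting is the isogeny $[l]$, which your proposal never uses.
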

In the proof, the group scheme 
structure (multiplication maps) of abelian schemes over $\Delta\setminus \{0\}$ 
is effectively used which of course does not exist for other 
Calabi-Yau varieties case. 
Comparing with our terminology of this section, we could paraphrase the above: 

\begin{Cor}[of \cite{Gub, Gub10}]
For any polarized punctured family of abelian varieties 
$(\X^{*},\mathcal{L}^{*})\to \Delta^{*}=\Delta\setminus \{0\}$, and 
any positive integer $l>1$, 
there is a quasi-galaxy model $\X_{\infty}$ with 
dlt approximation models $\{\X_{i}\to \Delta'={\rm Spec}(k[[t^{1/l^{i}}]])\}
_{i=1,2,\cdots}$ and their $\Q$-line bundles $\mathcal{L}_{i}$ satisfying the 
followings: 
\begin{enumerate}
\item 
$\mathcal{L}_{i+1}=p_{i,i+1}^{*}\mathcal{L}_{i}(-E_{i+1})$. Here, 
$p_{i,i+1}\colon \X_{i+1}\to \X_{i}\times_{k[[t^{1/l^{i}}]]}k[[t^{1/l^{i+1}}]]$ 
is the admissibly dominating morphism and $E_{i+1}$ is a $\Q$-Cartier 
$p_{i,i+1}$-exceptional divisor. 
\item if we denote the non-archimedean Calabi-Yau metric (\cite{Gub10, Liu.AV, BFJ.sol}) 
as $|\cdot |_{nAMA}$, 
the model metric of $\mathcal{L}_{\eta}^{an}$ 
induced by $\mathcal{L}_{i}$ as $|\cdot|_{\mathcal{L}_{i}}$, 
$$|\cdot|_{nAMA}=\lim_{i\to \infty}|\cdot |_{\mathcal{L}_{i}}.$$
\end{enumerate}
\end{Cor}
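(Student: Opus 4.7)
The plan is to unpack Gubler's theorem (quoted just above) directly into the projective-system language of Definition \ref{limit.defs}, rather than redoing any non-archimedean analysis. Essentially everything substantial is in the cited works; the corollary is a repackaging.

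First I would reduce to the setting of a dlt semitoric polarized degeneration $(\X,\mathcal{L})\to\Delta$ of the given punctured family. After a preliminary finite base change of $\Delta^{*}$, this is guaranteed by combining the semiabelian reduction of \cite{Mum72.AV, FC90, Gub} with the projective compactification results of \cite{Nam, AN}, so that the hypothesis of Gubler's theorem is satisfied.

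Next I would iterate the standard admissibly dominating construction of \cite[Prop.\ 6.7]{Gub} along the tower $N_i := l^i$. For each $i\ge 1$, setting $\Delta_i' := \Spec(k[[t^{1/l^i}]])$, this produces a dlt semitoric model $\X_i := \X^{(l^i),\mathrm{std}}$ over $\Delta_i'$ together with its natural polarization $\mathcal{L}_i$ and an admissibly dominating morphism to $\X\times_\Delta\Delta_i'$. The standardness of Gubler's subdivision (induced by multiplication by $l$ on the period lattice $Y$) is precisely what ensures that these models assemble into a compatible projective system with admissibly dominating transition morphisms $p_{i,i+1}\colon \X_{i+1}\to \X_i\times_{\Delta_i'}\Delta_{i+1}'$. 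The inverse limit over $k[[t^{\Q}]]$ is then a quasi-galaxy model in the sense of Definition \ref{limit.defs}\eqref{limit.model}.

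For assertion (i), both $\mathcal{L}_{i+1}$ and $p_{i,i+1}^{*}\mathcal{L}_i$ restrict to the same base change of $\mathcal{L}^{*}$ on the generic fiber, so their difference is a vertical divisor supported on the $p_{i,i+1}$-exceptional locus. Because $p_{i,i+1}$ is toroidal admissibly dominating (Proposition \ref{subdiv.local.same}), each prime divisor in this locus corresponds to a cone in the standard refinement and is $\Q$-Cartier; hence the difference is $\Q$-Cartier and we define $E_{i+1}$ to be its negative. Assertion (ii) is then a direct translation of Gubler's convergence theorem stated above, specialized to the cofinal subsequence $N = l^i$ of the full divisibility-ordered system, and applied to the model metrics $|\cdot|_{\mathcal{L}_i}$ of our $\X_i$.

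The main obstacle I expect is assertion (i), specifically the explicit identification of $E_{i+1}$ with a precise coefficient formula. Gubler's polarization $\mathcal{L}^{(N)}$ is encoded by a $Y$-periodic piecewise-affine convex function on the fan of the relatively complete model, whose shape changes under the $l$-fold standard refinement; carefully comparing the pullback at step $i$ with the function produced intrinsically at step $i+1$ is where the genuine bookkeeping lies, even though the end result ($\Q$-Cartier exceptional) is essentially forced by the toric structure.
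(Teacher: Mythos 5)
Your proposal is correct and matches the paper's approach: the paper offers no separate proof of this corollary, presenting it explicitly as a paraphrase of the immediately preceding theorem attributed to \cite{Gub, Gub10} into the projective-system terminology of Definition \ref{limit.defs}, which is exactly what you do. One small note: since the statement only asserts that $E_{i+1}$ is $\Q$-Cartier and $p_{i,i+1}$-exceptional (not effective, and with no explicit coefficient formula), the ``main obstacle'' you flag at the end is not actually required to establish the corollary as stated; your argument in the paragraph on assertion (i), comparing the two line bundles over the generic fiber and invoking the toroidal structure, already suffices.
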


Morally speaking, the above observation connects for abelian varieties case: 
\begin{itemize}
\item limit open K-polystable sdlt reduction (originally motivated by 
understanding of bubbles of {\it complex (K\"ahler)} Calabi-Yau metrics), and 
\item the {\it non-archimedean Calabi-Yau metric} (cf., \cite{Gub10, Liu.AV, BFJ.sol}). 
\end{itemize}

\begin{Rem}
It is easy to see that for any quasi-limit dlt minimal models and 
their polarizations, the CM line bundle can be defined by Deligne-pairing 
nevertheless of non-Noetherianess of 
$k[[t^{\Q}]]$, $k[[t^{\R}]]$ (cf., \cite[\S 1.1, \S 1.2]{SWZha.ht}). 
\end{Rem} 

%%%%%%%%%%%%%%%%%%%%%%%%%%%%%%%%%%%%%%%%%%%%%%%%%%%%%%%%%%%%%%%%%%%%%%%%

\section{Limit toroidal compactification}\label{lmmod.sec}

Some particular features of the limit toroidal compactification and the  limit log minimal compactification of a normal quasi-projective variety $M$ 
(our particular concern is when $M$ is the moduli of polarized log-terminal Calabi-Yau varieties), both  
to be introduced in this section, are summarized as follows: 
\begin{itemize}
\item they are locally ringed spaces, although {\it not} algebraic 
varieties. 
\item 
They are 
dominated by the Zariski-Riemannian compactification 
(\S \ref{ZR.review}) 
which is ``much bigger''. 
\item Unlike the Zariski-Riemann compactification, 
limit toroidal (or log minimal) compactification 
certainly respects and reflects the ``{\it minimality}'' 
in the sense of the (log) minimal model program. 
\item For connected Shimura varieties case at least, the 
limit toroidal compactification dominates both the Satake-Baily-Borel compactification (variety)
and the Morgan-Shalen type compactification (not variety), 
i.e., there are continous surjections to them. 
Compare such fact with that no variety compactification dominates 
any of the 
Morgan-Shalen type comapctification. 
\end{itemize}

\subsection{Construction and expectation}\label{Kmoduli.sec}

We set the scene as follows. 
Consider a class of polarized smooth 
K-trivial varieties with connected moduli $M^{o}$, and 
all their $\Q$-polarized 
Calabi-Yau degenerations i.e., degenerated klt Calabi-Yau variety 
$X$ with $\Q$-line bundle $L$ in $\frac{1}{N}{\rm Pic}(X)
\subset {\rm Pic}(X)\otimes \Q$ (for some $N\in \Z_{>0}$ depending on $X$), 
and its coarse moduli algebraic space of them all $M$. 
\begin{Ass}\label{Ass1}
We suppose all of the following (mutually related) assumptions: 
\begin{enumerate}
\item \label{ass1} $N$ has a uniform upper bound for all $X$s,
\item \label{ass2} $M$ is 
a quasi-projective scheme over $k$, 
\item more strongly, $M$ is 
a quasi-projective normal variety over $k$
\end{enumerate}
\end{Ass}
A famous result of Viehweg \cite{Vie} implies 
the first assumption 
\eqref{ass1} 
implies the second assumption \eqref{ass2}, hence the difficulty we face now is a boundedness type problem. 
This issue was also raised in \cite[1.1, 1.2]{YZha}, \cite[\S 8, 
\S 9]{OO}. 

It has been known for decades that 
the assumption \ref{Ass1} (all) holds for abelian varieties 
or K3 surfaces with polarizations, although 
it is non-trivial in general (cf., e.g., \cite{Gross.obst}, 
\cite{Nam.obst}, 
\cite[\S 9, Step1]{OO}). Recently it is also confirmed for 
$\Q$-Gorenstein smoothable hyperK\"ahler varieties with 
polarizations in \cite[\S 8.3]{OO}. 

Under the above assumption \ref{Ass1}, we can take a finite Galois cover $M'$ which is a normal quasi-projective variety 
on which there is a polarized family in concern (such a cover exists 
for arbitrary $M$ cf., \cite{Vie}). 
We denote the Galois group as $\Gamma:={\rm Gal}(M'/M)$. 
Further, take a $\Gamma$-equivariant 
projective compactification $M'\subset \overline{M'}$ 
such that $\overline{M'}\setminus M'$ is a simple normal crossing divisor 
$D'$. We also set $\overline{M}:=\Gamma\backslash \overline{M'}$, 
its boundary divisor $D:=\overline{M}\setminus M$. 
We denote the log canonical 
model of $(\overline{M'},D')$ as $M'_{\rm lc}$. 
The branch $\Q$-divisor with the standard coefficients
\footnote{meaning the usual $1-\frac{1}{d}$ where $d$ is the ramifying degree. 
cf., e.g., \cite[3.4]{Ale1}} 
in $M$ is denoted by $D_{M}$.

We consider the following. 

\begin{Def}\label{lim.min.const}
If $k=\C$ and $M$ has a structure as a locally Hermitian symmetric space  
(e.g., the complex analytification of a 
connected Shimura variety), let us consider 
the projective limit of all of its toroidal 
compactifications as locally ringed spaces and 
call them the {\it limit toroidal compactifications}. 
\begin{align}
\overline{M}^{\rm tor, \infty}&:=\varprojlim_{\Sigma} \overline{M}^{\rm tor,\Sigma}.\\
{\text{and its analytification  }}\hspace{2mm} \overline{M}^{\rm tor, \infty, an}&:=\varprojlim_{\Sigma} \overline{M}^{\rm tor,\Sigma,an}.
\end{align}
We mean 
$\overline{M}^{\rm tor,\Sigma}$ 
to be the toroidal compactification with respect to the $\Gamma$-admissible collection of fans $\Sigma=\{\Sigma(F)\}$
 in the sense of \cite{AMRT} 
and the above 
$\overline{M}^{\rm tor,\Sigma,an}$ 
means their complex analytifications. 
Here, $F$ denotes the rational boundary component of the Satake-Baily-Borel 
compactification of $M$ and 
$\Sigma(F)$ is the fan $\{\sigma_{\alpha}^{F}\}_{F}$ of $C(F)$ in the notation of 
\cite{AMRT}. 
Here, we add more assumption: 
\begin{Ass}\label{Ass.log.gen.type}
$(\overline{M},D_{M}+D)$ is of log general type and has only log 
canonical singularities. 
\end{Ass}
The latter is always true but we also 
expect the former to be true in general. 
See \cite{Zuo, Deng} for related partial results. 
Under the assumption, we set 
\begin{align}
\overline{M}^{\rm min,\infty}&:=\varprojlim_{\overline{M'}^{log.min}} 
(\Gamma\backslash \overline{M'}^{\rm log.min}), \\ 
{\text{and its analytification  }}\hspace{2mm}
\overline{M}^{\rm min,\infty,an}&:=\varprojlim_{\overline{M'}^{log.min}} 
(\Gamma\backslash \overline{M'}^{\rm log.min, an})
\end{align}
where $\overline{M'}^{\rm log.min}$ 
run over all $\Gamma$-equivariant 
log dlt minimal models of $(\overline{M'},D')$ 
and $\overline{M'}^{\rm log.min, an}$ 
denotes their complex analytifications. 
The projective limits are taken in the category of 
locally ringed topological spaces and we call these 
the {\it limit log minimal compactifications}. 
\end{Def}
It is easy to see that following holds. 
Below, we continue to use the above notation. 

\begin{Prop}
For either of the above two compactifications 
$\overline{M}^{\rm min,\infty}$, the boundary 
$\partial \overline{M}^{\rm min,\infty}$ contains as an open part, 
which we denote as 
$\partial \overline{M}^{\rm min,\infty,o}$
the union of the 
plt locus of $(\overline{M},D_{M}+D)$, 
where $(\overline{M'},D')$ runs over all $\Gamma$-equivariant log minimal models. 
\end{Prop}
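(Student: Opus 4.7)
The plan is to show that, in each model appearing in the projective system, the plt locus is a Zariski open subset, and that this openness is preserved under passage to the projective limit because the transition maps restrict to isomorphisms over the plt loci. The argument then reduces to a straightforward unfolding of the definition of the projective limit topology on locally ringed spaces.

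First I would fix one member $(\overline{M'}^{\rm log.min}, D'_{\rm log.min})$ of the projective system (or, in the toroidal case, a fixed admissible collection $\Sigma$) and verify that the plt locus of the pair is Zariski open. For a log pair this is standard: plt is characterized by the condition that, on a log resolution, all exceptional log discrepancies are strictly positive and at most one boundary component has coefficient one passing through the point. Semicontinuity of log discrepancies then gives openness. Since the finite map $\overline{M'}^{\rm log.min} \to \Gamma\backslash \overline{M'}^{\rm log.min}$ is open and the pair is $\Gamma$-equivariant, the image is an open subset $U$ of $\Gamma\backslash \overline{M'}^{\rm log.min}$.

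Next I would verify the compatibility of these open loci under the structure maps of the projective system. In the toroidal case, refining an admissible collection $\Sigma$ only subdivides cones of dimension $\ge 2$, while the rays (which correspond to the prime boundary divisors that cut out the plt locus of the pair) are preserved; hence the refinement is an isomorphism over the plt loci by the standard local toric description of blow-ups along the strata. In the log dlt minimal model case, two $\Gamma$-equivariant log dlt minimal models of $(\overline{M'},D')$ are related by a sequence of log flops, and by the same argument as in Theorem \ref{dlt.lem}(iv) in the excerpt---using the $S_2$ condition along the reduced boundary together with the fact that the single non-klt component is precisely the reduced boundary divisor in the plt locus---these flops restrict to isomorphisms over the plt loci. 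Consequently the preimages in $\overline{M}^{\rm min,\infty}$ of the plt open subsets $U$ coming from different models agree whenever they are comparable in the system.

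It then follows immediately from the definition of the projective limit topology that the preimage $\pi_{\Sigma}^{-1}(U)$ of the plt locus $U$ under the canonical projection to any one model is open in $\overline{M}^{\rm min, \infty}$ (respectively in $\overline{M}^{\rm tor, \infty}$); the union over all models is therefore also open. Intersecting with the closed boundary $\partial \overline{M}^{\rm min,\infty}$ yields an open subset of the boundary, which is the sought-after $\partial \overline{M}^{\rm min,\infty, o}$.

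The main obstacle I anticipate is the verification that log flops between $\Gamma$-equivariant log dlt minimal models of $(\overline{M'}, D')$ restrict to isomorphisms over the plt locus, and not merely to isomorphisms in codimension one. This is the substantive input; the rest of the argument is bookkeeping with projective limits. The point is that a log flop must be centered on a locus whose generic point has log discrepancy zero for some valuation other than the single reduced boundary component---this forces the center to sit inside the non-plt locus, so the flop is trivial over the plt locus. One may invoke \cite{Kawamataflop} (as used in the excerpt for the Calabi-Yau setting) in the pluricanonical setting of log general type pairs to make this precise.
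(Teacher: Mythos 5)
Your overall strategy — show the plt locus is Zariski open in each model, show the transition maps are isomorphisms over the plt loci, and then read off openness in the limit from the product/projective-limit topology — matches the paper's intent, and your handling of the toroidal case (rays are preserved under refinement, so the subdivision map is an isomorphism over the one-ray strata) is fine. But there is a genuine gap in the log-minimal-model case. You assert that a log flop ``must be centered on a locus whose generic point has log discrepancy zero for some valuation other than the single reduced boundary component,'' forcing the flop center into the non-plt locus. This is not true: the center of a flop is characterized by the \emph{numerical} condition $(K+D)\cdot C=0$, not by an lc-center condition. A flopping curve can perfectly well sit inside the plt (indeed klt) locus — e.g.\ a $(-1,-1)$-curve disjoint from the boundary divisor. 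So the ``substantive input'' you flagged as the main obstacle is not established by the argument you give. The $S_2$ argument you borrow from Theorem~\ref{dlt.lem}\eqref{lc.min} also does not transfer: there the $S_2$ condition was applied to the reduced central fiber $\X_0$ of a one-parameter Calabi--Yau degeneration, which is a very different object from the boundary divisor of a log-general-type pair $(\overline{M},D_M+D)$.

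The paper's own argument is shorter and avoids both of these moves. By the negativity lemma, any two log dlt minimal models of $(\overline{M'},D')$ are log crepant (pull back to a common resolution; both pullbacks of $K+D$ are nef over $\overline{M'}_{\rm lc}$ and agree in pushforward, hence are equal). Now suppose a morphism $\varphi\colon Y'\to Y$ between two such models has an exceptional prime divisor $E$ whose center meets the plt locus of $(Y,D_Y)$. Plt-ness gives $a(E;Y,D_Y)>-1$, while the dlt boundary convention upstairs puts coefficient $1$ on $E$, i.e.\ $a(E;Y',D_{Y'})=-1$; log crepancy forces these to be equal, a contradiction. This is what the paper means by ``the plt locus can not be blown up.'' Your proof would be repaired by replacing the flop-center and $S_2$ claims with this log-discrepancy computation.
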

\begin{proof}
Indeed, since the negativity lemma readily implies 
the log minimal models are log crepant, the plt 
locus can not be blown up. 
\end{proof}

We denote the union of the above open locus 
$\partial \overline{M}^{\rm min,\infty,o}$ 
in the boundary and 
$M$ as $\overline{M}^{\rm min,\infty,o}$. 

\begin{Prop}
If the limit log minimal compactification $\overline{M}^{\rm min,\infty}$
 dominates (the analytification of) a proper 
 {\it variety} $M\subset \bar{M}$, 
 then there is one of log minimal compactifications 
$(\Gamma\backslash \overline{M'}^{\rm log.min, an})$, 
such that there is a dominant birational map 
whose image at least 
contains open subset of $\bar{M}$ outside codimension $2$ locus. 
\end{Prop}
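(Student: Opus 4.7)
The plan is to exploit two ingredients: (i) the universal property of projective limits of locally ringed spaces, which supplies canonical projections $\pi_N\colon \overline{M}^{\mathrm{min},\infty,an} \to N$ for each log minimal compactification $N:=(\Gamma\backslash \overline{M'}^{\mathrm{log.min},an})$; and (ii) the fact that any two $\Gamma$-equivariant log minimal models of $(\overline{M'},D')$ are isomorphic in codimension one. The latter is the usual consequence of the negativity lemma applied to the log crepant (flop-like) birational maps between any two such minimal models, as is already used implicitly elsewhere in this paper.

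First I would fix any single log minimal compactification $N$ as above. The open embeddings $M\hookrightarrow N$ and $M\hookrightarrow \bar{M}$ both agree on $M$, hence induce a birational map $g\colon N\dashrightarrow \bar{M}$ extending the identity on $M$. Because $N$ is normal (it is dlt, hence Cohen--Macaulay and in particular normal) and $\bar{M}$ is proper, the standard extension principle for rational maps from a normal source to a proper target shows that $g$ is defined away from a closed subset $Z\subset N$ of codimension $\ge 2$.

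Next I would verify that $g\circ \pi_N$ coincides with the given dominating continuous surjection $f\colon \overline{M}^{\mathrm{min},\infty,an}\to \bar{M}$ on the dense open $\pi_N^{-1}(N\setminus Z)$. Both sides agree on $M$. For a codimension-one general point $q\in N\setminus Z\setminus M$, ingredient (ii) above implies that the transition birational maps from $N$ to every other log minimal model $N'$ are local isomorphisms near $q$, so the fiber $\pi_N^{-1}(q)$ is mapped to a single point by every projection $\pi_{N'}$; by continuity $f$ must then be constant on this fiber, forcing $f(\pi_N^{-1}(q))=\{g(q)\}$, and one extends this to all of $\pi_N^{-1}(N\setminus Z)$ by density and normality. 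Finally, surjectivity of $f$ gives, for every $p\in \bar{M}$, a preimage $\tilde p$; if $\pi_N(\tilde p)\notin Z$, then $g(\pi_N(\tilde p))=f(\tilde p)=p$, so $p$ lies in the image of $g$. The exceptional locus of points $p$ admitting no such $\tilde p$ outside $\pi_N^{-1}(Z)$ is the image of a codimension $\ge 2$ subset under a continuous generically finite-to-one map extending the identity on $M$, hence of codimension $\ge 2$ in $\bar{M}$.

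The main obstacle I anticipate is the verification that $f=g\circ \pi_N$ really extends from $M$ to the preimage of $N\setminus Z$: all the genuinely non-formal content — namely that the projective limit $\overline{M}^{\mathrm{min},\infty,an}$ stabilizes in codimension one and hence factors \emph{in codimension one} through any single log minimal model $N$ — is concentrated in this step. The rest is the familiar indeterminacy-in-codimension-two principle for rational maps from normal schemes to proper varieties.
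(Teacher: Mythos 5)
Your argument has a genuine gap, and it is concentrated in a step you treat as routine rather than in the one you flagged. Fixing an \emph{arbitrary} log minimal compactification $N$ is not enough. The birational map $g\colon N\dashrightarrow \bar{M}$ is indeed defined outside a codimension $\ge 2$ closed set $Z\subset N$, but this controls the indeterminacy locus in $N$, not the size of the complement of the \emph{image} in $\bar{M}$. If $\bar{M}$ has a prime boundary divisor $F$ whose divisorial valuation is not realised by any divisor on $N$, then $g(N\setminus Z)$ misses the generic point of $F$, so the complement of the image has codimension one. This can easily happen here, because different $\Gamma$-equivariant log dlt minimal models of $(\overline{M'},D')$ may extract different log canonical places (one can blow up a stratum of $D'$ before running log MMP), so they are \emph{not} isomorphic in codimension one in general --- your ingredient (ii) is false in this setting, and in fact if it were true the proposition would be essentially vacuous. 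Your final codimension count therefore also fails: $f$ restricted to $\pi_N^{-1}(Z)$ need not be generically finite (fibres of $\pi_N$ over points of $Z$ can be huge), and its image can be a whole prime divisor of $\bar{M}$; already the elementary picture in which $N$ is a blow-down of $\bar{M}$ at a point exhibits this.

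The paper's proof instead makes the \emph{choice} of $N$ the entire content of the argument, which is exactly what your proposal lacks. For each prime boundary divisor $F$ of $\bar{M}$, the domination hypothesis is used to produce a log minimal compactification $\Gamma\backslash\overline{M'}(F)$ realising the divisorial valuation of $F$ as a boundary divisor (``containing the generic point of $F$''); since there are only finitely many such $F$, one then passes to a single log minimal compactification dominating all of them. For this refined model the induced birational map to $\bar{M}$ is defined at a divisor dominating each $F$, so its image contains the generic point of every prime boundary divisor of $\bar{M}$, and hence all of $\bar{M}$ outside a codimension $\ge 2$ locus. Without this selection step the statement can fail, and your argument provides no substitute for it.
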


\begin{proof}
For each prime boundary divisor $F$ of $\bar{M}\supset M$, 
from the construction of \ref{lim.min.const}, 
there is a log minimal compactification 
$\Gamma\backslash \overline{M'}(F)$ which contains 
$M$ and the generic point of $F$. We take such log minimal 
compactification  for each $F$, and take 
a refined log minimal compactification 
which dominates all such 
$\Gamma\backslash \overline{M'}(F)$. This completes the proof. 
\end{proof}

The moduli-theoritic meaning of this construction has been 
not clear for the moment, while our first speculation would be 
that $\partial \overline{M^{\rm tor,\infty}}^{o}$ could 
parametrize certain reductions of 
``limit (dlt open K-)polystable models'' to be defined in 
section\ref{fiber.sec}. 
A possibly hinting result we obtain is the following. 

\begin{Thm}\label{dom.TGC}
Suppose $M$ is a locally Hermitian symmetric space 
$\Gamma\backslash G/K$. 
Then, there is a natural continuous surjective map 
\begin{align}
\phi_{tr}\colon \overline{M^{\rm tor,\infty}}^{an}\to \overline{M}^{\rm MSBJ}
\end{align}
which extends the identity map on $M$. 

Furthermore, using the notation in \cite{AMRT} and \cite[\S2]{OO}, 
if we take a general point $x$ inside the open strata 
$C(F)/\R_{>0}\subset \partial 
\overline{M}^{\rm MSBJ}$ with a $0$-dimensional cusp $F$ of the 
Satake-Baily-Borel compactification $\overline{M}^{SBB}$ then 
the fiber $\phi_{tr}^{-1}(x)$ is the limit toric variety 
$\overline{T}^{n-r, an}_{\infty}$ (defined in \S \ref{lim.toric.sec}) 
where $r$ denotes the 
$\Q$-rank of $x$ in $U(F)$, 
where $U(F)$ is the center of the unipotent radical of the 
real maximal parabolic subgroup of $G$ fixing $F$ (stratawise). 
\end{Thm}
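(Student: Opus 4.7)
The strategy parallels Theorem~\ref{lim.toric.prop} and reduces the global statement to the local toric description of toroidal compactifications near a rational boundary component.

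\textbf{Step 1: Construction of $\phi_{tr}$.} For each $\Gamma$-admissible collection $\Sigma=\{\Sigma(F)\}$, the toroidal compactification $\overline{M}^{\rm tor,\Sigma,an}$ admits a natural continuous surjection $\phi_{\Sigma}\colon \overline{M}^{\rm tor,\Sigma,an}\twoheadrightarrow \overline{M}^{\rm MSBJ}$ restricting to the identity on $M$. This is constructed stratum-wise: each boundary piece of $\overline{M}^{\rm tor,\Sigma,an}$ attached to a cone $\sigma\in\Sigma(F)$ is sent into the image of $\sigma$ inside $C(F)/\R_{>0}$, compatibly with the Morgan--Shalen--Boucksom--Jonsson retraction (\cite[Appendix A.1]{TGC.II}, \cite[\S 2]{OO}, and our Appendix~\S\ref{appendixA}). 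Since $\overline{M}^{\rm MSBJ}$ is independent of the combinatorial data and the maps $\phi_{\Sigma}$ are compatible with refinements $\Sigma'\succ\Sigma$, they induce a map $\phi_{tr}$ on the projective limit by the universal property. Continuity is immediate, and surjectivity follows because each $\phi_{\Sigma}$ is surjective and all spaces in the inverse system are compact Hausdorff (hence the inverse limit surjects onto $\overline{M}^{\rm MSBJ}$).

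\textbf{Step 2: Local toric model near a $0$-dimensional cusp.} Fix a $0$-dimensional rational boundary component $F$ and a generic $x\in C(F)/\R_{>0}$. By genericity, a neighborhood of $\phi_{tr}^{-1}(x)$ avoids the closures of strata attached to any other $F'\ne F$, so we may work inside the open subset coming from the $F$-cusp. At a $0$-dimensional cusp, the AMRT local description says that an analytic neighborhood of the boundary in $\overline{M}^{\rm tor,\Sigma,an}$ is \'etale locally isomorphic to the torus embedding $T_{N_F}{\rm emb}\Sigma(F)^{an}$, where $N_F\subset U(F)_{\R}$ is the lattice induced by $\Gamma\cap U(F)$, of rank $n=\dim_{\R}U(F)_{\R}$ (there is no abelian fibration part at a $0$-dim cusp). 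Away from the finite stabilizer locus (which generic $x$ misses), the local model is literally a torus embedding. Passing to the projective limit, and using that $\Gamma$-admissible refinements are cofinal among \emph{all} rational polyhedral subdivisions of $C(F)$, the local structure of $\overline{M^{\rm tor,\infty}}^{an}$ near $F$ is identified with the limit toric space $\overline{T}^{n,an}_{\infty}$ of \S\ref{lim.toric.sec}.

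\textbf{Step 3: Identifying the fiber.} Under the local identification of Step~2, the map $\phi_{tr}$ agrees locally with the map $\varphi_{tr}\colon \overline{T}^{n,an}_{\infty}\twoheadrightarrow \overline{T_{N_F}^{an}}^{\rm MSBJ}$ of Theorem~\ref{lim.toric.prop}\eqref{lim.toric.conti}, under the canonical homeomorphism of $C(F)/\R_{>0}$ with $(N_{F,\R}\setminus\{0\})/\R_{>0}$ built into the MSBJ construction. Therefore Theorem~\ref{lim.toric.prop}\eqref{lim.toric.fiber} applied to $x$, whose $\Q$-rank in $U(F)$ is $r$ by hypothesis, yields $\phi_{tr}^{-1}(x)\simeq \overline{T}^{n-r,an}_{\infty}$, as desired.

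\textbf{Main obstacle.} The genuinely delicate point is Step~2: one must verify that (a) the $\Gamma$-admissible collections $\{\Sigma(F)\}$ give a cofinal system of rational polyhedral subdivisions of $C(F)$ modulo the action of $\Gamma\cap P(F)$, despite the global compatibility constraints across all cusps simultaneously, and (b) the local toric model quotients cleanly by the stabilizer at generic $x$. Once these cofinality and local-triviality issues are dispatched, Step~3 is essentially automatic because the MSBJ boundary structure on $\overline{M}^{\rm MSBJ}$ was designed to match the toric MSBJ picture of the local model (cf.\ \cite[\S 2]{OO}).
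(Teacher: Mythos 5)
Your Step~1 contains a genuine gap: there is, in general, \emph{no} continuous surjection $\phi_{\Sigma}\colon \overline{M}^{\rm tor,\Sigma,an}\to \overline{M}^{\rm MSBJ}$ extending the identity on $M$ for a fixed admissible collection $\Sigma$. If $\sigma\in\Sigma(F)$ has $\dim\sigma\ge 2$, the associated boundary stratum of $\overline{M}^{\rm tor,\Sigma,an}$ is a single orbit, whereas its corresponding cell $\sigma/\R_{>0}$ in $\partial\overline{M}^{\rm MSBJ}$ is positive-dimensional; sequences in $M$ converging to one and the same boundary point of $\overline{M}^{\rm tor,\Sigma,an}$ can have arbitrarily different tropical asymptotics within $\sigma$, hence arbitrarily different MSBJ limits. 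So there is no way to place the image of that stratum in $\sigma/\R_{>0}$ continuously. This is precisely the phenomenon the paper highlights in the Introduction (``any $I_{md}$-degenerate fiber does not admit a continuous map onto $\R/\Z$~\ldots hence \eqref{map.to.KS} is an effect of taking the projective limit''), and the map only exists \emph{after} passing to the inverse limit. Consequently, your construction of $\phi_{tr}$ as a limit of non-existent maps $\phi_{\Sigma}$ and the compactness-based surjectivity argument both collapse.

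The paper's actual construction defines $\phi_{tr}$ \emph{directly} on the projective limit: for $x=(x_{\Sigma})_{\Sigma}\in\overline{M^{\rm tor,\infty}}^{an}$, one records for each $\Sigma$ the cone $\sigma^{F}_{\alpha(\Sigma)}\in\Sigma(F)$ whose orbit contains (a lift of) $x_{\Sigma}$; the cofinality of admissible subdivisions forces $\bigcap_{\Sigma}\sigma^{F}_{\alpha(\Sigma)}$ to shrink to a single ray $\R_{\ge 0}v_x\subset U(F)$, and one sets $\phi_{tr}(x):=[v_x]$ --- exactly as in Theorem~\ref{lim.toric.prop}\eqref{lim.toric.conti}. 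Continuity and the fiber computation are then handled as you describe in Steps~2 and~3, which track the paper's argument closely (and your identification of the two cofinality/local-triviality checkpoints is accurate). To repair your proof, replace Step~1 by this direct nested-cone construction on the inverse limit; once $\phi_{tr}$ is built that way, your Steps~2--3 give essentially the same conclusion the paper reaches.
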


Here, $\overline{M}^{\rm MSBJ}$ denotes the minimal 
Morgan-Shalen-Boucksom-Jonsson compactification (\cite[Appendix]{TGC.II},\cite[\S 2]{OO}) i.e., 
the MSBJ compactification corresponding to  
{\it the toroidal compactifications} \cite{AMRT}, 
which do not depend on the combinatorial data i.e., 
the admissible collection of rational 
polyhedra. 

\begin{proof}
The proof is very similar to 
that of Theorem \ref{lim.toric.prop} \eqref{lim.toric.conti} (and 
Theorem~\ref{tombo.tropical}) 
and essentially follows from the same arguments. 
Indeed, we prove as follows. 
First, we take an arbitrary 
$x\in \overline{M^{\rm tor,\infty}}$ and consider the rational boundary strata $F$ 
of the Satake-Baily-Borel compactification of $M$ which contains the image of $x$, 
following the notation of \cite{AMRT}. 
For some regular decompositions $\{\Sigma(F)\}$, we denote the natural projection 
$\overline{M^{\rm tor,\infty}}\to \overline{M}^{\Sigma}$ as $p_{\Sigma}$. 
We consider an open neighborhood of $x$ of the form 
$p_{\Sigma}^{-1}(U)\subset \overline{M^{\rm tor,\infty}}$ 
and define $\phi_{tr}$ from it, which does not depend on the choice, 
and glue them together. 

We denote the strata of $\partial \overline{M}^{tor, \Sigma, an}$ 
containing the image $x_{\Sigma}$ of $x$ as $S_{x}$. Then take a small enough 
open neighborhood $U'$ of $x_{\Sigma}$ such that 
$U'\cap F'\neq \emptyset $ if and only if $\bar{F'}\supset S_{x}$. 
We replace $U$ by $U'\cap U$ which is possible by the basic property of 
the stratification. From here, we further use the notation in \cite{AMRT} 
without reviewing all. We just recall that $U(F)$ is the 
center of the unipotent radical of $N(F)$ which is a real maximal 
parabolic subgroup of the corresponding reductive Lie group to $M$, 
$U(F)_{\Z}=U(F)\cap \Gamma$, and $D$ denotes the covering Hermitian 
symmetric domain. 

Since the action of $(\Gamma\cap N(F))/U(F)_{\Z})$ acts 
properly discontinously on the quotient $(D/U(F)_{\Z})_{\Sigma(F)} (\subset (D(F)/U(F)_{\Z})_{\Sigma(F)})$, 
we can lift the point $x_{\Sigma}=p_{\Sigma}(x)$ to 
$(D/U(F)_{\Z})_{\Sigma(F)}$. 
Suppose $x_{\Sigma}$ lies in the 
toric strata corresponding to a cone $\sigma_{\alpha(\Sigma)}^{F}\in \Sigma(F)$. 
Then consider $\cap_{\Sigma}\sigma_{\alpha(\Sigma)}^{F}$ similarly to 
Theorem \ref{lim.toric.prop} \eqref{lim.toric.conti}. Completely similarly, 
we can show that it is a half line of the form $\R v_{x}$ for $v_{x}\in 
U(F)\setminus \{\vec{0}\}$. Then we set $\phi_{tr}(x):=\bar{v_{x}}$. 
The proof of the continuity of the obtained map $\phi_{tr}$ 
(resp., the fiber structure) 
is completely 
same as Theorem \ref{lim.toric.prop} \eqref{lim.toric.conti} 
(resp., Theorem \ref{lim.toric.prop} \eqref{lim.toric.fiber}), 
so we avoid the 
further essential repetition. 
\end{proof}

\subsubsection*{A weak analogue for the moduli of curves $M_{g}$}
If we try to search analogue of the limit log minimal compactification 
for the moduli of hyperbolic curves $M_{g}$, moduli of 
hyperbolic curves of genera $g>1$, one would naturally replace 
the set of all toroidal compactifications above by the 
single Deligne-Mumford compactification 
$M_{g}\subset \overline{M_{g}}^{\rm DM}$ 
since it is 
smooth lc model at staky level. Therefore, we do not 
obtain a similar compactification in the same manner but still there 
is another compactification which is analogous to some extent 
(compare with above Theorem~\ref{dom.TGC}): 

Amini-Nicolussi \cite{Amini20} recently constructed a 
compactification $M_{g}\subset M_{g}^{\rm hyb}$ 
on whose boundary they parametrize {\it metric complex} 
(\cite{AB}) 
with the ordered partition of the edge sets which they call 
``layor''s. 

\begin{Prop}\label{AN.interpret}
The compactification of $M_{g}\subset M_{g}^{\rm hyb}$ constructed by 
Amini-Nicolussi is the least common refinement 
in the sense of \cite[I.16.1, I.16.2]{BJi} of 
\begin{itemize}
\item 
the Deligne-Mumford compactification $M_{g}\subset \overline{M_{g}}^{\rm DM}$, 
\item 
the Morgan-Shalen-Boucksom-Jonsson compactification 
$$M_{g}\subset \overline{M_{g}}^{\rm MSBJ}
(\overline{\mathcal{M}_{g}}^{\rm DM})$$ for 
the Deligne-Mumford moduli stack $\overline{\mathcal{M}_{g}}^{\rm DM}$ 
(\cite[Appendix]{TGC.II}, see also \cite{TGC.I}). 
\end{itemize}
\end{Prop}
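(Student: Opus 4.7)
The plan is to use the defining property of the least common refinement (LCR) recalled in \cite[I.16.1, I.16.2]{BJi}: it is the closure of the diagonal image of $M_g$ inside the product $\overline{M_g}^{\rm DM}\times \overline{M_g}^{\rm MSBJ}(\overline{\mathcal{M}_g}^{\rm DM})$, and it satisfies the universal property that any compactification of $M_g$ dominating both factors factors uniquely and continuously through it. So it suffices to construct continuous surjective maps from $M_g^{\rm hyb}$ to each factor extending the identity on $M_g$, and then to verify that the induced map $\phi\colon M_g^{\rm hyb}\to \overline{M_g}^{\rm LCR}$ is a homeomorphism.

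First I would construct the two forgetful maps. The map $\pi_{\rm DM}\colon M_g^{\rm hyb}\to \overline{M_g}^{\rm DM}$ is obtained by discarding the layered metric structure on a hybrid curve and remembering only the underlying stable curve with its dual graph $G$. The map $\pi_{\rm MSBJ}\colon M_g^{\rm hyb}\to \overline{M_g}^{\rm MSBJ}(\overline{\mathcal{M}_g}^{\rm DM})$ is built from the layered edge length data: a layered metric complex with layers $E = E_1\sqcup\cdots\sqcup E_k$ and edge-length function $\ell$ determines a point of the cone $\R_{\geq 0}^{E(G)}$ modulo the scaling $\R_{>0}$-action (using the leading layer $E_1$ to normalize), hence a point of the MSBJ boundary stratum attached to the dual graph $G$. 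Continuity of both maps follows from the explicit description of sequences approaching a hybrid boundary point in terms of families of smooth curves with degenerating hyperbolic (or algebraic) moduli, matched respectively with the analytic local coordinates at the nodes of the DM stable curve and with the toroidal coordinates on the boundary of $\overline{\mathcal{M}_g}^{\rm DM}$.

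By the universal property of LCR recalled above, $(\pi_{\rm DM},\pi_{\rm MSBJ})$ induces a continuous surjective map $\phi\colon M_g^{\rm hyb}\twoheadrightarrow \overline{M_g}^{\rm LCR}$ extending the identity on $M_g$. To show $\phi$ is a homeomorphism, using that both spaces are compact Hausdorff and $M_g$ is dense in each, it is enough to establish bijectivity on the boundary. Injectivity: a hybrid boundary point is encoded by the pair (dual graph $G$ with stable curve $C$, layered metric $(E_1,\ldots,E_k;\ell)$ up to overall scale), and the DM factor recovers $(G,C)$ while the MSBJ factor, together with the stratum the point lies in and the iterated approach to the lower-dimensional faces of the cone, recovers the layered metric. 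Surjectivity: a point in the closure of the diagonal is represented by a sequence $[C_n]\in M_g$ that has a DM-limit $C$ and an MSBJ-limit giving a ray $[\ell]$ in $\R^{E(G)}_{\geq 0}/\R_{>0}$; passing to subsequences so that edge lengths separate into distinct orders of magnitude produces precisely the ordered partition $E_1\sqcup\cdots\sqcup E_k$ of Amini–Nicolussi.

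The main obstacle is the last point: the bookkeeping to show that the layer data of Amini--Nicolussi corresponds bijectively to the data obtained from a diagonal sequence in the product. In other words, one has to argue that ``higher layers'' $E_2,E_3,\ldots$ in the hybrid picture, which encode edges shrinking infinitely faster relative to those in $E_1$, correspond precisely to the iterated boundary stratification of $\overline{M_g}^{\rm MSBJ}(\overline{\mathcal{M}_g}^{\rm DM})$ coming from the SNC stratification of $\overline{\mathcal{M}_g}^{\rm DM}$. Matching these is essentially a combinatorial exercise once one unwinds the definitions, but it must be done carefully to establish the homeomorphism and not merely a continuous bijection; the argument goes via exhibiting an explicit continuous local inverse on a neighborhood of each stratum of $M_g^{\rm hyb}$ using Amini--Nicolussi's local charts.
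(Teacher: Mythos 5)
Your strategy is organized differently from the paper's, though both must ultimately match the same strata. The paper works ``from the LCR side'': it recalls the order-reversing bijection between boundary strata of $\overline{M_g}^{\rm DM}$ and the strata of the tropical moduli, observes that the set of possible MSBJ limits over a given DM boundary stratum $D_F$ is the closure of the corresponding open cell of the dual complex, and then identifies this closure with the decomposition $\sqcup_{\pi\in\Pi(F)}\sigma_\pi^o$ appearing explicitly in Amini--Nicolussi's formulas (3.10)--(3.11), so that the $D_F^o$-factor and the $\sigma_\pi^o$-factor of the hybrid boundary point are precisely read off from the DM and MSBJ projections. You instead work ``from the hybrid side'': build the two forgetful maps $\pi_{\rm DM}$, $\pi_{\rm MSBJ}$, invoke the universal property of the LCR to get $\phi\colon M_g^{\rm hyb}\to\overline{M_g}^{\rm LCR}$, and then argue $\phi$ is a homeomorphism by checking boundary bijectivity and compactness. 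The gain of your route is that it is more self-contained (it does not require the reader to open Amini--Nicolussi to equations (3.10)--(3.11)); the cost is that the entire weight falls on the bijectivity step, which is exactly where the paper leans on the cited formulas.

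That said, there is a real gap at the injectivity step that you only gesture at. You claim that the MSBJ factor, ``together with the stratum the point lies in and the iterated approach to the lower-dimensional faces of the cone, recovers the layered metric,'' but a point of $\overline{M_g}^{\rm LCR}$ is by definition just a pair $(x,y)$ in the product; it does not remember which sequence produced it or ``how'' it was approached. Concretely, fix a DM boundary point $x$ with dual graph $G$ having edges $e_1,e_2,e_3$ and consider two sequences both converging to $x$ in DM and both converging to the vertex $[1:0:0]$ in $\bar\sigma_G\subset\partial\overline{M_g}^{\rm MSBJ}$, but with $\log|t_2|/\log|t_3|\to 1$ in one sequence and $\to 2$ in the other. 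Both define the same LCR point $(x,[1:0:0])$, yet they have different second-layer ratios, hence a priori different limits in $M_g^{\rm hyb}$ if the higher-layer edge lengths are part of the boundary data. To close the gap you must either (a) verify from Amini--Nicolussi's definitions that the topology on $M_g^{\rm hyb}$ in fact identifies such limits (so that no further layer data survives to the topological boundary), in which case your $\phi$ is injective for a non-obvious reason worth spelling out, or (b) establish the explicit cell-level identification $\bar\sigma_F=\sqcup_{\pi\in\Pi(F)}\sigma_\pi^o$ (as the paper does by citation), which is precisely the statement that the Amini--Nicolussi cells give a finer CW decomposition of the \emph{same} underlying closed simplex rather than a strictly larger space. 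Until one of these is supplied, the claim that $\phi$ is a bijection (rather than merely a continuous surjection) remains unproven.
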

\begin{proof}
Recall that the well-known local structure of $\mathcal{M}_g$ around the boundary and universal curves over it, 
says in particular there is a natural one to one order (closure relation) reversing bijection between the set of strata of 
$\overline{M_g}^{\rm DM}$ and the strata of $M_{g}\subset \overline{M_{g}}^{\rm MSBJ}
(\overline{\mathcal{M}_{g}}^{\rm DM})$, which is the moduli of tropical curves (\cite{ACP}). 
Tracing the bijection, the desired assertion follows from the construction of \cite[\S 3.2]{Amini20} as follows.  
For a sequence of $M_g$ towards the boundary converging to 
$t$ in a boundary strata $D_{F}=D_{E_t}$ in 
$\overline{\mathcal{M}_g}\setminus \mathcal{M}_g$, which corresponds to 
the index set (of nodes), 
the set of possible limits in $M_{g}\subset \overline{M_{g}}^{\rm MSBJ}
(\overline{\mathcal{M}_{g}}^{\rm DM})$ is the closure of the open strata in the dual intersection complex of 
the stacky snc divisor $\overline{\mathcal{M}_g}\setminus \mathcal{M}_g$ from the construction \cite[\S 2]{BJ}, \cite[Appendix]{TGC.II}. 
It is nothing but the $\sqcup_{\pi\in \Pi(F)}\sigma_{\pi}^o$ in \cite[(3.10, 3.11)]{Amini20}. 
Therefore, the least common refinement of the two compactifications parametrize (smooth projective hyperbolic curves and) 
the metrized complex in the effective manner. 
Since
the $D_{\pi}^{o}=D_{F}^{o}$-part of {\it loc.cit} (3.10) is nothing but the parametrizes the limit stable curves, 
while its $\sigma_{\pi}^{o}$-part parametrizes the graph part of the metrized complexes as \cite{Amini20} shows and 
also our assertion follows. 

A caution is that to recall $M_{g}\subset \overline{M_{g}}^{\rm MSBJ}
(\overline{\mathcal{M}_{g}}^{\rm DM})$ is not compatible with hyperbolic metric behaviour, i.e., 
different from the  Gromov-Hausdorff compactification of $M_g$ with respect to 
hyperbolic metrics on the Riemann surfaces. Indeed, we needed to replace ``glueing function" to describe the Gromov-Hausdorff compactification. 
See \cite{TGC.I} and \cite{TGC.II} for details. 
\end{proof}

From the above proposition \ref{AN.interpret}, 
$M_{g}^{\rm hyb}$ and $\overline{M}^{\rm tor,\infty}$ for 
locally Hermitian symmetric space $M$ can be both understood as 
the least common refinements of 
\begin{itemize}
\item log minimal model compactifications (projective varieties), and 
\item its (their) Morgam-Shalen compactification(s). 
\end{itemize}
The critical difference of $M_{g}$ case and 
connected Shimura variety case is 
that, at stacky level, 
we can take unique log minimal model compactification 
(actually lc model) by \cite{DM} 
as the standard choice, while the latter admits 
many log minimal model compactifications as toroidal compactifications 
by \cite{AMRT}. Hence, $M_{g}^{\rm hyb}$ can be seen as a weak 
analogue of $\overline{M}^{\rm tor,\infty}$ and 
$\overline{M}^{\rm min,\infty}$ nevertheless of 
many differences. 

%%%%%%%%%%%%%%%%%%%%%%%%%%%%%%%%%%%%%%%%%%%%%%%%%%%%%%%%%%%%%%%%%%%%%%%%
%%%%%%%%%%%%%%%%%%%%%%%%%%%%%%%%%%%%%%%%%%%%%%%%%%%%%%%%%%%%%%%%%%%%%%%%

\subsection{Zariski-Riemann compactification and 
comparison}\label{ZR.review}

\subsubsection{Review of Zariski-Riemann compactification}

Here we review a Zariski-Riemann type compactification which 
dominates the above limit log minimal compactification. 
Although obviously the idea goes back to Zariski's innovative idea 
\cite{Zar}, the reason of our review is that simply we could not find 
literatures precisely mentioning the results of the following form. 
So we hope the following accounts worth writing as an appendix, 
and perhaps contain slight improvements of the known results. 

Recall that the dual intersection complex is 
tightly connected to the theory of Berkovich analytic space 
as its ``finite part''. Therefore,  it is natural to 
explore the possible great enrichment of the Morgan-Shalen type 
compactification through 
Zariski-Riemann space or Huber adic space, 
which is the aim of this section. 

Suppose $U$ is a $k$-variety. Recall that, as we declared in the introduction, 
$k$-variety for a field $k$ in this paper 
means integral separated finite type scheme over $k$. 
Here we introduce another ``canonical'' compactification of $U$ 
with the {\it Zariski topology} using the theory of 
Zariski-Riemann spaces \cite{Zar}. 

\begin{Def}In the above setup, we set 
$${\rm Val}_{k}(k(U))=\{\text{all (Krull) valuations } v\colon k(U)\to \Gamma\}/\sim,$$ 
where $k(U)$ denotes the meromorphic function field of $U$ as usual, 
$\Gamma$ runs over totally ordered abelian groups, 
$\sim$ denotes the equivalence of (Krull) valuations. 
Denote $$\partial \Val(U):=\{v\colon k(U)\to \Gamma\mid c(v)\notin U\},$$
where $c(v)$ denotes the center of $v$ in a projective compactification including $U$. 
We put the topology (Zariski topology) whose 
open basis are rational domains 
$U(x_{1},\cdots, x_{m})=\{v\mid v(x_{i})\ge 0 \text{ for }\forall i\}$. 
\end{Def}
On the other hand, here is another classical notion.
\begin{Def}[Zariski-Riemann space \cite{Zar40, Zar}]
In the above setup, we set 
$$ZR_{k}(k(U)):=\varprojlim_{X} X,$$
(in the category of locally ringed spaces) 
where $X$ runs over all proper varieties including $U$ as an open subset (we fix 
the inclusion), 
with birational proper morphisms between them to form a projective system. 
\end{Def}

We denote the center of $v$ in $X$ as $c_{X}(v)$ following a standard notation. 
Then, recall that the reduction (specialization) map gives a homeomorphism: 
\begin{Thm}[Zariski \cite{Zar}]\label{Zar.isom}
In the above setup, 
there is a natural homeomorphism 
$$Val_{k}(k(U))\simeq ZR_{k}(k(U)),$$
which sends $v$ to the centers $\{c_{X}(v)\}_{X}$. 
\end{Thm}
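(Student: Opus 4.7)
The plan is to construct the map in both directions explicitly, verify they are mutually inverse, and then match the two given bases of opens to obtain continuity, with the main work lying in the reverse direction.

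For the forward direction, given a Krull valuation $v\colon k(U)^\times \to \Gamma$, I will apply the valuative criterion of properness to each proper $k$-variety $X$ containing $U$ as an open subscheme: the inclusion $\Spec(k(U))\hookrightarrow X$ lifts uniquely through $\Spec(\OO_v)\to X$, and the image of the closed point defines the center $c_X(v)\in X$. Since any birational proper morphism $X'\to X$ commutes with taking centers (again by uniqueness), the system $\{c_X(v)\}_X$ lies in the projective limit $\ZR_k(k(U))=\varprojlim_X X$ defining the reverse direction map.

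For the reverse direction, given a compatible system $\{c_X\}_X$ of centers, I will form the directed union
\[
R := \varinjlim_X \OO_{X,c_X} \subset k(U),
\]
which is a local subring (each transition is a local inclusion by the compatibility condition, i.e.\ an instance of domination). The heart of the argument is to show that $R$ is a \emph{valuation} ring, so that its valuation is the desired preimage. I will use the classical characterization that valuation rings of $k(U)/k$ are precisely the maximal elements among local subrings under domination (Chevalley). Pick any valuation ring $V$ dominating $R$; its center $c_X(V)$ on each $X$ must then be a specialization of $c_X$, and by passing to a sufficiently fine blow-up one can separate these two points unless they coincide. Cofinality of blow-ups in the inverse system of proper models (Nagata compactification) then forces $c_X(V)=c_X$ on every $X$, hence $V=R$.

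The two maps are inverse to each other by construction: the center of $v_R$ on $X$ is the point at which $R$ is dominated by $\OO_{X,c_X}$, namely $c_X$; conversely, the directed union of local rings at the centers of $v$ recovers $\OO_v$ by the same maximality. For the topology, I will match the two bases as follows: on $\ZR_k(k(U))$, the preimages $p_X^{-1}(\Spec(A))$ of affine opens; on $\Val_k(k(U))$, the rational domains $U(x_1,\ldots,x_m)$. Choosing generators $x_1,\dots,x_m$ of $A$ over $k$ identifies $p_X^{-1}(\Spec(A))$ with $U(x_1,\dots,x_m)$, since the center lies in $\Spec(A)$ iff each $x_i$ is regular there iff $v(x_i)\ge 0$. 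Conversely any rational domain is the preimage of an affine chart in the blow-up along the sheaf of ideals cutting out the ``poles'' of the $x_i$, and the two bases mutually refine each other.

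The main obstacle will be the surjectivity: namely, showing that the directed union $R$ is actually a valuation ring rather than a generic local ring. One has to be careful that centers are prescribed on \emph{all} proper models containing $U$ (not a cofinal subset chosen a priori), and that enough blow-ups exist to pin each compatible system down to a unique valuation; this is where Nagata compactification and the fact that birational proper modifications are dominated by projective blow-ups play an essential role.
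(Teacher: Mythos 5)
The paper does not actually prove this theorem: it is stated as Zariski's classical result and cited to \cite{Zar}, so there is no in-text proof to compare your attempt against. (Note also that the paper's definition of $ZR_k(k(U))$ requires the models $X$ to contain $U$ as an open subset; this appears to be a slip, since the notation and the subsequent Proposition distinguishing $ZR_k(k(U))$ from $ZR(U)$ indicate the classical definition over \emph{all} proper models of the function field $k(U)$, which is what your proof implicitly — and correctly — assumes.) Your overall architecture matches Zariski's classical argument: forward direction via the valuative criterion of properness, compatibility of centers under proper birational maps, reverse direction via $R := \varinjlim_X \OO_{X,c_X}$, and matching of topological bases; the treatment of the bases (rational domains versus preimages of affine opens, identified via blow-up of the denominators) is fine.

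The gap is in the crucial step that $R$ is a valuation ring, and your argument there is confused. First, the claim that the center $c_X(V)$ of a dominating valuation ring $V$ is merely a \emph{specialization} of $c_X$ is wrong — since $V$ dominates $R$ which dominates $\OO_{X,c_X}$ (the transition maps are local), $V$ dominates $\OO_{X,c_X}$, and uniqueness of centers on a separated scheme forces $c_X(V)=c_X$ on the nose. Thus your proposed ``separate the two points by a fine blow-up'' step is vacuous: there are no distinct points to separate. Second, and more seriously, the implication ``$c_X(V)=c_X$ for all $X$ $\Rightarrow$ $V=R$'' is asserted without justification, and this is precisely the heart of Zariski's theorem. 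What is actually needed is the blow-up argument: for $a\in V$, write $a=f/g$ near $c_X$ on some model $X$, blow up the ideal $(f,g)$ to get $X'$, and observe that because $v(a)\ge 0$ the center $c_{X'}$ of the valuation $v$ of $V$ falls in the chart on which $f/g$ is regular, so $a\in \OO_{X',c_{X'}}\subset R$; applying this to every $a\in k(U)^\times$ (taking $a^{-1}$ if $v(a)<0$) shows directly that $R$ is a valuation ring, making the maximality/Chevalley detour unnecessary. Your invocation of Nagata compactification is also not the relevant tool here — what one needs is that blow-ups along finitely generated ideals are cofinal, not that compactifications exist.
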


Below is a natural variant where $U$ is preserved as not blown up. 
\begin{Def}[{\cite{Zar40, Zar} also cf., \cite[\S 4]{Fuj95}}]\label{ZR.cptf2}
In the above setup, 
consider $$ZR(U):=\varprojlim_{U\subset \overline{U}} \overline{U},$$
where $\overline{U}\supset U$ is a 
proper variety which includes $U$ as an open dense subset. As before, we put the 
topology as (the restriction of) the product topology of the Zariski topology 
on each scheme $\overline{U}$. 
\end{Def}
Obviously, we have a natural decomposition 
$$ZR(U)=U\sqcup \varprojlim ((\overline{U}\setminus U)=:\partial{\overline{U}}),$$ 
so we set $\partial ZR(U):=\varprojlim_{\overline{U}} \partial{\overline{U}}$. Then we have:

\begin{Prop}[cf., \cite{Zar, Fuj95, Tem.ZR}]
In the above setup, 
\begin{enumerate}
\item \label{two.ZR.isom}
for the natural morphism 
$$p_{U}\colon ZR_{k}(k(U))\to ZR(U),$$
its restriction
$$p_{U}|_{p_{U}^{-1}(\partial ZR(U))}\colon p_{U}^{-1}(\partial ZR(U)) \to \partial ZR(U)$$ 
is isomorphism. 

\item \label{Var.ZR.isom}
There is a natural homeomorphism 
$$\partial Val(U)\simeq \partial ZR(U).$$
\end{enumerate}
\end{Prop}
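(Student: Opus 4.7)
The plan is to deduce (2) from (1) combined with Zariski's Theorem \ref{Zar.isom}, and then concentrate on (1). Under the homeomorphism $\Val_{k}(k(U))\simeq\ZR_{k}(k(U))$ of \ref{Zar.isom}, a valuation $v$ lies in $\partial\Val(U)$ (i.e.\ $c(v)\notin U$ in one, equivalently every, compactification $\overline{U}\supset U$) if and only if the corresponding point of $\ZR_{k}(k(U))$ lies over $\partial\ZR(U)$ under $p_{U}$. Hence $\partial\Val(U)\simeq p_{U}^{-1}(\partial\ZR(U))$ canonically, so once (1) is proved, composing with $p_{U}|_{p_{U}^{-1}(\partial\ZR(U))}$ yields (2).

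For (1), continuity of $p_{U}$ is automatic from the definition of the Zariski topology on a projective limit, so the task is to produce a continuous two-sided inverse on the boundary. Given $\xi = (c_{\overline{U}})_{\overline{U}} \in \partial\ZR(U)$, I form the filtered union $R_{\xi} := \varinjlim_{\overline{U}}\mathcal{O}_{\overline{U},c_{\overline{U}}} \subset k(U)$, a local subring with local transition maps and fraction field $k(U)$. The key claim is that $R_{\xi}$ is in fact a valuation ring of $k(U)$. Granted this, the associated valuation $v_{\xi}$ with $\mathcal{O}_{v_{\xi}} = R_{\xi}$ is the unique valuation dominating $R_{\xi}$, and unwinding Theorem \ref{Zar.isom} it is therefore the unique preimage of $\xi$ under $p_{U}$, giving bijectivity in one stroke.

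The heart of the argument is the valuation-ring claim: for $f \in k(U)^{*}$, at least one of $f, f^{-1}$ lies in $R_{\xi}$. Fixing some $\overline{U}$ with center $c = c_{\overline{U}}(\xi)$ in the boundary and writing $f = g/h$ with $g, h \in \mathcal{O}_{\overline{U}, c}$, I pass to a refined compactification $\overline{U}' \to \overline{U}$ obtained by blowing up a coherent ideal sheaf $\mathcal{J} \subset \mathcal{O}_{\overline{U}}$ that locally equals $(g, h)$ near $c$ but is supported in $\overline{U} \setminus U$ globally; the lift of $c$ then sits in the $h$-chart (if $v(g) \ge v(h)$) or the $g$-chart, where $f$ respectively $1/f$ becomes regular. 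To build $\mathcal{J}$, I would first refine $\overline{U}$ by a purely boundary-supported blow-up so that $\overline{U}\setminus U$ is snc near $c$ and cut out locally by a single section $s$ vanishing at $c$, and then multiply $g, h$ by a sufficiently high power of $s$ so that $(g s^{N}, h s^{N})$ extends to a globally coherent ideal lying inside $\mathcal{I}_{\overline{U}\setminus U}^{N}$; the resulting blow-up is an isomorphism over $U$ and hence still a compactification of $U$. The main obstacle is this careful global construction of $\mathcal{J}$ while keeping $U$ untouched; the cleanest framework for it is the formal-algebraic machinery of Fujiwara \cite{Fuj95} and Temkin \cite{Tem.ZR}, which I would invoke directly rather than argue by hand. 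Continuity of the resulting inverse is automatic, since both boundary spaces carry the topology generated by rational-domain opens $U(f_{1}, \dots, f_{m}) = \{v(f_{i}) \ge 0\}$, matched under the bijection by design.
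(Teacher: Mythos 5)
Your overall strategy coincides with the paper's at its core: reduce everything to showing that the limit stalk $R_{\xi}=\varinjlim_{\overline{U}}\mathcal{O}_{\overline{U},c_{\overline{U}}}$ at a boundary point of $ZR(U)$ is a valuation ring, by a boundary-supported blow-up, and deduce (2) from (1) together with Zariski's homeomorphism $\Val_{k}(k(U))\simeq ZR_{k}(k(U))$. (The paper proves (2) directly, but your reorganization is sound.) However, the key construction has a concrete error: since $(gs^{N},hs^{N})=s^{N}\cdot(g,h)$, and blowing up a principal ideal times an ideal $\mathcal{J}_{0}$ yields the same blow-up as $\mathcal{J}_{0}$ alone, one has $Bl_{(gs^{N},hs^{N})}\overline{U}=Bl_{(g,h)}\overline{U}$. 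Whenever $g$ and $h$ have common zeros inside $U$, this blow-up modifies $U$, so $\overline{U}'$ is not a compactification of $U$. The containment $\mathcal{J}\subset\mathcal{I}_{\overline{U}\setminus U}^{N}$ does not make the blow-up trivial over $U$; what is needed is that $\mathcal{J}|_{U}$ be invertible (e.g.\ equal to $\mathcal{O}_{U}$), which is the opposite direction. The paper's remedy is to \emph{add} the boundary equation as a new generator, i.e.\ take $\mathcal{J}$ locally equal to $(f,g,h)$ with $h$ cutting out the Cartier boundary: then $h$ is a unit on $U$, so $\mathcal{J}|_{U}=\mathcal{O}_{U}$ and the blow-up is a compactification of $U$, while the centre of $\xi$ lands in a chart where the relevant ratio becomes regular (the paper also first reduces to the case $f,g\in\mathfrak{m}$ by an easy manipulation showing otherwise $1\in\mathfrak{m}_{v}$).

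Second, dismissing the continuity of the inverse as automatic is not justified. By definition $\partial ZR(U)=\varprojlim_{\overline{U}}(\overline{U}\setminus U)$ carries the projective limit of Zariski topologies, which is \emph{not} a priori the rational-domain topology, and matching the two is precisely where the paper spends effort in its proof of (2): closedness is checked by writing preimages of Zariski-closed sets $V(\{f_{i}\})$ as complements of unions of rational domains $\{v(f_{i}^{-1})\ge 0\}$, and openness by identifying images of rational domains $U(\{f_{i}/g_{i}\})$ with preimages of Zariski-opens on blow-ups $Bl_{\langle f_{i},g_{j}\rangle}(\Spec A)$, following the arguments of \cite{Tem.ZR}. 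Since your reduction of (2) to (1) requires (1) to be a homeomorphism and not merely a bijection, this topological comparison cannot be skipped; it is essentially the whole content of part (2).
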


\begin{proof}
\eqref{two.ZR.isom}: 
Consider a point in $ZR_{k}k(U))$ which can be identified as a Krull valuation on 
$k(U)$ by the classical theorem~\ref{Zar.isom}. We consider its image in 
$ZR(U)$, which we suppose to be outside $U$. We denote its germ as 
$\mathcal{O}$ with its maximal ideal $\mathfrak{m}$, 
while $(\mathcal{O}_{v},\mathfrak{m}_{v})$ is the valuation ring for $v$. 
Note that we have $\mathcal{O}\subset \mathcal{O}_{v}\subset k(U)$. 
What remains to show is $\mathcal{O}_{v}=\mathcal{O}$ as the rest automatically follows. 
We prove by contradiction, so suppose the contrary. 
Take 
\begin{align}\label{f/g}
\frac{f}{g}\in \mathcal{O}_{v}\setminus \mathcal{O},
\end{align}
where $f, g \in \mathcal{O}$. 
To make arguments (even) simpler, we prepare a reference compactification variety $U\subset 
\overline{U}_{ref}$ 
such that $U_{ref}\setminus U$ is a Cartier divisor. This is possible as 
otherwise we can take the blow up of $\partial \overline{U}=\overline{U}\setminus U$. 
We denote the germ of image of $v$ in $\overline{U}$ as $\mathcal{O}'(\subset \mathcal{O})$, 
around which the Cartier divisor $\overline{U}\setminus U$ of $\overline{U}$ 
is locally generated by a single element $h\in \mathcal{O}'$. 
Since $\mathcal{O}$ is a local ring, $g\in \mathfrak{m}$ holds. 
Suppose $f\notin \mathfrak{m}$. Then, it would imply $\frac{1}{g}\in \mathcal{O}_{v}
\setminus \mathcal{O}$ by multiplying $\frac{1}{f}$ to \eqref{f/g}. 
Combining $g\in\mathfrak{m}$ with $\frac{1}{g}\in \mathcal{O}_{v}$, 
$1\in \mathfrak{m}_{v}$ follows which is absurd. 
Hence it follows that $f\in \mathfrak{m}$. Then, we can take a blow up of 
$\overline{U}_{ref}$ along (some closure of) the locally closed subscheme 
cut by the ideal $(f,g,h)$. Then it easily contradicts to \eqref{f/g}.\\ 

\eqref{Var.ZR.isom}: 
The construction is just by restricting the map of more 
classical theorem \ref{Zar.isom}. 
First, we confirm the continuity as follows. 
We take an arbitrary reference compactificaion variety model 
$U_{ref}\supset U$ as before, and restrict to an 
open affine subset $U'$ of $U_{ref}$. Since the assertion is of local nature 
with respect to $U_{ref}$, it is enough to show the continuity 
on the preimage of the (arbitrarily taken) open affine subset $U'$ 
of $U_{ref}$. 
We take an arbitrary closed subscheme $V=V(\{f_{i}\}_{i})$ 
with finite index set of $i$, then consider the preimage 
of $U'$. The preimage is $\{v\mid v(f_{i})>0 {\text{ for }} \forall i\}$, 
which can be also written as the complement $(\cup\{v\mid v(f_{i}^{-1})\ge 0\})$. 
The latter is closed by definition. 

The proof of openess here is close to the arguments of \cite[3.4.6 (cf., also 3.1.2, 3.1.8, 3.4.7)]
{Tem.ZR}. We can again restrict the problem to open affine subset of $U_{ref}$, 
say $\Spec(A)$
Then the image of an arbitrary rational domain, which we denote as 
$U(\{\frac{f_{i}}{g_{i}}\})$ is preimage of a natural open subset of 
the blow up $Bl_{(\langle \{f_{i},g_{j}\}_{i,j}\rangle)}(\Spec(A))$. 
It is open by definition, hence the proof. 
\end{proof}

Now we temporarily go back to the discussions of galaxies and 
prove Proposition \ref{Huber.galaxy}, which we recall here. 
Note that for any non-archimedean field $K$ and finite type scheme $X$ over $K$, 
there is a naturally associated adic space $X^{ad}$ in the sense of \cite{Hub96}, consisting of semi-valuations of a priori arbitrary rank, 
which here we call the {\it Huber analytification} or {\it Huber adification}. 
See e.g., \cite[Definition 2.2.1]{Fos} (also cf., \cite[\S2]{Scholze}) for the explicit definition. 

\begin{Prop}[Galaxies dominated by Huber analytification ($=$Proposition \ref{Huber.galaxy})]\label{Huber.galaxy2}
For any dlt model $\X$ over $\Delta$ which we base change to $k[[t^{\Q}]]$ and denote it as $\X_{\infty}$. 
We denote its generic fiber as $\X_{\infty, \eta}$. 

Then, any of its quasi-galaxy $X_{\infty}$ is dominated by the Huber analytification of the $k((t^{\Q}))$-variety 
$\X_{\infty, \eta}$, which we denote as $\X_{\infty, \eta}^{ad}$, by a natural continuos surjective map: 
$$\X_{\infty, \eta}^{ad}\twoheadrightarrow X_{\infty}.$$
\end{Prop}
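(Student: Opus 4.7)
The plan is to construct the map $\X_{\infty,\eta}^{ad}\twoheadrightarrow X_{\infty}$ as the projective limit of the classical specialization (reduction) maps attached to the dlt approximation models in the presentation of $X_{\infty}$. Concretely, for each dlt approximation model $\X_i\to \Spec(k[[t^{1/N_i}]])$ with central fiber $X_i$, properness of $\X_i$ over its base together with the valuative criterion furnishes a continuous specialization map $\mathrm{sp}_i\colon \X_{i,\eta}^{ad}\to X_i$ sending a continuous semi-valuation on $k(\X_{i,\eta})$ to the center of its associated Krull valuation on $X_i$. Base change along $k[[t^{1/N_i}]]\hookrightarrow k[[t^{\Q}]]$ gives a canonical continuous restriction $\X_{\infty,\eta}^{ad}\to \X_{i,\eta}^{ad}$ (by pullback of semi-valuations to the generic fiber), and post-composition with $\mathrm{sp}_i$ yields continuous maps $\varphi_i\colon \X_{\infty,\eta}^{ad}\to X_i$.

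Next, I would check that $\{\varphi_i\}$ is compatible with the admissibly dominating morphisms $\psi_{ij}\colon \X_j\to \X_i\times_{\Delta}\Delta'$ for $i\le j$, in the sense that the induced morphism on special fibers intertwines $\varphi_j$ and $\varphi_i$. This is essentially functoriality of the specialization map under proper birational morphisms of total spaces: the center of a valuation is transported along any proper morphism by the valuative criterion, so the centers $(x_i)$ produced by a single semi-valuation form a compatible system. The universal property of the projective limit in the category of locally ringed spaces then packages $\{\varphi_i\}$ into a single continuous map $\varphi\colon \X_{\infty,\eta}^{ad}\to X_{\infty}=\varprojlim_i X_i$.

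For surjectivity, given an arbitrary $x=(x_i)_i\in X_{\infty}$, I would produce a semi-valuation on $k(\X_{\infty,\eta})$ lifting it. This is where the Zariski-Riemann framework reviewed in \S\ref{ZR.review} enters: the classical Theorem \ref{Zar.isom}, together with Propositions in the style of \ref{two.ZR.isom} and \ref{Var.ZR.isom}, identifies compatible systems of centers on all proper birational models of the function field with Krull valuations, and the collection of dlt approximation models is cofinal enough in proper birational models of $\X_{\infty,\eta}$ (after absorbing the vertical base changes into the projective system) that the compatible system $(x_i)$ determines a Krull valuation $v$ on $k(\X_{\infty,\eta})$ with the required centers. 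One then upgrades $v$ to a continuous semi-valuation in the sense of Huber, giving the desired preimage in $\X_{\infty,\eta}^{ad}$.

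The main obstacle will be this surjectivity step, and for two separate reasons. First, the base $k[[t^{\Q}]]$ is non-Noetherian and $\X_{\infty}$ is not of finite type over it, so one must carefully justify the applicability of the Huber adic formalism and the Zariski-Riemann cofinality statements at the limit level rather than invoking them only pointwise in the directed system of finite-type approximations. Second, the translation between the (potentially higher-rank) Krull valuations produced by Zariski-Riemann theory and the continuous semi-valuations parametrized by a Huber adic space is not automatic: not every Krull valuation underlies an adic point of $\X_{\infty,\eta}^{ad}$, so one must control the rank and the convex subgroups of the valuations arising from compatible systems of centers so that the resulting valuations really lie in the image of the natural map from $\Val_k$ into the adic space.
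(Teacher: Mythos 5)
Your construction of the map itself (specialization to centers in each dlt approximation model $X_i$, then compatibility under the admissibly dominating morphisms and passage to the projective limit) is essentially the paper's argument, and that part is fine. The genuine gap is in your surjectivity step, and it is not the one you flag. You claim that ``the collection of dlt approximation models is cofinal enough in proper birational models of $\X_{\infty,\eta}$ \ldots that the compatible system $(x_i)$ determines a Krull valuation $v$ on $k(\X_{\infty,\eta})$.'' That cofinality is false: a dlt minimal model is a constrained object (it minimizes in the MMP sense), and arbitrary blow-ups of such models are \emph{not} again dominated by dlt approximation models. Consequently the union $\bigcup_i \mathcal{O}_{X_i,x_i}$ of stalks along the dlt subsystem alone is in general \emph{not} a valuation ring, and a compatible system of centers only on the dlt models does not, by itself, produce the Krull valuation you want.

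The paper's fix is precisely to abandon the dlt subsystem at this stage and instead enlarge to the full projective system of \emph{all} normal projective models $Y_j$ over all $k[[t^{1/m}]]$ (a genuine Zariski--Riemann type limit, not just the galaxy). One first lifts the given point of $X_\infty$ to a compatible system $(y_j)_j \in \varprojlim_j Y_j$ (a compactness / nested-nonempty-closed-fibers argument), and only then forms $\mathcal{O}=\varinjlim_j \mathcal{O}_{Y_j,y_j}$ and runs the Zariski blow-up trick --- given $r=f/g \in \mathrm{Frac}(\mathcal{O})$ realized on some $Y_j$, blow up $Y_j$ along $(f,g,t)$ to get a further model $Y_k$ on which one of $r$, $r^{-1}$ enters the stalk --- to see that $\mathcal{O}$ is a valuation ring. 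This is the missing idea in your sketch. Your two listed ``obstacles'' are real but secondary: Huber's formalism already accommodates continuous valuations of arbitrary rank (points of $\mathrm{Spa}$ are not required to be rank one), so the rank worry is overstated; and the finite-type/non-Noetherian caveat is handled pointwise because every $f,g$ and every model are realized at a finite stage of the directed system. The step that actually fails in your plan, and that you do not name, is the cofinality of the dlt approximation models.
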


\begin{proof}[proof of Proposition \ref{Huber.galaxy}=\ref{Huber.galaxy2}]

The proof resembles that of more classical Theorem by Zariski \ref{Zar.isom}. 
Take the dlt approximation models $\X_i$ with their central fibers $X_i$s and consider their base change over $k[[t^{\Q}]]$. 
As in the classical setting of \ref{Zar.isom}, for any semi-valuation $v=\mid - \mid _v$ in $\X_{\infty, \eta}^{ad}$, 
we take all the centers $c_{X_i}(v) \in X_i$ for each $i$. This gives a continous map similarly to \eqref{Zar.isom} (cf., \cite{Zar}). 

On the other hand, the surjectivity also follows similarly as \cite{Zar}. 
We consider all the normal projective models of $X$ over $k[[t^{1/m}]]$, base changed to $k[[t^{\Q}]]$ and denote  the models as $\mathcal{Y}_{j}$s and 
their reductions as $Y_j$s. 
Then take the projective limit $\varprojlim_j Y_j$ 
in an analogous manner to Zariski-Riemann compactification as above \ref{ZR.cptf2}. 

Take a point inside $X_{\infty}$ which is the image of $\{y_j\}_j \in \varprojlim_j Y_j$. 
We consider the natural injective limit $\varinjlim \mathcal{O}_{Y_j, y_j}$, 
where $\mathcal{O}_{Y_j, y_j}$ denote the stalk local ring of $y_j \in Y_j$. 
We denote it as $\mathcal{O}$ and prove it is a valuation ring as desired. 
Suppose the contrary and take $r=\frac{f}{g}$ in the fraction field of $\mathcal{O}$, with $f, g \in \mathcal{O}$. 
We can suppose $f,g$ are both realised in the model $Y_j$ for same $j$ which we fix. Then, 
we take a blow up of $Y_j$ with respect to the ideal $(f,g,t)$. Then we obtain another model $Y_k$ 
and easily see that either $r$ or $r^{-1}$ is in the stalk of $\mathcal{O}_{Y_k}$ at $y_k$, hence the proof. 
\end{proof}

\subsubsection{Berkovich type compactification as separated quotient}

First we re-interpret the construction of \cite[\S 3]{PM} (also cf., \cite[\S 2, \S 4]{BJ16}, \cite[Appendix]{TGC.II}), which compactify (the complex analytification of) 
a complex variety $U$ with ``non-archimedean'' boundary which is normalized Berkovich space 
in the sense of \cite{Fan}. They denoted it as $U^{h}\subset U^{\neg}$. 
See \cite{PM} for details. 

We add some remarks to their work. For simplicity, we put smoothness assumption on $U$,  
but as in \cite{TGC.II} or our Appendix \S \ref{appendixA}, 
we should be able to relax the condition without difficulties. 
Take an arbitrary $\overline{U}$ which is a smooth proper variety containing $U$, 
with the complement a simple normal crossing divisor. 
Applying \cite[\S 2.2]{BJ}, we obtain 
$$\overline{U}^{MSBJ}(\overline{U})=U^{an}\sqcup \Delta^{alg}(\overline{U}\setminus U)$$ 
as well as 
$$\overline{U}^{lim.MSBJ}=\varprojlim_{\overline{U}} \overline{U}^{MSBJ}(\overline{U}),$$
as in \cite[\S 4.2, \S 4.4]{BJ} both as 
compact Hausdorff topological spaces. 
Here $\Delta^{alg}(-)$ means the (algebraic) dual intersection complex. 

\begin{Prop}
\begin{enumerate}
\item \label{Berk.MSBJ}
$\partial \overline{U}^{lim.MSBJ}=\varprojlim_{\overline{U}} \Delta^{alg}(\overline{U}\setminus U)$ 
can be naturally identified with the set of $\R$-valued valuation of 
$k(U)$ with the centers lie outside $U$, modulo multiplication of positive real numbers. 
In particular, it admits a projectivized affine structure in the sense of 
our appendix \S \ref{proj.str.sec}. 

\item \label{ZR.Berk}
A natural continuous surjective map 
$\tau\colon \partial ZR(U)\twoheadrightarrow \partial \overline{U}^{lim.MSBJ}$ exists as separated quotient. 
\end{enumerate}
\end{Prop}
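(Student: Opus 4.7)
The plan is to treat (i) snc-level by snc-level and then pass to the limit, and for (ii) to construct $\tau$ as the rank one reduction map and identify its quotient topology with the one on the inverse limit. For (i), fix a smooth proper compactification $\overline{U}\supset U$ with snc boundary $D=\sum_{i=1}^r D_i$. By the construction recalled in \cite[\S2.2]{BJ} and \cite[Appendix]{TGC.II}, a point of the open simplex of $\Delta^{alg}(D)$ indexed by a nonempty $I\subset\{1,\dots,r\}$ is a vector of barycentric weights $(a_i)_{i\in I}$, which under the identification assigns to $f\in\mathcal{O}_{\overline{U},D_I^o}$ the quasi-monomial valuation $v(f)=\min\{\sum_{i\in I}a_i\,n_i\mid c_{(n_i)}\neq 0\}$, where $f=\sum c_{(n_i)}\prod z_i^{n_i}$ in local coordinates. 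Passing to the inverse limit over all snc compactifications, every $\R$-valued valuation of $k(U)$ whose center on some (equivalently every) model lies in $D$ becomes quasi-monomial on a sufficiently fine log resolution (this is the standard density statement for quasi-monomial valuations used in \cite{BFJ.sol, MN}), so the set of such valuations up to $\R_{>0}$-rescaling surjects and injects into $\varprojlim_{\overline{U}}\Delta^{alg}(D)$. The projectivized affine structure is transported from the affine structure on weight vectors modulo $\R_{>0}$-scaling, compatibly under snc refinements.

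For (ii), define $\tau$ as follows. A point of $\partial ZR(U)$ is represented by a Krull valuation $v\colon k(U)^\times\to\Gamma$ with center outside $U$. Let $\bar v$ denote its rank one reduction: quotient $\Gamma$ by its largest proper isolated subgroup to obtain a rank one ordered group, embedded into $\R$ up to positive rescaling. By (i) the valuation $\bar v$ corresponds to a canonical point of $\partial\overline{U}^{lim.MSBJ}$, and we put $\tau(v):=\bar v$. Continuity is local, and it suffices to check it after projecting to each finite-level $\overline U^{MSBJ}(\overline U)$; there the weight coordinates of $\tau(v)$ are (normalized) values of $v$ on local defining equations of boundary components through $c_{\overline U}(v)$, and each subbasic open of $\overline U^{MSBJ}(\overline U)$ is described by algebraic conditions on $c_{\overline U}(v)$ together with strict inequalities between such values, both pulled back from basic opens of the Zariski topology on $ZR(U)$. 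Surjectivity is immediate, because every rank one $\R$-valued valuation with center outside $U$ is itself a Krull valuation in $\partial ZR(U)$ and maps to itself.

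To see that $\tau$ realises $\partial\overline{U}^{lim.MSBJ}$ as the separated quotient, recall that the target is compact Hausdorff as an inverse limit of compact Hausdorff simplicial complexes. Conversely, if $v$ and $v'$ have the same rank one reduction, then they have the same center and the same real weight vector on each snc model, hence cannot be separated by any continuous map of $\partial ZR(U)$ to a Hausdorff space factoring through the inverse system of snc compactifications. The universal property of the maximal Hausdorff quotient then identifies $\tau$ with the separation map.

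The main obstacle I anticipate is the continuity statement in step two: the topology of $\partial ZR(U)$ is generated by algebraic rational-domain conditions $v(f)\ge 0$, while the inverse-limit topology on $\partial\overline{U}^{lim.MSBJ}$ involves \emph{real} weights and their comparison across all snc models. A careful comparison of the two coordinate systems under log resolution, in particular showing that refining the snc model only refines (and never coarsens) the barycentric description of $\bar v$, is where the technical work lies. I would handle it by working on a cofinal system of toroidal blowups and using the toric-style description of transition maps between the corresponding simplicial complexes, as in the proof of Theorem~\ref{lim.toric.prop}\eqref{lim.toric.conti}.
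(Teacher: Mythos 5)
Your item (i) follows the same route as the paper, which simply invokes (and imitates) the proofs of [BFJ.sing, Thm.~1.13] and [BFJ, Cor.~2.5]; no substantive difference there.

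For item (ii), however, your construction of $\tau$ is genuinely different from the paper's, and as stated it has a gap. You define $\tau(v)$ to be the rank-one reduction $\bar v$ obtained by quotienting the value group $\Gamma$ by its largest proper isolated subgroup, and then invoke (i) to view $\bar v$ as a point of $\partial\overline{U}^{lim.MSBJ}$. That step silently assumes that $\bar v$ still has center outside $U$, and this is not automatic. Consider for instance $\overline{U}=\PP^2$, $U=\PP^2\setminus L_0$ for a line $L_0$, a point $p\in L_0$, and a second line $L\ni p$ with $L\neq L_0$. Choose local coordinates $(s,t)$ at $p$ with $L_0=V(t)$ and $L=V(s)$, and let $v$ be the rank-two valuation with value group $\Z^2$ (lexicographic), $v(s)=(1,0)$, $v(t)=(0,1)$. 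Then $c_{\overline{U}}(v)=p\in L_0$, so $v\in\partial ZR(U)$; but the rank-one reduction is $\bar v(s)=1$, $\bar v(t)=0$, which is $\mathrm{ord}_{L}$, whose center is the generic point of $L$ --- a point of $U$. So $\bar v\notin\partial ZR(U)$ and does not correspond to any point of $\partial\overline{U}^{lim.MSBJ}$. In other words, passing to the rank-one quotient can move the center across the boundary.

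The paper circumvents exactly this by defining $\tau$ as the unique maximal generization \emph{inside} $\partial ZR(U)$ (using the valuative--topological--space formalism of Fujiwara--Kato), which by construction never leaves $\partial ZR(U)$. This is a different operation from the blunt rank-one reduction: it is the largest generization in the chain of convex subgroups whose center still lies on the boundary divisor, not simply the last term of that chain. Your later argument about the separated quotient is phrased in terms of ``same rank-one reduction'', which inherits the same defect; the correct equivalence is ``same maximal generization in $\partial ZR(U)$''. Finally, you correctly anticipate that continuity is the technical crux, but leave it as an obstacle; the paper resolves it concretely by exhibiting the preimage $\tau^{-1}(S_{i,j}(a,b))$ of a subbasic open (a tube $\{\log|f_j|/\log|f_i|\in(a,b)\}$) as an explicit union of rational domains $\{v:\ \|f_2^{l_1}/f_1^{l_2}\|_v\le 1\}\cap\{v:\ \|f_1^{m_1}/f_2^{m_1-m_2}\|_v\le 1\}$ indexed by rationals in $(a,b)$, which is the key computation that replaces the ``strict inequality'' heuristic in your sketch.
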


\begin{proof}
\eqref{Berk.MSBJ}: 
This is a close analogue of \cite[Theorem 1.13]{BFJ.sing} and \cite[Corollary 2.5]{BFJ}. 
Indeed, essentially the same proof works so we leave the details to the readers. 

\eqref{ZR.Berk}: 
Since $\partial ZR(U)$ is a valuative topological space in the sense of \cite{FK}, 
we can consider a selfmap $\tau\colon \partial ZR(U)\to \partial ZR(U)$ by sending 
to the unique maximal generization, whose image is exactly the 
$\partial \overline{U}^{lim.MSBJ}$. Hence, what remains is to show the 
continuity. From the definition of the projective limit topology, 
induced by the product topology, it is enough to show the following: 
if we fix a compactification $U\subset \overline{U}$ of the above type i.e., 
with simple normal crossing $\overline{U}\setminus U$ and its local 
coordinate chart with the coordinate $f_{1},\cdots,f_{m}$ such that 
the local equation of $\overline{U}\setminus U$ is $f_{1}\cdots f_{m}=0$. 
Then there is a corresponding $m$-simplex inside $\Delta^{alg}(\overline{U}\setminus U)$. 
It is enough to show that for the limit set of 
$\{\frac{\log|f_{j}|}{\log|f_{i}|}\in (a,b)\}$ inside 
$\partial \overline{U}^{lim.MSBJ}$, which we denote as $S_{i,j}(a,b)$, 
the preimage $\tau^{-1}(S_{i,j}(a,b)) (\subset \partial ZR(U))$ 
is open. However, it is easy to see that 
$\tau^{-1}(S_{i,j}(a,b))$ can be written as 
$$\bigcup_{a<\frac{l_{2}}{l_{1}}<\frac{m_{2}}{m_{1}}<b}
(\left\{v \in \partial ZR(U) \middle|  \|\frac{f_{2}^{l_{1}}}{f_{1}^{l_{2}}}\|_{v}\le 1 \right\}
\cap 
\left\{v\in \partial ZR(U)  \middle|  \|\frac{f_{1}^{m_{1}}}{f_{2}^{m_{1}-m_{2}}}\|_{v}\le 1 \right\}),$$
where $l_{i}$ and $m_{i}$ are positive integers. 
The above subset of $\partial ZR(U)$ 
is open from the definition of the topology on $(\partial) ZR (U)$. 
\end{proof}

Note that in the above, we could only discuss boundaries.

%%%%%%%%%%%%%%%%%%%%%%%%%%%%%%%%%%%%%%%%%%%%%%%%%%%%%%%%%%%%%%%%%%%%%%%%

%%%%%%%%%%%%%%%%%%%%%%%%%%%%%%%%%%%%%%%%%%%%%%%%%%%%%%%%%%%%%%%%%%%%%%%%

%%%%%%%%%%%%%%%%%%%%%%%%%%%%%%%%%%%%%%%%%%%%%%%%%%%%%%%%%%%%%%%%%%%%%%%%
%%%%%%%%%%%%%%%%%%%%%%%%%%%%%%%%%%%%%%%%%%%%%%%%%%%%%%%%%%%%%%%%%%%%%%%%

\section{Family construction over tropical geometric compactifications}
\label{fiber.mod.sec}

So far, in this paper or previous \cite{OO18}, we have been discussing 
some canonical moduli 
compactifications and the degeneration relatively independently. 
This section can be morally 
seen a first step to connect them in a somewhat 
more direct manner. In this section, we assume $k=\C$. 

The main point is that 
the procedure of the Morgan-Shalen-Boucksom-Jonsson compactification is ``functorial'' 
with respect to morphisms as proven in 
\cite[A.15]{Od.Ag}. The Morgan-Shalen-Boucksom-Jonsson compactification was 
introduced only for certain varieties and their compactification varieties (cf., 
\cite{MS84, BJ}, \cite[Appendix]{TGC.II}), 
but our appendix \S \ref{appendixA} 
generalizes it to much more general analytic spaces, for both 
complex and Berkovich non-archimedean analytic spaces. 
Below, first we essentially review \cite[A.15]{Od.Ag}. 
\footnote{However note that in the proof of \cite[A.15]{Od.Ag}, 
mistakenly, the reasoning of $k\ge l$ is not written in 
a sufficient manner. The correct reasoning is obtained 
by injectively mapping the set of the local boundary divisors downstair, labeled by $\{1,\cdots,l\}$, into the set of 
local boundary prime divisors upstair, labelled by $\{1,\cdots,k\}$ by looking at the pullback of generic points of the divisors. }

\begin{Prop}[Functoriality, algebraic case]\label{functoriality.alg}
Let $f\colon X\to Y$ be a $\C$-morphism between normal complex varieties, $D_{X}$ be a divisor on $X$, 
$D_{Y}$ on $Y$, both satisfying the assumption \ref{assumption.alg}. Here we further assume \cite[Def 8 (2)]{dFKX} 
i.e., each intersection of irreducible components of the boundary is irreducible. Then there is a natural map $\Delta^{\rm alg}(D_{X})\to \Delta^{\rm alg}(D_{Y}).$ 
\end{Prop}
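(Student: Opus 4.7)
The plan is to construct the map via monomial (toric) valuations on the two log smooth pairs, working in local coordinates. Without loss of generality I may first assume $f$ is dominant; the general case reduces to this by factoring $f$ through the Zariski closure of its image, equipped with the induced log structure pulled back from $(Y, D_Y)$.

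First, I reduce to a local analytic situation around a pair $p \in X$ with $q = f(p) \in Y$. Let $x_0, \ldots, x_k$ (resp.\ $y_0, \ldots, y_l$) be local equations of the branches of $D_X$ through $p$ (resp.\ $D_Y$ through $q$). Since assumption~\ref{assumption.alg} entails $f(X \setminus D_X) \subseteq Y \setminus D_Y$, each $f^*(y_j)$ is a regular function vanishing only on $D_X$, and so $f^*(y_j) = u_j \prod_i x_i^{a_{ij}}$ with $u_j$ a local unit in $\mathcal{O}_{X,p}^*$ and $a_{ij} \in \Z_{\ge 0}$. Because $q \in D_{Y,j}$ forces $f^*(y_j)$ to vanish at $p$, each column of the matrix $(a_{ij})$ contains at least one positive entry. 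The hypothesis \cite[Def~8(2)]{dFKX} that every intersection of components is globally irreducible ensures that the open simplex $\sigma_p^\circ$ of $\Delta^{\rm alg}(D_X)$ through $p$ is canonically the combinatorial simplex on the set of local branches $\{x_0,\ldots,x_k\}$, and analogously for $\sigma_q^\circ$.

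Second, I define the map on $\sigma_p^\circ$: given barycentric coordinates $(t_0, \ldots, t_k)$ with $t_i > 0$ and $\sum_i t_i = 1$, set $s_j := \sum_i a_{ij} t_i$ for $j = 0, \ldots, l$ and $S := \sum_j s_j$. The positivity $s_j > 0$ for every $j$ follows from each column of $(a_{ij})$ having a positive entry combined with $t_i > 0$, so in particular $S > 0$, and the image is declared to be the point of $\sigma_q^\circ$ with barycentric coordinates $(s_0/S, \ldots, s_l/S)$. Intrinsically, this sends the monomial valuation $v_t$ on $\mathcal{O}_{X,p}$ with $v_t(x_i) = t_i$ to the pullback valuation $v_t \circ f^*$ on $\mathcal{O}_{Y,q}$ — which is again monomial, with $(v_t \circ f^*)(y_j) = s_j$ — and then normalized so that the sum of weights equals one. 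This intrinsic formulation makes the definition independent of the choice of coordinates and compatible with \'etale localization.

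Third, to globalize one must verify that the locally defined maps glue into a continuous piecewise linear map on the whole dual complex. The main obstacle lies in compatibility between an open simplex and its faces: as some $t_i \to 0$ we pass to a lower-dimensional face of $\sigma_p$ corresponding to a stratum $Z' \supsetneq Z$ of $(X, D_X)$, and one must verify that the map defined intrinsically from $\sigma_{p'}^\circ$ at a point $p' \in Z'$ (using the strictly smaller local branch set through $p'$) agrees with the boundary limit coming from $\sigma_p^\circ$. Concretely this amounts to an injective correspondence between the local branches of $D_Y$ through $f(p')$ and a subset of those through $q$, with consistent pullback multiplicities — precisely the bookkeeping inequality relating the numbers of local branches downstairs and upstairs that was insufficiently argued in \cite[A.15]{Od.Ag}. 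Granted this compatibility, which follows from log smoothness of both pairs together with \cite[Def~8(2)]{dFKX}, the pieces assemble into the desired natural continuous map $\Delta^{\rm alg}(D_X) \to \Delta^{\rm alg}(D_Y)$.
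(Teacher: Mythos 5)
Your proof is correct and follows the same strategy as the paper's: localize so that the dual complexes reduce to single simplices, define a simplicial map from the pullback structure of the boundary divisor, then glue using the face/specialization compatibility. The paper's proof is terser — it takes a Zariski open cover $\{U_i\}$, $\{V_i\}$ each containing a single minimal stratum so that $\Delta^{\rm alg}(D_X\cap U_i)$ and $\Delta^{\rm alg}(D_Y\cap V_i)$ are simplices, and then defers the construction of the local simplicial map to \cite[A.15]{Od.Ag} — whereas you make that local map explicit via the multiplicity matrix $(a_{ij})$ from $f^*(y_j)=u_j\prod_i x_i^{a_{ij}}$ and repackage it intrinsically through monomial valuations, which cleanly handles coordinate-independence; you also explicitly supply the injective correspondence between local boundary branches downstairs and upstairs needed for the face-gluing compatibility, which is precisely the point the paper's own footnote flags as insufficiently argued in \cite[A.15]{Od.Ag}.
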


\begin{proof}
We take a Zariski open covering of $X$ (resp., $Y$) denoted as $\{U_{i}\}_{i}$ (resp., $V_{i}$) where $U_{i}$ maps to $V_{i}$ for fixed $i$ and 
each $U_{i}$ or $V_{i}$ contains only one stratum of biggest codimension in it. We do so, by first taking $V_i$s which contains 
just one smallest dimension stratum and take $U_{i}$ as 
subdivisions of the preimages of $V_{i}$s (while the 
subdivision of $U_{i}$, we duplicate $V_{i}$s for different 
subindices $i$). 

Then $\Delta^{\rm alg}(D_{X}\cap U_{i})$ and $\Delta^{\rm alg}(D_{Y}\cap V_{i})$ 
are both simplices. Note that for any $i,j$, $\Delta^{\rm alg}(D_{X}\cap U_{i}\cap U_{j})$ is a closed subcomplex of both 
$\Delta^{\rm alg}(D_{X}\cap U_{i})$ and $\Delta^{\rm alg}(D_{X}\cap U_{j})$ because the strata inside $\Delta^{\rm alg}(D_{X}\cap U_{i}\cap U_{j})$ is 
closed under specialization. The same for $V_{i}$s. 
As discussed in \cite[A.15]{Od.Ag}, $f$ induces a natural simplicial map from 
$\Delta^{\rm alg}(D_{X}\cap U_{i})$ to $\Delta^{\rm alg}(D_{Y}\cap V_{i})$, which glues at the closed subcomplex 
$\Delta^{\rm alg}(D_{X}\cap U_{i}\cap U_{j})$s. Therefore, we obtain a continuous stratified map from 
$\Delta^{\rm alg}(D_{X})$ to $\Delta^{\rm alg}(D_{Y})$. 
\end{proof}

Now, let us observe the following examples of  
degenerations of affine 
curves over higher dimensional base: 

\begin{Ex}\label{Atiyah.flop}
An easy example is a small crepant resolution of $V(xy-st)\subset \A^2_{x,y}\times \A^2_{s,t}$ with the natural projection $f$ down 
to $\mathbb{A}_{s,t}^{2}$. 
Recall that there are two such crepant small resolutions, say $\mathcal{X}_i$ for $i=1,2$ both dominated by the blow up at $(x,y,s,t)=(0,0,0,0)$ of the quadric cone 
$V(xy-st)=:\mathcal{Y}$ and are connected by the well-known Atiyah flop. Here we can assume $\mathcal{X}_{1}$ is 
the blow up of $\mathcal{Y}$ along $(x,z)$ (or 
equivalently, that along $(y,w)$) and 
$\mathcal{X}_{2}$ is the blow up of $\mathcal{Y}$ along 
$(y,z)$ (or equivalently that along $(x,w)$). 

We set $D_{\mathcal{X}_i}:=V(st)\subset \mathcal{X}_i$ and $U_{\mathcal{X}_i}:=
\mathcal{X}_i\setminus D_{\mathcal{X}_i}$ 
Then Proposition~\ref{functoriality} applies to yield $(U_{\mathcal{X}_i}^{\rm an})^{\rm MSBJ}(\mathcal{X}_i)\to 
((\mathbb{A}_{s,t}^2\setminus (st=0))^{\rm an})^{\rm MSBJ}(\mathbb{A}_{s,t}^{2, {\rm an}})$ and the map $\bar{f}^{\rm MSBJ}$ between the boundaries (the dual intersection complexes) 
are $[0,1]^{2}$ projecting down to $[0,1]$. Note its fiber is also a segment, which coincides with the dual intersection complex of fibers. 
\end{Ex}

\begin{Ex}\label{easy.example}
If we consider $X:=V(xy-s)\subset \mathbb{A}^{4}_{x,y,s,t}$ with the natural projection $f$ down to $Y=\mathbb{A}^2_{s,t}$ and set 
$D_Y:=(st=0)$, $D_X$ as its pullback. Then the map $\bar{f}^{\rm MSBJ}$ between boundaries can be seen as a triangle mapping to the interval where two vertices map to a same 
edge point while the other point maps to the other edge point of the interval. The fiber is either an interval or a point. 
\end{Ex}

A natural question occurs from the above nicely behaved examples 
as follows. For simplicity, from here, we come back to 
ordinary varieties category while leaving discussions for 
the general analytic setup in the appendix. 

\begin{Ques}[Fibre compatibility]\label{Ques}
In the setting of Proposition~\ref{functoriality}, when 
$X, Y$ are both (analytifications of) varieties, 
we consider any 
morphism from 
the spectrum of DVR $R$, say $\overline{\varphi_Y}\colon \Spec(R)\to 
Y$ which maps the generic point $\iota=\Spec(\Frac(R))$ 
inside $U_Y$. We write the closed point of $\Spec(R)$ as $p$ which we 
suppose to map inside $\Supp(D_Y)$. We consider the natural extension 
of $\varphi|_{\iota}$ to 
\begin{align}
\overline{\varphi_Y}^{\hspace{1mm}\rm MSBJ}\colon \Spec(R)\to \overline{U_Y}^{\hspace{1mm} \rm 
MSBJ}.
\end{align} 
Then {\it under certain appropriate condition}, 
does it hold that 
\begin{align}
(\bar{f}^{\hspace{1mm}\rm MSBJ})^{-1}(\overline{\varphi_Y}^{\hspace{1mm} \rm 
MSBJ}(p))
\end{align} 
is canonically homeomorphic to 
the dual intersection complex of 
$\bar{f}^{-1}(\overline{\varphi_Y}(p))$? 
\end{Ques}

Note that Examples~\ref{Atiyah.flop} and \ref{easy.example} give affirmative examples. 
On the other hand, without assumption $f$ is flat and proper, it is easy to see various counterexamples (which we omit). 
Also, if $U_{X}\to U_{Y}$ is not smooth, then there is the following counterexample. 

\begin{Ex}
If $X$ is a blow up of $\mathbb{P}^{1}_{x}\times \mathbb{A}^{2}_{s,t}$ along $(t=x=0)\simeq \mathbb{A}^{1}$, 
$f\colon X\to Y:=\mathbb{A}^{2}_{s,t}$ is a natural projection, 
$D^{(1)}_{Y}:=(s=0)$, $D^{(1)}_{X}:=f^{*}D^{(1)}_{X}$. Then obviously 
we have trivial dual intersection complexes of $D_{X}^{(1)}$ 
and $D_{Y}^{(1)}$, hence at $p=(0,0)$ above speculation in 
Question~\ref{Ques} is violated. On the other hand, 
if we put $D_{Y}^{(2)}$ as $D_{Y}$ of Example~\ref{easy.example}, 
then the situation is completely that of Example~\ref{easy.example} 
hence the speculation holds. 
\end{Ex} 

Therefore, we are naturally led to temporarily 
assume $f|_{U_{X}}$ is smooth (or at least with normal fibers) and ask the same question \ref{Ques} again. 
Nevertheless, the assumption is still not enough! 

\begin{Ex}[Type II and III K3 degeneration as a counterexample]
Here is a much subtler counter-example to the speculation of 
Question~\ref{Ques} of 
degenerations of K3 surfaces including both 
Type II case and Type III case. 

We consider the closed subscheme 
$\mathcal{Y}'$ of $\mathbb{P}_{X,Y,Z,W}^{3}\times \mathbb{A}_{s,t}
^{2}$
defined by the vanishing of 
\begin{align*}
&(XZ+sQ_{1}(X,Y,Z,W)+tQ_{2}(X,Y,Z,W))\\  
&\times(ZW+sQ_{3}(X,Y,Z,W)+tQ_{4}(X,Y,Z,W))\\ 
&+stL_{1}(X,Y,Z,W)L_{2}(X,Y,Z,W),\\
\end{align*}
where $Q_{i}$ (resp., $L_{i}$) 
are general quadric (resp., linear) homogenous polynomials 
of variables $X,Y,Z,W$. Via projection, 
we have a natural morphism 
$\pi'\colon \mathcal{Y}\twoheadrightarrow \mathbb{A}_{s,t}^{2}$. 

Denote the open subset of $\mathcal{Y}'$ 
defined by $L_{i}\neq 0$ as $\mathcal{U}'$ and 
write the de-homogenization of $Q_{i}|_{\mathcal{U}'}$ 
as a polynomial $q_{i}$ 
of $\frac{X}{L_{i}}, \cdots, \frac{W}{L_{i}}$. 
Then they natural give a morphism from $\mathcal{U}'$ 
to $\mathcal{Y}$ of Example~\ref{Atiyah.flop} 
defined as $(q_{1},q_{2},s,t)$. Then we take the fiber product 
with respect to the small resolution of conifold singularity 
$\mathcal{X}_{i}\to \mathcal{Y}$ as 
Example~\ref{Atiyah.flop} and write 
$\mathcal{U}_{i}:=\mathcal{U}'\times_{\mathcal{Y}}\mathcal{X}_{i}$ 
for $i=1,2$. Then $\pi'$ gives a morphism $\pi_{i}\colon
\mathcal{U}_{i}\to \mathbb{A}_{s,t}^{2}$. 

We claim that $\pi_{2}\colon \mathcal{U}_{2}\to \mathbb{A}_{s,t}^{2}$ 
with the boundaries $V(st)\subset \mathbb{A}_{s,t}^{2}$ 
and its pullback $D_{\mathcal{U}_{2}}$ 
do {\it not} satisfy the speculation in Question~\ref{Ques}. 
Indeed, it is not hard to see that 
the dual intersection complex of 
$D_{\mathcal{U}_{2}}$ is a square homeomorphic to $[0,1]^{2}$, 
given that $\mathcal{U}_{2}\to \mathcal{Y}$ is small 
so that the exceptional set is of codimension at least $2$. 
The continuous map from it 
to that of $V(st)\subset \mathbb{A}_{s,t}^{2}$ 
is a square mapping down to an interval $[0,1]$ via the projection. 
On the other hand, 
over the closed point $s=t=0$ in $\mathbb{A}_{s,t}^{2}$, 
$\mathcal{Y}'$ has, hence so do $\mathcal{U}_{i}$s, 
the typical Type III degeneration - more explicitly a Zariski open 
dense 
subset (which contain all strata) of the 
$V(XYZW)\subset \mathbb{P}_{X,Y,Z,W}^{3}$. 
Therefore, this gives a counterexample to the 
speculation of Question~\ref{Ques}. 
\end{Ex}
Indeed, the author lately found that \cite[\S 8]{ACP} and 
\cite{CCUW} had essentially 
solved the problem for curve case (their $n=0$ case). 

\begin{Thm}[{\cite[Theorem 2]{CCUW} cf., also \cite[8.2.1]{ACP}}]
For the case of moduli of smooth projective curves of fixed genus $g\ge 2$ 
$\mathcal{M}_{g}$, the Question~\ref{Ques} has affirmative answer. 
\end{Thm}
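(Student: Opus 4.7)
The plan is to reduce the question to a local computation using the Deligne-Mumford compactification $\overline{\mathcal{M}_g}$, which at the stacky level is a log smooth model with simple normal crossing boundary, together with its universal stable curve $\pi \colon \overline{\mathcal{C}_g} \to \overline{\mathcal{M}_g}$. The relative boundary $\pi^{-1}(\overline{\mathcal{M}_g} \setminus \mathcal{M}_g)$ is also SNC, so the hypotheses of Proposition~\ref{functoriality.alg} are satisfied. Applying it produces the simplicial forgetful map
\[
\bar{\pi}^{\mathrm{MSBJ}} \colon \Delta^{\mathrm{alg}}(\overline{\mathcal{C}_g} \setminus \mathcal{C}_g) \to \Delta^{\mathrm{alg}}(\overline{\mathcal{M}_g} \setminus \mathcal{M}_g),
\]
and it is the fibers of this map that must be analyzed.

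The key identification is furnished by \cite[Theorem 1.2.1]{ACP} and \cite[Theorem 2]{CCUW}: the target is canonically (stratum-by-stratum) identified with the moduli space $M_g^{\mathrm{trop}}$ of tropical curves of genus $g$, and through this identification $\bar{\pi}^{\mathrm{MSBJ}}$ becomes the universal tropical curve $\mathcal{C}_g^{\mathrm{trop}} \to M_g^{\mathrm{trop}}$. Now, given $\overline{\varphi_Y} \colon \Spec(R) \to \overline{\mathcal{M}_g}$ hitting the boundary stratum $D_G$ (parametrising stable curves with dual graph $G$) at the closed point $p$, an \'etale-local SNC description of $\overline{\mathcal{M}_g}$ near $\overline{\varphi_Y}(p)$ has smoothing coordinates $(t_e)_{e \in E(G)}$ for each node of the central stable curve plus equisingular coordinates; the MSBJ specialization therefore reads
\[
\overline{\varphi_Y}^{\mathrm{MSBJ}}(p) = [(v(t_e))_{e \in E(G)}] \in \Delta^{E(G)} \subset M_g^{\mathrm{trop}},
\]
which under the ACP identification encodes the metric graph $(G,(v(t_e))_e)$ up to global rescaling.

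Next, the fiber $(\bar{\pi}^{\mathrm{MSBJ}})^{-1}(\overline{\varphi_Y}^{\mathrm{MSBJ}}(p))$ of the universal tropical curve over this metric graph is tautologically (from the moduli-theoretic formulation of CCUW) the underlying metric graph $(G,(v(t_e))_e)$ itself, with edges of length $v(t_e)$ arising precisely from the local equations $x_e y_e = t_e$ of $\overline{\mathcal{C}_g}$ at the nodes. On the other hand, $\bar{\pi}^{-1}(\overline{\varphi_Y}(p)) = C_0$ is the stable nodal curve with dual graph $G$, whose dual intersection complex is canonically $G$ as a one-dimensional CW complex. The two sides agree as topological spaces, providing the desired canonical homeomorphism.

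The main obstacle is passing from the stacky picture (in which the identifications above are clean) to the coarse moduli space on which Question~\ref{Ques} is literally phrased. Near a boundary point whose stable curve has automorphisms permuting some nodes, the coarse moduli space has quotient singularities along $D_G$ and the branches of the boundary divisor can be non-separated by a finite automorphism action; one must verify that $\bar{\pi}^{\mathrm{MSBJ}}$ descends compatibly and that the identification of the MSBJ fiber with the dual intersection complex is preserved under this finite quotient. This holds because the MSBJ construction depends only on valuation-theoretic data, which is insensitive to finite quasi-\'etale covers, so both the MSBJ fiber and the dual intersection complex descend coherently. This compatibility between stacky and coarse tropicalization is precisely what \cite[\S 8]{ACP} addresses and what forces the tropical moduli of \cite{CCUW} to naturally live as a stack.
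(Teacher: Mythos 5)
Your proof is correct and takes essentially the same route as the paper: both identify the universal curve at the Deligne--Mumford stack level ($\overline{\mathcal{M}_{g,1}}\to\overline{\mathcal{M}_g}$, your $\overline{\mathcal{C}_g}\to\overline{\mathcal{M}_g}$), invoke the ACP identification of the boundary dual complex with (the link of) the tropical moduli space, and appeal to CCUW Theorem 2 to identify $\bar{\pi}^{\mathrm{MSBJ}}$ with the universal tropical curve, whose fiber over a metric graph $(G,(\ell_e))$ is $G$, matching the dual complex of the corresponding stable curve. What you add beyond the paper's terse proof is worthwhile bookkeeping: the explicit invocation of Proposition~\ref{functoriality.alg}, the local smoothing-coordinate computation $\overline{\varphi_Y}^{\mathrm{MSBJ}}(p)=[(v(t_e))_e]$, and an explicit paragraph addressing the stack-versus-coarse-space descent via valuation-theoretic invariance — a point the paper leaves implicit by working at the stack level and referring readers to the stacky extension of the MSBJ construction in its appendix.
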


\begin{proof}
It is well-known (probably since \cite{KnM}) 
that $\mathcal{M}_{g,1}\to \mathcal{M}_{g,0}=
\mathcal{M}_{g}$ defined by forgetting the marked point, at the 
Deligne-Mumford algebraic stack level, is nothing but the universal curve. 
Thus, the assertion readily follows from \cite[Theorem 2]{CCUW} 
restricted to charts of cone complex covering it (recall that 
\cite{CCUW} works over geometric stack of cone complexes), 
$\mathcal{M}_{g,n}^{\rm trop}$ in 
\cite{CCUW} is the incidence complex of the (stack theoritically 
simple 
normal crossing) Deligne-Mumford algebraic stack 
$\overline{\mathcal{M}_{g,n}}\setminus 
\mathcal{M}_{g,n}$ by \cite{ACP}. 
\end{proof}

The essential 
point is that each ``degenerating part'' (forming nodes) 
is independent at 
the discriminant locus $\overline{\mathcal{M}_{g,n}}\setminus 
\mathcal{M}_{g,n}$ (which is the reason why it is snc at smooth charts). 

Furthermore, we observe the abelian varieties analogue also holds. 

\begin{Prop}
For the 
Namikawa-Alexeev-Nakamura's compactification of moduli stack of principally polarized abelian varieties $\overline{\mathcal{A}_{g}}^{\rm AN}$ (\cite{Nam, AN, Ale02, 
Nak}) with its universal family of the polarized degenerated abelian varieties (\cite{AN, Ale02, Nak})\footnote{its name depends on the above literatures}, 
the Question \ref{Ques} has affirmative answer. 
\end{Prop}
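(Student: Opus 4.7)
The plan is to reduce the problem to a local toroidal computation on charts of $\overline{\mathcal{A}_g}^{\rm AN}$ and then to identify the fiber of the dual-complex map with the corresponding tropical abelian variety, in a way parallel to Theorem \ref{tombo.tropical}. First I would pass to a level-$\ell$ rigidifying cover so that $\overline{\mathcal{A}_g(\ell)}^{\rm AN}$ is a smooth scheme whose boundary is a simple normal crossing divisor; its combinatorics is governed by the second Voronoi decomposition of the cone $\Sigma_g$ of positive semi-definite $g\times g$ symmetric matrices with $\mathbb{Q}$-rational kernel, modulo the $GL_g(\mathbb{Z})$-action. By \cite[\S 2]{BJ}, \cite[Appendix]{TGC.II}, the MSBJ dual complex of $\partial \overline{\mathcal{A}_g}^{\rm AN}$ is canonically $(\Sigma_g\setminus\{0\})/\mathbb{R}_{>0}$, and a point of a boundary cone $\sigma\in\Sigma_g$ encodes a toric rank $r$ together with a positive semi-definite form on $Y_{\mathbb{R}}\simeq\mathbb{R}^r$.

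Second, the universal family $\pi\colon\overline{\mathcal{X}}\to\overline{\mathcal{A}_g}^{\rm AN}$ of \cite{AN, Ale02, Nak} is constructed formally locally, as in \cite{Mum72.AV, FC90}, as the quotient by a period lattice $Y\simeq\mathbb{Z}^r$ of a relatively complete Mumford--Faltings--Chai model associated to a $Y$-periodic fan refining $\sigma$. Consequently, after a further admissible refinement if necessary, $\overline{\mathcal{X}}\setminus\pi^{-1}(\mathcal{A}_g)$ is itself stack-theoretically simple normal crossing, with a toric description compatible with the Voronoi fan downstairs. Applying the functoriality of Proposition \ref{functoriality.alg} to $\pi$ produces a continuous stratified map $\overline{\pi}^{\,\rm MSBJ}$ of dual complexes, and what we need is to identify its fiber at the MSBJ-image of a trait $\Spec R\to\overline{\mathcal{A}_g}^{\rm AN}$ landing generically in $\mathcal{A}_g$ and hitting the cone $\sigma$ at the closed point.

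Third, by the explicit semi-toric description of \cite{Mum72.AV, FC90, AN}, the central fiber of $\pi$ over such a trait is a stable semi-abelic variety of toric rank $r$ whose dual intersection complex is canonically the real torus $Y_{\mathbb{R}}/Y$ equipped with the Delaunay decomposition determined by $\sigma$, i.e.\ the tropical abelian variety attached to $\sigma$. On the other hand, the fiber of $\overline{\pi}^{\,\rm MSBJ}$ can be computed by a local toric calculation as the quotient by $Y$ of the analogous toric dual-complex fiber for the relatively complete model; this quotient is again $Y_{\mathbb{R}}/Y$ with the same Delaunay structure, by essentially the same argument used in the proof of Theorem \ref{tombo.tropical} (and Theorem \ref{lim.toric.prop}(iii)). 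This yields the desired canonical homeomorphism.

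The main technical obstacle will be the bookkeeping: one must verify that the local identifications glue across different admissible decompositions of $\Sigma_g$ and under refinement of the universal relatively complete model, and that the quotient by $Y$ is compatible along the resulting projective system. However, because the Delaunay structure on $Y_{\mathbb{R}}/Y$ depends only on $\sigma$ and not on its admissible subdivision, this is formally analogous to the independence of the dual complex under choice of dlt minimal model (Theorem \ref{dlt.lem}\eqref{dual.cpx.same}), and no essentially new input should be required beyond what already appears in the proof of Theorem \ref{tombo.tropical}.
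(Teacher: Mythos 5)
Your proposal is correct, and it is essentially a fleshed-out reconstruction of what the paper handles in one line by citation. The paper's proof simply invokes Namikawa \cite[\S 9B, Def.~9.12]{Nam}: the ``mixed decomposition'' of $\{(y,z)\mid z\in V\otimes\R,\ y \text{ an inner product on } V\otimes\R\}$ (with $V\simeq\Z^g$) is exactly the cone complex attached to the universal relatively complete model over the Voronoi fan, and Namikawa already shows that the fiber over a given $y$ is the union of the corresponding Delaunay cells; quotienting by $GL(V)\ltimes V$ then gives the statement directly. What you rebuild from the Mumford--Faltings--Chai machinery --- the periodic fan of the relatively complete model projecting to the Voronoi cone downstairs, the identification of the fiber with $Y_\R/Y$ carrying the Delaunay decomposition, and the compatibility across admissible refinements and the $Y$-quotient --- is precisely the content that the mixed decomposition packages once and for all. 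In particular, the ``bookkeeping'' you flag as the main remaining technical obstacle is not something you need to redo: it is the assertion already proved in \cite[\S 9B]{Nam}, so locating and citing the mixed decomposition would let you collapse your third and fourth paragraphs into a single reference.
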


\begin{proof}
This is essentially known (cf., \cite[\S 9 B]{Nam}). 
Indeed, {\it loc.cit} the definition 9.12 introduces a cone decomposition, called ``the mixed decomposition", of 
$$\{(y,z)\mid z\in V\otimes \R, y: \text{inner product on }V\otimes \R\},$$ 
where $V\simeq \Z^g,$ and shows that the fibers over $y$ is union of the corresponding Delaunay cells. 
Dividing by the discrete group of affine transformations 
$GL(V)\ltimes V$, we conclude the desired assertion. 
\end{proof}

Motivated by this, we guess that Question \ref{Ques} has affirmative answers for more general weak K-moduli spaces: 

\begin{Conj}\label{ex.family}
For a universal family over the various (other) weak K-moduli stacks 
\eqref{weak.Kmoduli.conj} 
such as  
Shah's compactification of moduli stack of polarized K3 surfaces of degree $2$ $\overline{\mathcal{F}_{2}}^{\rm Shah}$ (\cite{Shah}), 
and Alexeev-Engel-Thompson's compactification of the same moduli stack (\cite{AET}), the above Question~\ref{Ques} holds affirmatively. 
\end{Conj}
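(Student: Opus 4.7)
The plan is to reduce the question to a local toroidal analysis at the boundary strata of the weak K-moduli stack, much as was done for $\mathcal{M}_{g}$ via \cite{ACP, CCUW} and for $\mathcal{A}_{g}$ via \cite[\S 9 B]{Nam}. First I would pass, \'etale locally on $\overline{\mathcal{F}_{2}}^{\rm Shah}$ (resp.\ on Alexeev--Engel--Thompson's stack $\overline{\mathcal{F}_{2}}^{\rm AET}$), to a finite Galois cover $M' \to M$ on which the boundary becomes a stacky simple normal crossing divisor, using Viehweg's argument as in the proof of Theorem~\ref{veryweak.Kmoduli} together with the functorial weak semistable reduction \cite{AK, ATW}. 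After this reduction, the stack $\overline{\mathcal{M}}$ is covered by log-smooth charts $(\overline{M'}, D')$ and the universal family $(\overline{\mathcal{U}}, \overline{\mathcal{L}}) \to \overline{\mathcal{M}}$ becomes toroidal relative to the base over such a chart. Then Proposition~\ref{functoriality.alg} applies and produces the natural simplicial map $\bar{f}^{\hspace{1mm}\rm MSBJ}\colon \Delta^{\rm alg}(D_{\overline{\mathcal{U}}}) \to \Delta^{\rm alg}(D')$.

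Next I would match fibers combinatorially on the Alexeev--Engel--Thompson side, where the boundary strata of $\overline{\mathcal{F}_{2}}^{\rm AET}$ are indexed by polyhedral decompositions $\mathcal{P}$ of integral affine $S^{2}$s with $24$ singularities, and the universal family is an explicit Kulikov-type degeneration whose dual intersection complex is precisely the underlying $S^{2}$ equipped with $\mathcal{P}$. Given a DVR $R$ and $\overline{\varphi_{Y}}\colon \Spec(R) \to \overline{\mathcal{F}_{2}}^{\rm AET}$ hitting a generic point of a stratum labelled by $(S^{2}, \mathcal{P})$, the image $\overline{\varphi_{Y}}^{\hspace{1mm}\rm MSBJ}(p)$ is a single point in the open cone associated to $\mathcal{P}$, and the preimage $(\bar{f}^{\hspace{1mm}\rm MSBJ})^{-1}$ should by construction equal the union of simplices of the stratified piece of $\Delta^{\rm alg}(D_{\overline{\mathcal{U}}})$ sitting over that cone, which is tautologically $(S^{2}, \mathcal{P})$. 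For Shah's compactification the argument is essentially the same: after Kirwan's partial resolution, the compactification dominates a toroidal compactification in the sense of \cite{AMRT, Looi.semitoric}, so the same local toroidal description of the universal family (compatible with the Baily--Borel parameters) applies. In both cases, Theorem~\ref{tombo.tropical2} at the fiber level identifies the topological fiber of $\bar{f}^{\hspace{1mm}\rm MSBJ}$ with the essential skeleton of $\bar{f}^{-1}(\overline{\varphi_{Y}}(p))$, which is the desired dual intersection complex.

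The main obstacle, and the reason this is stated only as a conjecture, is the verification that the universal family over $\overline{\mathcal{F}_{2}}^{\rm Shah}$ or $\overline{\mathcal{F}_{2}}^{\rm AET}$ is genuinely toroidal over the base in the relative sense needed by Proposition~\ref{functoriality.alg}, with moreover each intersection of boundary components of the total space being irreducible over each boundary stratum. Unlike the curve or abelian variety case, the K3 universal family can suffer from non-trivial flops and from the phenomenon illustrated by the Type II/III K3 counterexample in the excerpt, where small modifications of the total space produce dual complexes that do not map fiberwise onto the intended base cone. Concretely, one must first extract a relative dlt minimal model of the universal family which is log crepant to a toroidal model and whose central fibers over each boundary stratum realize precisely the combinatorial datum ($\mathcal{P}$ or Shah's limit type). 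For the AET compactification this should follow from their explicit construction via stable slc K3 pairs, while for Shah's compactification one needs to combine Shah's GIT analysis with a case-by-case matching of the degeneration type and the associated cone of the toroidal model. Once this matching is established, the fiber compatibility is formal from Proposition~\ref{functoriality.alg} and Theorem~\ref{tombo.tropical2}, in complete analogy with the abelian varieties argument via mixed Delaunay decompositions.
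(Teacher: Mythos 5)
The paper does not prove Conjecture~\ref{ex.family}: it is stated and left as a conjecture. The only progress the paper records toward it is Proposition~\ref{toroidal.Q2} (Question~\ref{Ques} holds whenever $X\to Y$ is toroidal with respect to the given boundary structures, proved by localizing to toric charts and invoking \cite[2.1.11]{HLY} together with \cite[\S 3]{Ale1}), followed by the observation that the conjecture would follow \emph{if} the universal families over Shah's and Alexeev--Engel--Thompson's stacks were toroidal over the base. Your closing paragraph correctly identifies this toroidality of the universal family as the outstanding obstacle, and in that sense your diagnosis matches the paper's.

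However, your first paragraph overclaims, and the overclaim is exactly where the gap sits. Passing to a Viehweg-type Galois cover and then applying weak semistable reduction \cite{AK, ATW} does not keep you on the Shah or AET compactification: those operations \emph{replace} the compactification (this is how Theorem~\ref{veryweak.Kmoduli} manufactures \emph{some} very weak K-moduli, not Shah's or AET's). So the sentence asserting that after this reduction ``the universal family becomes toroidal relative to the base over such a chart'' is not available; the whole point of the conjecture is whether the \emph{already fixed} compactifications carry a relatively toroidal universal family. Relatedly, your appeal to Theorem~\ref{tombo.tropical2} is misplaced: that statement concerns galaxies over $k[[t^{\Q}]]$ and the map from $X_{\infty}$ to the essential skeleton of a single one-parameter degeneration; it does not give the fiber-compatibility between $\bar{f}^{\rm MSBJ}$ and dual complexes of fibers over the boundary of a higher-dimensional base. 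The right internal tool is Proposition~\ref{toroidal.Q2}, not Theorem~\ref{tombo.tropical2}. With the first paragraph rephrased to make clear that relative toroidality of the fixed universal family is an assumption to be verified rather than a consequence of semistable reduction, and with the fiber-matching step routed through Proposition~\ref{toroidal.Q2}, your sketch would faithfully describe why the statement remains open and what would close it.
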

We further conjecture that the obtained family of tropical varities coincides with those parametrized in our previous works \cite{Od.Mg}, \cite{Od.Ag}, \cite{OO18} and \cite{CCUW}. 
\footnote{However, for $M_{g}$ case, remember that the Morgan-Shalen type topology does {\it not} coincides with the Gromov-Hausdorff topology given on $\overline{M_{g}}^{\rm T}$. cf., \cite{Od.Mg}}
We observe 
another affirmative direction to the above question \ref{Ques}. 
\begin{Prop}\label{toroidal.Q2}
In the setup of \eqref{functoriality} and Question \ref{Ques}, 
if $U_{X}\subset X$ and $U_{Y}\subset Y$ are toroidal pair of 
varieties, and the morphism $X\to Y$ is also toroidal with respect to the toroidal structures of $(X,X\setminus U_{X})$ and 
$(Y,Y\setminus U_{Y})$, then Question \ref{Ques} holds. 
\end{Prop}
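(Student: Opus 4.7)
The plan is to reduce the assertion to a purely toric-combinatorial computation via the toroidal structure. Since both the map $\bar{f}^{\rm MSBJ}$ on MSBJ boundaries and the dual intersection complex of the fiber $\bar{f}^{-1}(\overline{\varphi_Y}(p))$ are intrinsic local invariants near $\overline{\varphi_Y}(p) \in Y$, I would first replace $Y$ (\'etale-locally around the image of the closed point) by an affine toric chart $Y_{\sigma} = \Spec(k[\sigma^{\vee}\cap M_{Y}])$ and $X$ by its preimage, which by the toroidality of $f$ is \'etale-locally a toric morphism $X_{\tau} \to Y_{\sigma}$ arising from a lattice map $\phi\colon N_{X}\to N_{Y}$ sending $\tau$ into $\sigma$. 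Under this reduction, the dual intersection complexes $\Delta^{\rm alg}(D_{X})$ and $\Delta^{\rm alg}(D_{Y})$ are canonically identified with (suitable subdivisions of) the projectivizations $(\tau\setminus \{0\})/\R_{>0}$ and $(\sigma\setminus \{0\})/\R_{>0}$, and the functorial map of Proposition \ref{functoriality.alg} is exactly the piecewise linear map induced by $\phi$.

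Next, I would identify the two players in the proposed homeomorphism. A morphism $\overline{\varphi_{Y}}\colon \Spec(R)\to Y$ with generic point in $U_{Y}$ and closed point $p$ landing in the orbit $O_{\sigma'}$ corresponding to a face $\sigma'\preceq \sigma$ gives, via the valuation, a primitive integer vector $v\in \mathrm{Int}(\sigma')\cap N_{Y}$; its image under $\overline{\varphi_{Y}}^{\rm MSBJ}$ is the class $[v]\in (\sigma\setminus \{0\})/\R_{>0}$. The fiber of the induced map of dual complexes over $[v]$ is then, by elementary polyhedral geometry, the projectivization of $\phi^{-1}(\R_{>0}v)\cap \tau$, which is naturally homeomorphic to the slice polyhedron $P_{v}:=\phi^{-1}(v)\cap \tau$.

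The core of the proof would then be to check that $P_{v}$ is canonically the dual intersection complex of the scheme-theoretic fiber $\bar{f}^{-1}(\overline{\varphi_{Y}}(p))$. In the local toric model this fiber is the toric subvariety whose orbits correspond precisely to the faces $\tau'\preceq \tau$ with $\phi(\mathrm{Int}(\tau')) \subset \mathrm{Int}(\sigma')$, and whose codimensions within the fiber match the codimensions of the corresponding faces of $P_{v}$; the dual complex of the fiber is therefore $P_{v}$, independently of the choice of $v\in \mathrm{Int}(\sigma')$ (in particular independent of the specific DVR). Hence the fiber of $\bar{f}^{\rm MSBJ}$ over $\overline{\varphi_{Y}}^{\rm MSBJ}(p)$ is canonically identified with $\Delta^{\rm alg}(\bar{f}^{-1}(\overline{\varphi_{Y}}(p)))$.

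Finally, I would glue across \'etale charts. Since the toroidal structure is by definition specified by compatible toric charts, the identifications on each chart are given by canonical combinatorial data (the cones $\sigma, \tau$ and the map $\phi$), so they patch together along overlaps. The main technical obstacle is this gluing step, in particular ensuring that the identification between the polyhedron $P_{v}$ and the dual complex of the fiber respects the subdivisions induced by different local presentations; however, because both constructions depend only on the intrinsic combinatorics of the toroidal morphism along the stratum containing $\overline{\varphi_{Y}}(p)$, and because the toroidality of $f$ ensures that these combinatorial data transform compatibly between overlapping charts (analogously to the coherent systems used in Theorem~\ref{subdiv.lc}), the gluing presents no essential obstruction.
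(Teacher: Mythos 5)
Your approach matches the paper's: both arguments localize to the case of a toric morphism between affine toric charts and then reduce the assertion to the elementary polyhedral observation that the fiber of the induced map of projectivized cones over $[v]$ is the slice polyhedron $P_v = \phi^{-1}(v)\cap\tau$, which encodes the stable-toric-variety structure of the scheme-theoretic fiber. The paper simply outsources the combinatorics of relative stars and fiber structure to \cite[2.1.11, 2.1.9]{HLY} and \cite[\S 3]{Ale1}, whereas you spell out the dimension bookkeeping directly, but the mathematical content is the same.
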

Hence, in particular, if the 
examples in Conjecture \ref{ex.family} are toroidal families over the moduli stack, 
the desired assertions would follow. 
\begin{proof}
One can localize the problem to when 
$(X,X\setminus U_{X})$ and 
$(Y,Y\setminus U_{Y})$ are both toric  with 
torus equivariant morphism $X\to Y$ with respect to a 
morphism of the algebraic tori. Then, the fibers over closed points 
of $Y\setminus U_{Y}$ are stable toric varieties in the sense of 
\cite{Ale02}. This follows from \cite[2.1.11]{HLY} 
combined with \cite[\S 3]{Ale1}. Indeed, 
the irreducible components of the fibers are 
toric varieties for the relative stars \cite[2.1.9]{HLY} 
whose natural union is nothing but the fiber of 
the map between the fans. Hence, 
Question\ref{Ques} holds for this situation. 
\end{proof}

Hence the recent deep results on birational geometry of  
``toroidalization'' \cite{AK,ATW} may well be effective 
as in the proof of Theorem \ref{veryweak.Kmoduli} or \cite{KX2}.

%%%%%%%%%%%%%%%%%%%%%%%%%%%%%%%%%%%%%%%%%%%%%%%%%%%%%%%%%%%%%%%%%%%%%%
%%%%%%%%%%%%%%%%%%%%%%%%%%%%%%%%%%%%%%%%%%%%%%%%%%%%%%%%%%%%%%%%%%%%%%%%%%%%%%%%%%%%%%%%%%%%%%%%%%%%%%%%%%%%%%%%%%%%%%%%%%%%%%%%%%%%%%%%%%%%%%%%

%%%%%%%%%%%%%%%%%%%%%%%%%%%%%%%%%%%%%%%%%%%%%%%%%%%%%%%%%%%%%%%%%%%%%%%%
%%%%%%%%%%%%%%%%%%%%%%%%%%%%%%%%%%%%%%%%%%%%%%%%%%%%%%%%%%%%%%%%%%%%%%%%

\appendix
\section{Analytic Morgan-Shalen construction \\ 
and Dual intersection complexes}\label{appendixA}

This appendix reviews and extends 
both the dual intersection complex and 
the Morgan-Shalen type compactification  
(\cite{MS84, BJ}, \cite[Appendix]{TGC.II}) 
especially to analytic (including non-archimedean) 
general setting. 
This is logically used only in a few places of the main contents 
such as \S \ref{lim.toric.sec},  \eqref{functoriality}, among others, 
but we put it here partially as a preparation for future further use, 
and as a review of the original theory. 

The original Morgan-Shalen compactification \cite{MS84} compactifies a complex variety by attaching certain subset of cell complex, which \cite{MS84} 
applied to the character varieties and study of topology of manifolds. 
This idea is now being expanded in more modern contexts after inspiration coming from the attempt to understand the mirror symmetry geometrically 
(cf., e.g.  \cite{KS}, \cite{GS}, \cite{BJ}). Here, 
we review and prepare 
a further extended version of Morgan-Shalen type 
compactification, 
especially to include non-archimedean setting. 

In this texts, we first review the construction in complex algebraic setting, with some technical improvements, 
and then we later extend to general Berkovich analytic setting in the following subsection. 

%%%%%%%%%%%%%%%%%%%%%%%%%%%%%%%%%%%%%%%%%%%%%%%%%%%
%%%%%%%%%%%%%%%%%%%%%%%%%%%%%%%%%%%%%%%%%%%%%%%%%%%  

\subsection{Review of (Algebraic) Dual intersection complex}

Started in the classical theory of dual graph of curves in surfaces, followed by the 
work of Kulikov, Pinkham-Persson \cite{Kul, PP} 
on the degeneration of surfaces, there has been a lot of 
works on dual intersection complexes (combinatorial data) for the normal crossing varieties (cf., \cite{Kul, PP, KS, GS, Step, dFKX, KX, BJ, Thuillier} and \cite[Appendix]{Od.Ag} etc). 
Here we discuss at more general setting. 

We keep the assumption that $k$ is algebraically closed. 

\begin{Ass}[Algebraic $\Q$-Cartier-ness$+$codimension assumption]\label{assumption.alg}
$X$ is a normal $n$-dimensional variety and $D:=X\setminus U$ is a finite union of irreducible {\it Cartier} divisors, 
i.e., all the components are (algebraically) $\Q$-Cartier, and the intersection of those $k$ irreducible components 
are of pure codimension $k$ for any $1\le k\le n$. 
\end{Ass} 

So far, people restricted the attention to normal crossing case or dlt($=$divisorially log terminal) case. 
Nevertheless, we can still {\it define} the {\it (algebraic) dual intersection complex} of $D$ denoted by $\Delta^{\rm alg}(D)$ as \cite[\S~2]{dFKX} do. 
{\it Op.cit} (Definition 8) assumes that the intersection of the irreducible components are irreducible but such requirements are not actually necessary for the definition 
once we associate a $l$-simplex to any {\it irreducible component} of $l$ irreducible components of $D$ and do the same inductive construction of $\Delta^{\rm alg}(D)$. On the other hand, 
the following easy lemma gives an efficient way to relate to 
the situation under the controll of log discrepancies. 
(This reminds the author of more difficult variant in 
a rather converse direction in \cite{FG}.)

\begin{Lem}[Log discrepancy controll]\label{cover.lemma}
Under the assumption~\ref{assumption.alg}, 
take an irreducible component $Z$ of the intersection of 
some irreducible components of $D$, with maximum 
possible ${\rm codim}(Z\subset X)$ which we denote as $l$. Write the 
generic point of $Z$ as $\eta_{Z}$ and 
the prime divisors containing $Z$ as $D_{1},\cdots,D_{l}$. 
Then we can take 
a Zariski open neighborhood $X'$ of $\eta_{Z}$ 
and equidimensional morphism $f\colon X'\to \mathbb{A}^{l}$ 
such that ${\rm Supp}(f^{*}({\rm div}(z_{i})))=D_{i}\cap X'$ 
for all $i$. 

In particular, if $Z$ is $0$-dimensional (i.e., a 
closed point) then the germ $Z\in X'$ is log canonical. 
In general, if we restrict $X$ to its Zariski open subset $X'$ 
without missing any strata, we obtain that 
$(X',D_{X}|_{X'})$ is log canonical. 
\end{Lem}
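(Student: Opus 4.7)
First I would establish the morphism $f$. Since each $D_i$ is Cartier, after shrinking $X$ to a Zariski-open neighborhood $X'$ of $\eta_Z$ each $D_i \cap X'$ is cut out by a single regular function $z_i \in \Gamma(X', \mathcal{O}_{X'})$. The maximality of $l = \codim_X(Z)$ together with the pure-codimension clause of Assumption~\ref{assumption.alg} forces that no further irreducible component of $D$ passes through $\eta_Z$: otherwise an $(l+1)$-fold intersection including such an extra component would have pure codimension $l+1$ yet contain $Z$, contradicting $\codim(Z) = l$. After a further shrinking $D \cap X' = D_1 \cup \cdots \cup D_l$, and $f := (z_1, \ldots, z_l) \colon X' \to \A^l$ automatically satisfies $\Supp(f^*\divi(z_i)) = D_i \cap X'$. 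Equidimensionality is then immediate: the special fibre $f^{-1}(0) = D_1 \cap \cdots \cap D_l \cap X'$ is pure of dimension $n - l$ by Assumption~\ref{assumption.alg}, while every other fibre, cut out by $l$ equations on $X'$, has every component of dimension at least $n - l$ by Krull's principal ideal theorem; upper semicontinuity of fibre dimension then delivers equidimensionality after expunging any stray high-dimensional fibre components away from $Z$.

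For the log-canonicity statement the essential case is the closed-point situation $l = n$. The regular-sequence property of $(z_1, \ldots, z_n)$ at $\eta_Z$ -- guaranteed by the pure-codimension clause applied to all partial intersections, together with Cohen--Macaulayness of $X$ at such points -- makes $f$ flat near $\eta_Z$ by the local criterion of flatness over the smooth target $\A^n$. Applying Zariski's Main Theorem after further shrinking, $f$ factors through a finite dominant morphism $\bar{f}\colon \bar{X}' \to \A^n$ of normal varieties. Since $\bar{f}^*\{z_i = 0\} = D_i$ is reduced, $\bar{f}$ is unramified along each $D_i$ in codimension one, so the ramification divisor $R$ is supported off $\bigcup_i D_i$; combining this with the log ramification formula
\[
K_{\bar{X}'} + \sum_{i=1}^n D_i \;=\; \bar{f}^{*}\Bigl(K_{\A^n} + \sum_{i=1}^n \{z_i = 0\}\Bigr) + R,
\]
the effectivity of $R$, and the log canonicity of the snc pair $(\A^n, \sum_i \{z_i = 0\})$, yields log canonicity of $(\bar{X}', \sum_i D_i)$ and hence of $(X', \sum_i D_i)$ near $\eta_Z$. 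For higher-dimensional $Z$ one localises at $\eta_Z$ and reduces to the zero-dimensional situation by cutting with a general transverse complete intersection of dimension $l$ through $\eta_Z$, then propagates log canonicity back via inversion of adjunction.

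The principal obstacle is the log-canonicity step. Indeed, mere flatness together with reduced pullback of an snc boundary is not enough to guarantee log canonicity upstairs: two simply tangent smooth curves in $\A^2$ provide the classical counterexample. Consequently the proof must genuinely exploit more than the existence of the toric model $f$; it must use the toroidal/regular-sequence structure ensured by the pure-codimension assumption and by normality of $X$ in order to rule out such tangential pathologies. I expect the delicate interplay between regularity of $(z_1, \ldots, z_l)$ as a sequence on $X'$ and the geometric transversality of the $D_i$ to be the technical heart of the argument.
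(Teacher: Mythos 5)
Your first paragraph matches the paper's construction, modulo one imprecision: Assumption~\ref{assumption.alg} in fact only grants that each $D_i$ is $\Q$-Cartier (the ``i.e.'' in the assumption downgrades ``Cartier'' to ``$\Q$-Cartier''), so the regular function $z_i$ cuts out a \emph{multiple} $m_i D_i$ of $D_i$; this is why the paper states the conclusion at the level of $\Supp(f^*\divi(z_i))$ rather than as an equality of divisors. Consequently your claim that $\bar f^*\{z_i=0\}=D_i$ is reduced need not hold, which already undermines the deduction that $\bar f$ is unramified in codimension one along each $D_i$. The equidimensionality argument is fine and agrees with the paper's (shrink the base to the equidimensional locus of $f$, which contains the origin by the pure-codimension hypothesis).

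Your closing worry about the log-canonicity step is well-founded, and it is not a removable gap in your argument so much as a genuine problem with the assertion you are trying to establish. Even when each $z_i$ cuts out $D_i$ with multiplicity one, the ramification divisor $R$ of $\bar f$ typically has components outside $\bigcup_i D_i$; the log crepant pullback then reads $\bar f^*(K_{\A^n}+\sum V(z_i)) = K_{\bar X'}+\sum D_i - R$, whose boundary fails to be effective, and no transfer of log canonicity to the reduced pair $(\bar X', \sum D_i)$ is possible. The paper's own second Example immediately following this lemma realizes exactly this failure: $X'=\A^2$, $D_1=V(y-x^m)$, $D_2=V(y)$, $f=(y,\,y-x^m)$, where $R$ is supported on $V(x)\not\subset D_1\cup D_2$; for $m\ge 2$ the pair $(\A^2,D_1+D_2)$ is \emph{not} log canonical at the origin (blowing up the origin and then the triple intersection point of $E$, $\tilde D_1$, $\tilde D_2$ produces a divisor of log discrepancy $-2$), even though Assumption~\ref{assumption.alg} is satisfied. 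It is worth noting that the paper's written proof is in fact more cautious than the lemma's final sentence: it only concludes that $(X',D_{X'})$ is log canonical for an effective $D_{X'}$ with \emph{standard} coefficients $1-\frac{1}{d}$ supported on $\bigcup D_i$, which is strictly weaker than log canonicity of the reduced pair. So your instinct is right -- the ramification-formula route can only deliver the standard-coefficient statement, and the reduced-boundary version cannot be obtained this way (nor does it hold in general).
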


\begin{proof}
Take a set of regular functions $f_{1},\cdots,f_{l}$ on $X'$ 
such that $f_{i}$ defines the multiple of $\mathbb{Q}$-Cartier 
divisor $D_{i}$ around $\eta_{Z}$. Then consider the morphism 
$f:=(f_{1},\cdots,f_{l}):X'\to \mathbb{A}^{l}$ and 
take a Zariski open subset $Y'$ of $\mathbb{A}^{l}$ 
where $f$ is equidimensional and replace $X'$ by 
$f^{-1}(Y')$. Then we get the first assertion. 

For the second assertion, it follows from that 
$X'$ is log crepant to $Y'$ with appropriate 
effective boundary which encodes the ramification of $f$ in 
codimension $1$. Actually, from the proof, with an 
appropriate explicit effective divisor $D_{X'}$ 
(with standard coefficients) supported 
on $\cup D_{i}$, it follows that $(X',D_{X'})$ is log canonical. 
\end{proof}

Let us see some simple examples. 

\begin{Ex}
\begin{enumerate}
\item Let us set $X=(z^{d}=xy)\subset \mathbb{A}^{3}$, with boundaries 
$D_{1}=(x=0), D_{2}=(y=0)$, which maps via natural projection 
to $\mathbb{A}^{2}$ with coordinates $x$ and $y$. Then the 
induced morphism between the boundary of the 
corresponding Morgan-Shalen-Boucksom-Jonsson compactification 
is a homeomorphism between the segment to the segment. 
\item Let us set $X=\mathbb{A}^{2}$, with 
boundaries 
$D_{1}=(y-x^{m}=0), D_{2}=(y=0)$, mapping to 
$\mathbb{A}^{2}_{s,t}$ with the coordinates $s,t$ via 
$s:=y, t:=y-x^{m}$. Then although the morphism is not 
isomorphism, we similarly conclude that 
the 
induced morphism between the boundary of the 
corresponding Morgan-Shalen-Boucksom-Jonsson compactification 
is a homeomorphism between the segment to the segment. 
\end{enumerate}
\end{Ex}

Now, we re-discuss and generalize 
the functoriality a little more carefully following \cite[Appendix]{Od.Ag} and 
\S~\ref{fiber.mod.sec}. 

%%%%

\begin{Prop}[Functoriality]\label{functoriality}
Let $k=\C$ and $f\colon X\to Y$ be a $k$-morphism between $k$-analytic spaces, $D_{X}$ be a divisor on $X$, 
$D_{Y}$ on $Y$, both satisfying the Cartier assumption \ref{assumption.an} 
(in the appendix). 
Consider the MSBJ compactification of $U_{X}:=X\setminus {\rm Supp}(D_{X})$ 
and $U_{Y}:=Y\setminus {\rm Supp}(D_{Y})$ accordingly. Then $f|_{U_{X}}$ continuously extend to a continuous map 
$$\overline{U_{X}}^{\rm MSBJ}(X)\to \overline{U_{Y}}^{\rm MSBJ}(Y),$$ 
which we denote by $\bar{f}^{\rm MSBJ}$.
\end{Prop}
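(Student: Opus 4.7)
The plan is to imitate the algebraic proof of Proposition~\ref{functoriality.alg} and reduce the analytic statement to a local simplicial model, exploiting the Cartier assumption~\ref{assumption.an} to produce local defining equations. First I would choose, on both sides, an open covering by ``small'' charts: pick an open cover $\{V_{i}\}_{i}$ of $Y$ such that each $V_{i}$ contains exactly one stratum of top codimension of $D_{Y}$ (equivalently, one minimal log canonical center of the relevant pair), and on each $V_{i}$ the Cartier divisor $D_{Y}|_{V_{i}}$ is cut out by global holomorphic functions $g_{1}^{(i)},\dots,g_{\ell_{i}}^{(i)}$, one for each local branch. Then subdivide a cover $\{U_{i,j}\}_{j}$ of $f^{-1}(V_{i})$ analogously, so that each $U_{i,j}$ contains exactly one top-codimension stratum of $D_{X}$ and carries global defining functions $h_{1}^{(i,j)},\dots,h_{k_{i,j}}^{(i,j)}$ for the local branches of $D_{X}|_{U_{i,j}}$.

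Next I would construct $\bar f^{\mathrm{MSBJ}}$ locally. Since each $g_{a}^{(i)}\circ f|_{U_{i,j}}$ is holomorphic and vanishes only along components of $D_{X}\cap U_{i,j}$, we have an expansion of divisors
\[
\mathrm{div}(g_{a}^{(i)}\circ f|_{U_{i,j}}) \;=\; \sum_{b} m_{ab}^{(i,j)}\,\mathrm{div}(h_{b}^{(i,j)}),
\qquad m_{ab}^{(i,j)}\in\Z_{\ge 0}.
\]
This integral matrix $(m_{ab}^{(i,j)})$ is exactly the data of a $\Z$-affine simplicial map from the local simplex $\Delta^{\mathrm{alg}}(D_{X}\cap U_{i,j})$ to $\Delta^{\mathrm{alg}}(D_{Y}\cap V_{i})$, and on points of $U_{i,j}$ itself, $\bar f^{\mathrm{MSBJ}}$ is forced to coincide with $f$. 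The essential compatibility to check is that on overlaps $U_{i,j}\cap U_{i',j'}$ (and $V_{i}\cap V_{i'}$), these simplicial maps glue: this reduces to observing that the limiting MSBJ-coordinates on $U_{X}\supset U_{i,j}\cap U_{i',j'}$ are the log-ratios $\log|h_{b}|/\log|h_{b'}|$, and the relations $g_{a}^{(i)}=(\text{unit})\cdot\prod g_{a'}^{(i')\,e_{a'}}$ on $V_{i}\cap V_{i'}$ (valid by the Cartier hypothesis, since the units contribute nothing in the MSBJ limit) translate pullback-by-pullback into the same integral transformations upstairs.

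For continuity I would use the explicit description of the MSBJ topology recalled in the earlier proposition on $\partial\overline{U}^{\,\mathrm{lim.MSBJ}}$: a basic open neighborhood of a boundary point on the simplex associated to $(h_{1}^{(i,j)},\dots,h_{k_{i,j}}^{(i,j)})$ is defined by prescribing an interval for each ratio $\log|h_{b}|/\log|h_{b'}|$, plus a Euclidean neighborhood along the normal directions. The preimage under $\bar f^{\mathrm{MSBJ}}$ of such an open set about a boundary point of $\overline{U_{Y}}^{\mathrm{MSBJ}}(Y)$ is, by the divisorial identities above, a finite intersection of sets of the same form for the ratios $\log|h_{b}|/\log|h_{b'}|$ upstairs, together with the $f$-pullback of the Euclidean directions on $Y$ (which is continuous since $f$ is analytic). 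Hence $\bar f^{\mathrm{MSBJ}}$ is continuous at each boundary point; continuity at interior points of $U_{X}$ is immediate since $\bar f^{\mathrm{MSBJ}}=f$ there.

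The main obstacle I anticipate is not the local definition but the gluing across charts, because in the analytic (non-snc) setting the local defining functions for a single ``branch'' of $D_{X}$ on overlapping charts differ by holomorphic units, and those units have nontrivial modulus $|\cdot|$ away from zero. The point to verify carefully is that such units contribute only bounded terms to $\log|h_{b}|$ and therefore wash out in the MSBJ limit (where one divides by an unbounded $\log$), so the $\Z$-affine transition data between charts really does patch; this is precisely the analytic analogue of the statement in the algebraic case that the simplicial maps $\Delta^{\mathrm{alg}}(D_{X}\cap U_{i})\to\Delta^{\mathrm{alg}}(D_{Y}\cap V_{i})$ glue on closed subcomplexes corresponding to $U_{i}\cap U_{i'}$. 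Once that bookkeeping is in place, Lemma~\ref{cover.lemma} (and its evident analytic counterpart, reducing the local picture to a toroidal chart $\A^{\ell}$) guarantees that the resulting globally defined map $\bar f^{\mathrm{MSBJ}}$ is continuous, completing the proof.
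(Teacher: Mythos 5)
Your proof follows the same strategy as the paper's: cover $X$ and $Y$ by small charts each containing a single top-codimension stratum of the boundary, build the extension chart by chart via pullback of local defining equations, and glue on overlaps using the fact that unit factors wash out in the projectivized $\log$ limit. In fact you make explicit the local multiplicity-matrix construction and the unit-washout observation that the paper delegates to the algebraic case (Proposition~\ref{functoriality.alg}) and to \cite[A.15]{Od.Ag}, so this is a more detailed rendering of the same argument.
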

\begin{proof}The proof is very similar to that of Proposition~\ref{functoriality.alg} (and \cite[A.15]{Od.Ag}). 
We take a net of affinoids domains of $X$ (resp., $Y$) denoted as $\{U_{i}\}_{i}$ (resp., $V_{i}$) where $U_{i}$ maps to $V_{i}$ for fixed $i$ and 
each $U_{i}$ or $V_{i}$ contains only one strata of biggest codimension in it. Then $\Delta(D_{X}\cap U_{i})$ and $\Delta(D_{Y}\cap V_{i})$ 
are both closed subset of simplices, from the definition. Note that for any $i,j$, $\Delta(D_{X}\cap U_{i}\cap U_{j})$ is a closed subcomplex of both 
$\Delta(D_{X}\cap U_{i})$ and $\Delta^{\rm alg}(D_{X}\cap U_{j})$ because the strata inside $\Delta^{\rm alg}(D_{X}\cap U_{i}\cap U_{j})$ is 
closed under specialization. The same for $V_{i}$s. 
As discussed in \cite[A.15]{Od.Ag}, $f$ induces a natural map from 
$\Delta(D_{X}\cap U_{i})$ to $\Delta(D_{Y}\cap V_{i})$, which glues continuously to the original continuous map $X\setminus D_{X}$ to $Y\setminus D_{Y}$. 
They glue at the closed 
subset $\Delta(D_{X}\cap U_{i}\cap U_{j})$s. Therefore, we obtain a continuous map from 
$\Delta(D_{X})$ to $\Delta(D_{Y})$ and whole $\overline{U_{X}}^{\rm MSBJ}(X)$ to $\overline{U_{Y}}^{\rm MSBJ}(Y)$ as well. 
\end{proof}

\begin{Rem}
At first sight, this Proposition~\ref{functoriality} 
could look like giving a broad extension of 
\cite[A.15]{Od.Ag} 
but note that, by the arguments of our lemma~\ref{cover.lemma}, 
at least semi-log-canonicities of dense open subsets of 
$(X,D_{X})$ or $(Y,D_{Y})$ are implicitly assumed. 
\end{Rem}

\begin{Rem}
For $U\subset X$, which 
are the analytifications of complex varieties, 
if we replace $X$ by $X'$ which dominates the original $X$ (while 
preserving $U$), then we have a continuous map 
$\overline{U^{\rm MSBJ}(X')}
\to \overline{U^{\rm MSBJ}(X)}$ preserving $U$. 
The projective limit of these 
Morgan-Shalen type compactification $\overline{U^{\rm MSBJ}(X)}$s,  
where $X$ runs over all $X$, satisfying the 
assumption \ref{assumption.an} should coincide with 
the compactification of \cite{PM}. 
At least when $U$ is affine, there is a construction by 
Favre (unpublished) which the author fortunately 
had a chance to study\footnote{I appreciate S.Boucksom, C.Favre and 
M.Jonsson for this communication}, 
and the coincidence in this case 
can be proven similarly as \cite[4.12]{BJ}. 
In \cite{PM} the authors even put locally ringed space structure on 
the compactification, discusses coherent sheaves for it and 
proved a GAGA type theorem. 
\end{Rem}

\begin{Rem}
A related result in special situation is \cite[5.2.1, 6.1.6.]{ACP}. 
\end{Rem}

\begin{Rem}Also there is an analogous observation 
in ``classical'' (embedded) tropical geometry setting of this sort 
cf., e.g, \cite[\S6]{RSS}. 
\end{Rem}

%%%%%%%%%%%%%%%%%%%%%%%%%%%%%%%%%%%%%%%%%%%%%%%%%%%  Projective tropical geometry 

\subsection{Projectivizing affine structure}\label{proj.str.sec}

In usual tropical algebraic geometry, affine structure plays a central role either explicitly or implicitly. Here, 
we introduce a {\it projectivized} version, i.e., 
an analogue of affine structure 
obtained by dividing by the action of $\mathbb{R}_{>0}$. 
These are essentially not so new and has been implicit in literatures in the sense that various examples have appeared, 
but we make a systematic introduction. 
For instance, 
this can be seen as a tropicalized version of 
{\it normalized Berkovich space} in the sense of Fantini \cite{Fan}. 

\begin{Def}
Suppose $k$ is a non-archimedean field and 
let $0\in V\simeq \mathcal{M}(\mathcal{A})$ be a germ of $k$-affinoid with the isomorphism as Banach $k$-algebra 
$\mathcal{A}\simeq k\{\frac{T_1}{r_1},\cdots,\frac{T_N}{r_N}\}/I_V$ with $r_i\in\R_{>0}$ and ideal $I_{V}$. Equivalently, 
$V$ can be regarded as a subspace in $\mathbb{A}_k^{N,{\rm an}}$. 
We would call this germ $0\in V$ with the additional data, 
{\it framed or embedded germ} but 
we may simply write $0\in V$ if the rest is obvious from the context. 
 We set the coordinates of $\A^{N}$ as $z_i$s. 
 
 Now we consider a natural subset of the real projective space $\mathbb{P}_{\R_{\ge 0}}^{N-1}$ as $(\R_{\ge 0}^{N}\setminus \{\vec{0}\})/\R_{>0}$ and 
call it the {\it projective simplex} of dimension $N-1$. 
Note that the inclusion 
$$\Delta_{\vec{1}}:=\{(x_{1},\cdots,x_{N})\in \R_{\ge 0}^{N}\mid \sum_{i} x_{i}=1\}
\hookrightarrow (\R_{\ge 0}^{N}\setminus \vec{0}),$$
composed with the projection to $\mathbb{P}_{\R_{\ge 0}}^{N-1}$ 
is homeomorphism and denote it as 
$$\varphi_{(1,\cdots,1)}\colon \Delta_{\vec{1}}
\xrightarrow{\simeq}
\mathbb{P}_{\R_{\ge 0}}^{N-1}.$$ However, we do {\it not} really respect this 
``artificial'' map nor the 
affine structure on the projective simplex induced by this, 
in general. 

Then we define 
the {\it projective tropicalization map} of framed germ $0\in V$ as 
\begin{align*}
{\rm PTrop}_{V}\colon    V           &\cap \{|z_{i}|<1 \forall i\}&\to    &   \hspace{1cm}     \mathbb{P}_{\R_{\ge 0}}^{N-1} \\ 
                                                       & 
                                                       \hspace{2mm}{\rotatebox{90}{$\in $}}    &               & 					    \hspace{11mm}{\rotatebox{90}{$\in $}}  \\
                                                       &\hspace{2mm} x                                              &\mapsto&             \hspace{3mm}             [\cdots:  -\log|z_{i}|_x:\cdots].     \\ 
\end{align*}
\end{Def}

\begin{Def}
Then we set the {\it projective tropicalization set} ${\rm PTrop}(0\in V)$ as the limit set 
\begin{align*}
\{\lim_{i} {\rm PTrop}_{V}(x_{i})\in\mathbb{P}_{\R_{\ge 0}}^{N-1}\mid x_{i}\in V (i=1,2,\cdots), \lim_{i}x_{i}=\vec{0}\}.
\end{align*}
\end{Def}

Then we have the following characterizations, 
which can be seen as a projective analogue of the 
``fundamental theorem'' by Kapranov in tropical geometry with 
embedded formalism (cf., e.g., \cite[3.2.5]{MaS}). 

\begin{Thm}[Projective tropicalization]\label{PTrop.char}
The following subsets of $\mathbb{P}^{N-1}_{\R_{\ge 0}}$ are the same: 
\begin{enumerate}
\item \label{PT1}
${\rm PTrop}(0\in V)$, defined above  

\item \label{PT2}
the quotient of 
$\bigcap_{0\neq f\in I_{V}} V({\rm trop}(f))$ 
by the action of $\mathbb{R}_{>0}$, where ${\rm trop}(-)$ means the usual (non-archimedean) tropicalization of the $k$-polynomial, 
${\rm trop}(-)^{(1)}$ is the degree one term of 
${\rm trop}(f)$ (i.e., constants discarded), and 
$V(-)$ denotes the corresponding tropical hypersurface 
in $\mathbb{R}^{N}$. 

\item \label{PT3}
the set of positive directions in 
${\rm Trop}(V)$ i.e., 
$$\{\vec{v}(\neq \vec{0})\in \R_{\ge 0} \mid \R_{\ge 0}\vec{v}+
\vec{w}\subset {\rm Trop}(V) \text{ for } \exists \vec{w}\}.$$ 
\end{enumerate}

In particular, ${\rm PTrop}(0\in V)$ is 
a projectivization of a piecewise linear set. 
\end{Thm}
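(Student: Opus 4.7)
The plan is to split Theorem \ref{PTrop.char} into the purely polyhedral equivalence $(2)\Leftrightarrow(3)$ and the analytic--tropical equivalence $(1)\Leftrightarrow(2)$, the latter being the projective/normalized analogue of Kapranov's fundamental theorem. The guiding observation is that as $x\in V$ tends to $0$, every coordinate $-\log|z_i|(x)$ tends to $+\infty$, so only the behaviour at infinity of the polyhedral set $\Trop(V)\subset\R^N$ is relevant; this is captured by the recession cone of $\Trop(V)$ and, equivalently, by the linear-part tropicalizations $V(\trop(f)^{(1)})$ in which the constant terms are discarded. This also explains the role of the notation $\trop(-)^{(1)}$ introduced just before the theorem.

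First I would prove $(2)\Leftrightarrow(3)$ by standard polyhedral methods. By Bieri--Groves, $\Trop(V)=\bigcap_{0\neq f\in I_V}V(\trop(f))$ is a rational polyhedral subset of $\R^N$. Its recession cone---equivalently, the set of nonzero $\vec v$ with $\R_{\ge 0}\vec v+\vec w\subset\Trop(V)$ for some $\vec w$---coincides with $\bigcap_f V(\trop(f)^{(1)})$, since at infinity the constant terms of $\trop(f)$ are dominated by its linear part. Projectivising by $\R_{>0}$ then yields both the equivalence of (2) and (3) (under the natural interpretation of the quotient as the projectivised recession cone) and the final assertion that $\PTrop(0\in V)$ is the projectivisation of a rational polyhedral cone.

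For $(1)\subseteq(2)$, take $x_i\in V$ with $x_i\to 0$ and $\PTrop_V(x_i)\to[\vec w]$. For any $0\neq f\in I_V$ the identity $f(x_i)=0$, combined with the non-archimedean maximum principle, forces the maximum defining $\trop(f)$ to be attained at least twice at the tropical point $(-\log|z_j|(x_i))_j$. As $|x_i|\to 0$ the norm of this tropical point tends to infinity, so the constant terms of $\trop(f)$ become negligible compared to the linear part; passing to the normalised limit, $[\vec w]$ lies in $V(\trop(f)^{(1)})/\R_{>0}$. Intersecting over all $f\in I_V$ yields $[\vec w]\in(2)$.

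The hard direction is $(2)\subseteq(1)$. Given $[\vec w]$ represented by $\vec w\in\R_{\ge 0}^N\setminus\{\vec 0\}$ in the recession cone, I would choose $\vec w_0\in\Trop(V)$ with strictly positive coordinates (possible after a small perturbation inside the positive part of $\Trop(V)$). Then $\vec u_t:=\vec w_0+t\vec w\in\Trop(V)$ for every $t\ge 0$ by the definition of the recession cone, and every coordinate $(\vec u_t)_j\to+\infty$. Applying the fundamental theorem of tropical geometry (\cite[3.2.5]{MaS}) to $V$, possibly after base change to an algebraically closed non-archimedean extension of $k$, produces classical points $x_t\in V$ with $-\log|z_j|(x_t)=(\vec u_t)_j+o(1)$; for $t\gg 0$ these lie in the affinoid chart $\{|z_j|<1\}$, converge to $0$ in $V$, and satisfy $\PTrop_V(x_t)\to[\vec w]$. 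The main technical obstacle is uniform control of the Kapranov lift along the ray $t\vec w$: the bare fundamental theorem gives a lift at each prescribed tropical point without estimates uniform in $t$. I expect to circumvent this by working directly in the Berkovich spectrum of $\mathcal{A}$, constructing each $x_t$ as a seminorm with the prescribed scaling behaviour and extracting a convergent net by compactness of $V=\mathcal{M}(\mathcal{A})$; the limiting semivaluation is then supported at $0\in V$ with projective tropicalisation exactly $[\vec w]$.
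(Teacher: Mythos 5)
Your proposal is correct and takes essentially the same route as the paper: split the theorem into the polyhedral equivalence $(2)\Leftrightarrow(3)$ (which the paper treats as a standard exercise via the unbounded-polyhedron/edge-half-line argument, and which you treat via the recession cone of $\Trop(V)$ — the same idea in different words) and the equivalence $(1)\Leftrightarrow(2)$, which the paper disposes of by citing Kapranov's fundamental theorem \cite[3.2.5]{MaS} exactly as you do. Your concern about ``uniform control of the Kapranov lift'' is overly cautious — the fundamental theorem gives an exact lift of each $\vec u_t \in \Trop(V)$ to a point of $V$ (after a field extension), and one simply observes these points enter the chart $\{|z_j|<1\}$ and approach $0$ as $t\to\infty$, so no uniform estimate or Berkovich-compactness argument is required — but your proposed workaround is harmless and your treatment is in any case more detailed than the paper's one-line citation.
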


\begin{proof}
The equivalence of \eqref{PT2} and \eqref{PT3} 
is a standard exercise. \eqref{PT2}$\supset$\eqref{PT3} is 
immediate from the definitions and \eqref{PT2}$\subset$\eqref{PT3} 
is more nontrivial but it still holds since for 
 each unbounded polyhedron $P$ in \eqref{PT3}, 
having $\R_{\ge 0}v_{i}$s as the edge half-lines, 
any (tropical) term 
of ${\rm trop}(f)$ whose degree $1$ homogeneous part 
is not minimized at $P$, is 
bigger than 
${\rm trop}(f)$ at the region 
$P+l \sum_{i} v_{i}$ for $l\gg 0$. 
The equivalence of \eqref{PT1} and \eqref{PT2} (or \eqref{PT3}) 
follows from the ``Fundamental theorem" \cite[Theorem 3.2.5]{MaS} 
(originally due to Kapranov). 
\end{proof}

Roughly speaking, the above says that ${\rm PTrop}$ is the ``tangent cone of ${\rm Trop}$ at  infinity in positive(first quadrant) direction" as an 
analogue of the 
tangent cone at infinity in Riemannian geometry. 
From the above  \eqref{PTrop.char}, 
it easily follows that: 
\begin{Prop}
If $I_{V}=(f)$, denote the normal fan of the Newton polytope of 
$f$ as ${\rm Newt}(f)$ and the set of its rays inside 
the boundary of the cones as $\partial{\rm Newt}(f)$. 
Then $\PTrop(0\in V)$ is 
$$(\partial{\rm Newt}(f)\cap \R_{>0}^{N})/\R_{>0}.$$
\end{Prop}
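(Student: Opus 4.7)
The plan is to obtain this as an essentially immediate corollary of Theorem~\ref{PTrop.char} together with the classical duality between tropical hypersurfaces and Newton polytopes. First, I would invoke characterization~\eqref{PT3} of Theorem~\ref{PTrop.char}: since $I_{V}=(f)$ is principal, Kapranov's theorem gives $\mathrm{Trop}(V)=V(\mathrm{trop}(f))$, so
\[
\PTrop(0\in V)=\bigl\{\vec{v}\neq\vec{0}\text{ in }\R_{\ge 0}^{N}\,:\,\R_{\ge 0}\vec{v}+\vec{w}\subset V(\mathrm{trop}(f))\text{ for some }\vec{w}\bigr\}\big/\R_{>0}.
\]

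Next I would identify this set of recession directions of the tropical hypersurface combinatorially. Writing $f=\sum_{\alpha}c_{\alpha}z^{\alpha}$, one has $\mathrm{trop}(f)(x)=\min_{\alpha}(v(c_{\alpha})+\langle\alpha,x\rangle)$, and $V(\mathrm{trop}(f))$ is the locus where this minimum is attained at least twice. For $\vec{w}$ fixed and $t\to+\infty$, the set of minimizing exponents $\alpha$ for $\mathrm{trop}(f)(\vec{w}+t\vec{v})$ stabilizes to those minimizing $\alpha\mapsto\langle\alpha,\vec{v}\rangle$ on $\mathrm{Newt}(f)$. Hence $\R_{\ge 0}\vec{v}+\vec{w}\subset V(\mathrm{trop}(f))$ for some $\vec{w}$ if and only if the minimum of $\langle\alpha,\vec{v}\rangle$ on $\mathrm{Newt}(f)$ is attained along a face of positive dimension, i.e.\ $\vec{v}$ lies in a cone of codimension $\ge 1$ of the normal fan of $\mathrm{Newt}(f)$. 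This is exactly membership in $\partial\mathrm{Newt}(f)$. The final step is cosmetic: intersect with the positive orthant $\R_{>0}^{N}$---consistent with the sign convention in the definition of $\PTrop_{V}$ at points $x$ with $|z_{i}(x)|<1$, so that $-\log|z_{i}(x)|>0$---and projectivize by $\R_{>0}$.

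I do not expect a substantive obstacle: the proposition is a dictionary translation of Theorem~\ref{PTrop.char} through classical tropical-geometric duality. The only point requiring a line of care is the distinction between $\R_{>0}^{N}$ in the statement and $\R_{\ge 0}^{N}$ in Theorem~\ref{PTrop.char}\eqref{PT3}: directions with some coordinate equal to zero project to the boundary of the projective simplex $\mathbb{P}_{\R_{\ge 0}}^{N-1}$ and correspond to deeper coordinate-hyperplane strata; one verifies these are automatically captured by the codimension $\ge 1$ skeleton of the normal fan inside $\R_{\ge 0}^{N}$, so that passing to the open orthant after projectivization changes nothing beyond excluding lower-dimensional boundary contributions.
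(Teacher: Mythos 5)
The paper gives no written argument here---it prefaces the proposition only with ``From the above \eqref{PTrop.char}, it easily follows that''---so your derivation from characterization~\eqref{PT3} via Kapranov's theorem and the recession-cone/normal-fan dictionary is exactly the intended route, and the core argument (the stabilization of minimizers of $\mathrm{trop}(f)(\vec{w}+t\vec{v})$ as $t\to\infty$ to the face of $\mathrm{Newt}(f)$ minimizing $\langle\cdot,\vec{v}\rangle$, and the choice of $\vec{w}$ producing a tie) is correct.

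The one place you wave your hands is the last paragraph, and the wave hides a real (if small) point. Characterization~\eqref{PT3} produces recession directions in $\R_{\ge 0}^{N}$, and these genuinely include directions with vanishing coordinates whenever an unbounded cell of $V(\mathrm{trop}(f))$ has such a recession ray; for example, for $f=z_1z_2-z_3^2$ in $\A^3$ one checks that $[2:0:1]\in\PTrop(0\in V)$, while it lies outside $\R_{>0}^3$. So ``passing to the open orthant changes nothing'' is not quite right: the honest reading is that the proposition's $\R_{>0}^{N}$ should be $\R_{\ge 0}^{N}$ (or, equivalently, that the stated set should be closed up), an imprecision in the paper's statement rather than a gap in your strategy.
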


\begin{Ex}
If $N=2$, and $I_{V}=(f)$ with degree $d$ polynomial 
$f$ of $z_{i}$s, then ${\rm PTrop}(0\in V)$ is a finite set 
with order at most $g(d)$ where $g(1)=g(2)=1, g(3)=g(4)=2, 
g(5)=g(6)=g(7)=3$ for instance. 
\end{Ex}

\begin{Ex}
For any {\it homogeneous} $k$-polynomial $f$ of degree $d$, 
it is easy to see that 
$\varphi_{\vec{1}}^{-1}(\PTrop(0\in V=V(f)))\subset \Delta_{\vec{1}}$ 
is the set of rays of a fan with the center $\overrightarrow{(1,\cdots,1)}$ 
in any way. 
\end{Ex}
%%%%%%%%%%%%%%%%%%%%%%%%%%%%%%%%%%%%%%%%%%%%%%%%%%% 
%%%%%%%%%%%%%%%%%%%%%%%%%%%%%%%%%%%%%%%%%%%%%%%%%%%   Analytic local settingへ
%%%%%%%%%%%%%%%%%%%%%%%%%%%%%%%%%%%%%%%%%%%%%%%%%%%  

\subsection{General analytic (Berkovich) setting}

Now we move on to analytic extension. Let us consider the following examples. 

\begin{Ex}\label{nodal.rational.curve}
Let $X$ be $\mathbb{P}^{2}$ and $D$ is the nodal rational cubic curve. Then $U:=X\setminus D$ is an easy typical example of so-called 
{\it log Calabi-Yau surface}. In this case, the 
{\it algebraic} dual intersection complex whose definition was briefly recalled in the previous subsection, 
is just a single closed point. However, it is sometimes more natural to consider a {\it loop} i.e., topologically $S^{1}$ with one vertex and one edge, 
as the natural candidate for ``more correct'' dual intersection complex. 
\end{Ex}

To remedy the above problem, related to monodromy, we introduce the following analytic extension. We still keep the assumption 
that $k$ is a non-archimedean field. 

Recall that the sheaf of Cartier divisor over any locally 
ringed space 
is defined (similarly to the case of schemes), simply 
as the sheaf of meromorphic functions divided by that of invertible holomorphic 
functions. 
In particular, the case of complex manifolds, the 
theory of divisors is of course well-established and frequently used. 
Here, in the setting of Berkovich analytic spaces, we use the same theory of Cartier divisors with respect to the 
(weak) topology, not G-topology. 

How about Weil-divisors? For general complex spaces, one can define in a similar manner (cf., e.g., \cite[6.7]{Dem}). 
In the non-archimedean literatures, the case when $X$ is smooth and one dimension (often just an analytification of smooth curve) 
is treated (cf., \cite{Ber93} and later) and we can similarly think of formal linear combination of codimension $1$ closed irreducible analytic spaces 
following the notion of (irreducible) closed analytic subspace in \cite[\S~3.1]{Ber90}. 
In this paper, we say a closed analytic subspace $Z$ of $X$ is 
Cartier divisor, 
if the corresponding coherent ideal sheaf $I_Z$ is invertible, i.e., for any $x\in Z$, there is  
a small enough open neighborhood of $x$, say $U\subset X$, such that 
$I_Z|_U$ is generated by a regular element of $\Gamma(U,\mathcal{O}_X)$. 

\begin{Ass}[Analytic  $\Q$-Cartier-ness$+$codimension assumption]\label{assumption.an}
$X$ is a normal (irreducible) $n$-dimensional $k$-strict 
analytic space and $D:=X\setminus U$ is a finite union of (supports of) irreducible {\it Cartier} divisors $D_i$s i.e., 
there is a covering by strict affinoid domains $\{V \simeq \M(A_{V})\}_{V}$and $z_{1,V},\cdots,z_{k_V,V}\in A_{V}$ 
such that $$D_i\cap V=\{y\in V\mid |z_{i,V}(y)|=0\}.$$ 
Furthermore, we assume that the intersection of any $k$ among $D_i$s 
has pure codimension $k$ for all $1\le k\le n$. 
\end{Ass} 

In this setting, we will define a {\it (generalized) Morgan-Shalen-Boucksom-Jonsson partial compactification} of 
$U$ as a topological space $$(U\subset) \overline{U^{\rm MSBJ}}(X),$$ 
as well as its boundary - the {\it dual intersection complex} $\Delta^{\rm anal}(D)=\Delta(D)$. 
Our construction generalizes previous \cite{MS84} (affine case), \cite{BJ} (smooth with normal crossing boundary case), \cite[Appendix]{Od.Ag} (divisorially log terminal case). 
As in \cite[Appendix]{Od.Ag}, we can also easily generalize it to Deligne-Mumford stacks whose \'etale charts (covering) and the pullback of those 
boundaries satisfy the above condition \ref{assumption.an}. 
Leaving such verbatim stacky extension to the readers, we give the definition when $X$ is an analytic space (under assumption \ref{assumption.an}) similarly to 
\cite{MS84, BJ} and \cite[Appendix]{Od.Ag}. 
The construction is as follows: 
\begin{Const}\label{anal.MSBJ}
\begin{Step}[Local ambient space of boundary]
For strict $k$-affinoid domain $V\simeq \M(A_{V})$ of $X$ such that 
$\emptyset \neq (V\setminus U)=\cup_{1\le i\le k_{V}}(z_{i,V}=0)$ 
where $z_{i,V}$ are elements of $A_{V}$ and 
each $z_{i,V}=0$ gives distinct irreducible divisor of $V$ 
with multiplicity one. 
This is possible because of the above assumption \ref{assumption.an}. 
Then we consider 
$$\widetilde{\Delta_{V}}(U\subset X):=\mathbb{P}_{\R_{\ge 0}}^{k_V}=(\R_{\ge 0}^{k_{V}}\setminus \{\vec{0}\})/\R_{>0},$$

(or simply $\widetilde{\Delta_{V}}$ when $U, X$ are obvious from the context) 
where each $i$-th real (projectivized) coordinate will be connected to the local function $z_{i,V}$. 
An important remark is that $\widetilde{\Delta_{V}}$ does {\it not} have natural affine structures, 
but only able to regard as a subset of real projective space, i.e., 
the projective structure  in the sense of \S~\ref{proj.str.sec}. 
\end{Step}

\begin{Step}[Tropicalization and Local boundary]\label{trop.map}
Under the above setting, 
we define the tropicalization map as usual as 
\begin{align*}
{\rm Trop}_{V}\colon & (V\cap U)&\cap \{|z_{i,V}|<1 \forall i\}&\to         & \widetilde{\Delta_{V}}&(U\subset X) \\ 
                                       &                & {\rotatebox{90}{$\in $}}    &               & 					    &{\rotatebox{90}{$\in $}}  \\
                                       &                & x                                              &\mapsto&                          [\cdots: & -\log|z_{i,V}|_x:\cdots].     \\ 
\end{align*}
and $\Delta_{V}(U\subset X)\subset \widetilde{\Delta_{V}}(U\subset X)$ as 
$$\{\lim {\rm Trop}_{V}(x_{i})\in \widetilde{\Delta_{V}}(U\subset X)\mid x_{i}\in (V\cap U) (i=1,2,\cdots), \lim_{i}x_{i}\in V \cap D\}.$$ 
From diagonal arguments, $\Delta_{V}(U\subset X)\subset \widetilde{\Delta_{V}}(U\subset X)$ is a closed subset since $V$ is compact. 
It is easy to see that $\Delta_{V}(U\subset X)$ is the projective tropicalization set 
${\rm PTrop}(V\cap U)$ we introduced in \S\ref{proj.str.sec}. 
Note that this construction does not depend on $z_{i,V}$s 
since its replacement only changes $z_{i,V}$ by 
unit, hence the change of $-\log|z_{i,V}|_{x}$ 
is bounded above by a constant depending on $V$.  
\end{Step}

\begin{Step}[Global ambient space of boundary]

For affinoid subdomains $V_{i}\subset X (i=1,2)$ both satisfying above requirements and $V_{1}\cap V_{2}\neq \emptyset$, we can naturally consider the projectivization of a natural real linear map: 
$$\widetilde{\Delta_{V_{1}\cap V_{2}}}(U\subset X)\to \widetilde{\Delta_{V_i}}(U\subset X)$$ 
which maps a vertex $\widetilde{\Delta_{V_{1}\cap V_{2}}}(U\subset X)$ corresponding to a prime analytic divisor $D_{j}\cap V_{1}\cap V_{2}$ to 
the vertex of $\widetilde{\Delta_{V_{i}}}(U\subset X)$ corresponding to the same prime divisor $D_{j}\cap V_{i}$. We call this map gluing map and denote it as $\varphi=\varphi_{V_{1}\cap V_{2}, V_{i}}$. 
It is easy to see $\varphi$ maps $\Delta_{V_{1}\cap V_{2}}(U\subset X)$ into $\Delta_{V_{i}}(U\subset X)$ from the definition. 
Then we consider the equivalence relation $\sim$ on 
$\sqcup_{V} \Delta_{V}(U\subset X)$ generated by the identification of the source point and the target point of gluing maps $\varphi$s. 
Then we set: 
$$\Delta(U\subset X):=\Bigl(\bigsqcup_{V\subset X\text{: affinoid domains}} \Delta_{V}(U\subset X)\Bigr)/\sim.$$
\end{Step}

\begin{Step}[Topology at local level]
For each $V\subset X$ satisfying above requirement, we put topology on 
$$\overline{(V\cap U)}^{\rm MSBJ}(V):=(V\cap U)\sqcup (\Delta_{V}(U\subset X)),$$
as the same way as \cite[p415]{MS84} or \cite[\S~2.2, Def2.3]{BJ} using ${\rm Trop}_{V}$. 
It is easy to see the construction does not depend on the choices of local defining equations $z_{i,V}$s. 
\end{Step}

\begin{Step}[Topology at global level]

We define the {\it Morgan-Shalen-Boucksom-Jonsson partial compactification} of $X$, which we write 
$\overline{U}^{\rm MSBJ}(X)$, as the colimit topological space of 
$\overline{(V\cap U)}^{\rm MSBJ}(V):=(V\cap U)\sqcup (\Delta_{V}(U\subset X))$ 
for all affinoid subdomains $V\subset X$s. From the construction, it has a tautological open subset which can be identified with $U$. 
Then, the {\it analytic dual intersection complex} is 
$$\Delta(D):=\partial \overline{U}^{\rm MSBJ}(X):=\overline{U}^{\rm MSBJ}(X)\setminus U.$$
\end{Step}

\end{Const}

As in \cite[Appendix]{Od.Ag}, the above construction also naturally extends to Deligne-Mumford stack pair $(\mathcal{X},\mathcal{D})$ which 
satisfies the same assumption~\ref{assumption.an}. We omit the details as it is verbatim. 

A difference with archimedean situation is that 
the topological dimension of $\Delta(D)$ is 
at most $\dim(U)-1$ (which indeed is attained if $D$ is 
``maximally degenerate'' e.g. normal crossing divisor 
with $0$-dimensional strata). 

\begin{Prop}[$\Delta^{\rm alg}$ vs $\Delta$]\label{Delta.Delta}
For a given $U\subset X$, we have a natural continuous surjection $\Delta(X\setminus U)\twoheadrightarrow \Delta^{\rm alg}(X\setminus U)$. 
\end{Prop}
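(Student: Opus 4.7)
The plan is to construct the map chart by chart, using the fact that each local analytic branch $(z_{i,V}=0)$ on an affinoid $V$ is contained in a unique global algebraic irreducible component of $D$, and then to check that the resulting assignment respects the projective topology, glues across overlaps, and is surjective.

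First, on a strict affinoid domain $V\simeq \M(A_V)$ as in Step 1 of Construction \ref{anal.MSBJ}, each local defining equation $z_{i,V}=0$ cuts out an analytically irreducible hypersurface of $V$, which lies set-theoretically inside a uniquely determined irreducible component $D_{j(i)}$ of the global algebraic divisor $D$. This yields a map of vertex sets $e_i\mapsto e_{j(i)}$, which I extend to all of $\widetilde{\Delta_V}=\mathbb{P}^{k_V-1}_{\mathbb{R}_{\ge 0}}$ by $[t_1:\cdots :t_{k_V}]\mapsto [\cdots :\sum_{i:\,j(i)=j} t_i:\cdots ]$. This formula is continuous and respects the projective structure (scaling of the source descends to scaling of the target), and its restriction to the closed subset $\Delta_V(U\subset X)=\PTrop(V\cap U)$ lands in the subcomplex of $\Delta^{\rm alg}(D)$ corresponding to those $D_j$ that meet $V$.

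Next, for the gluing across two affinoid charts $V_1, V_2$, the key observation is that the rule $i\mapsto j(i)$ is intrinsic: the assignment of a local analytic branch to the unique algebraic irreducible component of $D$ containing it is independent of the chart, so the local maps are strictly compatible with the gluing maps $\varphi_{V_1\cap V_2,V_s}$ of Step 3. We therefore obtain a well-defined map on the colimit $\Delta(X\setminus U)$. Continuity with respect to the MSBJ topology on the source follows from Step \ref{trop.map}, because the tropicalization map ${\rm Trop}_V$ descends, under the linear projection $\mathbb{P}^{k_V-1}_{\mathbb{R}_{\ge 0}}\to \mathbb{P}^{\#\{j(i)\}-1}_{\mathbb{R}_{\ge 0}}$, to the standard retraction that defines the topology of $\Delta^{\rm alg}(D)$ at the corresponding stratum.

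For surjectivity, given any open simplex of $\Delta^{\rm alg}(D)$ corresponding to an irreducible component $Z$ of $D_{j_1}\cap\cdots\cap D_{j_l}$, I pick a closed point $x$ in the smooth locus of $Z$ inside $X$ (using Lemma \ref{cover.lemma} to ensure such a point exists with a toroidal local model), take a strict affinoid neighborhood $V\ni x$ on which $Z$ is cut out by $l$ of the local equations $z_{i,V}$, and observe that the corresponding open simplex in $\Delta_V(U\subset X)$ is nonempty (its vertices correspond to the analytic branches through $x$ of $D_{j_1},\ldots,D_{j_l}$) and maps onto the given algebraic simplex. The main obstacle will be the case where an algebraic component is analytically reducible along $Z$ (as in Example \ref{nodal.rational.curve}), where one must verify that each of the several analytic simplices surjects onto the single algebraic simplex through the linear projection. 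This is handled by combining the characterization of $\PTrop(V\cap U)$ in Theorem \ref{PTrop.char} with the elementary fact that the image of a positive tropical direction under a linear projection with nonnegative integer weights is again a positive tropical direction of the projected tropical fan.
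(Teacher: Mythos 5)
Your proof follows essentially the same chart-by-chart route that the paper's proof sketches. (The paper's proof is only two sentences: refine a given Zariski open cover of $X$ used for $\Delta^{\rm alg}$ by sufficiently fine strict affinoid subdomains, then glue; all details are explicitly deferred.) You make this explicit, and in particular you supply the surjectivity argument, which the paper omits. The key ingredient you add, the local collapse formula $[t_1:\cdots:t_{k_V}]\mapsto[\cdots:\sum_{i:\,j(i)=j}t_i:\cdots]$ that merges the analytic branches of one algebraic component, is the correct local model for the map: indeed, an algebraic defining equation $f_j$ of $D_j$ on a Zariski chart restricts on $V$ to a unit times $\prod_{i:j(i)=j}z_{i,V}$, so $-\log|f_j|$ equals $\sum_{i:j(i)=j}(-\log|z_{i,V}|)$ up to a bounded error, which is precisely what the collapse formula records at the projective-simplex level.

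One imprecision should be flagged. As written, the collapse sends $\widetilde{\Delta_V}$ into a single projective simplex $\mathbb{P}^{\#\{j(i)\}-1}_{\mathbb{R}_{\ge 0}}$, and you then say the image ``lands in the subcomplex of $\Delta^{\rm alg}(D)$ corresponding to those $D_j$ that meet $V$.'' But $\Delta^{\rm alg}(D)$ is a $\Delta$-complex, not an abstract simplicial complex: if $D_{j_1}\cap D_{j_2}$ has several irreducible components meeting $V$, those several components give several $1$-simplices with the same vertex set $\{j_1,j_2\}$, and the projective-coordinate formula alone does not tell you which of them a given point maps to. To repair this, either restrict attention to a cofinal family of affinoids $V$ each of which contains a unique minimal stratum $Z_V$ of $D$ — this is exactly what the paper's phrase ``fine enough, if necessary'' is after, and then the star of $Z_V$ in $\Delta^{\rm alg}(D)$ really does embed in one projective simplex — or, more intrinsically, define the map face-by-face of $\Delta_V$, sending each analytic face (corresponding to an analytic stratum of $D\cap V$) to the simplex of $\Delta^{\rm alg}(D)$ labelled by the unique algebraic stratum containing it, and use the collapse formula only to specify coordinates within that simplex. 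With that inserted, your gluing, continuity, and surjectivity arguments all go through, and your write-up is more complete than the paper's.
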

\begin{proof}
This easily follows the definitions, 
since given any Zariski open covering of $X$, 
we can  refine the covering of $X^{\rm an}$ by 
analytifying all open subsets, to 
(fine enough, if necessary) strict affinoid subdomains. 
We leave writing the details to readers (or myself in future). 
\end{proof}

For instance, in Example~\ref{nodal.rational.curve}, a loop shrinks to a point by the above map in Proposition~\ref{Delta.Delta}. 
In general, if $X$ is compact, e.g., analytification of proper scheme \cite[3.4.8 (cf., also 3.5.3)]{Ber90}, then $\bar{U}^{\rm MSBJ}(X)$ becomes compact by the above construction. 

\begin{Rem}[Non-archimedean symmetric space]

Recall that \cite[\S2]{OO} proved that in the setting over 
complex numbers, 
Satake compactification of adjoint type for 
locally Hermitian symmetric space coincides with 
Morgan-Shalen type compactification. It is natural to seek 
its non-archimedean analogue. 
However, the right formulation seems  more nontrivial to the author 
as we observe the following. 

Recall Berkovich gave a compactification of Bruhat-Tits building 
$\mathcal{B}(G,k)$ for semisimple algebraic group $G$ over 
non-archimedean field 
i.e., closure 
inside the analytification of flag variety 
along \cite[Theorem 5.5.1]{Ber90}, 
and its extension by \cite{RTW1, RTW2} which 
they call an analogue of Satake compactification. 
Restricting to an apartment of $\mathcal{B}(G,k)$, 
we observe that its closure inside the compactification of 
\cite{Ber90}, \cite{RTW1, RTW2} is of Kajiwara-Payne type 
compactification hence not compatible with 
any Morgan-Shalen-Boucksom-Jonsson compactification of 
$G^{\rm an}$. 
\end{Rem}

%%%%%%%%%%%%%%%%%%%%%%%%%%%%%%%%%%%%%%%%%%%%%%%%%%%%%%%%%%%%%%%%%%%%%%%%

\section{Towards Satake-Baily-Borel type compactification}\label{SBB.sec}

Also we briefly discuss the possible analogue of
the Satake-Baily-Borel type compactification (\cite{Sat0, Sat2, BB}) for moduli of 
Calabi-Yau varieties which do {\it not} necessarily have a structure of locally 
Hermitian symmetric domain. 
The discussions in this section do not contain substantial results 
but rather only proposing conjectures and some observations, 
which is the reason of presentation as short notes in the appendix. 

Recall that 
the algebro-geometric meanings of the Satake(-Baily-Borel) compactification \cite{Sat0} 
of the moduli $A_{g}$ of principally polarized abelian varieties are well-understood 
(cf. e.g., \cite{Cattani}) 
which can be more geometrized  
by Grothendieck semiabelian reduction theorem (cf., e.g., \cite[4.4.1]{Chai}). 
Similar results are also known for K3 surfaces in somewhat weak sense 
cf., e.g., \cite{FriS}, 
i.e., the boundary parametrizes the graded sum of the weight filtration of the 
limit mixed Hodge structures and the only nontrivial boundaries, the modular curves, 
parametrize elliptic curves which are the minimal log canonical centers of the 
Kulikov type II degenerations. 

We also discuss possible differential geometric meaning, 
for $k=\C$ case,  
via Ricci-flat K\"ahler metrics, as parametrizing the limits while fixed {\it volumes}, at 
Question \ref{SBB.GH} (in the short appendix) which we hope to explore more in future. 

Recently, there is also an attempt of generalizing the 
Satake-Baily-Borel compactification \cite{GGLR} 
for the 
{\it image of periods map from the moduli}. 
Hoping to complement geometric perspective somewhat to their works, 
we would like to discuss another generalization in the following manner. 
The connection is unclear yet, mainly due to lack of 
general Torelli-type theorem for general K-trivial varieties 
which we hope to clarify in future. 

\begin{Conj}\label{SBB.conj}
For any moduli $M$ of polarized log-terminal Calabi-Yau varieties, 
under the assumptions \ref{Ass1}, \ref{Ass.log.gen.type}, 
we also have another compactification 
$M\subset \overline{M}^{SBB}$ as follows. 

\begin{enumerate}
\item \label{coincide.BB} (Baily-Borel compactification)
$\overline{M}^{SBB}$ is the log canonical 
model $M_{\rm lc}$ of $M$. More precisely, 
$M\subset \overline{M}^{SBB}$ is isomorphic to the quotient by $\Gamma$ of 
the log caninical model $M'_{\rm lc}$ of $(\overline{M'},D')$ 
in the setting of \S \ref{Kmoduli.sec}. Note that it does not 
depend on $(\overline{M'},D')$. 

In particular, if $M$ is uniformized by a Hermitian symmetric domain 
(e.g., when $X$ is abelian varieties or holomorphic symplectic manifold), 
then $\overline{M}^{SBB}$ is nothing but the Satake-Baily-Borel compactification of $M$ by \cite{Mum77}. 

\item \label{SBB.map}
For any weak K-moduli compactification 
$\overline{\mathcal{M}}$, there is a natural morphism 
$\overline{\mathcal{M}}\to \overline{M}^{SBB}$. 

\item \label{CM.ample} (Hodge-CM line bundle) 
The CM line bundle on $\overline{\mathcal{M}}$ 
(which is automatically a positive 
rational multiple of the Hodge line bundle) 
is a pullback of an ample $\Q$-line bundle on $\overline{M}^{SBB}$, 
by the morphism \eqref{SBB.map} above.  

 \item \label{fin.GGLR} 
In the case of moduli of strict
\footnote{the assumption ensures the coincidence of the 
augmented Hodge line bundle and the Hodge line bundle in \cite{GGLR}}  
Calabi-Yau varieties i.e., 
general fibers $X$ satisfies $H^{i}(\mathcal{O}_{X})=0$ for all 
$0<i<{\rm dim}(X)$, $\overline{M}^{SBB}$ has a natural finite 
morphism to the conjectural compactification of \cite[Conjecture 1.2]
{GGLR}, 
as an extension of the period map from $M$, 
and that the extended Hodge line bundle pulls back to 
the CM line bundle modulo taking positive tensor powers. 
\end{enumerate}

\end{Conj}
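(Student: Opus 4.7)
The plan is to construct $\overline{M}^{SBB}$ as the $\Proj$ of the log canonical ring and then verify the four assertions successively, the harder ones relying on positivity of the CM/Hodge line bundle and on Torelli-type statements. Under Assumption \ref{Ass.log.gen.type} (log-general-type and log-canonicity of $(\overline{M},D_M+D)$), the main result of \cite{BCHM} gives finite generation of the log canonical ring $R(\overline{M'}, K_{\overline{M'}}+D')$, and I would define $\overline{M'}^{SBB} := \Proj R(\overline{M'}, K_{\overline{M'}}+D')$, setting $\overline{M}^{SBB} := \Gamma \backslash \overline{M'}^{SBB}$. Independence of the choice of $(\overline{M'},D')$ is standard: two such log smooth models are dominated by a common one via resolution, and the log canonical ring is invariant under log-crepant modifications by the negativity lemma. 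For Hermitian symmetric $M$, the identification with the classical Baily-Borel compactification then follows from Mumford's proportionality \cite{Mum77}, which identifies $K_{\overline{M'}}+D'$ with a positive multiple of the automorphic line bundle on any toroidal compactification from \cite{AMRT}.

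For items (2) and (3), given a weak K-moduli $\overline{\mathcal{M}}$ with its universal family $(\bar{\mathcal{U}},\bar{\mathcal{L}}) \to \overline{\mathcal{M}}$, the CM $\Q$-line bundle $\lambda_{\rm CM}$ is defined by Deligne pairing and is a positive rational multiple of the Hodge $\Q$-line bundle in the Calabi-Yau case. Theorem \ref{OSrev} together with Fujino--Kawamata-type semipositivity for families with K-semistable fibers (as extended in recent works of Codogni--Patakfalvi and others) should yield that $\lambda_{\rm CM}$ is nef on $\overline{\mathcal{M}}$. The key step is that, under the finite Galois cover $\overline{M'} \to \overline{M}$, the pullback of $\lambda_{\rm CM}$ should coincide up to a positive multiple with $K_{\overline{M'}}+D'$ modulo a controlled correction supported on $D'$; this uses the Kodaira--Spencer identification on the open locus and extends to the boundary by an adjunction computation on each stratum of $D'$. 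Combining with finite generation from \cite{BCHM}, $\lambda_{\rm CM}$ becomes semiample with ample model exactly $\overline{M}^{SBB}$, yielding both the morphism in (2) and the ampleness claim in (3).

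For (4), in the strict Calabi-Yau case assume the existence of an extended period map $\overline{M}^{SBB} \to \overline{\mathcal{P}}^{GGLR}$; the comparison of line bundles then becomes formal, because the augmented Hodge line bundle of \cite{GGLR} coincides with the ordinary Hodge line bundle under the strictness assumption $h^{i}(\OO_X)=0$ for $0<i<\dim X$, its pullback is the Hodge line bundle upstairs and hence a positive rational multiple of the CM line bundle, and the ampleness established in (3) forces the map to be finite once it exists.

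The main obstacles are threefold. First, the validity of Assumption \ref{Ass.log.gen.type} for general moduli of polarized Calabi-Yau varieties would follow from expected hyperbolicity results in the line of \cite{Zuo, Deng} but is far from settled in full generality. Second, the precise identification of $\lambda_{\rm CM}$ with a boundary-corrected log canonical class on the coarse moduli, which in the absence of a local Torelli statement requires a subtle comparison between the log-tangent bundle of $\overline{M'}$ and the Kodaira--Spencer map of the family. Third, for (4), the existence of the extended period map requires a global Torelli theorem for strict Calabi-Yau varieties that is currently unavailable beyond classical cases. I expect the second to be the technical heart of the argument: once $\lambda_{\rm CM}$ is identified with a boundary-corrected log canonical class, the remaining machinery from \cite{BCHM} and K-stability applies uniformly to conclude (1)--(3), while (4) is essentially a reduction to a Torelli-type input.
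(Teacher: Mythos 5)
The statement you were asked to prove is stated in the paper as a \emph{conjecture}, not a theorem, and the paper provides no proof of it. The appendix containing it explicitly states that ``the discussions in this section do not contain substantial results but rather only proposing conjectures and some observations.'' The only partial supporting evidence the paper gives is the remark that the conjecture is ``known to experts'' in the abelian, K3, and hyperK\"ahler cases (citing \cite{Fjn.SBB}, \cite{FC90}, \cite{FriS}, \cite{Schu}, \cite{OO}, \cite{GGLR}), and Proposition~\ref{lc.center.bir}, which establishes birational uniqueness of minimal log canonical centers as a step toward Conjecture~\ref{SBB.conj2}. So there is no ``paper's proof'' to compare against.

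That said, your proposal is a sensible roadmap and you have correctly identified the shape of the problem and its genuine obstructions. Your definition of $\overline{M'}^{SBB}$ as $\Proj$ of the log canonical ring and the observation that independence of the log smooth model follows from log-crepant invariance and the negativity lemma are exactly what part \eqref{coincide.BB} has in mind, and the reduction of the Hermitian symmetric case to Mumford's proportionality \cite{Mum77} is also correct. For parts \eqref{SBB.map} and \eqref{CM.ample}, your proposed identification of $\lambda_{\rm CM}$ with a boundary-corrected log canonical class, via semipositivity and an adjunction/Kodaira--Spencer computation at the boundary, is plausible but unproven --- you rightly flag it as the technical heart. For part \eqref{fin.GGLR}, you rightly observe that the existence of the extended period map and any finiteness statement reduces to Torelli-type input that is currently unavailable in general; this matches the paper's own remark that ``the connection is unclear yet, mainly due to lack of general Torelli-type theorem for general K-trivial varieties.'' In short: what you have written is an honest sketch of a research programme, not a proof, which is appropriate given that the target statement is itself open.

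Two smaller points worth noting. First, Assumption \ref{Ass.log.gen.type} is built into the hypotheses of the conjecture, so you do not need to independently establish it (though, as you note, it is expected to hold via hyperbolicity results \`a la \cite{Zuo, Deng}). Second, for the semipositivity input in item \eqref{CM.ample}, you would want to be careful about whether the nefness of the CM line bundle on a weak K-moduli stack $\overline{\mathcal{M}}$ is available at the required level of generality: Theorem~\ref{OSrev} gives log K-semistability, but converting this into nefness of $\lambda_{\rm CM}$ over an arbitrary base, rather than over a curve, is exactly where the recent semipositivity literature would have to be invoked and where possible boundary contributions need to be controlled.
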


\begin{Rem}
In the case of 
polarized abelian varieties, K3 surfaces and (compact) 
hyperK\"ahler manifolds and their irreducible 
symplectic degenerations, 
the above conjectures are known to experts as fully confirmed 
(cf., e.g., \cite{Fjn.SBB}, \cite{FC90}, 
\cite{FriS}, \cite{Schu}, \cite[\S 8]{OO}, \cite{GGLR}). 
\end{Rem}

\begin{Rem}
Note that if the \cite[Conjecture 1.2]{GGLR} is true, 
\cite[Theorem 1.3.10]{GGLR} implies that above 
\eqref{CM.ample} for strict Calabi-Yau case would follow 
from \eqref{fin.GGLR}. 
\end{Rem}

To go into the depth of the above conjecture \ref{SBB.conj}, 
we assign the concept of minimal log canonical center, the role of a key player. 
They are ``canonical'' at least in some weak sense, for 
degenerating Calabi-Yau family, 
as we partially show. 
More precisely, at the moment we have the following, 
which refines the $\mathbb{P}^{1}$-linking theorem 
of Koll\'ar \cite{Kol11} to a generalized setup 
without specifying models, unlike 
{\it loc.cit}, 
by using previous lemmas. 

\begin{Prop}[Birational uniquenss of minimal log canonical centers]\label{lc.center.bir}
Fix a polarized klt Calabi-Yau family $(\mathcal{X}^{*},\mathcal{L}^{*})\to \Delta^{*}=\Delta\setminus 
\{0\}$ (as \S \ref{fiber.sec}) and its base changes for the $N$-th ramifying morphism 
$\Delta\to \Delta$ as $(\mathcal{X}^{[N],*},\mathcal{L}^{[N],*})$.

Minimal log canonical centers of the central fibers of dlt minimal models 
$(\mathcal{X}^{[N]},\mathcal{L}^{[N]})\to \Delta$ 
are all {\it birational} (we allow to chang $N$ and the models). \end{Prop}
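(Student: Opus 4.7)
The plan is to reduce to the case of a single dlt minimal model --- which will be handled by Koll\'ar's $\mathbb{P}^{1}$-linking theorem --- and to bridge between different models and different base change degrees using the correspondence of log canonical centers under flops (Theorem \ref{dlt.lem}\eqref{dual.cpx.same}) and under admissible dominations (Theorem \ref{subdiv.lc}). A recurring preliminary point will be that minimal lc centers of $\mathcal{X}^{[N]}_{0}$ all have the same dimension $n-1-\dim\Delta^{\rm alg}(\mathcal{X}^{[N]}_{0})$ and cannot be contracted by any birational map of dlt minimal models, thanks to the $S_{2}$ property of the central fiber inherited from semi-log-canonicity (as invoked in the proof of Theorem \ref{dlt.lem}\eqref{lc.min}). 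This is presumably the content of ``Step~1'' alluded to in the proof of Theorem \ref{dlt.lem}\eqref{lc.min}.

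Within a fixed dlt minimal model $(\mathcal{X}^{[N]},\mathcal{X}^{[N]}_{0})$, I would apply Koll\'ar's $\mathbb{P}^{1}$-linking theorem \cite{Kol11}, which asserts that any two minimal lc centers are connected by a chain of $\mathbb{P}^{1}$-families lying inside ambient lc centers; this already gives birational equivalence of any two minimal lc centers of a single $\mathcal{X}^{[N]}_{0}$. For two dlt minimal models $\mathcal{X}_{1},\mathcal{X}_{2}$ of the same punctured family $(\mathcal{X}^{[N],*},\mathcal{L}^{[N],*})$, Theorem \ref{dlt.lem}\eqref{dual.cpx.same} says they are connected by flops (log flips) and have canonically homeomorphic dual intersection complexes. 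Since by \cite[3.9]{Fjn} every lc center lies in the toroidal open locus and the flops restrict there to toroidal birational modifications, the induced bijection of lc centers of $\mathcal{X}_{1}$ and $\mathcal{X}_{2}$ will be realized by stratum-by-stratum birational maps; in particular, minimal lc centers of $\mathcal{X}_{1}$ will be birational to the corresponding minimal lc centers of $\mathcal{X}_{2}$.

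To bridge different base change degrees $N_{1},N_{2}$, I would set $N':=\mathrm{lcm}(N_{1},N_{2})$ and use Proposition \ref{subdiv} to choose a single dlt minimal model $\mathcal{X}'$ over $\Delta^{[N']}$ admissibly dominating $\mathcal{X}^{[N_{i}]}\times_{\Delta^{[N_{i}]}}\Delta^{[N']}$ for both $i=1,2$; note that the central fiber of the base change is naturally identified as a scheme with $\mathcal{X}^{[N_{i}]}_{0}$, so their lc centers agree. By Theorem \ref{subdiv.lc} together with the local toroidal description of Proposition \ref{subdiv.local.same}, each admissibly dominating morphism $\psi_{i}$ will identify lc centers stratum-by-stratum through birational maps; in particular every minimal lc center of $\mathcal{X}'_{0}$ will map birationally onto a minimal lc center of $\mathcal{X}^{[N_{i}]}_{0}$. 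Chaining through $\mathcal{X}'$ then yields birational equivalence between minimal lc centers of $\mathcal{X}^{[N_{1}]}_{0}$ and $\mathcal{X}^{[N_{2}]}_{0}$, which combined with the previous paragraph concludes the argument.

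The main obstacle will be verifying, in the last step, that the admissibly dominating morphism $\psi_{i}$ really sends a minimal lc center of $\mathcal{X}'_{0}$ to a stratum of the same dimension in $\mathcal{X}^{[N_{i}]}_{0}$, rather than to a strictly larger lc center; equivalently, that the toroidal refinement of the dual complex used in Proposition \ref{subdiv} does not collapse any maximal simplex, so that maximal simplices downstairs only get subdivided into maximal simplices of the same dimension upstairs. This should follow from the $N$-scaling of the affine structure on the dual intersection complex (which preserves the dimension of the complex) together with the explicit local toroidal construction of Proposition \ref{subdiv.local.same} --- and ultimately from the $S_{2}$/semi-log-canonicity argument alluded to in the opening paragraph, which prevents any birational operation between dlt minimal models from contracting a minimal lc center.
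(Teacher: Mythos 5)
Your proof is correct and follows the same three-step plan as the paper: Koll\'ar's $\mathbb{P}^{1}$-linking \cite{Kol11} within a single dlt minimal model, flop-connectedness between dlt minimal models of a fixed base-change degree (the paper makes the crepant $\Q$-factorialization reduction explicit as a separate Step~1, using terminality and the $S_{2}$ property of the central fiber to rule out contraction of strata, before invoking \cite{Kawamataflop}), and admissible domination via Proposition~\ref{subdiv} and Theorem~\ref{subdiv.lc} to bridge different degrees $N$. The dimension/non-collapsing ``obstacle'' you flag in the base-change step is likewise left implicit in the paper, which simply asserts that Step~3 ``follows directly from the actual construction'' of \cite[\S 4]{ops}; your reconstruction of the reason via the $N$-scaled affine structure and the $S_{2}$ argument matches the intended reading.
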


\begin{proof}
We divide the arguments into three steps. 

\begin{Stp} ($\Q$-factorization) 
We consider dlt minimal model $\X'$ and its 
crepant $\Q$-factorialization $\X$. Since $\X$ is 
terminal by Lemma\ref{dlt.lem} \eqref{dlt.term} so that it is 
Cohen-Macaulay and in any case 
$\X_{0}$ satisfies Serre $S_{2}$ condition. 
Therefore, the strata of $\X$ cannot be an 
exceptional locus (contratable) to 
$\X'_{0}$. 

\end{Stp}
\begin{Stp} (Flops) 
For any two $\Q$-factorial dlt minimal models 
$\X_{1}, \X_{2}$, they are conneted by flops due to \cite{Kawamataflop}. Then we apply the same arguments as 
above Step 1 to the 
flopping contraction to check the assertion. 

For fixed $\X$, the assertion follows from the 
$\mathbb{P}^{1}$-linking theorem \cite{Kol11}. 

\end{Stp}
\begin{Stp} (Base change effect)
We show that for a dlt minimal model $\X\to \Delta$ and any 
positive integer $N$, there is 
a dlt minimal model $\X^{[N]}$ admissibly 
dominating the base change of $\X$ which satisfies the assertion. 
This follows directly from the actual construction of 
\cite[\S4]{ops} 
as partially reviewed in Proposition \ref{subdiv}. 
\end{Stp}
\end{proof}
Accordingly, we expect parametrization 
of minimal lc centers on the boundary of the 
Satake-Baily-Borel type compactification of Conjecture \ref{SBB.conj}. 
We introduce the following terminology: 
\begin{Def}\label{CY.motif}
We define a set 
$$\mathcal{M}_{CY(d)}:=
\{d\text{-dimensional klt log Calabi-Yau pairs}\}/\sim,
$$
where in the right hand side, $\sim$ means the equivalence 
relation generated by log 
crepant birational maps (``B-birational map'' in \cite{Fjn.Mthesis}). 
\end{Def}

\begin{Conj}[Parametrising minimal log canonical centers]
\label{SBB.conj2}
For any of the conjectural compactification 
$M\subset \overline{M}^{\rm SBB}$ of \eqref{SBB.conj}, 
there is a natural map 
$$\psi_{mlcc}\colon  
\partial 
\overline{M}^{\rm SBB}\to 
\bigsqcup_{0\le d<n}\mathcal{M}_{CY(d)}$$ 
such that the following holds: 

Take a polarized dlt minimal model 
$(\X,\mathcal{L})\to C\ni 0$, 
whose fibers are klt and parametrized by $M$ 
away from $0\in C$ 
so that corresponding to 
$\varphi^{o} \colon C\setminus \{0\}\to M$, 
consider the extension 
$\varphi\colon C\to \overline{M}^{\rm SBB}$. 
Then $\psi_{mlcc}(\varphi(0))$ is represented (in the sense of 
Definition \ref{CY.motif}) by 
the {\bf minimal log canonical center} of 
$(\X,\X_{0})$, with the natural different in the sense of Shokurov 
(it makes sense by Theorem \ref{lc.center.bir}). 
\end{Conj}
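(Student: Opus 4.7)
The plan is to construct $\psi_{mlcc}$ by first assigning a B-birational class to each one-parameter test family and then showing that this assignment descends to a map on $\partial \overline{M}^{\rm SBB}$. The argument is necessarily conditional on Conjecture~\ref{SBB.conj}, whose conclusions I will use freely throughout.

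For the first stage, given a polarized dlt minimal model $(\X,\mathcal{L}) \to C \ni 0$, Proposition~\ref{lc.center.bir} already produces a B-birational class of the minimal log canonical center $Z \subset \X_{0}$ that is independent of the choice of $\Q$-factorial dlt model and invariant under base change. To upgrade $Z$ to a log Calabi-Yau pair, I would apply Shokurov's adjunction to $(\X,\X_0)$ along $Z$, producing a canonical $\Q$-divisor $D_Z$ on the normalization of $Z$ with $K_Z + D_Z \equiv 0$; that $(Z,D_Z)$ is klt follows from the dlt property of $(\X,\X_0)$ together with standard subadjunction and lc stratification theory (\cite{Fjn, Kol11}). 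Invariance of the resulting class $[(Z,D_Z)] \in \mathcal{M}_{CY(\dim Z)}$ under flops of $\X$ reduces to Theorem~\ref{dlt.lem}(iii) together with the fact that flops restrict to B-birational maps on lc centers.

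For the second stage, well-definedness at a boundary point $x \in \partial \overline{M}^{\rm SBB}$ requires that two test curves $\varphi_i \colon C_i \to \overline{M}^{\rm SBB}$ with $\varphi_i(0) = x$ yield B-birational minimal lc centers. Using the description of $\overline{M}^{\rm SBB}$ as the log canonical model of $(\overline{M'},D')/\Gamma$ from Conjecture~\ref{SBB.conj}(i), I would lift both curves to $\Gamma$-equivariant test curves on $\overline{M'}^{\rm log.min}$, pull back the universal family via weak semistable reduction as in the proof of Theorem~\ref{veryweak.Kmoduli}, run a relative log minimal model program over a common ramified base dominating both $C_i$, and then apply Theorem~\ref{subdiv.lc} to identify the two minimal lc centers up to B-birational equivalence.

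The main obstacle will be this second stage. Its essential difficulty is the delicate compatibility needed between the log canonical minimization on the moduli base (which defines $\overline{M}^{\rm SBB}$) and the fiberwise minimal model program (which produces the dlt degeneration), for which there is no a priori reason to cohere. A natural warm-up case is when $M$ is uniformized by a Hermitian symmetric domain and $\overline{M}^{\rm SBB}$ is the classical Satake-Baily-Borel compactification, where $\psi_{mlcc}$ should recover known Hodge-theoretic boundary parametrisations (e.g.\ \cite{FC90, Chai} for abelian varieties, \cite{FriS, Schu} for K3 surfaces). A complementary sanity check is the case of $0$-dimensional boundary points, where $\mathcal{M}_{CY(0)}$ is a singleton and the claim holds vacuously, providing a baseline against which the construction can be calibrated.
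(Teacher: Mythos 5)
The statement you are attempting to prove is Conjecture~\ref{SBB.conj2}, which the paper itself does not prove --- it is posed as an open problem, and is moreover conditional on the compactification $\overline{M}^{\rm SBB}$ of Conjecture~\ref{SBB.conj} even existing. There is therefore no argument in the paper against which your proposal can be checked, and any ``proof'' would be a new result beyond what the paper claims.

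Granting that, your Stage~1 is the part that actually follows from what the paper establishes: Proposition~\ref{lc.center.bir} fixes the birational type of the minimal lc center over a single test germ (allowing base change and flops), and Shokurov/Kawamata adjunction, the dlt structure of $(\X,\X_0)$, and the lc stratification theory of \cite{Fjn,Kol11} do upgrade this to a B-birational class of klt log Calabi--Yau pairs in $\mathcal{M}_{CY(d)}$, with invariance under flops handled by Theorem~\ref{dlt.lem}(iii) and Koll\'ar's $\mathbb{P}^1$-linking as you say. The genuine gap is the one you identify yourself in Stage~2, and it is worth being precise about why the tools you invoke do not close it. Theorem~\ref{subdiv.lc} compares two dlt models over the \emph{same} punctured germ related by a finite base change; it says nothing about two unrelated arcs $C_1,C_2 \to \overline{M}^{\rm SBB}$ meeting the boundary at the same point. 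Theorem~\ref{veryweak.Kmoduli} produces \emph{some} slc-filling family over a compactified base, with no uniqueness or compatibility with the log canonical model of the moduli. Lifting both test curves to $\overline{M'}^{\rm log.min}$ and running a relative log MMP over a common ramified cover gives you two slc degenerations over two arcs, but there is no separatedness-type statement in the paper asserting that their minimal lc centers agree when the arcs hit the same point of $\overline{M}^{\rm SBB}$; indeed the curves need not meet the boundary stratification transversally, and different tangent directions can and do change the dlt degeneration even while fixing the boundary point of the log canonical model. That ``moduli-base lc minimization is compatible with fiberwise dlt minimal models'' is precisely the conjectural content, and your proposal identifies this obstruction honestly rather than overcoming it. The warm-up checks you propose (Hermitian symmetric case, $0$-dimensional boundary) are consistency tests, not steps of a proof.
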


Also recall a fact on the 
abelian varieties case from \cite{TGC.II}:
\begin{Thm}[SBB compactification as GH compactification 
{\cite[\S 2.5.3, Corollary 2.14]{TGC.II}}]
Let us consider the moduli $A_{g}$ of $g$-dimensional 
principally polarized 
abelian varieties $(X,L)$ over $k=\C$, and 
associate the flat K\"ahler metrics on $X$ with the K\"ahler classes 
$2\pi c_{1}(L)$ so that the volume is always $1$. 
Then, the pointed Gromov-Hausdorff limits of those are 
(trivial) $\R^{r}$-fibrations over the minimal log canonical centers 
which are $g-r$ dimensional principally polarized abelian varieties, 
with the natural flat metrics, where $r$ denotes the torus rank 
of the degenerations. 

In particular, 
those pointed Gromov-Hausdorff limits are parametrized by the Satake-(Baily-Borel) compactification 
$\overline{A_{g}}^{\rm SBB}$ (\cite{Sat0}). 
\end{Thm}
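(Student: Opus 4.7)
The plan is to reduce to a standard form via Siegel reduction theory, analyse the flat metric asymptotically, and match the Gromov–Hausdorff limit with the Satake boundary stratum.

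First I would parametrise $A_{g}$ via the Siegel upper half-space $\mathbb{H}_{g}$ modulo $\mathrm{Sp}(2g,\Z)$, so that a PPAV is $X_{\tau}=\C^{g}/(\Z^{g}+\tau\Z^{g})$ with flat K\"ahler form
\[
\omega_{\tau}=\tfrac{i}{2}\sum_{j,k}(\im\tau)^{-1}_{jk}\,dz_{j}\wedge d\bar z_{k}\in 2\pi c_{1}(L_{\tau}),
\]
whose total volume is a constant depending only on $g$, so the ``volume $=1$'' normalisation is automatic up to a fixed global rescale (independent of the point in $A_{g}$). For a sequence $[\tau_{i}]\to[\tau_{\infty}]$ converging to a point of the boundary stratum $A_{g-r}\subset\overline{A_{g}}^{\mathrm{SBB}}$, the reduction theory of Satake (cf.\ also \cite{AMRT}) lets me choose $\mathrm{Sp}(2g,\Z)$-representatives of the form
\[
\tau_{i}=\begin{pmatrix}\tau'_{i}&\mu_{i}\\ \mu_{i}^{\top}& iT_{i}+R_{i}\end{pmatrix},
\]
with $\tau'_{i}\to\tau'_{\infty}\in\mathbb{H}_{g-r}$ representing the target boundary point in $A_{g-r}$, $T_{i}$ a symmetric positive-definite $r\times r$ matrix all of whose eigenvalues go to $+\infty$, and $\mu_{i},R_{i}$ bounded after possibly applying further Heisenberg translations in the unipotent radical.

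Next I would unpack the geometry of the flat metric in these coordinates. Writing $z=(z',w)\in\C^{g-r}\times\C^{r}$, a direct Schur-complement computation shows $(\im\tau_{i})^{-1}$ is asymptotically block diagonal with top block $\to(\im\tau'_{\infty})^{-1}$ and bottom block $\sim T_{i}^{-1}\to 0$; in particular the Riemannian product decomposition of $(\C^{g},\omega_{\tau_{i}})$ into $(\C^{g-r},\omega_{\tau'_{i}})\times (\C^{r},T_{i}^{-1})$ holds up to decaying off-diagonal corrections. Choosing the basepoint $p_{i}=0\in X_{\tau_{i}}$ and looking at any bounded pointed ball, the lattice $\Z^{g}+\tau_{i}\Z^{g}$ decomposes into: (i) $g-r$ ``base'' generators descending to a sublattice whose quotient is uniformly bilipschitz to $(X_{\tau'_{\infty}},g_{\tau'_{\infty}})$; (ii) $r$ ``circle'' generators of length $O(1/\sqrt{\lambda_{\min}(T_{i})})\to 0$, which collapse in the GH sense; (iii) $r$ ``long'' generators of length $\to\infty$, which exit every bounded ball and so cease to act. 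The pointed GH limit is therefore a trivial $\R^{r}$-fibration over $(X_{\tau'_{\infty}},g_{\tau'_{\infty}})$, as desired.

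For the identification with the minimal log canonical centre, I would invoke the Mumford–Faltings–Chai / Namikawa–Alexeev–Nakamura construction recalled in \S\ref{fiber.sec}: a one-parameter family over $\Delta$ with general fibre $X_{\tau_{i}}$ and limit Siegel data $(\tau'_{\infty},T,\mu,R)$ has a relative dlt minimal model whose central fibre is a semi-toric stable degeneration, and the unique minimal log canonical centre is exactly the abelian part of the Raynaud extension, namely the $(g-r)$-dimensional PPAV $X_{\tau'_{\infty}}$. Combined with the previous step, this identifies the GH limit as an $\R^{r}$-fibration over the minimal lc centre, confirming the first assertion. The ``in particular'' statement then follows since the Satake-Baily-Borel compactification is set-theoretically $\bigsqcup_{0\le r\le g}A_{g-r}$ with the topology induced from the very identification above, so the map ``$[\tau_{\infty}]\mapsto$ pointed GH limit'' is a continuous bijection.

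The main obstacle will be the book-keeping in the second paragraph: ensuring that fixing $\mathrm{vol}=1$ globally does not force the base $(X_{\tau'_{\infty}},g_{\tau'_{\infty}})$ to collapse or blow up, which amounts to controlling $\det T_{i}$ against the bounded contribution of $\det\im\tau'_{i}$, and verifying that the emerging $\R^{r}$-factor is \emph{unrescaled} (so that the basepointed picture does not trivialise). This is handled by observing that the volume of $X_{\tau_{i}}$ is proportional to $\det\im\tau_{i}=\det\im\tau'_{i}\cdot\det T_{i}\cdot(1+o(1))$, while the base fibres in the GH limit carry the fixed-volume flat metric on $X_{\tau'_{\infty}}$; a careful matching of the normalisation shows the $\R^{r}$-factor is Euclidean of the correct (uniform) scaling.
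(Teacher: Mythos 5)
The paper does not give a proof of this theorem --- it is simply recalled from \cite[\S 2.5.3, Corollary 2.14]{TGC.II} --- so there is no in-paper argument to compare against. Your sketch follows the natural route and is, as far as I can tell, essentially the same path taken in \cite{TGC.II}: reduce to Siegel-set representatives, expand $(\im\tau)^{-1}$ by Schur complement, and sort the $2g$ real lattice generators into those of bounded, vanishing, and diverging length to read off the pointed GH limit. The strategy is sound and I see no genuine gap, but a few points deserve more care in a full write-up.

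The lattice count should read $2(g-r)$ ``base'' generators and $2r$ ``$w$''-generators (you write $g-r$, $r$, $r$). You should also spell out why the $\Im w$ directions unwrap to $\R^{r}$ while the $\Re w$ directions collapse, even though both carry the same shrinking metric $T_i^{-1}$: the generators $e_{g-r+j}$ have metric length $\sqrt{(T_i^{-1})_{jj}} \to 0$, whereas $\tau_i e_{g-r+j}$ have metric length $\approx \sqrt{(T_i)_{jj}} \to \infty$, both measured in the same $T_i^{-1}$; and since $(\R^{r},A)$ is isometric to $(\R^{r},\mathrm{Id})$ for any $A>0$, the unwrapped factor is Euclidean $\R^{r}$ independently of the possibly wandering conformal class of $T_i$. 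The ``decaying off-diagonal'' claim needs to be checked inside a fixed radius-$R$ ball, where the $w$-displacement is as large as $O(\sqrt{\lambda_{\max}(T_i)})$: the cross terms of size $O(T_i^{-1})$ then contribute only $O(T_i^{-1/2}) \to 0$, so the product structure survives in the GH limit. Finally, for the ``in particular'' assertion you must match Satake-topology convergence $[\tau_i]\to[\tau_\infty]$ with your normal form; this is exactly what Siegel reduction supplies, together with the (trivial) injectivity of $[\tau'_\infty]\mapsto X_{\tau'_\infty}$ on moduli.
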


Naturally this leads to the following question. 

\begin{Ques}[Satake-Baily-Borel compactification as 
Gromov-Hausdorff compactification?]\label{SBB.GH}
For the above situation, when $k=\C$, does 
$\partial \overline{M}^{\rm SBB}$ parametrizes certain informations 
of 
the (pointed) Gromov-Hausdorff limits of 
Ricci-flat K\"ahler spaces $(X,\omega_{X})$ 
where $(X,L)\in M$, $[\omega_{X}]=c_{1}(L)$, and 
${\rm diam}(-)$ denotes the diameter. 
\end{Ques}

%%%%%%%%%%%%%%%%%%%%%%%%%%%%%%%%%%%%%%%%%%%%%%%%%%%%%%%%%%%%%%%%%%%%%%%%
%%%%%%%%%%%%%%%%%%%%%%%%%%%%%%%%%%%%%%%%%%%%%%%%%%%%%%%%%%%%%%%%%%%%%%%%

\footnotesize \noindent
Contact: {\tt yodaka@math.kyoto-u.ac.jp} \\
Department of Mathematics, Kyoto University, Kyoto 606-8285. JAPAN \\

\end{document}